\newtheorem{theorem}{Theorem}
\theoremstyle{plain}
\newtheorem{claim}{Claim}
\newtheorem{corollary}{Corollary}
\newtheorem{definition}{Definition}
\newtheorem{example}{Example}
\newtheorem{lemma}{Lemma}
\newtheorem{proposition}{Proposition}
\newtheorem{remark}{Remark}
\numberwithin{equation}{section}
\begin{document}
\title{Weighted inequalities for product fractional integrals}
\author{Eric Sawyer}
\address{McMaster University}
\author{Zipeng Wang}
\address{McMaster University}

\begin{abstract}
We investigate one and two weight norm inequalities for product fractional
integrals 
\begin{equation*}
I_{\alpha ,\beta }^{m,n}f\left( x,y\right) =\diint\limits_{\mathbb{R}%
^{m}\times \mathbb{R}^{n}}\left\vert x-u\right\vert ^{\alpha -m}\left\vert
y-t\right\vert ^{\beta -n}f\left( u,t\right) dudt
\end{equation*}%
in $\mathbb{R}^{m}\times \mathbb{R}^{n}$. We show that in the one weight
case, most of the $1$-parameter theory carries over to the $2$-parameter
setting - the one weight inequality%
\begin{equation*}
\left\Vert \left( I_{\alpha ,\beta }^{m,n}f\right) w\right\Vert _{L^{q}}\leq
N_{p,q}^{\left( \alpha ,m\right) ,\left( \beta ,n\right) }\left\Vert
fw\right\Vert _{L^{p}},\ \ \ \ \ f\geq 0,
\end{equation*}%
is equivalent to finiteness of the rectangle characteristic 
\begin{equation*}
A_{p,q}^{\left( \alpha ,\beta \right) ,\left( m,n\right) }\left( w\right)
=\sup_{I,J}\ \left\vert I\right\vert ^{\frac{\alpha }{m}-1}\left\vert
J\right\vert ^{\frac{\beta }{n}-1}\left( \diint\limits_{I\times
J}w^{q}\right) ^{\frac{1}{q}}\left( \diint\limits_{I\times J}w^{-p^{\prime
}}\right) ^{\frac{1}{p^{\prime }}},
\end{equation*}%
which is in turn equivalent to the diagonal and balanced equalities%
\begin{equation*}
0<\frac{\alpha }{m}=\frac{\beta }{n}=\frac{1}{p}-\frac{1}{q}.
\end{equation*}%
Moreover, the optimal power of the characteristic that bounds the norm is $%
2+2\max \left\{ \frac{p^{\prime }}{q},\frac{q}{p^{\prime }}\right\} $.
However, in the two weight case, apart from the trivial case of product
weights, the rectangle characteristic fails to control the operator norm of $%
I_{\alpha ,\beta }:L^{p}\left( v^{p}\right) \rightarrow L^{q}\left(
w^{q}\right) $ in general.

On the other hand, in the half-balanced case $\min \left\{ \frac{\alpha }{m},%
\frac{\beta }{n}\right\} =\frac{1}{p}-\frac{1}{q}$, we prove that the
rectangle characteristic is sufficient for the weighted norm inequality in
the presence of a side condition - either $w^{q}$ is in product $A_{1}$ or $%
v^{-p^{\prime }\text{ }}$ is in product $A_{1}$.

Moreover, the Stein-Weiss extension of the classical Hardy - Littlewood -
Sobolev inequality to power weights carries over to the $2$-parameter
setting with \emph{nonproduct} power weights using a `sandwiching'
technique, providing our main positive result in two weight $2$-parameter
theory.
\end{abstract}

\maketitle
\tableofcontents

\section{Introduction}

The theory of weighted norm inequalities for product operators, i.e. those
operators commuting with a multiparameter family of dilations, has proved
challenging since the pioneering work of Robert Fefferman \cite{Fef} in the
1980's involving covering lemmas for collections of rectangles. The purpose
of the present paper is to settle some of the basic questions arising in the
weighted theory for the special case of product \emph{fractional} integrals,
in particular the relationship between their norm inequalities and their
associated rectangle characteristics. Our four main results can be split
into two distinct parts, which are presented largely independent of each
other:

\begin{enumerate}
\item In the first part of the paper, we consider the general two weight
norm inequality $I_{\alpha ,\beta }^{m,n}:L^{p}\left( v^{p}\right)
\rightarrow L^{q}\left( w^{q}\right) $ for \textbf{product} fractional
integrals $I_{\alpha ,\beta }^{m,n}$ when the indices are \emph{subbalanced}:%
\begin{equation*}
\min \left\{ \frac{\alpha }{m},\frac{\beta }{n}\right\} \geq \frac{1}{p}-%
\frac{1}{q}>0.
\end{equation*}%
We focus separately on the three subcases where the indices are \emph{%
balanced} $\frac{\alpha }{m}=\frac{\beta }{n}=\frac{1}{p}-\frac{1}{q}>0$, 
\emph{half} \emph{subbalanced} $\min \left\{ \frac{\alpha }{m},\frac{\beta }{%
n}\right\} =\frac{1}{p}-\frac{1}{q}>0$ with $\frac{\alpha }{m}\neq \frac{%
\beta }{n}$, and finally \emph{strictly} \emph{subbalanced}, $\min \left\{ 
\frac{\alpha }{m},\frac{\beta }{n}\right\} >\frac{1}{p}-\frac{1}{q}>0$. It
turns out that the one weight theory in $1$-parameter carries over to the
product setting in the balanced case, some of the familiar two weight theory
in $1$-parameter carries over in the half balanced case, and finally, the
rectangle characteristic is not sufficient for the norm inequality in the
strictly subbalanced case, without assuming additional side conditions on
the weights. More precisely, we prove:

\begin{enumerate}
\item In the balanced case, the one weight inequality in the product setting
is equivalent to\ finiteness of the product Muckenhoupt characteristic.

\item In the half balanced case, we use the one weight inequality to show
that the two weight inequality holds if in addition to the Muckenhoupt
characteristic, we have one of the side conditions, $w^{q}\in A_{1}\times
A_{1}$ or $v^{-p^{\prime }}\in A_{1}\times A_{1}$.

\item In the strictly sub-balanced case, a simple construction shows that
the rectangle characteristic is not sufficient\footnote{%
We thank H. Tanaka for pointing out an error in our counterexample with
rectangle $A_{1}$\ weights in the previous version of this paper, and also
for bringing to our attention "The $n$-linear embedding theorem for dyadic
rectangles" by H. Tanaka and K. Yabuta, \texttt{arXiv 1710.08059v1}, which
obtains boundedness of certain product fractional integrals with reverse
doubling weights. The counterexample for the two-tailed characteristic in
the previous version of this paper also contained an error.}.
\end{enumerate}

\item In the second part of the paper, we give a sharp product version of
the Stein-Weiss extension of the classical Hardy-Littlewood-Sobolev theorem
for (nonproduct) power weights, which we establish by a (somewhat
complicated) method of iteration, something traditionally thought unlikely.
\end{enumerate}

Our positive results for one weight theory and two power weight theory are
obtained using the tools of iteration, Minkowski's inequality and a
sandwiching argument. Now we begin to describe these matters in detail.

Let $m,n\geq 1$. For indices $1<p,q<\infty $ and $0<\alpha <m$ and $0<\beta
<n$, we consider the weighted norm inequality 
\begin{equation*}
\left\{ \diint\limits_{\mathbb{R}^{m}\times \mathbb{R}^{n}}I_{\alpha ,\beta
}^{m,n}f\left( x,y\right) ^{q}\ w\left( x,y\right) ^{q}\ dxdy\right\} ^{%
\frac{1}{q}}\leq N_{p,q}^{\left( \alpha ,\beta \right) ,\left( m,n\right)
}\left( v,w\right) \left\{ \diint\limits_{\mathbb{R}^{m}\times \mathbb{R}%
^{n}}f\left( u,t\right) ^{p}\ v\left( u,t\right) ^{p}\ dudt\right\} ^{\frac{1%
}{p}}
\end{equation*}%
for the product fractional integral%
\begin{equation*}
I_{\alpha ,\beta }^{m,n}f\left( x,y\right) =\diint\limits_{\mathbb{R}%
^{m}\times \mathbb{R}^{n}}\left\vert x-u\right\vert ^{\alpha -m}\left\vert
y-t\right\vert ^{\beta -n}f\left( u,t\right) dudt,\ \ \ \ \ \left(
x,y\right) \in \mathbb{R}^{m}\times \mathbb{R}^{n},
\end{equation*}%
and characterize when one weight and two weight inequalities are equivalent
to the corresponding product fractional Muckenhoupt characteristic,%
\begin{equation}
A_{p,q}^{\left( \alpha ,\beta \right) ,\left( m,n\right) }\left( v,w\right)
\equiv \sup_{I\subset \mathbb{R}^{m},\ J\subset \mathbb{R}^{n}}\left\vert
I\right\vert ^{\frac{\alpha }{m}-1}\left\vert J\right\vert ^{\frac{\beta }{n}%
-1}\left( \diint\limits_{I\times J}w^{q}\right) ^{\frac{1}{q}}\left(
\diint\limits_{I\times J}v^{-p^{\prime }}\right) ^{\frac{1}{p^{\prime }}},
\label{A w v}
\end{equation}%
and its two-tailed variant,%
\begin{eqnarray}
&&\widehat{A}_{p,q}^{\left( m,n\right) ,\left( \alpha ,\beta \right) }\left(
v,w\right)  \label{A w v tail} \\
&\equiv &\sup_{I\times J\subset \mathbb{R}^{m}\times \mathbb{R}%
^{n}}\left\vert I\right\vert ^{\frac{\alpha }{m}-1}\left\vert J\right\vert ^{%
\frac{\beta }{n}-1}\left( \diint\limits_{\mathbb{R}^{m}\times \mathbb{R}%
^{n}}\left( \widehat{s}_{I\times J}w\right) ^{q}d\omega \right) ^{\frac{1}{q}%
}\left( \diint\limits_{\mathbb{R}^{m}\times \mathbb{R}^{n}}\left( \widehat{s}%
_{I\times J}v^{-1}\right) ^{p^{\prime }}d\sigma \right) ^{\frac{1}{p^{\prime
}}},  \notag
\end{eqnarray}%
where%
\begin{equation}
\widehat{s}_{I\times J}\left( x,y\right) \equiv \left( 1+\frac{\left\vert
x-c_{I}\right\vert }{\left\vert I\right\vert ^{\frac{1}{m}}}\right) ^{\alpha
-m}\left( 1+\frac{\left\vert y-c_{J}\right\vert }{\left\vert J\right\vert ^{%
\frac{1}{n}}}\right) ^{\beta -n}.  \label{def s hat}
\end{equation}

For the \emph{one weight} inequality when $v=w$, we show that the indices
must be balanced and diagonal, i.e.%
\begin{equation*}
\frac{1}{p}-\frac{1}{q}=\frac{\alpha }{m}=\frac{\beta }{n},
\end{equation*}%
and that $p<q$; then the finiteness of the operator norm $N_{p,q}^{\left(
m,n\right) }\left( w\right) $ is equivalent to finiteness of the
characteristic $A_{p,q}^{\left( m,n\right) }\left( w\right) $, where as is
conventional, we are suppressing redundant indices in the one weight
diagonal balanced case. In addition, we characterize the optimal power of
the characteristic that controls the operator norm. These one weight results
are proved by an iteration strategy using Minkowski's inequality that also
yields two weight results for the special case of \emph{product weights}.

For the general two weight case, we show that in the absence of any side
conditions on the weight pair $\left( v,w\right) $, the operator norm $%
N_{p,q}^{\left( \alpha ,\beta \right) ,\left( m,n\right) }\left( v,w\right) $
is \emph{never} controlled by the two-tailed characteristic $\widehat{A}%
_{p,q}^{\left( m,n\right) ,\left( \alpha ,\beta \right) }\left( v,w\right) $%
, not even the weak type operator norm of the much smaller dyadic fractional
maximal function $M_{\alpha ,\beta }^{\limfunc{dy}}$ (see below for
definitions). On the other hand, new two weight results can be obtained from
known norm inequalities, such as the one weight and product weight results
mentioned above, by the technique of `sandwiching'.

\begin{lemma}
\label{sandwich}If $\left\{ \left( V^{i},W^{i}\right) \right\} _{i=1}^{N}$
is a sequence of weight pairs `sandwiched' in a weight pair $\left(
v,w\right) $, i.e. 
\begin{equation*}
\frac{w\left( x,y\right) }{v\left( u,t\right) }\leq \sum_{i=1}^{N}\frac{%
W^{i}\left( x,y\right) }{V^{i}\left( u,t\right) }\ ,
\end{equation*}%
then\footnote{%
In the case $N=1$, the hypothesis is implied by \thinspace $w\leq W^{1}$ and 
$V^{1}\leq v$, hence the terminology `sandwiched'.} 
\begin{equation*}
N_{p,q}^{\left( \alpha ,\beta \right) ,\left( m,n\right) }\left( v,w\right)
\leq \sum_{i=1}^{N}N_{p,q}^{\left( \alpha ,\beta \right) ,\left( m,n\right)
}\left( V^{i},W^{i}\right) .
\end{equation*}
\end{lemma}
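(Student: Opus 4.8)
The plan is to exploit the positivity of the kernel of $I_{\alpha ,\beta }^{m,n}$ together with the pointwise sandwich hypothesis, reducing the whole statement to the triangle inequality in $L^{q}$ and the definitions of the operator norms $N_{p,q}^{\left( \alpha ,\beta \right) ,\left( m,n\right) }\left( V^{i},W^{i}\right) $. First I would fix $f\geq 0$ with $\left\Vert fv\right\Vert _{L^{p}}<\infty $ (the asserted inequality being vacuous otherwise, and the conclusion being trivial if some $N_{p,q}^{\left( \alpha ,\beta \right) ,\left( m,n\right) }\left( V^{i},W^{i}\right) =\infty $), and set $g\equiv fv$, so that $g\geq 0$ and $f=g/v$ almost everywhere since the weight $v$ is a.e. positive. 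Substituting $f=g/v$ into the defining integral gives
\[
w(x,y)\,I_{\alpha ,\beta }^{m,n}f(x,y)=\diint\limits_{\mathbb{R}^{m}\times \mathbb{R}^{n}}\left\vert x-u\right\vert ^{\alpha -m}\left\vert y-t\right\vert ^{\beta -n}\,\frac{w(x,y)}{v(u,t)}\,g(u,t)\,du\,dt .
\]

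Next I would apply the sandwich inequality $\dfrac{w(x,y)}{v(u,t)}\leq \sum_{i=1}^{N}\dfrac{W^{i}(x,y)}{V^{i}(u,t)}$ inside this integral: because the kernel $\left\vert x-u\right\vert ^{\alpha -m}\left\vert y-t\right\vert ^{\beta -n}$ and $g$ are nonnegative, the integral is dominated termwise, and pulling $W^{i}(x,y)$ out of the $\left( u,t\right) $-integration yields the pointwise bound
\[
w(x,y)\,I_{\alpha ,\beta }^{m,n}f(x,y)\leq \sum_{i=1}^{N}W^{i}(x,y)\,I_{\alpha ,\beta }^{m,n}\!\left( \frac{g}{V^{i}}\right) (x,y) .
\]
Taking $L^{q}\left( dx\,dy\right) $ norms of both sides and using Minkowski's (triangle) inequality in $L^{q}$ to distribute the norm across the finite sum, then invoking the definition of each operator norm applied to the nonnegative function $g/V^{i}$, I get
\[
\left\Vert W^{i}\,I_{\alpha ,\beta }^{m,n}(g/V^{i})\right\Vert _{L^{q}}\leq N_{p,q}^{\left( \alpha ,\beta \right) ,\left( m,n\right) }\left( V^{i},W^{i}\right) \left\Vert \tfrac{g}{V^{i}}\,V^{i}\right\Vert _{L^{p}}=N_{p,q}^{\left( \alpha ,\beta \right) ,\left( m,n\right) }\left( V^{i},W^{i}\right) \left\Vert g\right\Vert _{L^{p}} .
\]
Summing over $i$ and recalling $\left\Vert g\right\Vert _{L^{p}}=\left\Vert fv\right\Vert _{L^{p}}$ gives $\left\Vert w\,I_{\alpha ,\beta }^{m,n}f\right\Vert _{L^{q}}\leq \bigl( \sum_{i=1}^{N}N_{p,q}^{\left( \alpha ,\beta \right) ,\left( m,n\right) }\left( V^{i},W^{i}\right) \bigr) \left\Vert fv\right\Vert _{L^{p}}$, and taking the supremum over all admissible $f\geq 0$ produces the claimed bound on $N_{p,q}^{\left( \alpha ,\beta \right) ,\left( m,n\right) }\left( v,w\right) $.

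There is no serious obstacle here; the lemma is essentially a bookkeeping statement, and this is presumably why it is recorded as a stand-alone tool. The only points requiring a word of care are that the weights are strictly positive (so that the substitution $f=g/v$ and the reassembly of $I_{\alpha ,\beta }^{m,n}$ applied to $g/V^{i}$ are legitimate almost everywhere) and that the kernel of $I_{\alpha ,\beta }^{m,n}$ is nonnegative, which is exactly what allows the pointwise sandwich inequality to survive integration against it; were $I_{\alpha ,\beta }^{m,n}$ replaced by an operator with a sign-changing kernel, the termwise domination step would fail. I would also note in passing that the finiteness issues are handled by the usual conventions (both sides being allowed to be infinite), so no qualitative a priori assumption on $f$ beyond $f\geq 0$ is needed, and that when $N=1$ and $w\leq W^{1}$, $V^{1}\leq v$ pointwise, positivity of the weights immediately gives the sandwich hypothesis, justifying the terminology.
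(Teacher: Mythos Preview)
Your proof is correct and follows essentially the same route as the paper's: both insert the sandwich inequality $\frac{w}{v}\leq\sum_i\frac{W^i}{V^i}$ under the positive kernel after the substitution $g=fv$. The only cosmetic difference is that the paper compresses the $L^q$ step by writing the operator norm as a supremum of the bilinear form $\iint h\,W\,|x-u|^{\alpha-m}|y-t|^{\beta-n}\,\frac{g}{V}$ over $\|g\|_{L^p}\leq 1$, $\|h\|_{L^{q'}}\leq 1$, whereas you use the triangle inequality in $L^q$ directly; these are equivalent.
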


\begin{proof}
This follows immediately from setting $g=fV$ in the identity, 
\begin{equation*}
N_{p,q}^{\left( \alpha ,\beta \right) ,\left( m,n\right) }\left( V,W\right)
=\sup_{\left\Vert g\right\Vert _{L^{p}}\leq 1,\left\Vert h\right\Vert
_{L^{q^{\prime }}}\leq 1}\diint\limits_{\mathbb{R}^{m}\times \mathbb{R}%
^{n}}h\left( x,y\right) W\left( x,y\right) \left\vert x-u\right\vert
^{\alpha -m}\left\vert y-t\right\vert ^{\beta -n}\frac{g\left( u,t\right) }{%
V\left( u,t\right) }dxdydudt.
\end{equation*}
\end{proof}

We mention two simple examples of sandwiching, the first example sandwiching
a one weight pair, and the second example sandwiching two product weight
pairs:

\begin{enumerate}
\item In the case of diagonal and balanced indices $\frac{1}{p}-\frac{1}{q}=%
\frac{\alpha }{m}=\frac{\beta }{n}$, if there is a \emph{one weight} pair $%
\left( u,u\right) $ with $A_{p,q}^{\left( \alpha ,\beta \right) ,\left(
m,n\right) }\left( u\right) <\infty $ sandwiched in $\left( v,w\right) $,
then $N_{p,q}^{\left( \alpha ,\beta \right) ,\left( m,n\right) }\left(
v,w\right) \lesssim A_{p,q}^{\left( \alpha ,\beta \right) ,\left( m,n\right)
}\left( u\right) <\infty $.

\item If $w\left( x,y\right) =\left\vert \left( x,y\right) \right\vert
^{-\gamma }$ and $v\left( x,y\right) =\left\vert \left( x,y\right)
\right\vert ^{\delta }$ are power weights on $\mathbb{R}^{m+n}$, then $%
N_{p,q}^{\left( \alpha ,\beta \right) ,\left( m,n\right) }\left( v,w\right)
<\infty $ provided the indices satisfy product conditions corresponding to
those of the Stein-Weiss theorem in $1$-parameter (see below), that in turn
generalize the classical Hardy-Littlewood-Sobolev inequality. In this
example there are two weight pairs $\left\{ \left( V,W\right) ,\left(
V^{\prime },W^{\prime }\right) \right\} $ depending on the indices and
sandwiched in $\left( v,w\right) $ where each weight is an appropriate
product power weight.
\end{enumerate}

We begin by briefly recalling the $1$-parameter weighted theory of
fractional integrals.

\section{$1$-parameter theory}

Define $\Omega _{\alpha }^{m}\left( x\right) =\left\vert x\right\vert
^{\alpha -m}$ and set $I_{\alpha }^{m}g=\Omega _{\alpha }^{m}\ast g$. The
following one weight theorem for fractional integrals is due to Muckenhoupt
and Wheeden.

\begin{theorem}
\label{Muck and Wheed}Let $0<\alpha <m$. Suppose $1<p<q<\infty $ and%
\begin{equation}
\frac{1}{p}-\frac{1}{q}=\frac{\alpha }{m}.  \label{bal MW}
\end{equation}%
Let $w\left( x\right) $ be a nonnegative weight on $\mathbb{R}^{m}$. Then%
\begin{equation*}
\left\{ \int_{\mathbb{R}^{m}}I_{\alpha }^{m}f\left( x\right) ^{q}\ w\left(
x\right) ^{q}\ dx\right\} ^{\frac{1}{q}}\leq N_{p,q}\left( w\right) \left\{
\int_{\mathbb{R}^{m}}f\left( x\right) ^{p}\ w\left( x\right) ^{p}\
dx\right\} ^{\frac{1}{p}}
\end{equation*}%
for all $f\geq 0$ for $N_{p,q}\left( w\right) <\infty $ \emph{if and only if}%
\begin{equation}
A_{p,q}\left( w\right) \equiv \sup_{\text{cubes }I\subset \mathbb{R}%
^{m}}\left( \frac{1}{\left\vert I\right\vert }\int_{I}w\left( x\right) ^{q}\
dx\right) ^{\frac{1}{q}}\left( \frac{1}{\left\vert I\right\vert }%
\int_{I}w\left( x\right) ^{-p^{\prime }}\ dx\right) ^{\frac{1}{p^{\prime }}%
}<\infty .  \label{Apq one}
\end{equation}
\end{theorem}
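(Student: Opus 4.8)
The plan is to prove both directions of the equivalence, with the necessity direction being the easy one and the sufficiency direction requiring the real work via a good-$\lambda$ or covering argument.

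For necessity, I would test the inequality on $f = \chi_I$ for a cube $I$. For $x \in I$ one has $I_\alpha^m \chi_I(x) = \int_I |x-u|^{\alpha-m}\,du \gtrsim |I|^{\frac{\alpha}{m}}$, so restricting the left integral to $I$ gives $|I|^{\frac{\alpha}{m}} \left(\int_I w^q\right)^{1/q} \lesssim N_{p,q}(w)\left(\int_I w^p\right)^{1/p}$. This alone is not quite the $A_{p,q}$ condition, so instead I would test on $f = \chi_I w^{-p'}$: then for $x\in I$, $I_\alpha^m f(x) \gtrsim |I|^{\frac{\alpha}{m}-1}\int_I w^{-p'}$, and plugging in yields $|I|^{\frac{\alpha}{m}-1}\left(\int_I w^{-p'}\right)\left(\int_I w^q\right)^{1/q} \lesssim N_{p,q}(w)\left(\int_I w^{-p'}\right)^{1/p}$. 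Dividing through and using $\frac{\alpha}{m} = \frac1p - \frac1q$ together with $1 - \frac{1}{p'} = \frac1p$ rearranges exactly to $A_{p,q}(w) \lesssim N_{p,q}(w)$, after checking the normalization of the cube averages.

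For sufficiency, the main idea is to dominate $I_\alpha^m$ pointwise (up to the fractional maximal function, which is controlled) by a dyadic model and then run a Calder\'on--Zygmund / good-$\lambda$ argument adapted to the measure $w^q\,dx$. Concretely, I would first reduce to the dyadic fractional integral $I_\alpha^{m,\mathrm{dy}} f = \sum_{Q \text{ dyadic}} |Q|^{\frac{\alpha}{m}} \left(\frac{1}{|Q|}\int_Q f\right)\chi_Q$, using the standard shifted-dyadic-grids trick so that $I_\alpha^m f \lesssim \sum_{k} I_\alpha^{m,\mathrm{dy}_k}f$ over finitely many grids. Then, writing $d\omega = w^q\,dx$ and $d\sigma = w^{-p'}\,dx$, the target inequality becomes $\|I_\alpha^{m,\mathrm{dy}}(f\sigma)\|_{L^q(\omega)} \lesssim A_{p,q}(w)\,\|f\|_{L^p(\sigma)}$ after the substitution $f \mapsto f\sigma$ (here I am identifying $w^p\,dx$ with $\sigma$ since $\frac1p - \frac1q = \frac{\alpha}{m}$ forces $w^p = w^{-p'}\cdot w^{p+p'} $... more carefully: one uses $p - (-p') \cdot ?$ — the clean route is to keep $f w \mapsto g$ and test against $h w$ in the dual, giving a bilinear form $\sum_Q |Q|^{\frac{\alpha}{m}-1}\langle g w^{-1}, \chi_Q\rangle \langle h w, \chi_Q\rangle$ to be bounded by $A_{p,q}(w)\|g\|_p \|h\|_{q'}$). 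I would then apply the dyadic Carleson embedding theorem or a direct stopping-time decomposition on the cubes $Q$: split the sum according to the maximal cubes where the average $\frac{1}{|Q|}\int_Q g w^{-1}$ first exceeds $2^j$, and similarly for $h w$, using the $A_{p,q}$ condition to bound each local piece and $p < q$ to sum the resulting geometric series over the stopping generations.

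The main obstacle is the sufficiency direction, and within it the delicate point is that we are in the off-diagonal case $p < q$, so the naive two-weight testing/Sawyer-type machinery is heavier than needed but a direct good-$\lambda$ inequality must be set up correctly: one wants $\omega\left(\{I_\alpha^{m,\mathrm{dy}} F > 2\lambda,\ M_\alpha^{\mathrm{dy}} F \le \gamma\lambda\}\right) \le C\gamma^{\theta}\,\omega(\{I_\alpha^{m,\mathrm{dy}} F > \lambda\})$ for a suitable power $\theta > 0$ depending on $q/p$, which then self-improves to the $L^q(\omega)$ bound once the fractional maximal function $M_\alpha$ is known to satisfy the same weighted bound (the latter following from the $A_{p,q}$ condition by a now-standard argument). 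Keeping track of where the single exponent $\frac{\alpha}{m} = \frac1p - \frac1q$ is forced, and verifying that no strictly weaker two-weight hypothesis would suffice here, is the part that needs care; everything else is routine once the dyadic reduction is in place.
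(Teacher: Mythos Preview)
The paper does not prove this theorem. It is stated as a classical result due to Muckenhoupt and Wheeden \cite{MuWh} and used throughout as a black box; the Appendix proves only the sharper addendum that the balanced condition $\frac{1}{p}-\frac{1}{q}=\frac{\alpha}{m}$ is itself necessary (Theorem~\ref{Muck and Wheed sharp}), and that argument explicitly begins ``Given Theorem~\ref{Muck and Wheed}, it remains only to prove\ldots''. So there is no proof in the paper to compare your proposal against.

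On the proposal itself: your necessity argument via $f=\chi_I w^{-p'}$ is correct and standard; the paper's Lemma~\ref{tails} carries out essentially the same computation (with tails, in the two-parameter setting). Your sufficiency sketch via dyadic reduction followed by either good-$\lambda$ or a stopping-time/Carleson argument is a legitimate modern route, though the paragraph where you attempt the substitution $f\mapsto f\sigma$ gets tangled: the identity you half-write is not the right bookkeeping, and the clean version is simply to set $g=fw$ so that $\|f\|_{L^p(w^p)}=\|g\|_{L^p}$ and dualize $\langle I_\alpha^m(gw^{-1}),hw\rangle$ against $h\in L^{q'}$. With that fixed, either of the two mechanisms you name (good-$\lambda$ reducing to $M_\alpha$, or a direct dyadic stopping-time bound on the bilinear form) goes through. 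None of this appears in the present paper, which simply quotes the result.
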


In fact, assuming only that $1<p,q<\infty $, the balanced condition (\ref%
{bal MW}) is \emph{necessary} for the norm inequality (\ref{Apq one}). See
Theorem \ref{Muck and Wheed sharp} in the Appendix for this.

A two weight analogue for $1<p\leq q<\infty $ was later obtained by Sawyer 
\cite{Saw} that involved testing the norm inequality and its dual over
indicators of cubes times $w^{q}$ and $v^{-p^{\prime }}$ respectively,
namely for all cubes $Q\subset \mathbb{R}^{m}$,%
\begin{eqnarray*}
\left\{ \int_{\mathbb{R}^{m}}I_{\alpha }^{m}\left( \mathbf{1}%
_{Q}v^{-p^{\prime }}\right) \left( x\right) ^{q}\ w\left( x\right) ^{q}\
dx\right\} ^{\frac{1}{q}} &\leq &T_{p,q}^{\alpha ,m}\left( v,w\right) \
\left\vert Q\right\vert _{v^{-p^{\prime }}}^{\frac{1}{p}}, \\
\left\{ \int_{\mathbb{R}^{m}}I_{\alpha }^{m}\left( \mathbf{1}%
_{Q}w^{q}\right) \left( x\right) ^{p^{\prime }}\ v\left( x\right)
^{-p^{\prime }}\ dx\right\} ^{\frac{1}{p^{\prime }}} &\leq &T_{q^{\prime
},p^{\prime }}^{\alpha ,m}\left( \frac{1}{w},\frac{1}{v}\right) \ \left\vert
Q\right\vert _{w^{q}}^{\frac{1}{q^{\prime }}}.
\end{eqnarray*}%
Later yet, it was shown by Sawyer and Wheeden \cite{SaWh}, using an idea of
Kokilashvili and Gabidzashvili \cite{KoGa}, that in the special case $p<q$,
the testing conditions could be replaced with a two-tailed two weight
version of the $A_{p,q}$ condition (\ref{Apq one}):%
\begin{equation}
\sup_{I\subset \mathbb{R}^{m}}\left\vert I\right\vert ^{\frac{\alpha }{m}%
-1}\left( \frac{1}{\left\vert I\right\vert }\int_{I}\left[ \widehat{s}%
_{I}\left( x\right) w\left( x\right) \right] ^{q}\ dx\right) ^{\frac{1}{q}%
}\left( \frac{1}{\left\vert I\right\vert }\int_{I}\left[ \widehat{s}%
_{I}\left( x\right) v\left( x\right) ^{-1}\right] ^{p^{\prime }}\ dx\right)
^{\frac{1}{p^{\prime }}}\equiv \widehat{A}_{p,q}^{\alpha ,m}\left(
v,w\right) <\infty ,  \label{Apq hat}
\end{equation}%
where the tail $\widehat{s}_{I}$ is given by%
\begin{equation*}
\widehat{s}_{I}\left( x\right) \equiv \left( 1+\frac{\left\vert
x-c_{I}\right\vert }{\left\vert I\right\vert ^{\frac{1}{m}}}\right) ^{\alpha
-m},
\end{equation*}%
and $c_{I}$ is the center of the cube $I$. Note that in Sawyer and Wheeden 
\cite{SaWh}, this condition was written in terms of the rescaled tail $%
s_{I}=\left\vert I\right\vert ^{\frac{\alpha }{m}-1}\widehat{s}_{I}$, and it
was also shown that the two-tailed condition $\widehat{A}_{p,q}$ condition (%
\ref{Apq hat}) could be replaced by a pair of corresponding one-tailed
conditions.

\begin{theorem}
\label{Saw and Wheed}Suppose $1<p<q<\infty $ and $0<\alpha <m$. Let $w\left(
x\right) $ and $v\left( x\right) $ be a pair of nonnegative weights on $%
\mathbb{R}^{m}$. Then%
\begin{equation}
\left\{ \int_{\mathbb{R}^{m}}I_{\alpha }^{m}f\left( x\right) ^{q}\ w\left(
x\right) ^{q}\ dx\right\} ^{\frac{1}{q}}\leq N_{p,q}^{\alpha ,m}\left(
v,w\right) \left\{ \int_{\mathbb{R}^{m}}f\left( x\right) ^{p}\ v\left(
x\right) ^{p}\ dx\right\} ^{\frac{1}{p}}  \label{1 par 2 weight}
\end{equation}%
for all $f\geq 0$ \emph{if and only if }the $\widehat{A}_{p,q}^{\alpha ,m}$
condition (\ref{Apq hat}) holds, i.e. $\widehat{A}_{p,q}^{\alpha ,m}\left(
v,w\right) <\infty $. Moreover, the best constant $N_{p,q}^{\alpha ,m}\left(
w,v\right) $ in (\ref{1 par 2 weight}) is comparable to $\widehat{A}%
_{p,q}^{\alpha ,m}\left( w,v\right) $.
\end{theorem}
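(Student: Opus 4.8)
The plan is to obtain necessity by testing and sufficiency by a dyadic (sparse) reduction together with a stopping-time/Carleson-embedding argument in which the strict gap $p<q$ is used in an essential way.

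\emph{Necessity.} Write $\sigma=v^{-p'}$, $\omega=w^{q}$, and $\langle h\rangle_{Q}^{\mu}=\mu(Q)^{-1}\int_{Q}h\,d\mu$ for a measure $\mu$. After the substitution $f=g\sigma$ the inequality (\ref{1 par 2 weight}) reads $\|I_{\alpha}^{m}(g\sigma)\|_{L^{q}(\omega)}\le N_{p,q}^{\alpha,m}(v,w)\,\|g\|_{L^{p}(\sigma)}$. Fix a cube $I$. Restricting the integration in $I_{\alpha}^{m}$ to $I$ gives the elementary bound $I_{\alpha}^{m}(\mathbf{1}_{I}\sigma)(x)\gtrsim|I|^{\frac{\alpha}{m}-1}\widehat{s}_{I}(x)\,\sigma(I)$ for every $x$, and testing against $g=\mathbf{1}_{I}$ then yields the one-tailed estimate $|I|^{\frac{\alpha}{m}-1}\big(\int(\widehat{s}_{I}w)^{q}\big)^{1/q}\sigma(I)^{1/p'}\lesssim N_{p,q}^{\alpha,m}(v,w)$. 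Testing the dual inequality — valid since $I_{\alpha}^{m}$ is self-adjoint — against the indicator of $I$ gives the symmetric estimate $|I|^{\frac{\alpha}{m}-1}\omega(I)^{1/q}\big(\int(\widehat{s}_{I}v^{-1})^{p'}\big)^{1/p'}\lesssim N_{p,q}^{\alpha,m}(v,w)$. Finally, the two-tailed characteristic (\ref{Apq hat}) is comparable to the sum of these two one-tailed quantities — decompose $\widehat{s}_{I}$ into dyadic annuli about $I$, control the diagonal terms by the tailless $A_{p,q}$ quantity $\sup_{I}|I|^{\frac{\alpha}{m}-1}\omega(I)^{1/q}\sigma(I)^{1/p'}$ (dominated by either one-tailed quantity), and sum the off-diagonal terms as a geometric series convergent because $\alpha<m$ — so $\widehat{A}_{p,q}^{\alpha,m}(v,w)\lesssim N_{p,q}^{\alpha,m}(v,w)$.

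\emph{Sufficiency.} By duality it suffices to bound the bilinear form $\iint_{\mathbb{R}^{m}\times\mathbb{R}^{m}}|x-y|^{\alpha-m}f(y)g(x)\,dy\,dx$ for nonnegative $f\in L^{p}(v^{p})$ and $g\in L^{q'}(w^{-q'})$. Using the resolution $|x-y|^{\alpha-m}\approx\sum_{Q\ni x,\,y}|Q|^{\frac{\alpha}{m}-1}$ over a finite family of shifted dyadic grids — i.e. the sparse domination of $I_{\alpha}^{m}$ it entails — and the substitution $f=\varphi\sigma$, $g=\psi\omega$ (so $\|\varphi\|_{L^{p}(\sigma)}=\|f\|_{L^{p}(v^{p})}$, $\|\psi\|_{L^{q'}(\omega)}=\|g\|_{L^{q'}(w^{-q'})}$), the matter reduces to the sparse bilinear estimate
\[
\mathcal B:=\sum_{Q\in\mathcal S}|Q|^{\frac{\alpha}{m}-1}\sigma(Q)\,\omega(Q)\,\langle\varphi\rangle_{Q}^{\sigma}\,\langle\psi\rangle_{Q}^{\omega}\ \lesssim\ \widehat{A}_{p,q}^{\alpha,m}(v,w)\,\|\varphi\|_{L^{p}(\sigma)}\,\|\psi\|_{L^{q'}(\omega)}
\]
for a sparse collection $\mathcal S$, into which the characteristic enters only through the coefficient bound $|Q|^{\frac{\alpha}{m}-1}\sigma(Q)\omega(Q)\le\widehat{A}_{p,q}^{\alpha,m}(v,w)\,\sigma(Q)^{1/p}\omega(Q)^{1/q'}$, a rewriting of (\ref{Apq hat}). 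Inserting this and applying Hölder in $Q$ with exponents $p,p'$,
\[
\mathcal B\le\widehat{A}_{p,q}^{\alpha,m}(v,w)\Big(\sum_{Q\in\mathcal S}\sigma(Q)(\langle\varphi\rangle_{Q}^{\sigma})^{p}\Big)^{1/p}\Big(\sum_{Q\in\mathcal S}\big(\omega(Q)(\langle\psi\rangle_{Q}^{\omega})^{q'}\big)^{p'/q'}\Big)^{1/p'};
\]
since $p<q$ gives $p'/q'\ge1$, the embedding $\ell^{1}\hookrightarrow\ell^{p'/q'}$ bounds the last factor by $\big(\sum_{Q\in\mathcal S}\omega(Q)(\langle\psi\rangle_{Q}^{\omega})^{q'}\big)^{1/q'}$, and two applications of the dyadic Carleson embedding theorem then complete the estimate — \emph{provided} $\mathcal S$ can be taken $\sigma$-Carleson and $\omega$-Carleson. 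Granting that (see the next paragraph), $\mathcal B\lesssim\widehat{A}_{p,q}^{\alpha,m}(v,w)\|\varphi\|_{L^{p}(\sigma)}\|\psi\|_{L^{q'}(\omega)}$, whence $N_{p,q}^{\alpha,m}(v,w)\lesssim\widehat{A}_{p,q}^{\alpha,m}(v,w)$; with the necessity bound this is the asserted comparability of best constant and characteristic.

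\emph{The main obstacle.} I expect the hard part to be precisely the italicized proviso. A Lebesgue-sparse family need not be $\sigma$-Carleson or $\omega$-Carleson, and there is in general \emph{no} single sparse family adapted to both measures: were there one, the computation above would go through verbatim at $p=q$ (where $p'/q'=1$), contradicting the fact — central to why the gap is needed — that for $p=q$ the characteristic (\ref{Apq hat}) is not sufficient and must be replaced by Sawyer's testing conditions \cite{Saw}. Reconciling the two incompatible corona (stopping-time) structures is therefore the technical heart of the proof, and it is exactly here that the strict inequality $p<q$ is genuinely used. The device, due to Kokilashvili and Gabidzashvili \cite{KoGa} and implemented by Sawyer and Wheeden \cite{SaWh}, is to run the two stopping times in parallel and, on each block, to use that the scale sum $\sum_{Q\subseteq D}\sigma(Q)^{1/p}\omega(Q)^{1/q'}$ over \emph{all} dyadic scales below a stopping cube $D$ collapses — after an auxiliary mass-halving stopping time — to a geometric series of ratio at most $2^{-c}$ with $c=c(p,q)>0$; this convergence holds precisely because $\frac{1}{p}-\frac{1}{q}>0$ (in the model case $\sigma=\omega=dx$ it is simply $\sum_{Q\subseteq D}|Q|^{1/p+1/q'}\approx|D|^{1/p+1/q'}$, convergent iff $1/p+1/q'>1$), so that the same-scale overlap which is fatal at $p=q$ becomes summable. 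With this reconciliation in hand, the telescoping over the stopping trees against the two Carleson packing estimates for their stopping cubes is routine. I note, finally, that the same ideas reprove Sawyer's testing characterization \cite{Saw} in this range: the ``far'' part $\big(\int_{\mathbb{R}^{m}\setminus 3Q_{0}}I_{\alpha}^{m}(\mathbf{1}_{Q_{0}}\sigma)^{q}\,d\omega\big)^{1/q}\lesssim\widehat{A}_{p,q}^{\alpha,m}(v,w)\,\sigma(Q_{0})^{1/p}$ of each testing condition is immediate from the one-tailed form of the characteristic and needs no gap, while the ``near'' part $\|I_{\alpha}^{m}(\mathbf{1}_{Q_{0}}\sigma)\|_{L^{q}(\omega,\,3Q_{0})}\lesssim\widehat{A}_{p,q}^{\alpha,m}(v,w)\,\sigma(Q_{0})^{1/p}$ is precisely the stopping-time estimate above.
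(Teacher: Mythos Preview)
The paper does not prove this theorem; it is quoted as background from Sawyer--Wheeden \cite{SaWh} (with the central idea credited to Kokilashvili--Gabidzashvili \cite{KoGa}), and the appendix only remarks that the comparability of norm and characteristic ``can be verified by carefully tracking the constants in the proof of Theorem~1 in~\cite{SaWh}.'' So there is little in-paper to compare against beyond the necessity half, which is Lemma~\ref{tails}.

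On necessity your route is valid but roundabout: you test on indicators to get the two one-tailed quantities and then invoke the equivalence of the two-tailed and one-tailed characteristics --- but that equivalence is itself a nontrivial lemma (also from \cite{SaWh}), and your annular-decomposition sketch does not quite prove it. The paper's Lemma~\ref{tails} (written in two parameters; the one-parameter case is identical) gets $\widehat A_{p,q}^{\alpha,m}\le N_{p,q}^{\alpha,m}$ in a single stroke: from the pointwise bound $|x-u|^{\alpha-m}\ge |I|^{\alpha/m-1}\widehat s_I(x)\,\widehat s_I(u)$, plug $f=\mathbf 1_{B(0,R)}\,\widehat s_I^{\,p'-1}$ into the norm inequality and let $R\to\infty$. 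This bypasses the one-tailed detour entirely.

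On sufficiency your sparse/bilinear outline is a legitimate modern recasting, and you have correctly isolated the crux: a Lebesgue-sparse (or full dyadic) family is in general neither $\sigma$- nor $\omega$-Carleson, and no single stopping family can serve both --- if it could, your H\"older $+$ $\ell^{1}\hookrightarrow\ell^{p'/q'}$ $+$ Carleson-embedding chain would run at $p=q$, which is known to fail. The mechanism you describe (parallel stopping trees with an auxiliary mass-halving, the scale sum $\sum_{Q\subseteq D}\sigma(Q)^{1/p}\omega(Q)^{1/q'}$ collapsing precisely because $1/p+1/q'>1$) is the right one and matches the spirit of \cite{KoGa,SaWh}, but you stop short of carrying it out. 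At that point your proposal, like the paper itself, ultimately defers the technical heart to \cite{SaWh}.
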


The special case of power weights $\left\vert x\right\vert ^{\gamma }$ had
been considered much earlier, and culminated in the following 1958 theorem
of Stein and Weiss \cite{StWe2}.

\begin{theorem}
\label{Stein-Weiss}Let $w_{\gamma }\left( x\right) =\left\vert x\right\vert
^{-\gamma }$ and $v_{\delta }\left( x\right) =\left\vert x\right\vert
^{\delta }$ be a pair of nonnegative power weights on $\mathbb{R}^{m}$ with $%
-\infty <\gamma ,\delta <\infty $. Suppose $1<p\leq q<\infty $ and $\alpha
\in \mathbb{R}$ satisfy the strict constraint inequalities,%
\begin{equation}
0<\alpha <m\text{ and }q\gamma <m\text{ and }p^{\prime }\delta <m,
\label{strict constraint}
\end{equation}%
together with the inequality%
\begin{equation*}
\gamma +\delta \geq 0,
\end{equation*}%
and the power weight equality,%
\begin{equation*}
\frac{1}{p}-\frac{1}{q}=\frac{\alpha -\left( \gamma +\delta \right) }{m}.
\end{equation*}%
Then (\ref{1 par 2 weight}) holds, i.e.%
\begin{equation*}
\left\{ \int_{\mathbb{R}^{m}}I_{\alpha }^{m}f\left( x\right) ^{q}\
\left\vert x\right\vert ^{-\gamma q}\ dx\right\} ^{\frac{1}{q}}\leq
N_{p,q}^{\alpha ,m}\left( w_{\gamma },v_{\delta }\right) \left\{ \int_{%
\mathbb{R}^{m}}f\left( x\right) ^{p}\ \left\vert x\right\vert ^{\delta p}\
dx\right\} ^{\frac{1}{p}}.
\end{equation*}
\end{theorem}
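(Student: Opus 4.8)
\medskip

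\noindent\textbf{Proof strategy.} The plan is to run the classical dyadic annular decomposition that underlies the Stein--Weiss inequality. By duality it suffices to establish, for $g,h\geq0$,
\[
\diint\limits_{\mathbb{R}^{m}\times\mathbb{R}^{m}}h(x)\,|x|^{-\gamma}\,|x-u|^{\alpha-m}\,|u|^{-\delta}\,g(u)\,du\,dx\ \lesssim\ \|g\|_{L^{p}}\,\|h\|_{L^{q^{\prime}}},
\]
where we have substituted $f=|u|^{-\delta}g$ and paired the output against $h$; a routine truncation and monotone convergence lets us assume $g,h$ are bounded with compact support, so every integral is finite. Put $E_{j}=\{x:2^{j}\leq|x|<2^{j+1}\}$, $g_{k}=g\mathbf{1}_{E_{k}}$, $h_{j}=h\mathbf{1}_{E_{j}}$, and split the double sum $\sum_{j,k}B_{j,k}$ of the left--hand side into the \emph{local} part $|j-k|\leq1$ and the two \emph{far} parts $j\geq k+2$ and $k\geq j+2$.

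For the local part, on $E_{j}$ and $E_{j\pm1}$ the weights are comparable to the constants $2^{-j\gamma}$ and $2^{-j\delta}$, and the relevant mass lies in $B(0,2^{j+2})$, so $B_{j,k}\lesssim2^{-j(\gamma+\delta)}\|h_{j}\|_{L^{q^{\prime}}}\|I_{\alpha}^{m}g_{k}\|_{L^{q}(B(0,2^{j+2}))}$. First I would record the scale--invariant unweighted estimate $\|I_{\alpha}^{m}\phi\|_{L^{q}(B(0,R))}\lesssim R^{\gamma+\delta}\|\phi\|_{L^{p}}$: at $R=1$ this is the weightless case of Theorem \ref{Muck and Wheed} (classical Hardy--Littlewood--Sobolev, $I_{\alpha}^{m}:L^{p}\to L^{q_{0}}$ with $\frac{1}{q_{0}}=\frac{1}{p}-\frac{\alpha}{m}$) followed by H\"{o}lder on the ball to drop the exponent from $q_{0}$ down to $q$ --- exactly the step where $\gamma+\delta\geq0$, equivalently $q\leq q_{0}$, is used --- and the case of general $R$ follows from the dilation identity $I_{\alpha}^{m}\bigl(\phi(\cdot/t)\bigr)=t^{\alpha}\,(I_{\alpha}^{m}\phi)(\cdot/t)$. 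The gain $R^{\gamma+\delta}=2^{j(\gamma+\delta)}$ cancels the weight factor, leaving $B_{j,k}\lesssim\|h_{j}\|_{L^{q^{\prime}}}\|g_{k}\|_{L^{p}}$; summing over the $O(1)$ relevant $k$ for each $j$ and using the embedding $\ell^{p}\hookrightarrow\ell^{q}$ (legitimate since $p\leq q$) gives the bound $\|g\|_{L^{p}}\|h\|_{L^{q^{\prime}}}$ for this part.

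For the far parts one has $|x-u|\sim2^{\max\{j,k\}}$, so the kernel is essentially the constant $2^{\max\{j,k\}(\alpha-m)}$, the integral factors, and H\"{o}lder against the annular weights gives $\int_{E_{j}}h\,|x|^{-\gamma}\lesssim2^{j(m/q-\gamma)}\|h_{j}\|_{L^{q^{\prime}}}$ and $\int_{E_{k}}g\,|u|^{-\delta}\lesssim2^{k(m/p^{\prime}-\delta)}\|g_{k}\|_{L^{p}}$. Collecting exponents yields $B_{j,k}\lesssim2^{aj+bk}\|h_{j}\|_{L^{q^{\prime}}}\|g_{k}\|_{L^{p}}$ with $a+b=\alpha-m\bigl(\frac{1}{p}-\frac{1}{q}\bigr)-(\gamma+\delta)=0$ by the power weight equality, so $aj+bk=a(j-k)$ (resp. $b(k-j)$) and the bound is $2^{-c\,|j-k|}\|h_{j}\|_{L^{q^{\prime}}}\|g_{k}\|_{L^{p}}$ with $c=m/p^{\prime}-\delta>0$ (from $p^{\prime}\delta<m$) when $j\geq k+2$ and $c=m/q-\gamma>0$ (from $q\gamma<m$) when $k\geq j+2$. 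Summing the geometric series in $|j-k|$ and then applying $\ell^{p}\hookrightarrow\ell^{q}$ in the surviving index again yields $\lesssim\|g\|_{L^{p}}\|h\|_{L^{q^{\prime}}}$, which completes the proof.

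The step I expect to be the main obstacle is the exponent bookkeeping in the far parts: checking that the coefficient of the common index $\min\{j,k\}$ cancels identically --- this is precisely the power weight equality $\frac{1}{p}-\frac{1}{q}=\frac{\alpha-(\gamma+\delta)}{m}$ --- and that the residual geometric ratio has the correct sign, which is exactly the content of the strict constraints $q\gamma<m$ and $p^{\prime}\delta<m$ (the remaining constraints $0<\alpha<m$ and $\gamma+\delta\geq0$ enter only through the local part). A minor loose end is the borderline local case $\alpha\geq m/p$, where the Hardy--Littlewood--Sobolev target exponent degenerates to $L^{\infty}$ (or $\exp L$ when $\alpha=m/p$); there one bounds $\|I_{\alpha}^{m}g_{k}\|_{L^{q}(B_{R})}$ directly by H\"{o}lder via $\bigl\|\,|z|^{\alpha-m}\mathbf{1}_{|z|\leq R}\bigr\|_{L^{p^{\prime}}}\lesssim R^{\alpha-m/p}$, which again produces the factor $R^{\gamma+\delta}$.
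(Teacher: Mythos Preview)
Your argument is correct and is essentially the classical Stein--Weiss dyadic annular decomposition. Note, however, that the paper does \emph{not} supply its own proof of this statement: Theorem~\ref{Stein-Weiss} is quoted as the 1958 result of Stein and Weiss \cite{StWe2}, and in the Appendix (Theorem~\ref{Stein-Weiss sharp}) the paper explicitly writes ``The sufficiency of these conditions \ldots\ is the classical theorem of Stein and Weiss, so we turn to proving their necessity.'' So there is no in-paper proof to compare against; what you have written is a clean execution of the original method.

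Two small remarks on presentation. First, in the far-part summation you invoke ``$\ell^{p}\hookrightarrow\ell^{q}$ in the surviving index''; it may be cleaner to say that the kernel $2^{-c|j-k|}$ lies in $\ell^{1}(\mathbb{Z})$, so Young's convolution inequality gives $\ell^{p}\to\ell^{p}$, and then either pair against $\|h_{j}\|_{L^{q'}}\in\ell^{q'}\hookrightarrow\ell^{p'}$ or pass from $\ell^{p}$ to $\ell^{q}$ before pairing --- both routes use $p\leq q$ exactly once. Second, your handling of the borderline $\alpha=m/p$ in the local block is fine but can be streamlined: since $g_{k}$ is supported in a ball of radius $\sim R$ and you only need the $L^{q}$ norm on a comparable ball, scaling reduces to $R=1$, where $I_{\alpha}^{m}$ maps $L^{p}$ of a fixed ball into every $L^{q}_{\mathrm{loc}}$ (Trudinger when $\alpha=m/p$, H\"older continuity when $\alpha>m/p$); the scaling then automatically produces the factor $R^{\gamma+\delta}$.
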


The previous theorem cannot be improved when the weights are restricted to
power weights. Indeed, $\alpha >0$ follows from the local integrability of
the kernel, both $q\gamma <m$ and $p^{\prime }\delta <m$ follow from the
local integrability of $w^{q}$ and $v^{-p^{\prime }}$, and then both $\gamma
+\delta \geq 0$ and $\frac{1}{p}-\frac{1}{q}=\frac{\alpha -\left( \gamma
+\delta \right) }{m}$ follow from the finiteness of the Muckenhoupt
condition $A_{p,q}\left( v,w\right) $ using standard arguments (see e.g. the
proof of Theorem \ref{A char mn} below). These conditions then yield 
\begin{equation*}
\alpha =m\left( \frac{1}{p}-\frac{1}{q}\right) +\gamma +\delta <m\left( 
\frac{1}{p}-\frac{1}{q}\right) +\frac{m}{q}+\frac{m}{p^{\prime }}=m.
\end{equation*}%
A routine calculation shows that the aforementioned conditions on the
indices $\alpha ,m,\gamma ,\delta ,p,q$ are precisely those for which the
characteristic $A_{p,q}^{\alpha ,m}\left( w_{\gamma },v_{\delta }\right) $
is finite. Finally, the necessity of the remaining condition $p\leq q$ is an
easy consequence of Maz'ja's characterization \cite{Maz} of the Hardy
inequality for $q<p$. See Theorem \ref{Stein-Weiss sharp} in the Appendix
for this. Altogether, this establishes the succinct conclusion that the
power weight norm inequality holds if and only if $p\leq q$ and the
characteristic is finite.

Next, in the one weight setting, we recall the solution to the `power of the
characteristic' problem for fractional integrals due to Lacey, Moen, Perez
and Torres in \cite{LaMoPeTo}. See the Appendix for a different proof of
this $1$-parameter theorem that reveals the origin of the number $1+\max
\left\{ \frac{p^{\prime }}{q},\frac{q}{p^{\prime }}\right\} $ to be the
optimal exponent in the inequality $\overline{A}_{p,q}\left( w\right) \leq
A_{p,q}\left( w\right) ^{1+\max \left\{ \frac{p^{\prime }}{q},\frac{q}{%
p^{\prime }}\right\} }$ where $\overline{A}_{p,q}\left( w\right) $ is a
one-tailed version of $A_{p,q}\left( w\right) $.

\begin{theorem}
Let $0<\alpha <m$. Suppose $1<p\leq q<\infty $ and%
\begin{equation*}
\frac{1}{p}-\frac{1}{q}=\frac{\alpha }{m}.
\end{equation*}%
Let $w\left( x\right) $ be a nonnegative weight on $\mathbb{R}^{m}$. Then%
\begin{equation*}
\left\{ \int_{\mathbb{R}^{m}}I_{\alpha }^{m}f\left( x\right) ^{q}\ w\left(
x\right) ^{q}\ dx\right\} ^{\frac{1}{q}}\leq N_{p,q}\left( w\right) \left\{
\int_{\mathbb{R}^{m}}f\left( x\right) ^{p}\ w\left( x\right) ^{p}\
dx\right\} ^{\frac{1}{p}}
\end{equation*}%
where%
\begin{equation*}
N_{p,q}\left( w\right) \leq C_{p,q,n}\ A_{p,q}\left( w\right) ^{1+\max
\left\{ \frac{p^{\prime }}{q},\frac{q}{p^{\prime }}\right\} }.
\end{equation*}%
The power $1+\max \left\{ \frac{p^{\prime }}{q},\frac{q}{p^{\prime }}%
\right\} $ is sharp, even when $w$ is restricted to power weights $w\left(
x\right) =\left\vert x\right\vert ^{\gamma }$.
\end{theorem}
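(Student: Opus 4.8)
The plan is to prove the upper bound by reducing the norm inequality to a \emph{one-tailed} Muckenhoupt characteristic and then comparing that characteristic to $A_{p,q}\left( w\right) $ by a quantitative reverse doubling argument, and to prove sharpness by testing on two one-parameter families of power weights approaching the two endpoints of the range of admissible exponents. Since the balanced diagonal identity $\frac{1}{p}-\frac{1}{q}=\frac{\alpha }{m}$ forces $p<q$, Theorem \ref{Saw and Wheed} applied with $v=w$ gives $N_{p,q}\left( w\right) \approx \widehat{A}_{p,q}^{\alpha ,m}\left( w,w\right) $ with constants depending only on $p,q,m$, and by the one-tailed reformulation of \cite{SaWh} the latter is comparable to $\overline{A}^{\left( 1\right) }\left( w\right) +\overline{A}^{\left( 2\right) }\left( w\right) $, where $\overline{A}^{\left( 1\right) }\left( w\right) =\sup_{I}\left\vert I\right\vert ^{\frac{\alpha }{m}-1}\left\Vert \widehat{s}_{I}w\right\Vert _{L^{q}}\left\Vert \mathbf{1}_{I}w^{-1}\right\Vert _{L^{p'}}$ carries the tail on the $w$-side and $\overline{A}^{\left( 2\right) }$ is its dual (with $w$ replaced by $w^{-1}$ and $\left( p,q\right) $ by $\left( q',p'\right) $). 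Because $A_{p,q}\left( w\right) \geq 1$ always (Hölder applied on a cube), it suffices to prove $\overline{A}^{\left( 1\right) }\left( w\right) \lesssim A_{p,q}\left( w\right) ^{1+p'/q}$ and, dually, $\overline{A}^{\left( 2\right) }\left( w\right) \lesssim A_{p,q}\left( w\right) ^{1+q/p'}$; the larger of the two exponents is $1+\max \left\{ p'/q,\,q/p'\right\} $.

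For the first of these, set $\sigma =w^{-p'}$ and $\omega =w^{q}=\sigma ^{-q/p'}$. Finiteness of $A_{p,q}\left( w\right) $ is exactly the assertion $\left\langle \sigma \right\rangle _{I}\left\langle \sigma ^{-q/p'}\right\rangle _{I}^{p'/q}\leq A_{p,q}\left( w\right) ^{p'}$ for all cubes $I$, i.e. $\sigma \in A_{1+p'/q}$ with characteristic at most $A_{p,q}\left( w\right) ^{p'}$. I would first record the quantitative reverse doubling that this forces: assuming $\sigma \left( 2Q\right) \leq \left( 1+\delta \right) \sigma \left( Q\right) $, bound $\omega \left( 2Q\right) \geq \omega \left( 2Q\setminus Q\right) $ from below using Jensen's inequality for the convex map $t\mapsto t^{-q/p'}$, and feed this back into the $A_{p,q}$ inequality on $2Q$; this yields $\delta \gtrsim A_{p,q}\left( w\right) ^{-p'}$, hence $\sigma \left( 2^{j}I\right) \geq \left( 1+cA_{p,q}\left( w\right) ^{-p'}\right) ^{j}\sigma \left( I\right) $ with $c=c\left( p,q,m\right) >0$. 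Then expand $\left\Vert \widehat{s}_{I}w\right\Vert _{L^{q}}^{q}\approx \sum_{j\geq 0}2^{-j\left( m-\alpha \right) q}\,\omega \left( 2^{j}I\right) $, bound $\omega \left( 2^{j}I\right) \leq A_{p,q}\left( w\right) ^{q}\left\vert 2^{j}I\right\vert ^{q\left( 1-\alpha /m\right) }\sigma \left( 2^{j}I\right) ^{-q/p'}$ from the $A_{p,q}$ inequality on $2^{j}I$, and insert the reverse doubling lower bound for $\sigma \left( 2^{j}I\right) $. Since $\left\vert 2^{j}I\right\vert ^{q\left( 1-\alpha /m\right) }=2^{jq\left( m-\alpha \right) }\left\vert I\right\vert ^{q\left( 1-\alpha /m\right) }$ cancels the factor $2^{-j\left( m-\alpha \right) q}$ exactly, one is left with the convergent geometric sum $\sum_{j}\left( 1+cA_{p,q}\left( w\right) ^{-p'}\right) ^{-jq/p'}\lesssim A_{p,q}\left( w\right) ^{p'}$. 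Collecting powers gives $\left\Vert \widehat{s}_{I}w\right\Vert _{L^{q}}\lesssim A_{p,q}\left( w\right) ^{1+p'/q}\left\vert I\right\vert ^{1-\alpha /m}\left\Vert \mathbf{1}_{I}w^{-1}\right\Vert _{L^{p'}}^{-1}$, i.e. $\overline{A}^{\left( 1\right) }\left( w\right) \lesssim A_{p,q}\left( w\right) ^{1+p'/q}$; the estimate for $\overline{A}^{\left( 2\right) }$ is identical with the roles of $\sigma $ and $\omega $ (equivalently $w$ and $w^{-1}$) interchanged.

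For sharpness I would use power weights. One has $A_{p,q}\left( \left\vert x\right\vert ^{\gamma }\right) <\infty $ iff $-m/q<\gamma <m/p'$. Taking $\gamma =\frac{m}{p'}-\varepsilon $ with $\varepsilon \downarrow 0$ and computing with balls centered at the origin gives $A_{p,q}\left( w\right) \approx \varepsilon ^{-1/p'}$, the blow-up being carried by $\int_{B\left( 0,R\right) }\left\vert x\right\vert ^{-p'\gamma }\sim \varepsilon ^{-1}R^{p'\varepsilon }$. Testing the norm inequality on $f$ with $fw=\left\vert u\right\vert ^{-m/p-\varepsilon /2}\mathbf{1}_{\left\vert u\right\vert >1}$ — whose image $I_{\alpha }^{m}\left( \left\vert u\right\vert ^{-\gamma }fw\right) \left( x\right) $ receives equal-order contributions from $\left\vert u\right\vert \sim \left\vert x\right\vert $ and from $1<\left\vert u\right\vert \ll \left\vert x\right\vert $, the latter carrying an extra factor $\varepsilon ^{-1}$ — gives $N_{p,q}\left( w\right) \gtrsim \varepsilon ^{-\left( 1-\alpha /m\right) }$. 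Since $p'\left( 1-\alpha /m\right) =1+p'/q$, the exponent cannot be smaller than $1+p'/q$; the symmetric computation at the other endpoint $\gamma \uparrow -\frac{m}{q}$ (or the self-duality $N_{p,q}\left( w\right) =N_{q',p'}\left( w^{-1}\right) $, $A_{p,q}\left( w\right) =A_{q',p'}\left( w^{-1}\right) $), where $A_{p,q}\left( w\right) \approx \varepsilon ^{-1/q}$ and again $N_{p,q}\left( w\right) \gtrsim \varepsilon ^{-\left( 1-\alpha /m\right) }$, forces the exponent to be at least $1+q/p'$. Combined with the upper bound, this identifies the optimal exponent as $1+\max \left\{ p'/q,\,q/p'\right\} $.

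The main obstacle is obtaining the \emph{exact} power $\max \left\{ p'/q,q/p'\right\} $ rather than merely some finite power. The geometric series in the second step converges only because of reverse doubling, and passing through the abstract $A_{\infty }$ constant of $w^{-p'}$ would be lossy; one must instead keep the quantitative dependence $\eta \sim A_{p,q}\left( w\right) ^{-p'}$ of the reverse doubling exponent explicit and check that $\eta ^{-1}\sim A_{p,q}\left( w\right) ^{p'}$ is the only surviving power of $A_{p,q}\left( w\right) $ introduced by the summation. A secondary point, which is precisely what produces the $\max $, is to verify that the two power-weight families saturate the two dual one-tailed estimates \emph{separately}, one realizing the exponent $1+p'/q$ and the other $1+q/p'$.
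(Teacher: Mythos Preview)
Your proposal is correct and follows essentially the same route as the paper's proof in the Appendix: reduce $N_{p,q}(w)$ to the one-tailed characteristic $\overline{A}_{p,q}(w)$ via \cite{SaWh}, expand the tail dyadically, replace $\omega(2^{j}I)$ by $\sigma(2^{j}I)^{-q/p'}$ through the $A_{p,q}$ condition, and sum the resulting geometric series using a quantitative reverse doubling exponent $\delta(\sigma)\gtrsim A_{p,q}(w)^{-p'}$. The only cosmetic difference is in how that reverse doubling bound is obtained: the paper derives it from a three-term H\"older inequality on $Q$ followed by iteration on triples $\frac{1}{3^{k}}Q$, whereas you argue by contradiction via Jensen's inequality for $t\mapsto t^{-q/p'}$ on the annulus $2Q\setminus Q$; both yield the same quantitative dependence. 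For sharpness the paper simply invokes Buckley and \cite{LaMoPeTo}, while you spell out the power-weight computation at the endpoint $\gamma\uparrow m/p'$ and its dual; your calculation is consistent with theirs.
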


Finally we recall the extremely simple proof of the equivalence of the
dyadic characteristic 
\begin{equation*}
\mathbb{A}_{p,q}^{\alpha ,\limfunc{dy}}\left( \sigma ,\omega \right) \equiv
\sup_{Q\subset \mathbb{R}^{n}\ \limfunc{dyadic}}\left\vert Q\right\vert ^{%
\frac{\alpha }{m}+\frac{1}{q}-\frac{1}{p}}\left( \frac{\left\vert
Q\right\vert _{\omega }}{\left\vert Q\right\vert }\right) ^{\frac{1}{q}%
}\left( \frac{\left\vert Q\right\vert _{\sigma }}{\left\vert Q\right\vert }%
\right) ^{\frac{1}{p^{\prime }}},
\end{equation*}%
where the supremum is taken over dyadic cubes, and the weak type $\left(
p,q\right) $ operator norm $\mathbb{N}_{p,q}^{\alpha ,\limfunc{dy}}\left(
\sigma ,\omega \right) $ of the dyadic fractional maximal operator $%
M_{\alpha }^{\limfunc{dy}}$ with respect to $\left( \sigma ,\omega \right) $:%
\begin{equation*}
\mathbb{N}_{p,q}^{\alpha ,\limfunc{dy}}\left( \sigma ,\omega \right) \equiv
\sup_{f\geq 0}\frac{\sup_{\lambda >0}\lambda \left\vert \left\{ M_{\alpha }^{%
\limfunc{dy}}\left( f\sigma \right) >\lambda \right\} \right\vert _{\omega
}^{\frac{1}{q}}}{\left( \int f^{p}d\sigma \right) ^{\frac{1}{p}}}.
\end{equation*}%
This proof illustrates the power of an effective covering lemma for weights,
something that is sorely lacking in the $2$-parameter setting, and accounts
for much of the negative nature of our results (c.f. Example \ref{simple}
below). Recall that the $1$-parameter dyadic fractional maximal operator $%
M_{\alpha }^{\limfunc{dy}}$ acts on a signed measure $\mu $ in $\mathbb{R}%
^{n}$ by%
\begin{equation*}
M_{\alpha }^{\limfunc{dy}}\mu \left( x\right) \equiv \sup_{Q\text{ dyadic}%
}\left\vert Q\right\vert ^{\alpha -n}\int_{Q}d\left\vert \mu \right\vert \ .
\end{equation*}

\begin{lemma}
\label{maximal}Let $\left( \sigma ,\omega \right) $ be a locally finite
weight pair in $\mathbb{R}^{n}$, and let $1<p\leq q<\infty $. Then $%
M_{\alpha }^{\limfunc{dy}}:L^{p}\left( \sigma \right) \rightarrow
L^{q,\infty }\left( \omega \right) $ if and only if $A_{p,q}^{\alpha ,%
\limfunc{dy}}\left( \sigma ,\omega \right) <\infty $.
\end{lemma}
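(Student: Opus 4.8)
The plan is to prove the two directions separately, with the ``easy'' direction (necessity of the $A_{p,q}^{\alpha,\mathrm{dy}}$ condition) following by testing the weak-type inequality on indicators, and the ``hard'' direction (sufficiency) by a direct stopping-time / Calder\'on--Zygmund selection argument on the level sets of $M_\alpha^{\mathrm{dy}}$.

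For necessity, fix a dyadic cube $Q$ and apply the weak-type inequality to $f=\mathbf{1}_Q$ with measure $\sigma$. Since $M_\alpha^{\mathrm{dy}}(\mathbf 1_Q\sigma)(x)\ge |Q|^{\alpha-n}|Q|_\sigma$ for every $x\in Q$, choosing $\lambda$ slightly below $|Q|^{\alpha-n}|Q|_\sigma$ gives $\{M_\alpha^{\mathrm{dy}}(\mathbf 1_Q\sigma)>\lambda\}\supseteq Q$, so that $\lambda\,|Q|_\omega^{1/q}\le \mathbb N_{p,q}^{\alpha,\mathrm{dy}}(\sigma,\omega)\,|Q|_\sigma^{1/p}$. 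Rearranging and letting $\lambda\uparrow|Q|^{\alpha-n}|Q|_\sigma$ yields $|Q|^{\alpha-n}|Q|_\sigma^{1/p'}|Q|_\omega^{1/q}\le \mathbb N_{p,q}^{\alpha,\mathrm{dy}}(\sigma,\omega)$, which after writing $|Q|^{\alpha-n}=|Q|^{\frac{\alpha}{n}+\frac1q-\frac1p-1}\,|Q|^{\frac1p-\frac1q}$ and distributing the $|Q|$ factors is exactly the $A_{p,q}^{\alpha,\mathrm{dy}}$ bound; taking the supremum over $Q$ gives $A_{p,q}^{\alpha,\mathrm{dy}}(\sigma,\omega)\le \mathbb N_{p,q}^{\alpha,\mathrm{dy}}(\sigma,\omega)<\infty$.

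For sufficiency, assume $A_{p,q}^{\alpha,\mathrm{dy}}(\sigma,\omega)=:\mathcal A<\infty$ and fix $\lambda>0$ and $f\ge 0$. Let $\{Q_j\}$ be the maximal dyadic cubes with $|Q_j|^{\alpha-n}\int_{Q_j}f\,d\sigma>\lambda$; these are pairwise disjoint and $\{M_\alpha^{\mathrm{dy}}(f\sigma)>\lambda\}=\bigcup_j Q_j$. Then $|\{M_\alpha^{\mathrm{dy}}(f\sigma)>\lambda\}|_\omega=\sum_j|Q_j|_\omega$, and on each selected cube $\lambda<|Q_j|^{\alpha-n}\int_{Q_j}f\,d\sigma$. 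Multiplying by $|Q_j|_\omega^{1/q}$ and using $|Q_j|^{\alpha-n}|Q_j|_\omega^{1/q}|Q_j|_\sigma^{1/p'}\le\mathcal A$ (which is the $A_{p,q}^{\alpha,\mathrm{dy}}$ inequality rewritten in cube-measure form), we get
\begin{equation*}
\lambda\,|Q_j|_\omega^{\frac1q}\le \mathcal A\,|Q_j|_\sigma^{-\frac1{p'}}\int_{Q_j}f\,d\sigma\le \mathcal A\,\Bigl(\int_{Q_j}f^p\,d\sigma\Bigr)^{\frac1p},
\end{equation*}
the last step by H\"older's inequality on $Q_j$. Raising to the power $q$, summing in $j$, and using $q/p\ge 1$ together with disjointness of the $Q_j$ (so that $\sum_j(\int_{Q_j}f^p d\sigma)^{q/p}\le(\sum_j\int_{Q_j}f^p d\sigma)^{q/p}\le\|f\|_{L^p(\sigma)}^q$) gives $\lambda^q|\{M_\alpha^{\mathrm{dy}}(f\sigma)>\lambda\}|_\omega\le\mathcal A^q\|f\|_{L^p(\sigma)}^q$, i.e. $\mathbb N_{p,q}^{\alpha,\mathrm{dy}}(\sigma,\omega)\le\mathcal A$.

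The only subtle point is the passage $\sum_j(\int_{Q_j}f^p d\sigma)^{q/p}\le(\sum_j\int_{Q_j}f^p d\sigma)^{q/p}$, which is the elementary inequality $\sum a_j^r\le(\sum a_j)^r$ for $r=q/p\ge 1$ and $a_j\ge 0$; this is where the hypothesis $p\le q$ is used, and it is the step that fails (and must be replaced by something genuinely harder) when $q<p$. Everything else is the standard CZ-type selection argument, and no covering lemma beyond the disjointness of maximal dyadic cubes is needed — which is precisely why the proof is ``extremely simple'' and does not extend to the $2$-parameter setting where maximal dyadic \emph{rectangles} need not be disjoint.
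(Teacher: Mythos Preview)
Your proof is correct and follows essentially the same approach as the paper: decompose the superlevel set as a disjoint union of maximal dyadic cubes, apply the $A_{p,q}^{\alpha,\mathrm{dy}}$ bound together with H\"older on each cube, and sum using the $\ell^{q/p}$--$\ell^1$ inequality for $q/p\ge 1$. The paper organizes the arithmetic slightly differently (raising to the $p$th power and using $(\sum a_k)^{p/q}\le\sum a_k^{p/q}$ rather than your $\sum a_j^{q/p}\le(\sum a_j)^{q/p}$), but these are the same inequality, and your closing remark about why the argument is tied to disjointness of maximal dyadic cubes is exactly the point the paper is making.
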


\begin{proof}
Fix $\lambda >0$ and $f\geq 0$ bounded with compact support. Let $\Omega
_{\lambda }\equiv \left\{ x\in \mathbb{R}^{n}:M_{\alpha }^{\limfunc{dy}%
}f\sigma >\lambda \right\} $. Then%
\begin{equation*}
\Omega _{\lambda }=\overset{\cdot }{\dbigcup\limits_{k}}Q_{k}\ ,\ \ \ \ \
\left\vert Q_{k}\right\vert ^{\alpha -n}\int_{Q_{k}}fd\sigma >\lambda ,
\end{equation*}%
and we have%
\begin{eqnarray*}
\left( \lambda \left\vert \Omega _{\lambda }\right\vert _{\omega }^{\frac{1}{%
q}}\right) ^{p} &=&\lambda ^{p}\left( \sum_{k}\left\vert Q_{k}\right\vert
_{\omega }\right) ^{\frac{p}{q}}\leq \lambda ^{p}\sum_{k}\left\vert
Q_{k}\right\vert _{\omega }^{\frac{p}{q}}=\sum_{k}\lambda ^{p}\left\vert
Q_{k}\right\vert _{\omega }^{\frac{p}{q}} \\
&<&\sum_{k}\left( \left\vert Q_{k}\right\vert ^{\alpha
-n}\int_{Q_{k}}fd\sigma \right) ^{p}\left\vert Q_{k}\right\vert _{\omega }^{%
\frac{p}{q}}=\sum_{k}\left( \left\vert Q_{k}\right\vert ^{\alpha
-n}\left\vert Q_{k}\right\vert _{\sigma }^{\frac{1}{p^{\prime }}}\left\vert
Q_{k}\right\vert _{\omega }^{\frac{1}{q}}\right) ^{p}\left\vert
Q_{k}\right\vert _{\sigma }^{1-p}\int_{Q_{k}}fd\sigma \\
&\leq &\sum_{k}\left( A_{p,q}^{\alpha ,\limfunc{dy}}\left( \sigma ,\omega
\right) \right) ^{p}\int_{Q_{k}}f^{p}d\sigma \leq A_{p,q}^{\alpha ,\limfunc{%
dy}}\left( \sigma ,\omega \right) ^{p}\left\Vert f\right\Vert _{L^{p}\left(
\sigma \right) }^{p}\ ,
\end{eqnarray*}%
which gives%
\begin{eqnarray*}
\left\Vert M_{\alpha }^{\limfunc{dy}}f\right\Vert _{L^{q,\infty }\left(
\omega \right) } &=&\sup_{\lambda >0}\lambda \left\vert \Omega _{\lambda
}\right\vert _{\omega }^{\frac{1}{q}}\leq A_{p,q}^{\alpha ,\limfunc{dy}%
}\left( \sigma ,\omega \right) \left\Vert f\right\Vert _{L^{p}\left( \sigma
\right) }\ , \\
\text{for all }f &\geq &0\text{ bounded with compact support.}
\end{eqnarray*}%
The proof of the converse statement is standard, similar to but easier than
that of Lemma \ref{tails}\ below.
\end{proof}

\section{$2$-parameter theory}

Define the product fractional integral $I_{\alpha ,\beta }^{m,n}$ on $%
\mathbb{R}^{m}\times \mathbb{R}^{n}$ by the convolution formula $I_{\alpha
,\beta }^{m,n}f\equiv \Omega _{\alpha ,\beta }^{m,n}\ast f$, where the
convolution kernel $\Omega _{\alpha ,\beta }^{m,n}$ is a product function: 
\begin{equation*}
\Omega _{\alpha ,\beta }^{m,n}\left( x,y\right) =\left\vert x\right\vert
^{\alpha -m}\left\vert y\right\vert ^{\beta -n},\ \ \ \ \ \left( x,y\right)
\in \mathbb{R}^{m}\times \mathbb{R}^{n}.
\end{equation*}%
Let $v\left( x,y\right) $ and $w\left( x,y\right) $ be positive weights on $%
\mathbb{R}^{m}\times \mathbb{R}^{n}$. For $1<p,q<\infty $ and $0<\alpha <m$, 
$0<\beta <n$, we consider the two weight norm inequality for nonnegative
functions $f\left( x,y\right) $:%
\begin{equation}
\left\{ \int_{\mathbb{R}^{m}}\int_{\mathbb{R}^{n}}I_{\alpha ,\beta
}^{m,n}f\left( x,y\right) ^{q}\ w\left( x,y\right) ^{q}\ dxdy\right\} ^{%
\frac{1}{q}}\leq N_{p,q}^{\left( \alpha ,\beta \right) ,\left( m,n\right)
}\left( v,w\right) \left\{ \int_{\mathbb{R}^{m}}\int_{\mathbb{R}^{n}}f\left(
x,y\right) ^{p}\ v\left( x,y\right) ^{p}\ dxdy\right\} ^{\frac{1}{p}}.
\label{2 weight abs cont}
\end{equation}%
If we define absolutely continuous measures $\sigma ,\omega $ by%
\begin{equation}
d\sigma \left( x,y\right) =v\left( x,y\right) ^{-p^{\prime }}dxdy\text{ and }%
d\omega \left( x,y\right) =w\left( x,y\right) ^{q}dxdy,  \label{identif}
\end{equation}%
then the two weight norm inequality (\ref{2 weight abs cont}) is equivalent
to the norm inequality%
\begin{equation}
\left\{ \int_{\mathbb{R}^{m}}\int_{\mathbb{R}^{n}}I_{\alpha ,\beta
}^{m,n}\left( f\sigma \right) \left( x,y\right) ^{q}\ d\omega \left(
x,y\right) \right\} ^{\frac{1}{q}}\leq \mathbb{N}_{p,q}^{\left( \alpha
,\beta \right) ,\left( m,n\right) }\left( \sigma ,\omega \right) \left\{
\int_{\mathbb{R}^{m}}\int_{\mathbb{R}^{n}}f\left( x,y\right) ^{p}\ d\sigma
\left( x,y\right) \right\} ^{\frac{1}{p}},  \label{2 weight arb meas}
\end{equation}%
where now the measure $\sigma $ appears inside the argument of $I_{\alpha
,\beta }^{m,n}$, namely in $I_{\alpha ,\beta }^{m,n}\left( f\sigma \right) $%
. In this form, the norm inequality makes sense for arbitrary locally finite
Borel measures $\sigma ,\omega $ since for nonnegative $f\in L^{p}\left(
\sigma \right) $, the function $f$ is measurable with respect to $\sigma $
and the integral $\int \Omega _{\alpha ,\beta }^{m,n}\left( x-u,y-t\right)
f\left( u,t\right) d\sigma \left( u,t\right) $ exists. Note also that the
best constants $N_{p,q}^{\left( \alpha ,\beta \right) ,\left( m,n\right)
,}\left( v,w\right) $ and $\mathbb{N}_{p,q}^{\left( \alpha ,\beta \right)
,\left( m,n\right) ,}\left( \sigma ,\omega \right) $ coincide under the
standard identifications in (\ref{identif}), and this accounts for the use
of blackboard bold font to differentiate the two best constants.

A necessary condition for (\ref{2 weight arb meas}) to hold is the
finiteness of the corresponding product fractional characteristic $\mathbb{A}%
_{p,q}^{\left( \alpha ,m\right) ,\left( \beta ,n\right) }\left( \sigma
,\omega \right) $ of the weights,%
\begin{eqnarray}
\mathbb{A}_{p,q}^{\left( \alpha ,\beta \right) ,\left( m,n\right) }\left(
\sigma ,\omega \right) &\equiv &\sup_{I\subset \mathbb{R}^{m},\ J\subset 
\mathbb{R}^{n}}\left\vert I\right\vert ^{\frac{\alpha }{m}+\frac{1}{q}-\frac{%
1}{p}}\left\vert J\right\vert ^{\frac{\beta }{n}+\frac{1}{q}-\frac{1}{p}%
}\left( \frac{\left\vert I\times J\right\vert _{\omega }}{\left\vert I\times
J\right\vert }\right) ^{\frac{1}{q}}\left( \frac{\left\vert I\times
J\right\vert _{\sigma }}{\left\vert I\times J\right\vert }\right) ^{\frac{1}{%
p^{\prime }}}  \label{A fraktur} \\
&=&\sup_{I\subset \mathbb{R}^{m},\ J\subset \mathbb{R}^{n}}\left\vert
I\right\vert ^{\frac{\alpha }{m}-1}\left\vert J\right\vert ^{\frac{\beta }{n}%
-1}\left( \diint\limits_{I\times J}d\omega \right) ^{\frac{1}{q}}\left(
\diint\limits_{I\times J}d\sigma \right) ^{\frac{1}{p^{\prime }}},  \notag
\end{eqnarray}%
as well as the finiteness of the larger two-tailed characteristic,%
\begin{eqnarray}
&&\widehat{\mathbb{A}}_{p,q}^{\left( \alpha ,\beta \right) ,\left(
m,n\right) }\left( \sigma ,\omega \right)  \label{A hat fraktur} \\
&\equiv &\sup_{I\times J\subset \mathbb{R}^{m}\times \mathbb{R}%
^{n}}\left\vert I\right\vert ^{\frac{\alpha }{m}-1}\left\vert J\right\vert ^{%
\frac{\beta }{n}-1}\left( \diint\limits_{\mathbb{R}^{m}\times \mathbb{R}^{n}}%
\widehat{s}_{I\times J}\left( x,y\right) ^{q}d\omega \left( x,y\right)
\right) ^{\frac{1}{q}}\left( \diint\limits_{\mathbb{R}^{m}\times \mathbb{R}%
^{n}}\widehat{s}_{I\times J}\left( u,t\right) ^{p^{\prime }}d\sigma \left(
u,t\right) \right) ^{\frac{1}{p^{\prime }}},  \notag
\end{eqnarray}%
where $\widehat{s}_{I\times J}\left( x,y\right) $ is the `tail' defined in (%
\ref{def s hat}) above. When (\ref{identif}) holds, we write $%
A_{p,q}^{\left( \alpha ,\beta \right) ,\left( m,n\right) }\left( v,w\right) =%
\mathbb{A}_{p,q}^{\left( \alpha ,\beta \right) ,\left( m,n\right) }\left(
\sigma ,\omega \right) $ and $\widehat{A}_{p,q}^{\left( \alpha ,\beta
\right) ,\left( m,n\right) }\left( v,w\right) =\widehat{\mathbb{A}}%
_{p,q}^{\left( \alpha ,\beta \right) ,\left( m,n\right) }\left( \sigma
,\omega \right) $, \ and in the one weight case $v=w$ we simply write $%
A_{p,q}^{\alpha ,m}\left( w\right) $ and $\widehat{A}_{p,q}^{\alpha
,m}\left( w\right) $.

\begin{lemma}
\label{tails}For $1<p,q<\infty $ and $0<\alpha \leq m$, $0<\beta \leq n$, we
have 
\begin{equation}
\widehat{\mathbb{A}}_{p,q}^{\left( \alpha ,\beta \right) ,\left( m,n\right)
}\left( \sigma ,\omega \right) \leq \mathbb{N}_{p,q}^{\left( \alpha ,\beta
\right) ,\left( m,n\right) }\left( \sigma ,\omega \right) .  \label{first}
\end{equation}
\end{lemma}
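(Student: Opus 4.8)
To prove (\ref{first}) the plan is to test the norm inequality (\ref{2 weight arb meas}) against a single nonnegative function built from the tail $\widehat{s}_{I\times J}$ itself, chosen so that \emph{both} tail factors appearing in $\widehat{\mathbb{A}}_{p,q}^{\left( \alpha ,\beta \right) ,\left( m,n\right) }\left( \sigma ,\omega \right) $ are produced at once. Testing the operator and its dual separately would recover only one tail at a time, paired with a bare mass $\left\vert I\times J\right\vert _{\sigma }$ or $\left\vert I\times J\right\vert _{\omega }$, which is not enough.

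The key ingredient is an elementary pointwise lower bound for the kernel. Fix a cube $I\subset \mathbb{R}^{m}$, write $\ell =\left\vert I\right\vert ^{1/m}$, and let $x,u\in \mathbb{R}^{m}$. The triangle inequality gives
$$\left\vert x-u\right\vert \leq \left\vert x-c_{I}\right\vert +\left\vert u-c_{I}\right\vert \leq \ell \left( 1+\frac{\left\vert x-c_{I}\right\vert }{\ell }\right) \left( 1+\frac{\left\vert u-c_{I}\right\vert }{\ell }\right) ,$$
and since $\alpha -m\leq 0$, raising to the power $\alpha -m$ reverses this to
$$\left\vert x-u\right\vert ^{\alpha -m}\geq \left\vert I\right\vert ^{\frac{\alpha }{m}-1}\left( 1+\frac{\left\vert x-c_{I}\right\vert }{\ell }\right) ^{\alpha -m}\left( 1+\frac{\left\vert u-c_{I}\right\vert }{\ell }\right) ^{\alpha -m}.$$
The analogous bound holds in the $y$-variable with exponent $\beta -n\leq 0$; multiplying the two, and recalling from (\ref{def s hat}) that $\widehat{s}_{I\times J}$ factors as the product of the two one-parameter tails, yields
$$\left\vert x-u\right\vert ^{\alpha -m}\left\vert y-t\right\vert ^{\beta -n}\geq \left\vert I\right\vert ^{\frac{\alpha }{m}-1}\left\vert J\right\vert ^{\frac{\beta }{n}-1}\,\widehat{s}_{I\times J}\left( x,y\right) \,\widehat{s}_{I\times J}\left( u,t\right)$$
for all $\left( x,y\right) ,\left( u,t\right) \in \mathbb{R}^{m}\times \mathbb{R}^{n}$.

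With this in hand, fix a rectangle $I\times J$ and a large ball $E\subset \mathbb{R}^{m+n}$, and apply (\ref{2 weight arb meas}) to $f=\widehat{s}_{I\times J}^{p^{\prime }-1}\mathbf{1}_{E}$, which is bounded with compact support, hence in $L^{p}\left( \sigma \right) $ since $\sigma $ is locally finite. Because $\left( p^{\prime }-1\right) p=p^{\prime }$, the right-hand side of (\ref{2 weight arb meas}) equals $\mathbb{N}_{p,q}^{\left( \alpha ,\beta \right) ,\left( m,n\right) }\left( \sigma ,\omega \right) \left( \int_{E}\widehat{s}_{I\times J}^{p^{\prime }}d\sigma \right) ^{1/p}$. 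For the left-hand side, the kernel bound gives, for every $\left( x,y\right) $,
$$I_{\alpha ,\beta }^{m,n}\left( f\sigma \right) \left( x,y\right) \geq \left\vert I\right\vert ^{\frac{\alpha }{m}-1}\left\vert J\right\vert ^{\frac{\beta }{n}-1}\widehat{s}_{I\times J}\left( x,y\right) \int_{E}\widehat{s}_{I\times J}^{p^{\prime }}d\sigma ,$$
so taking the $L^{q}\left( \omega \right) $ norm, dividing by $\left( \int_{E}\widehat{s}_{I\times J}^{p^{\prime }}d\sigma \right) ^{1/p}$, and using $1-\frac{1}{p}=\frac{1}{p^{\prime }}$ produces
$$\left\vert I\right\vert ^{\frac{\alpha }{m}-1}\left\vert J\right\vert ^{\frac{\beta }{n}-1}\left( \int_{E}\widehat{s}_{I\times J}^{p^{\prime }}d\sigma \right) ^{1/p^{\prime }}\left( \int_{\mathbb{R}^{m+n}}\widehat{s}_{I\times J}^{q}d\omega \right) ^{1/q}\leq \mathbb{N}_{p,q}^{\left( \alpha ,\beta \right) ,\left( m,n\right) }\left( \sigma ,\omega \right) .$$
Letting $E\nearrow \mathbb{R}^{m+n}$ by monotone convergence and then taking the supremum over all rectangles $I\times J$ gives (\ref{first}) exactly, with no loss of constant. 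The degenerate cases are disposed of directly: if $\sigma \equiv 0$ the inequality is trivial; if $\int \widehat{s}_{I\times J}^{q}d\omega =\infty $ for some rectangle the display above already forces $\mathbb{N}_{p,q}^{\left( \alpha ,\beta \right) ,\left( m,n\right) }=\infty $; and if $\int_{E}\widehat{s}_{I\times J}^{p^{\prime }}d\sigma =0$ there is nothing to divide by.

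The only substantive step is the pointwise kernel inequality; everything afterward is bookkeeping. The point to get right is that a single test function must carry the tail on \emph{both} sides, which is precisely why $\widehat{s}_{I\times J}^{p^{\prime }-1}$, rather than an indicator of $I\times J$, is the correct choice, and the exponent identities $\left( p^{\prime }-1\right) p=p^{\prime }$ and $1-\frac{1}{p}=\frac{1}{p^{\prime }}$ are what make the two occurrences of $\int_{E}\widehat{s}_{I\times J}^{p^{\prime }}d\sigma $ collapse to the single factor with exponent $1/p^{\prime }$.
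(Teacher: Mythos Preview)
Your proof is correct and follows essentially the same approach as the paper: both establish the pointwise kernel lower bound $\left\vert x-u\right\vert^{\alpha-m}\left\vert y-t\right\vert^{\beta-n}\geq \left\vert I\right\vert^{\frac{\alpha}{m}-1}\left\vert J\right\vert^{\frac{\beta}{n}-1}\widehat{s}_{I\times J}(x,y)\,\widehat{s}_{I\times J}(u,t)$ via the elementary inequality $a+b\leq \ell(1+a/\ell)(1+b/\ell)$, then test the norm inequality against a truncation of $\widehat{s}_{I\times J}^{p'-1}$, divide through, and pass to the limit. The only cosmetic difference is that you truncate by a ball $E\subset\mathbb{R}^{m+n}$ while the paper uses the product set $B(0,R)\times B(0,R)$; both serve the same purpose.
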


\begin{proof}
To see this, we begin by noting that for any rectangle $I\times J$ we have%
\begin{eqnarray*}
\left\vert I\right\vert ^{\frac{1}{m}}\left\vert x-u\right\vert &\leq
&\left( \left\vert I\right\vert ^{\frac{1}{m}}+\left\vert x-c_{I}\right\vert
\right) \left( \left\vert I\right\vert ^{\frac{1}{m}}+\left\vert
u-c_{I}\right\vert \right) \text{ and }\left\vert J\right\vert ^{\frac{1}{n}%
}\left\vert y-t\right\vert \leq \left( \left\vert J\right\vert ^{\frac{1}{n}%
}+\left\vert y-c_{J}\right\vert \right) \left( \left\vert J\right\vert ^{%
\frac{1}{n}}+\left\vert t-c_{J}\right\vert \right) , \\
\text{i.e. }\left\vert x-u\right\vert &\leq &\left\vert I\right\vert ^{\frac{%
1}{m}}\left( 1+\frac{\left\vert x-c_{I}\right\vert }{\left\vert I\right\vert
^{\frac{1}{m}}}\right) \left( 1+\frac{\left\vert u-c_{I}\right\vert }{%
\left\vert I\right\vert ^{\frac{1}{m}}}\right) \text{ and }\left\vert
y-t\right\vert \leq \left\vert J\right\vert ^{\frac{1}{n}}\left( 1+\frac{%
\left\vert y-c_{J}\right\vert }{\left\vert J\right\vert ^{\frac{1}{n}}}%
\right) \left( 1+\frac{\left\vert t-c_{J}\right\vert }{\left\vert
J\right\vert ^{\frac{1}{n}}}\right) ,
\end{eqnarray*}%
and hence%
\begin{eqnarray*}
&&\left\vert x-u\right\vert ^{\alpha -m}\ \left\vert y-t\right\vert ^{\beta
-n} \\
&\geq &\left\vert I\right\vert ^{\frac{\alpha }{m}-1}\left( 1+\frac{%
\left\vert x-c_{I}\right\vert }{\left\vert I\right\vert ^{\frac{1}{m}}}%
\right) ^{\alpha -m}\left( 1+\frac{\left\vert u-c_{I}\right\vert }{%
\left\vert I\right\vert ^{\frac{1}{m}}}\right) ^{\alpha -m}\ \left\vert
J\right\vert ^{\frac{\beta }{n}-1}\left( 1+\frac{\left\vert
y-c_{J}\right\vert }{\left\vert J\right\vert ^{\frac{1}{n}}}\right) ^{\beta
-n}\left( 1+\frac{\left\vert t-c_{J}\right\vert }{\left\vert J\right\vert ^{%
\frac{1}{n}}}\right) ^{\beta -n} \\
&=&\left\vert I\right\vert ^{\frac{\alpha }{m}-1}\left\vert J\right\vert ^{%
\frac{\beta }{n}-1}\widehat{s}_{I\times J}\left( x,y\right) \widehat{s}%
_{I\times J}\left( u,t\right) \ .
\end{eqnarray*}%
Thus for $R>0$ and $f_{R}\left( u,t\right) \equiv \mathbf{1}_{B\left(
0,R\right) \times B\left( 0,R\right) }\left( u,t\right) \widehat{s}%
_{Q}\left( u,t\right) ^{p^{\prime }-1}$, we have%
\begin{eqnarray*}
I_{\alpha ,\beta }^{m,n}\left( f_{R}\sigma \right) \left( x,y\right)
&=&\diint\limits_{B\left( 0,R\right) \times B\left( 0,R\right) }\left\vert
x-u\right\vert ^{\alpha -n}\left\vert y-t\right\vert ^{\beta -n}\widehat{s}%
_{I\times J}\left( u,t\right) ^{p^{\prime }-1}d\sigma \left( u,t\right) \\
&\geq &\diint\limits_{B\left( 0,R\right) \times B\left( 0,R\right)
}\left\vert I\right\vert ^{\frac{\alpha }{m}-1}\left\vert J\right\vert ^{%
\frac{\beta }{n}-1}\widehat{s}_{I\times J}\left( x,y\right) \widehat{s}%
_{I\times J}\left( u,t\right) \widehat{s}_{I\times J}\left( u,t\right)
^{p^{\prime }-1}d\sigma \left( u,t\right) \\
&=&\left\vert I\right\vert ^{\frac{\alpha }{m}-1}\left\vert J\right\vert ^{%
\frac{\beta }{n}-1}\widehat{s}_{I\times J}\left( x,y\right)
\diint\limits_{B\left( 0,R\right) \times B\left( 0,R\right) }\widehat{s}%
_{I\times J}\left( u,t\right) ^{p^{\prime }}d\sigma \left( u,t\right) .
\end{eqnarray*}%
Substituting this into the norm inequality (\ref{2 weight arb meas}) gives%
\begin{eqnarray*}
&&\left\vert I\right\vert ^{\frac{\alpha }{m}-1}\left\vert J\right\vert ^{%
\frac{\beta }{n}-1}\left( \diint\limits_{B\left( 0,R\right) \times B\left(
0,R\right) }\widehat{s}_{I\times J}\left( u,t\right) ^{p^{\prime }}d\sigma
\left( u,t\right) \right) \left( \diint\limits_{\mathbb{R}^{m}\times \mathbb{%
R}^{n}}\widehat{s}_{I\times J}\left( x,y\right) ^{q}d\omega \left(
x,y\right) \right) ^{\frac{1}{q}} \\
&\leq &\left\{ \diint\limits_{\mathbb{R}^{m}\times \mathbb{R}^{n}}I_{\alpha
,\beta }^{m,n}\left( f_{R}\right) \sigma \left( x,y\right) ^{q}d\omega
\left( x,y\right) \right\} ^{\frac{1}{q}} \\
&\leq &\mathbb{N}_{p,q}^{\left( \alpha ,\beta \right) ,\left( m,n\right)
}\left( \sigma ,\omega \right) \left\{ \diint\limits_{\mathbb{R}^{m}\times 
\mathbb{R}^{n}}f_{R}\left( u,t\right) ^{p}d\sigma \left( u,t\right) \right\}
^{\frac{1}{p}} \\
&=&\mathbb{N}_{p,q}^{\left( \alpha ,\beta \right) ,\left( m,n\right) }\left(
\sigma ,\omega \right) \left( \diint\limits_{B\left( 0,R\right) \times
B\left( 0,R\right) }\widehat{s}_{I\times J}\left( u,t\right) ^{p^{\prime
}}d\sigma \left( u,t\right) \right) ^{\frac{1}{p}},
\end{eqnarray*}%
and upon dividing through by $\left( \diint\limits_{B\left( 0,R\right)
\times B\left( 0,R\right) }\widehat{s}_{I\times J}\left( u,t\right)
^{p^{\prime }}d\sigma \left( u,t\right) \right) ^{\frac{1}{p}}$, we obtain%
\begin{eqnarray*}
&&\left\vert I\right\vert ^{\frac{\alpha }{m}-1}\left\vert J\right\vert ^{%
\frac{\beta }{n}-1}\left( \diint\limits_{B\left( 0,R\right) \times B\left(
0,R\right) }\widehat{s}_{I\times J}\left( u,t\right) ^{p^{\prime }}d\sigma
\left( u,t\right) \right) ^{\frac{1}{p^{\prime }}}\left( \diint\limits_{%
\mathbb{R}^{m}\times \mathbb{R}^{n}}\widehat{s}_{I\times J}\left( x,y\right)
^{q}d\omega \left( x,y\right) \right) ^{\frac{1}{q}} \\
&&\ \ \ \ \ \ \ \ \ \ \ \ \ \ \ \ \ \ \ \ \ \ \ \ \ \ \ \ \ \ \leq \mathbb{N}%
_{p,q}^{\left( \alpha ,\beta \right) ,\left( m,n\right) }\left( \sigma
,\omega \right) ,\ \ \ \ \ \text{for all }R>0\text{ and all rectangles }%
I\times J.
\end{eqnarray*}%
Now take the supremum over all $R>0$ and all rectangles $I\times J$ to get (%
\ref{first}).
\end{proof}

\begin{remark}
We have the `duality' identities $\mathbb{N}_{p,q}^{\left( \alpha ,\beta
\right) ,\left( m,n\right) }\left( \sigma ,\omega \right) =\mathbb{N}%
_{q^{\prime },p^{\prime }}^{\left( \alpha ,\beta \right) ,\left( m,n\right)
}\left( \omega ,\sigma \right) $ and $\widehat{\mathbb{A}}_{p,q}^{\left(
\alpha ,\beta \right) ,\left( m,n\right) }\left( \sigma ,\omega \right) =%
\widehat{\mathbb{A}}_{q^{\prime },p^{\prime }}^{\left( \alpha ,\beta \right)
,\left( m,n\right) }\left( \omega ,\sigma \right) $.
\end{remark}

\begin{remark}
\label{disjoint support}Since $\widehat{s}_{I\times J}\approx 1$ on $I\times
J$, we have the inequality $\mathbb{A}_{p,q}^{\left( \alpha ,\beta \right)
,\left( m,n\right) }\left( \sigma ,\omega \right) \lesssim \widehat{\mathbb{A%
}}_{q^{\prime },p^{\prime }}^{\left( \alpha ,\beta \right) ,\left(
m,n\right) }\left( \omega ,\sigma \right) $. In particular, we see that in
the case 
\begin{equation*}
\frac{1}{p}-\frac{1}{q}>\min \left\{ \frac{\alpha }{m},\frac{\beta }{n}%
\right\} ,
\end{equation*}%
say $\frac{1}{p}-\frac{1}{q}-\frac{\alpha }{m}=\varepsilon >0$, we have $%
\frac{\alpha }{m}-1=-\varepsilon -\frac{1}{p^{\prime }}-\frac{1}{q}$, and so%
\begin{equation*}
\left\vert I\times J\right\vert ^{-\varepsilon }\left( \frac{1}{\left\vert
I\times J\right\vert }\diint\limits_{I\times J}d\sigma \right) ^{\frac{1}{%
p^{\prime }}}\left( \frac{1}{\left\vert I\times J\right\vert }%
\diint\limits_{I\times J}d\omega \right) ^{\frac{1}{q}}\leq \mathbb{A}%
_{p,q}^{\left( \alpha ,\beta \right) ,\left( m,n\right) }\left( \sigma
,\omega \right) \lesssim \widehat{\mathbb{A}}_{p,q}^{\left( \alpha ,\beta
\right) ,\left( m,n\right) }\left( \sigma ,\omega \right) ,
\end{equation*}%
for all rectangles $I\times J$. Thus the finiteness of $\widehat{\mathbb{A}}%
_{p,q}^{\left( \alpha ,\beta \right) ,\left( m,n\right) }\left( \sigma
,\omega \right) $ implies that the measures $\sigma $ and $\omega $ are
carried by \emph{disjoint} sets. We shall not have much more to say
regarding this case.
\end{remark}

\section{Statements of problems and theorems, and simple proofs}

The $2$-parameter questions we investigate in this paper are these.

\begin{enumerate}
\item Is the finiteness of the one weight characteristic $A_{p,q}\left(
w\right) $ in (\ref{Apq}) below sufficient for the one weight norm
inequality (\ref{2 weight abs cont}) with $w=v$ when the indices are
balanced, $\frac{1}{p}-\frac{1}{q}=\frac{\alpha }{m}=\frac{\beta }{n}$, and
if so what is the dependence of the operator norm $N_{p,q}\left( w\right) $
on the characteristic $A_{p,q}\left( w\right) $?

\item If the operator norm $\mathbb{N}_{p,q}^{\left( \alpha ,\beta \right)
,\left( m,n\right) }\left( \sigma ,\omega \right) $ fails to be controlled
by the characteristic $\mathbb{A}_{p,q}^{\left( \alpha ,\beta \right)
,\left( m,n\right) }\left( \sigma ,\omega \right) $, what additional side
conditions on the weights $\sigma ,\omega $ are needed for finiteness of the
characteristic $\mathbb{A}_{p,q}^{\left( \alpha ,\beta \right) ,\left(
m,n\right) }\left( \sigma ,\omega \right) $ to imply the norm inequality (%
\ref{2 weight arb meas})?
\end{enumerate}

\subsection{A one weight theorem}

The special `one weight' case of (\ref{2 weight abs cont}), namely when $v=w$%
, is equivalent to finiteness of the product characteristic, and we can
calculate the optimal power of the characteristic.

\begin{theorem}
\label{one weight product}Let $\alpha ,\beta >0$, and suppose $1<p,q<\infty $%
. Let $w\left( x,y\right) $ be a nonnegative weight on $\mathbb{R}^{m}\times 
\mathbb{R}^{n}$. Then the norm inequality 
\begin{equation}
\left\{ \int_{\mathbb{R}^{m}}\int_{\mathbb{R}^{n}}I_{\alpha ,\beta
}^{m,n}f\left( x,y\right) ^{q}\ w\left( x,y\right) ^{q}\ dxdy\right\} ^{%
\frac{1}{q}}\leq N_{p,q}\left( w\right) \left\{ \int_{\mathbb{R}^{m}}\int_{%
\mathbb{R}^{n}}f\left( x,y\right) ^{p}\ w\left( x,y\right) ^{p}\
dxdy\right\} ^{\frac{1}{p}}  \label{norm one weight}
\end{equation}%
holds for all $f\geq 0$ \emph{if and only if }both%
\begin{equation}
\frac{1}{p}-\frac{1}{q}=\frac{\alpha }{m}=\frac{\beta }{n},
\label{bala and diag}
\end{equation}%
and%
\begin{equation}
A_{p,q}\left( w\right) \equiv \sup_{I\subset \mathbb{R}^{m},\ J\subset 
\mathbb{R}^{n}}\left( \frac{1}{\left\vert I\right\vert \left\vert
J\right\vert }\int \int_{I\times J}w\left( x,y\right) ^{q}\ dxdy\right) ^{%
\frac{1}{q}}\left( \frac{1}{\left\vert I\right\vert \left\vert J\right\vert }%
\int \int_{I\times J}w\left( x,y\right) ^{-p^{\prime }}\ dxdy\right) ^{\frac{%
1}{p^{\prime }}}<\infty ,  \label{Apq}
\end{equation}%
\emph{if and only if} both (\ref{bala and diag}) and%
\begin{eqnarray}
\sup_{y\in \mathbb{R}^{n}}\left\{ \sup_{I\subset \mathbb{R}^{m}}\left( \frac{%
1}{\left\vert I\right\vert }\int_{I}w^{y}\left( x\right) ^{q}\ dx\right) ^{%
\frac{1}{q}}\left( \frac{1}{\left\vert I\right\vert }\int_{I}w^{y}\left(
x\right) ^{-p^{\prime }}\ dx\right) ^{\frac{1}{p^{\prime }}}\right\} &\leq
&A_{p,q}\left( w\right) ,  \label{iter Apq} \\
\sup_{x\in \mathbb{R}^{m}}\left\{ \sup_{J\subset \mathbb{R}^{n}}\left( \frac{%
1}{\left\vert J\right\vert }\int_{J}w_{x}\left( y\right) ^{q}\ dy\right) ^{%
\frac{1}{q}}\left( \frac{1}{\left\vert J\right\vert }\int_{J}w_{x}\left(
y\right) ^{-p^{\prime }}\ dy\right) ^{\frac{1}{p^{\prime }}}\right\} &\leq
&A_{p,q}\left( w\right) .  \notag
\end{eqnarray}%
Moreover 
\begin{equation}
N_{p,q}\left( w\right) \lesssim A_{p,q}\left( w\right) ^{2+2\max \left\{ 
\frac{p^{\prime }}{q},\frac{q}{p^{\prime }}\right\} },
\label{characteristic power}
\end{equation}%
and the exponent $2+2\max \left\{ \frac{p^{\prime }}{q},\frac{q}{p^{\prime }}%
\right\} $ is best possible.
\end{theorem}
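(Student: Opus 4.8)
The plan is to deduce everything from the one-parameter theory (Theorems \ref{Muck and Wheed}, \ref{Saw and Wheed}, and the Lacey--Moen--Perez--Torres sharp power theorem) by an iteration in the two variable blocks, using Minkowski's integral inequality as the organizing device.

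\emph{Necessity of the conditions.} First I would show that finiteness of $N_{p,q}(w)$ forces \eqref{bala and diag}. Testing \eqref{norm one weight} on functions of product form $f(u,t)=g(u)h(t)$ and on indicators of rectangles $I\times J$, and using the exact scaling $I_{\alpha,\beta}^{m,n}(\mathbf{1}_{I\times J})(x,y)\gtrsim |I|^{\alpha/m}|J|^{\beta/n}$ on $I\times J$, yields (after freezing one variable and rescaling the cube in the other) the one-parameter estimate $|I|^{\alpha/m-1}(\frac{1}{|I|}\int_I w^y{}^q)^{1/q}(\frac{1}{|I|}\int_I w^y{}^{-p'})^{1/p'}\lesssim N_{p,q}(w)$, i.e. the slicewise $A_{p,q}$ characteristic is finite — this gives \eqref{iter Apq} with $N_{p,q}(w)$ in place of $A_{p,q}(w)$, and symmetrically in the other variable. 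But each slicewise $A_{p,q}$ finiteness, by the $1$-parameter necessity remark (Theorem \ref{Muck and Wheed sharp} cited after Theorem \ref{Muck and Wheed}), forces $\frac1p-\frac1q=\frac{\alpha}{m}$ from the $x$-slices and $\frac1p-\frac1q=\frac{\beta}{n}$ from the $y$-slices, hence \eqref{bala and diag} and also $p<q$. A standard scaling/translation argument on rectangles of arbitrary aspect ratio then upgrades the slicewise bounds to the full two-parameter $A_{p,q}(w)<\infty$ in \eqref{Apq} (this direction $A_{p,q}(w)\le N_{p,q}(w)$ being the same computation as Lemma \ref{tails} restricted to rectangles). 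The implications among \eqref{Apq}, \eqref{iter Apq} are then formal: $A_{p,q}(w)$ dominates each slicewise characteristic because $\frac{1}{|I||J|}\iint_{I\times J}$ is an average of $\frac{1}{|I|}\int_I$ over $J$ and Jensen/Hölder applies in the remaining variable (using $q/p'$ and $p'/q$ powers respectively on the two factors), and conversely the slicewise bounds plus the iterated norm inequality below recover the norm bound, hence also $A_{p,q}(w)\lesssim N_{p,q}(w)$.

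\emph{Sufficiency and the power bound.} Assume \eqref{bala and diag} and $A_{p,q}(w)<\infty$, so by \eqref{iter Apq} the slicewise characteristics are all $\le A_{p,q}(w)$. Write $I_{\alpha,\beta}^{m,n}f(x,y)=\int_{\mathbb{R}^n}|y-t|^{\beta-n}\big(\int_{\mathbb{R}^m}|x-u|^{\alpha-m}f(u,t)\,du\big)dt = I_\beta^{n}\big[(u\mapsto I_\alpha^m f^{(\cdot)})\big]$, i.e. apply the $x$-fractional integral inside and the $y$-fractional integral outside. Taking $L^q_x(w^q)$ norm and using Minkowski's integral inequality to pull the $y$-integration outside, then applying the $1$-parameter Muckenhoupt--Wheeden inequality with sharp constant in the $x$ variable for a.e. fixed $y$ — with characteristic bounded by $\sup_x$ of the $y$-slice characteristic $\le A_{p,q}(w)$ — reduces matters to the same inequality in the $y$ variable with the function $u\mapsto (\int |x-u|^{\alpha-m}\cdots)$; applying the $1$-parameter result once more in $y$ closes the estimate. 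Each of the two applications costs a factor $A_{p,q}(w)^{1+\max\{p'/q,\,q/p'\}}$ by the Lacey--Moen--Perez--Torres power bound (valid since each slice is genuinely $1$-parameter with the balanced diagonal index), giving the product $A_{p,q}(w)^{2+2\max\{p'/q,\,q/p'\}}$ claimed in \eqref{characteristic power}. The order of iteration (which variable inside, which outside) does not matter by symmetry; the only care needed is that Minkowski's inequality requires $q\ge p$, which we already have.

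\emph{Sharpness of the exponent.} For the lower bound on the power I would take a product power weight $w(x,y)=|x|^{\gamma}|y|^{\gamma}$ on $\mathbb{R}^m\times\mathbb{R}^n$ (with $m=n$ for symmetry), compute that $A_{p,q}(w)\approx \lambda^{-c}$ as $\gamma$ approaches the endpoint of the admissible range for a suitable small parameter, and test the operator on a corresponding near-extremal product input $f(u,t)=f_0(u)f_0(t)$ where $f_0$ is the one-variable extremizer from the $1$-parameter sharp example. Because the operator and the weight both factor, the two-parameter operator norm is exactly the square of the one-parameter operator norm for this class, while the two-parameter characteristic is (comparable to) the square of the one-parameter characteristic; hence the sharp one-parameter relation $N\approx A^{1+\max\{p'/q,q/p'\}}$ squares to $N_{p,q}(w)\gtrsim A_{p,q}(w)^{2+2\max\{p'/q,q/p'\}}$, matching \eqref{characteristic power}.

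\emph{Main obstacle.} The delicate point is the necessity half: one must be sure that testing on rectangles of \emph{all} aspect ratios (not just cubes-times-cubes) recovers the full $A_{p,q}$ supremum and that freezing a variable legitimately produces a genuine $1$-parameter weighted inequality with the right (slicewise) characteristic, rather than merely an inequality for a.e. fixed slice with a non-uniform constant; handling the passage from "a.e. slice" to "uniform in the slice" cleanly — so that one may invoke the $1$-parameter \emph{necessity} of the balanced condition — is where the argument needs the most care, and is the step I expect to be the real work.
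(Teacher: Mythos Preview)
Your proposal is essentially correct and follows the same route as the paper: necessity of \eqref{bala and diag} and \eqref{Apq} via the rectangle test (Lemma~\ref{tails}), passage from the rectangle characteristic to the slicewise characteristics \eqref{iter Apq}, and then sufficiency by iterating the one-parameter Muckenhoupt--Wheeden theorem through Minkowski's inequality (exactly Theorem~\ref{iter op}), with the power bound coming from two applications of the Lacey--Moen--P\'erez--Torres estimate and sharpness from product power weights.

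One small correction: your justification that $A_{p,q}(w)$ dominates the slicewise characteristics via ``Jensen/H\"older on the remaining variable'' is not right---that inequality goes the wrong way. The paper simply lets the cube $J$ shrink to a point $y$ (Lebesgue differentiation applied to the locally integrable functions $w^q$ and $w^{-p'}$), which gives the slice bound \eqref{iter Apq} for a.e.\ $y$ with constant exactly $A_{p,q}(w)$, and symmetrically in $x$. This also dissolves what you flagged as the ``main obstacle'': the uniformity in the slice is automatic from Lebesgue differentiation of the rectangle condition, and no separate freezing-of-variables argument from the norm inequality is needed.
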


There is a substitute for the case $\alpha =\beta =0$ due to R. Fefferman 
\cite[see page 82]{Fef}, namely that the strong maximal function 
\begin{equation}
\mathcal{M}f\left( x,y\right) \equiv \sup_{I,J}\frac{1}{\left\vert
I\right\vert \left\vert J\right\vert }\int_{I}\int_{J}\left\vert f\left(
x,y\right) \right\vert dxdy,  \label{def strong max}
\end{equation}%
is bounded on the weighted space $L^{p}\left( w^{p}\right) $ if and only if $%
A_{p}\left( w\right) =A_{p,p}\left( w\right) $ is finite. This is proved in 
\cite[see page 82]{Fef} as an application of a rectangle covering lemma, but
can also be obtained using iteration, specifically from Theorem \ref{iter op}
below as $\mathcal{M}$ is dominated by the iterated operators in (\ref%
{strong max}). Another substitute for the case $\alpha =\beta =0$ is the
boundedness of the double Hilbert transform%
\begin{equation*}
\mathcal{H}f\left( x,y\right) \equiv \int_{I}\int_{J}\frac{1}{x-u}\frac{1}{%
y-t}f\left( u,t\right) dudt,
\end{equation*}%
on $L^{p}\left( w^{p}\right) $ if and only if $A_{p}\left( w\right) $ is
finite. This result, along with corresponding results for more general
product Calder\'{o}n-Zygmund operators, can be easily proved using Theorem %
\ref{iter op} below, and are left for the reader.

\subsubsection{Application to a two weight half-balanced norm inequality}

Here is a two weight consequence of Theorem \ref{one weight product} when
the indices satisfy the half-balanced condition.

\begin{theorem}
\label{original A1}Suppose that $1<p,q<\infty $ and $0<\frac{\alpha }{m},%
\frac{\beta }{n}<1$ satisfy the half-balanced condition%
\begin{equation}
\frac{1}{p}-\frac{1}{q}=\min \left\{ \frac{\alpha }{m},\frac{\beta }{n}%
\right\} ,  \label{ba and di}
\end{equation}%
If one of the weights $w^{q}$ or $v^{-p^{\prime }}$ is in the product $%
A_{1}\times A_{1}$ class, then the following norm inequality holds,%
\begin{eqnarray}
&&\left\{ \int_{\mathbb{R}^{m}}\int_{\mathbb{R}^{n}}I_{\alpha ,\beta
}^{m,n}f\left( x,y\right) ^{q}\ w\left( x,y\right) ^{q}\ dxdy\right\} ^{%
\frac{1}{q}}  \label{A1 ineq} \\
&\leq &C_{p,q}A_{p,q}^{\left( \alpha ,\beta \right) ,\left( m,n\right)
}\left( v,w\right) \left\{ \int_{\mathbb{R}^{m}}\int_{\mathbb{R}^{n}}f\left(
x,y\right) ^{p}\ v\left( x,y\right) ^{p}\ dxdy\right\} ^{\frac{1}{p}}, 
\notag
\end{eqnarray}%
where $C_{p,q}$ depends also on either $\left\Vert w^{q}\right\Vert
_{A_{1}\times A_{1}}$ or $\left\Vert v^{-p^{\prime }}\right\Vert
_{A_{1}\times A_{1}}$.
\end{theorem}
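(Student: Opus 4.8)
The plan is to exploit the half-balanced hypothesis by transferring the problem, via the \emph{one weight} Theorem \ref{one weight product}, from the genuinely two-weight pair $(v,w)$ to a cleverly chosen one-weight pair, and then to invoke the sandwiching Lemma \ref{sandwich}. Suppose without loss of generality that the minimum in (\ref{ba and di}) is achieved in the first factor, so that $\frac{\alpha}{m}=\frac{1}{p}-\frac{1}{q}$ and $\frac{\beta}{n}\geq\frac{1}{p}-\frac{1}{q}$, and assume that $w^{q}\in A_{1}\times A_{1}$ (the case $v^{-p'}\in A_{1}\times A_{1}$ follows by the duality identity $\mathbb{N}_{p,q}^{(\alpha,\beta),(m,n)}(\sigma,\omega)=\mathbb{N}_{q',p'}^{(\alpha,\beta),(m,n)}(\omega,\sigma)$ from the first Remark after Lemma \ref{tails}). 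The idea is that $\frac{\beta}{n}>\frac{1}{p}-\frac{1}{q}$ means the $y$-kernel $|y-t|^{\beta-n}$ is \emph{better than critically integrable} in that variable, so the excess regularity there can be absorbed into the weight $w^{q}$ using its $A_{1}\times A_{1}$ structure, leaving a balanced diagonal operator to which Theorem \ref{one weight product} applies.

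First I would set up the sandwiching. I seek a one-weight pair $(u,u)$ sandwiched in $(v,w)$, i.e. $\frac{w(x,y)}{v(u,t)}\leq\frac{u(x,y)}{u(u',t')}$; by the footnote to Lemma \ref{sandwich}, it suffices to produce $u$ with $w\leq u\leq v$ pointwise — but this is too crude in general, so instead I would use the full sandwiching inequality with $N=1$ and a weight $u$ built from $v$, $w$ and the Muckenhoupt characteristic. Concretely, since $\frac{\alpha}{m}=\frac{\beta'}{n}$ would be the balanced exponent if we replaced $\beta$ by $\beta':=n(\frac{1}{p}-\frac{1}{q})\leq\beta$, I would write $|y-t|^{\beta-n}=|y-t|^{\beta-\beta'}\,|y-t|^{\beta'-n}$ and try to dominate the extra harmless factor $|y-t|^{\beta-\beta'}$ (which has a \emph{positive} exponent $\beta-\beta'\geq 0$, hence is locally bounded and grows only polynomially) by a ratio $\widetilde{w}(x,y)/\widetilde{v}(u,t)$ of product weights that, multiplied into $(v,w)$, yields a pair whose balanced-diagonal characteristic is controlled by $A_{p,q}^{(\alpha,\beta),(m,n)}(v,w)$ together with $\|w^{q}\|_{A_{1}\times A_{1}}$. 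The $A_{1}\times A_{1}$ hypothesis enters precisely here: it guarantees a reverse-Hölder / self-improvement bound on averages of $w^{q}$ over rectangles that lets one compare the critically-scaled characteristic of the modified pair to that of the original pair.

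The key steps, in order, are: (1) reduce to the case where the min is in the $\beta$-slot and $w^{q}\in A_{1}\times A_{1}$, using duality; (2) factor the $y$-kernel as $|y-t|^{\beta-n}=|y-t|^{\beta-\beta'}|y-t|^{\beta'-n}$ with $\beta'=n(\frac1p-\frac1q)$, so that $I_{\alpha,\beta}^{m,n}f\leq$ (something) $\cdot\, I_{\alpha,\beta'}^{m,n}(\cdot)$ after absorbing the polynomial factor; (3) use the $A_{1}\times A_{1}$ property of $w^{q}$ — via the defining pointwise bound $\mathcal{M}(w^{q})\lesssim w^{q}$ for the strong maximal function — to replace $w$ by a weight $W$ with $w\leq W$, $W^{q}$ still comparable to $w^{q}$ on rectangles, and $A_{p,q}^{(\alpha,\beta'),(m,n)}(v,W)\lesssim A_{p,q}^{(\alpha,\beta),(m,n)}(v,w)$; (4) since $I_{\alpha,\beta'}^{m,n}$ is now \emph{balanced and diagonal}, sandwich the pair $(v,W)$ inside a genuine one-weight pair $(U,U)$ — here one takes $U$ comparable to $W$ on the $\omega$-side and to $v$ on the $\sigma$-side, checking $A_{p,q}(U)\lesssim A_{p,q}^{(\alpha,\beta'),(m,n)}(v,W)$ using $W\leq$ const $\cdot v$ on rectangles, which again comes from finiteness of the two-weight characteristic; (5) apply Theorem \ref{one weight product} to $(U,U)$ to get $N_{p,q}(U)\lesssim A_{p,q}(U)^{2+2\max\{p'/q,\,q/p'\}}<\infty$; (6) run Lemma \ref{sandwich} backwards to conclude (\ref{A1 ineq}), tracking that $C_{p,q}$ depends only on $p,q$ and $\|w^{q}\|_{A_{1}\times A_{1}}$.

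The main obstacle I anticipate is step (3)–(4): arranging the one-weight pair $(U,U)$ so that it is simultaneously sandwiched in $(v,w)$ \emph{and} has finite (indeed controlled) balanced characteristic. The naive choice $U=\max(w, \text{const}\cdot v)$ is sandwiched but its characteristic is not obviously controlled, while the naive choice making the characteristic small need not be sandwiched. Reconciling these requires using the $A_{1}\times A_{1}$ condition not just qualitatively but quantitatively — one needs that for $w^{q}\in A_{1}\times A_{1}$, the function $w$ is, up to the $A_{1}\times A_{1}$ constant, a \emph{product} of a function of $x$ and a function of $y$ on each fixed rectangle scale, or at least that its rectangle averages factor appropriately; turning this into the precise comparison of characteristics, uniformly over all rectangles $I\times J$ and over the extra scaling degree of freedom coming from $\beta\neq\beta'$, is the delicate bookkeeping step. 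I would handle it by working with the measure-theoretic formulation (\ref{2 weight arb meas}), choosing $U$ via a Rubio-de-Francia-type iteration of the strong maximal function applied to $w^{q}$ on the $\omega$-side (so that $U^{q}\in A_{1}\times A_{1}$ with controlled constant and $U\geq w$), and then verifying the characteristic bound scale by scale using the half-balanced arithmetic $\frac{\alpha}{m}-1+\frac{\beta'}{n}-1=-\frac{1}{p'}-\frac{1}{q}+(\frac{\alpha}{m}+\frac{\beta'}{n}-1)$ exactly as in Remark \ref{disjoint support}.
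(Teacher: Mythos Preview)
Your plan hinges on absorbing the kernel factor $|y-t|^{\beta-\beta'}$ (with $\beta'=n(\tfrac1p-\tfrac1q)$) into a weight ratio, but this factor depends on the \emph{difference} $y-t$ and is not of the product form $\widetilde W(x,y)/\widetilde V(u,t)$ that Lemma~\ref{sandwich} requires. Even after the crude splitting $|y-t|^{\beta-\beta'}\lesssim |y|^{\beta-\beta'}+|t|^{\beta-\beta'}$ you are left with two genuinely \emph{two-weight} problems for the balanced operator $I_{\alpha,\beta'}^{m,n}$, e.g.\ for the pair $\bigl(v,\,|y|^{\beta-\beta'}w\bigr)$; if $w^{q}\in A_{1}\times A_{1}$, the modified weight $|y|^{(\beta-\beta')q}w^{q}$ is \emph{not} in $A_{1}\times A_{1}$ (the power $|y|^{(\beta-\beta')q}$ fails $A_{1}$ in the $y$-variable), so nothing improves. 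Your step~(4), sandwiching the two-weight pair inside a one-weight pair $(U,U)$, then carries the entire burden, and the Rubio-de-Francia idea does not discharge it: iterating $\mathcal M$ on $w^{q}$ yields $U\geq w$ with $U^{q}\in A_{1}\times A_{1}$, but provides no comparison whatsoever between $U$ and $v$, so neither $w\leq U\leq v$ nor the ratio bound $w/v\leq U/U\equiv 1$ holds. The argument, as written, does not close.

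The paper's route is entirely different and bypasses sandwiching. It uses the iterated structure $I_{\alpha,\beta}^{m,n}=(\delta_{0}\otimes I_{\beta}^{n})\circ(I_{\alpha}^{m}\otimes\delta_{0})$ via the Porisms of Theorem~\ref{iter op}. Taking, say, $v^{-p'}\in A_{1}\times A_{1}$ and $\tfrac1p-\tfrac1q=\tfrac{\alpha}{m}\leq\tfrac{\beta}{n}$: (i)~shrinking $I$ to a point in the product characteristic gives the uniform $1$-parameter bound $A_{p,q}^{\beta,n}(v_{x},w_{x})\leq A_{p,q}^{(\alpha,\beta),(m,n)}(v,w)$, which controls the two-weight inequality for $I_{\beta}^{n}$ on each slice $\{x\}\times\mathbb R^{n}$; (ii)~Minkowski interchanges the order of integration; (iii)~in the $x$-direction one applies the \emph{one-weight} Muckenhoupt--Wheeden Theorem~\ref{Muck and Wheed} for $I_{\alpha}^{m}$ (balanced, since $\tfrac{\alpha}{m}=\tfrac1p-\tfrac1q$) with the single weight $v^{y}$. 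The $A_{1}\times A_{1}$ hypothesis is consumed only through Lemma~\ref{easy}, which turns $v^{-p'}\in A_{1}\times A_{1}$ into the uniform bound $A_{p,q}(v)\leq\|v^{-p'}\|_{A_{1}\times A_{1}}^{1/p'}$ needed for step~(iii). There is no reduction to a different fractional exponent, no auxiliary one-weight pair $(U,U)$, and no sandwiching.
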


This theorem shows that in the half-balanced case, the simple characteristic 
$A_{p,q}^{\left( \alpha ,\beta \right) ,\left( m,n\right) }$ controls the
two weight norm inequality (\ref{2 weight abs cont}) under either of the
side conditions (\textbf{i}) $w^{q}\in A_{1}\times A_{1}$ or (\textbf{ii}) $%
v^{-p^{\prime }}$ $\in A_{1}\times A_{1}$. This is in stark constrast to the
strictly subbalanced case $\frac{1}{p}-\frac{1}{q}<\min \left\{ \frac{\alpha 
}{m},\frac{\beta }{n}\right\} $ where not even both side conditions $%
v^{-p^{\prime }},w^{q}\in A_{1}\times A_{1}$ are sufficient for control of
the norm inequality by the two-tailed characteristc $\widehat{A}%
_{p,q}^{\left( \alpha ,\beta \right) ,\left( m,n\right) }\left( v,w\right) $%
. On the other hand, we cannot replace the side condition that $w^{q}\in
A_{1}\times A_{1}$ or $v^{-p^{\prime }}\in A_{1}\times A_{1}$ in the above
theorem with the smaller side condition $w^{q}\in A_{\infty }\times
A_{\infty }$ or $v^{-p^{\prime }}\in A_{\infty }\times A_{\infty }$, or even
with $w^{q}\in A_{q}\times A_{q}$ or $v^{-p^{\prime }}\in A_{p^{\prime
}}\times A_{p^{\prime }}$, as evidenced by the following family of examples,
whose properties we prove below.

\begin{example}
\label{half example}Let$1<p<q<\infty $ and $\frac{\alpha }{m}=\frac{1}{p}-%
\frac{1}{q}$. Set 
\begin{equation*}
v_{1}\left( y\right) =\left\vert y\right\vert ^{-\frac{m}{q}}\text{ and }%
w_{1}\left( x\right) =\left( 1+\left\vert x\right\vert \right) ^{-m}\text{
for }x,y\in \mathbb{R}^{m}.
\end{equation*}%
Then $v_{1}^{-p^{\prime }}\left( \mathbb{R}^{m}\right) \in A_{p^{\prime }}$
and $A_{p,q}^{\alpha ,m}\left( v_{1},w_{1}\right) <\infty =\overline{A}%
_{p,q}^{\alpha ,m}\left( v_{1},w_{1}\right) $. Now let $\left(
v_{2},w_{2}\right) $ be any weight pair in $\mathbb{R}^{n}$ satisfying both $%
v_{2}^{-p^{\prime }}\in A_{p^{\prime }}\left( \mathbb{R}^{n}\right) $ and $%
0<A_{p,q}^{\beta ,n}\left( v_{2},w_{2}\right) <\infty $. Then\ with $v\left(
y_{1},y_{2}\right) =v_{1}\left( y_{1}\right) v_{2}\left( y_{2}\right) $ and $%
w\left( x_{1},x_{2}\right) =w_{1}\left( x_{1}\right) w_{2}\left(
x_{2}\right) $, we have $v^{-p^{\prime }}\in \left( A_{p^{\prime }}\times
A_{p^{\prime }}\right) \left( \mathbb{R}^{m+n}\right) $ and%
\begin{eqnarray*}
A_{p,q}^{\left( \alpha ,\beta \right) ,\left( m,n\right) }\left( v,w\right)
&=&A_{p,q}^{\alpha ,m}\left( v_{1},w_{1}\right) \ A_{p,q}^{\beta ,n}\left(
v_{2},w_{2}\right) <\infty , \\
\overline{A}_{p,q}^{\left( \alpha ,\beta \right) ,\left( m,n\right) }\left(
v,w\right) &=&\overline{A}_{p,q}^{\alpha ,m}\left( v_{1},w_{1}\right) \ 
\overline{A}_{p,q}^{\beta ,n}\left( v_{2},w_{2}\right) =\infty .
\end{eqnarray*}%
In particular, the two weight norm inequality (\ref{2 weight abs cont})
fails to hold for the weight pair $\left( v,w\right) $ despite the fact that 
$v_{p^{\prime }}^{-p^{\prime }}$ belongs to the product $A_{p^{\prime
}}\times A_{p^{\prime }}$ class, and $A_{p,q}^{\left( \alpha ,\beta \right)
,\left( m,n\right) }\left( v,w\right) $ is finite.
\end{example}

\subsection{Two weight theorems - counterexamples}

Without any side conditions at all on the weights, the characteristic $%
\mathbb{A}_{p,q}^{\left( \alpha ,\beta \right) ,\left( m,n\right) }\left(
\sigma ,\omega \right) $ \emph{never} controls the operator norm $\mathbb{N}%
_{p,q}^{\left( \alpha ,\beta \right) ,\left( m,n\right) }\left( \sigma
,\omega \right) $ for the product fractional integral, and not even for the
smaller product dyadic fractional maximal operator $\mathcal{M}_{\alpha
,\beta }^{\limfunc{dy}}$ defined on a signed measure $\mu $ by%
\begin{equation*}
\mathcal{M}_{\alpha ,\beta }^{\limfunc{dy}}\mu \left( x,y\right) \equiv \sup 
_{\substack{ R=I\times J\text{ dyadic}  \\ \left( x,y\right) \in R}}%
\left\vert I\right\vert ^{\frac{\alpha }{m}-1}\left\vert J\right\vert ^{%
\frac{\beta }{n}-1}\diint\limits_{I\times J}d\left\vert \mu \right\vert .
\end{equation*}%
(note that $\mathcal{M}_{\alpha ,\beta }^{\limfunc{dy}}\mu \leq I_{\alpha
,\beta }^{m,n}\mu $ when $\mu $ is positive). Compare this to Lemma \ref%
{maximal} in the $1$-parameter setting.

\begin{example}
\label{simple}Let $0<\alpha ,\beta <1$ and $1<p,q<\infty $. Given $0<\rho
<\infty $, define a weight pair $\left( \sigma ,\omega _{\rho }\right) $ in
the plane $\mathbb{R}^{2}$ by%
\begin{equation}
\sigma \equiv \delta _{\left( 0,0\right) }\text{ and }\omega _{\rho }\equiv
\sum_{P\in \mathcal{P}}\delta _{P}\text{ where }\mathcal{P}=\left\{ \left(
2^{k},2^{-\rho k}\right) \right\} _{k=1}^{\infty }\ .  \label{def weights}
\end{equation}%
If $R=I\times J$ is a rectangle in the plane $\mathbb{R}\times \mathbb{R}$
with sides parallel to the axes that contains $\left( 0,0\right) $ and
satisfies $R\cap \mathcal{P}=\left\{ \left( 2^{k},2^{-\rho k}\right)
\right\} _{k=L}^{L+N}$ for some $L\geq 1$ and $N\geq 0$, then it follows
that $\diint\limits_{I\times J}d\omega _{\rho }\approx
\sum_{k=L}^{L+N}1\approx N+1$, and%
\begin{equation*}
\left\vert I\right\vert ^{\alpha -1}\left\vert J\right\vert ^{\beta
-1}\left( \diint\limits_{I\times J}d\omega _{\rho }\right) ^{\frac{1}{q}%
}\left( \diint\limits_{I\times J}d\sigma \right) ^{\frac{1}{p^{\prime }}%
}\lesssim \left( 2^{L+N}\right) ^{\alpha -1}\left( 2^{-\rho \left(
L+1\right) }\right) ^{\beta -1}\left( N+1\right) ^{\frac{1}{q}}1^{\frac{1}{%
p^{\prime }}}.
\end{equation*}%
If $\rho \leq \frac{1-\alpha }{1-\beta }$, then this latter expression is $%
2^{\rho \left( 1-\beta \right) }$ times $2^{-L\left[ \left( 1-\alpha \right)
-\rho \left( 1-\beta \right) \right] }\ 2^{N\left( \alpha -1\right) }\left(
N+1\right) ^{\frac{1}{q}}$, which is uniformly bounded, and hence $\mathbb{A}%
_{p,q}^{\left( \alpha ,\beta \right) ,\left( 1,1\right) }\left( \delta
_{\left( 0,0\right) },\omega \right) $ is finite. On the other hand, the
function $f\left( x,y\right) \equiv 1$ satisfies $f\in L^{p}\left( \sigma
\right) $, while the strong dyadic fractional maximal function satisfies%
\begin{equation*}
\mathcal{M}_{\alpha ,\beta }^{\limfunc{dy}}f\sigma \left( 2^{N},2^{-\rho
N}\right) \geq \left\vert \left( 0,2^{N}\right) \right\vert ^{\alpha
-1}\left\vert \left( 0,2^{-\rho N}\right) \right\vert ^{\beta
-1}\diint\limits_{\left[ 0,2^{N}\right) \times \left[ 0,2^{-\rho N}\right)
}fd\sigma =2^{N\left( \alpha -1-\rho \left( \beta -1\right) \right) }\geq 1,
\end{equation*}%
for all $N\geq 1$ provided $\rho \geq \frac{1-\alpha }{1-\beta }$, so that
the weak type operator norm of $\mathcal{M}_{\alpha ,\beta }^{\limfunc{dy}}f$
is infinite if $\rho \geq \frac{1-\alpha }{1-\beta }$:%
\begin{eqnarray*}
&&\left\Vert \mathcal{M}_{\alpha ,\beta }^{\limfunc{dy}}\right\Vert
_{L^{q,\infty }\left( \omega _{\rho }\right) }=\sup_{\substack{ \lambda >0 
\\ \left\Vert f\right\Vert _{L^{p}\left( \sigma \right) }=1}}\lambda
\left\vert \left\{ x\in \mathbb{R}^{n}:\mathcal{M}_{\alpha ,\beta }^{%
\limfunc{dy}}f\sigma >\lambda \right\} \right\vert _{\omega _{\rho }}^{\frac{%
1}{q}} \\
&\geq &\frac{1}{2}\left\vert \left\{ x\in \mathbb{R}^{n}:\mathcal{M}_{\alpha
,\beta }^{\limfunc{dy}}f\sigma >\frac{1}{2}\right\} \right\vert _{\omega
_{\rho }}^{\frac{1}{q}}=\frac{1}{2}\left( \sum_{N=1}^{\infty }\left\vert
\left( 2^{N},2^{-\rho N}\right) \right\vert _{\omega _{\rho }}\right) ^{%
\frac{1}{q}}\approx \left( \sum_{N=1}^{\infty }1\right) ^{\frac{1}{q}%
}=\infty .
\end{eqnarray*}
\end{example}

\begin{remark}
It is easy to verify that when $\sigma =\delta _{\left( 0,0\right) }$, the
operator norm $\mathbb{N}_{p,q}^{\left( \alpha ,\beta \right) ,\left(
m,n\right) }\left( \delta _{\left( 0,0\right) },\omega \right) $ is in fact
equivalent to the \emph{two-tailed} characteristic $\widehat{\mathbb{A}}%
_{p,q}^{\left( \alpha ,\beta \right) ,\left( m,n\right) }\left( \delta
_{\left( 0,0\right) },\omega \right) $ for all measures $\omega $. Indeed,
as shown in the Appendix below, $\mathbb{N}_{p,q}^{\left( \alpha ,\beta
\right) ,\left( m,n\right) }\left( \delta _{\left( 0,0\right) },\omega
\right) $ and $\widehat{\mathbb{A}}_{p,q}^{\left( \alpha ,\beta \right)
,\left( m,n\right) }\left( \delta _{\left( 0,0\right) },\omega \right) $ are
each equivalent to%
\begin{equation*}
\left\{ \int_{\mathbb{R}^{m}}\int_{\mathbb{R}^{n}}\left\vert x\right\vert
^{\left( \alpha -m\right) q}\left\vert y\right\vert ^{\left( \beta -n\right)
q}\ d\omega \left( x,y\right) \right\} ^{\frac{1}{q}}.
\end{equation*}%
Thus the previous example simply produces a weight pair $\left( \sigma
,\omega \right) $ for which $\mathbb{A}_{p,q}^{\left( \alpha ,\beta \right)
,\left( m,n\right) }\left( \sigma ,\omega \right) <\infty $ and $\widehat{%
\mathbb{A}}_{p,q}^{\left( \alpha ,\beta \right) ,\left( m,n\right) }\left(
\sigma ,\omega \right) =\infty $.
\end{remark}

\subsection{Two weight theorems - power weights}

We will see below that in the special case that \emph{both} weights are 
\emph{product} weights, i.e. $w\left( x,y\right) =w_{1}\left( x\right)
w_{2}\left( y\right) $ and $v\left( x,y\right) =v_{1}\left( x\right)
v_{2}\left( y\right) $, then the one parameter theory carries over fairly
easily to the multiparameter setting. Despite the negative nature of the
previous theorem, the 1958 result of Stein and Weiss on power weights \emph{%
does} carry over to the multiparameter setting using the sandwiching
technique of Lemma \ref{sandwich} - where here the power weights are \emph{%
not} product power weights (the theory for product power weights reduces
trivially to that of the $1$-parameter setting).

At this point it is instructive to observe that the kernel of the $1$%
-parameter fractional integral $I_{\alpha +\beta }^{m+n}$ is trivially
dominated by the kernel of the $2$-parameter fractional integral $I_{\alpha
,\beta }^{m,n}$, and hence the corresponding $1$-parameter conditions in
Theorem \ref{Stein-Weiss} are necessary for boundedness of $I_{\alpha ,\beta
}^{m,n}$ - namely $p\leq q$ and 
\begin{eqnarray*}
&&0<\alpha +\beta <m+n\text{ and }\gamma q<m+n\text{ and }\delta p^{\prime
}<m+n, \\
&&\ \ \ \ \ \ \ \ \ \ \ \ \ \ \ \ \ \ \ \ \gamma +\delta \geq 0, \\
&&\ \ \ \ \ \ \ \ \ \ \frac{1}{p}-\frac{1}{q}=\frac{\alpha +\beta -\left(
\gamma +\delta \right) }{m+n},
\end{eqnarray*}%
where the displayed conditions are equivalent to finiteness of the $1$%
-parameter Muckenhoupt characteristic $A_{p,q}^{\alpha +\beta ,m+n}\left(
v_{\delta },w_{\gamma }\right) $. The boundedness of $I_{\alpha ,\beta
}^{m,n}$ is instead given by finiteness of the rectangle Muckenhoupt
characteristic $A_{p,q}^{\left( \alpha ,\beta \right) ,\left( m,n\right)
}\left( v_{\delta },w_{\gamma }\right) $. Here now is our extension of the
classical Stein-Weiss theorem to the product setting.

\begin{theorem}
\label{power weights}Let $w_{\gamma }\left( x,y\right) =\left\vert \left(
x,y\right) \right\vert ^{-\gamma }=\left( \left\vert x\right\vert
^{2}+\left\vert y\right\vert ^{2}\right) ^{-\frac{\gamma }{2}}$ and $%
v_{\delta }\left( x,y\right) =\left\vert \left( x,y\right) \right\vert
^{\delta }=\left( \left\vert x\right\vert ^{2}+\left\vert y\right\vert
^{2}\right) ^{\frac{\delta }{2}}$ be a pair of nonnegative power weights on $%
\mathbb{R}^{m+n}=\mathbb{R}^{m}\times \mathbb{R}^{n}$ with $-\infty <\gamma
,\delta <\infty $. Let $1<p,q<\infty $ and $-\infty <\alpha ,\beta <\infty $%
. Then the following three conditions are equivalent:

\begin{enumerate}
\item the two weight norm inequality (\ref{2 weight abs cont}) holds, i.e.%
\begin{equation}
\left\{ \int_{\mathbb{R}^{m}\times \mathbb{R}^{n}}I_{\alpha ,\beta
}^{m,n}f\left( x,y\right) ^{q}\ \left\vert \left( x,y\right) \right\vert
^{-\gamma q}\ dxdy\right\} ^{\frac{1}{q}}\leq N_{p,q}^{\left( \alpha ,\beta
\right) ,\left( m,n\right) }\left( v_{\delta },w_{\gamma }\right) \left\{
\int_{\mathbb{R}^{m}\times \mathbb{R}^{n}}f\left( x,y\right) ^{p}\
\left\vert \left( x,y\right) \right\vert ^{\delta p}\ dxdy\right\} ^{\frac{1%
}{p}},  \label{pwni}
\end{equation}%
for all $f\geq 0$

\item the indices $p,q$ satisfy%
\begin{equation}
p\leq q,  \label{p at most q}
\end{equation}%
and the Muckenhoupt characteristic $A_{p,q}^{\left( \alpha ,\beta \right)
,\left( m,n\right) }\left( v_{\delta },w_{\gamma }\right) $ is finite, i.e.%
\begin{equation}
\sup_{I\subset \mathbb{R}^{m},\ J\subset \mathbb{R}^{n}}\left\vert
I\right\vert ^{\frac{\alpha }{m}-1}\left\vert J\right\vert ^{\frac{\beta }{n}%
-1}\left( \int \int_{I\times J}\left\vert \left( x,y\right) \right\vert
^{-\gamma q}\ dxdy\right) ^{\frac{1}{q}}\left( \int \int_{I\times
J}\left\vert \left( u,t\right) \right\vert ^{-\delta p^{\prime }}\
dudt\right) ^{\frac{1}{p^{\prime }}}<\infty ,  \label{finiteness of A}
\end{equation}

\item the indices satisfy\ (\ref{p at most q}) and%
\begin{equation}
{\frac{1}{p}}-{\frac{1}{q}}+{\frac{{\gamma +\delta }}{m+n}}={\frac{\alpha
+\beta }{m+n},}  \label{Formula}
\end{equation}%
and%
\begin{equation}
{\gamma +\delta }\geq 0,  \label{gamma+delta}
\end{equation}%
and%
\begin{eqnarray}
\beta -\frac{n}{p} &<&\delta \text{ and }\alpha -\frac{m}{p}<\delta \text{
when }\gamma \geq 0\geq \delta ,  \label{star'} \\
\beta -\frac{n}{q^{\prime }} &<&\gamma \text{ and }\alpha -\frac{m}{%
q^{\prime }}<\gamma \text{ when }\delta \geq 0\geq \gamma .  \notag
\end{eqnarray}
\end{enumerate}
\end{theorem}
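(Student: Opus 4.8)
The plan is to prove the cycle of implications $(1)\Rightarrow(2)\Rightarrow(3)\Rightarrow(1)$.

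For $(1)\Rightarrow(2)$, finiteness of $A_{p,q}^{(\alpha,\beta),(m,n)}(v_{\delta},w_{\gamma})$ is immediate from Lemma \ref{tails}, the inequality $\mathbb{A}_{p,q}\lesssim\widehat{\mathbb{A}}_{p,q}$ recorded in Remark \ref{disjoint support}, and the duality identities of the remark preceding it, which together give $A_{p,q}^{(\alpha,\beta),(m,n)}(v_{\delta},w_{\gamma})\lesssim\widehat{A}_{p,q}^{(\alpha,\beta),(m,n)}(v_{\delta},w_{\gamma})\le N_{p,q}^{(\alpha,\beta),(m,n)}(v_{\delta},w_{\gamma})<\infty$. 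The necessity of $p\le q$ is obtained, as in the one-parameter Stein--Weiss theorem, by restricting the inequality (\ref{pwni}) to functions and regions on which $I_{\alpha,\beta}^{m,n}$ dominates a one-dimensional weighted Hardy operator and then invoking Maz'ja's characterization \cite{Maz} of the Hardy inequality for $q<p$; see Theorem \ref{Stein-Weiss sharp} in the Appendix.

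For $(2)\Rightarrow(3)$ one computes the rectangle characteristic (\ref{finiteness of A}) directly for the homogeneous weights $w_{\gamma},v_{\delta}$. Dilation invariance of the supremand under $(x,y)\mapsto(\lambda x,\lambda y)$ forces the balance equality (\ref{Formula}); testing on rectangles whose distance to the origin tends to infinity, and on rectangles $I\times J$ with, say, $I$ shrinking to the origin while $J$ is fixed and far from the origin, forces $\gamma+\delta\ge0$ together with the range and local-integrability restrictions on $\alpha,\beta,\gamma,\delta$ implicit in (3). The genuinely two-parameter input comes from letting $I$ and $J$ degenerate near the origin at \emph{independent} rates and optimizing over the two scales: this produces precisely the side conditions (\ref{star'}) in the two mixed-sign regimes. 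Conversely, partitioning the supremum according to the positions and relative sizes of $I$ and $J$ and the size of $|(x,y)|$ on the rectangle, and estimating each regime by elementary integrals, shows that the listed (in)equalities are sufficient.

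For $(3)\Rightarrow(1)$, the heart of the matter, we use the sandwiching device of Lemma \ref{sandwich}. The building block is the product case: if $V(u,t)=V_{1}(u)V_{2}(t)$ and $W(x,y)=W_{1}(x)W_{2}(y)$, then $I_{\alpha,\beta}^{m,n}=I_{\alpha}^{m}\otimes I_{\beta}^{n}$ and the weights factor, so since $p\le q$ iteration and Minkowski's integral inequality (Theorem \ref{iter op}) yield $N_{p,q}^{(\alpha,\beta),(m,n)}(V,W)\le N_{p,q}^{\alpha,m}(V_{1},W_{1})\,N_{p,q}^{\beta,n}(V_{2},W_{2})$, and each one-dimensional norm is finite once the corresponding one-parameter $\widehat{A}_{p,q}$ condition holds --- by Theorem \ref{Saw and Wheed}, or, for pure power factor weights, by the classical Theorem \ref{Stein-Weiss}. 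It then remains to decompose, using $|(x,y)|\approx\max\{|x|,|y|\}$ and the analogous comparison in $(u,t)$, the ratio $w_{\gamma}(x,y)/v_{\delta}(u,t)=|(x,y)|^{-\gamma}|(u,t)|^{-\delta}$ into a finite sum $\sum_{i}W^{i}(x,y)/V^{i}(u,t)$ of ratios of products of one-dimensional power-type weights, adapted to the relative sizes of $|x|,|y|$ and of $|u|,|t|$ (the constraint $\gamma+\delta\ge0$ rules out $\gamma$ and $\delta$ both negative, so at most two pieces are needed). The one-dimensional exponents appearing in each piece are tuned --- differently in different pieces --- so that every one-dimensional factor satisfies its one-parameter condition; conditions (\ref{Formula}), (\ref{gamma+delta}) and (\ref{star'}) are exactly what is needed for such a tuning to exist. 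Lemma \ref{sandwich} then delivers (\ref{pwni}).

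The main obstacle is this last construction and verification: the per-factor balance equalities make the admissible exponents essentially rigid, so there is no slack, and the side conditions (\ref{star'}) must be invoked in a precise, regime-by-regime way to guarantee the strict local-integrability inequalities for the exponents occurring in the pieces; moreover, because $w_{\gamma}$ and $v_{\delta}$ are not product weights, the one-dimensional factor weights in the pieces will in general be `broken' powers (powers times tail factors of the kind appearing in Example \ref{half example}) rather than pure powers, so that one genuinely uses the two-weight one-parameter theorem (Theorem \ref{Saw and Wheed}) and not merely the one-parameter Stein--Weiss theorem.
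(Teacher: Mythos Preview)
Your overall architecture matches the paper: the cycle $(1)\Rightarrow(2)\Rightarrow(3)\Rightarrow(1)$, with the hard implication $(3)\Rightarrow(1)$ handled by sandwiching (Lemma~\ref{sandwich}) against product weight pairs and then iterating via Theorem~\ref{iter op}/Theorem~\ref{iteration}. Two corrections, one minor and one substantive.

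\textbf{Minor.} For $(1)\Rightarrow(2)$ you invoke Lemma~\ref{tails}, but that lemma carries the hypotheses $0<\alpha\le m$, $0<\beta\le n$, which are not assumed in the theorem. The paper uses the power-weight variant, Proposition~\ref{tails'}, which removes those restrictions when one weight is a power. For $p\le q$ the paper does not restrict to a region and reduce to Hardy; it simply notes $|x-u|^{\alpha-m}|y-t|^{\beta-n}\ge|(x-u,y-t)|^{\alpha+\beta-(m+n)}$, so $I_{\alpha+\beta}^{m+n}f\le I_{\alpha,\beta}^{m,n}f$ pointwise for $f\ge0$, and then invokes the sharp one-parameter Stein--Weiss theorem (Theorem~\ref{Stein-Weiss sharp}).

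\textbf{Substantive.} Your description of the sandwiching pieces in $(3)\Rightarrow(1)$ is where you diverge from the paper, and the divergence is a genuine gap rather than an alternative route. You assert that the one-dimensional factor weights will be ``broken'' powers with tail factors and that the full two-weight one-parameter theory (Theorem~\ref{Saw and Wheed}) is required. In fact the paper shows that \emph{pure} product power weights suffice, and only the classical Stein--Weiss Theorem~\ref{Stein-Weiss} is used on the factors. The device you are missing is Young's inequality in the form
\[
|u|^{1-\theta}|t|^{\theta}\le\sqrt{|u|^{2}+|t|^{2}}=|(u,t)|,\qquad 0\le\theta\le1,
\]
which for $\delta\ge0$ gives $|(u,t)|^{-\delta}\le|u|^{-(1-\theta)\delta}|t|^{-\theta\delta}$, and symmetrically for $|(x,y)|^{-\gamma}$ when $\gamma\ge0$. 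Thus in the case $\gamma\ge0$, $\delta\ge0$ a \emph{single} product power pair $(V,W)$ sandwiches $(v_\delta,w_\gamma)$; the free parameters $\theta$ (one for the numerator, one for the denominator) are then chosen so that each one-dimensional factor pair satisfies the Stein--Weiss balance and constraint inequalities---this is a linear system the paper solves explicitly. In the mixed-sign cases $\gamma<0<\delta$ (resp.\ $\delta<0<\gamma$) one first uses $|(x,y)|^{-\gamma}\lesssim|x|^{-\gamma}+|y|^{-\gamma}$ (since $-\gamma>0$) and then applies the Young trick to the denominator separately in each summand, yielding exactly two product power pairs; here the side conditions~(\ref{star'}) are precisely what ensure the one-dimensional Stein--Weiss hypotheses hold for the factors. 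Your ``$|(x,y)|\approx\max\{|x|,|y|\}$'' heuristic does not by itself produce the required pointwise upper bound by a sum of product ratios with tunable exponents, and the broken-power route you sketch would at best reinvent this inequality after considerable detour. Finally, the paper treats the endpoint cases $\alpha=m$ or $\beta=n$ separately by integrating out one variable and reducing to a one-parameter Stein--Weiss inequality; your proposal does not flag these.
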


In the absence of\ (\ref{p at most q}), a charactertization in terms of
indices of the finiteness of the Muckenhoupt characteristic $A_{p,q}^{\left(
\alpha ,\beta \right) ,\left( m,n\right) }\left( v_{\delta },w_{\gamma
}\right) $ is given by the following theorem, not used in this paper. For
any real number $t$ let $t_{+}\equiv \max \left\{ t,0\right\} $ and $%
t_{-}\equiv \max \left\{ -t,0\right\} $ be the positive and negative parts
of $t$. Note that $t=t_{+}-t_{-}$. For the statement of the next theorem, we
will use the notation $0_{+}\leq A$ to mean the inequality $0<A$.

\begin{theorem}
\label{A char mn}Suppose that 
\begin{equation*}
1<p,q<\infty \text{ and }-\infty <\alpha ,\beta <\infty ,
\end{equation*}%
and $w_{\gamma }\left( x,y\right) =\left\vert \left( x,y\right) \right\vert
^{-\gamma }$ and $v_{\delta }\left( x,y\right) =\left\vert \left( x,y\right)
\right\vert ^{\delta }$ for $\left( x,y\right) \in \mathbb{R}^{m}\times 
\mathbb{R}^{n}$ with $-\infty <\gamma ,\delta <\infty $. Let 
\begin{equation*}
\Gamma \equiv \frac{1}{p}-\frac{1}{q}=\frac{1}{q^{\prime }}-\frac{1}{%
p^{\prime }},
\end{equation*}%
and set%
\begin{eqnarray*}
\bigtriangleup _{p,q}^{\gamma ,\delta }\left( m\right) &\equiv &\left(
\gamma -\frac{m}{q}\right) _{+}+\left( \delta -\frac{m}{p^{\prime }}\right)
_{+}\ , \\
\bigtriangleup _{p,q}^{\gamma ,\delta }\left( n\right) &\equiv &\left(
\gamma -\frac{n}{q}\right) _{+}+\left( \delta -\frac{n}{p^{\prime }}\right)
_{+}\ .
\end{eqnarray*}%
Then the Muckenhoupt characteristic is finite, i.e. 
\begin{equation*}
A_{p,q}^{\left( \alpha ,\beta \right) ,\left( m,n\right) }\left( v_{\delta
},w_{\gamma }\right) <\infty ,
\end{equation*}%
\textbf{if and only if} the power weights $w_{\gamma }^{q}$ and $v_{\delta
}^{-p^{\prime }}$ are locally integrable,%
\begin{equation}
\gamma q<m+n\text{ and }\delta p^{\prime }<m+n,  \label{local integ}
\end{equation}%
and the following power weight equality and constraint inequalities for $%
\alpha $ and $\beta $ hold:%
\begin{eqnarray}
\Gamma &=&\frac{\alpha +\beta -\gamma -\delta }{m+n},  \label{3 lines} \\
\Gamma +\frac{\bigtriangleup _{p,q}^{\gamma ,\delta }\left( n\right) }{m}
&\leq &\frac{\alpha }{m}\leq \Gamma +\frac{\gamma +\delta }{m}-\frac{%
\bigtriangleup _{p,q}^{\gamma ,\delta }\left( m\right) }{m},  \notag \\
\Gamma +\frac{\bigtriangleup _{p,q}^{\gamma ,\delta }\left( m\right) }{n}
&\leq &\frac{\beta }{n}\leq \Gamma +\frac{\gamma +\delta }{n}-\frac{%
\bigtriangleup _{p,q}^{\gamma ,\delta }\left( n\right) }{n}.  \notag
\end{eqnarray}
\end{theorem}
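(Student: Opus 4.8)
The plan is to prove this by a direct computation, split into a necessity part (each index condition is forced by testing the characteristic on a suitable one-parameter family of cubes) and a sufficiency part (casework on the position of an arbitrary test cube pair relative to the origin). Everything hinges on making explicit the two power-weight box integrals
\[
\mathcal{I}_{a}(I\times J)\ \equiv\ \iint_{I\times J}\bigl|(x,y)\bigr|^{a}\,dx\,dy,
\]
which occur with $a=-\gamma q$ and $a=-\delta p'$. I first record the elementary one-variable estimate: for a cube $Q\subset\mathbb{R}^{N}$ and any exponent $a>-N$,
\[
\int_{Q}|z|^{a}\,dz\ \approx\ \ell(Q)^{N}\bigl(\ell(Q)+\operatorname{dist}(0,Q)\bigr)^{a},
\]
together with the comparison $\bigl|(x,y)\bigr|^{a}\approx\max(|x|,|y|)^{a}$ with constants depending only on $a$. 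Feeding these into a layer-cake decomposition in $\max(|x|,|y|)$ gives, in each of finitely many regimes determined by the positions of $0$ relative to $I$ and to $J$, an explicit monomial for $\mathcal{I}_{a}(I\times J)$ in the four quantities $\ell(I),\ell(J),d_I\equiv\operatorname{dist}(0,I),d_J\equiv\operatorname{dist}(0,J)$; crucially the \emph{form} of this monomial changes as $a$ crosses the thresholds $-m$, $-n$, $-(m+n)$, and this is exactly what produces the truncations $(\,\cdot\,)_+$ in $\bigtriangleup_{p,q}^{\gamma,\delta}(m)$ and $\bigtriangleup_{p,q}^{\gamma,\delta}(n)$. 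Finiteness of $\mathcal{I}_{a}$ at all requires $a>-(m+n)$, which gives the necessity of \eqref{local integ}.

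For necessity of the remaining relations I test the characteristic against a short list of cube families. Tiny cubes centred at $0$ recover \eqref{local integ} directly. Applying the joint dilation $(x,y)\mapsto(\lambda x,\lambda y)$ to a fixed pair $I_{0}\times J_{0}$ multiplies the summand of the characteristic by a single power $\lambda^{\,c}$; letting $\lambda\to 0$ and $\lambda\to\infty$ forces $c=0$, which is the balance identity $\Gamma=\tfrac{\alpha+\beta-\gamma-\delta}{m+n}$, i.e.\ the first line of \eqref{3 lines}. The four remaining one-sided constraints of \eqref{3 lines} come from four degenerate one-parameter families: taking $J$ a cube at $0$ with $\ell(J)\to 0$ while $I$ is a fixed unit cube at $0$ yields the upper bound $\tfrac{\alpha}{m}\le\Gamma+\tfrac{\gamma+\delta}{m}-\tfrac{\bigtriangleup_{p,q}^{\gamma,\delta}(m)}{m}$; the mirror family ($I\leftrightarrow J$) yields the upper bound on $\tfrac{\beta}{n}$; taking $J$ a cube at $0$ with $\ell(J)\to\infty$ and $I$ fixed yields the lower bound $\tfrac{\alpha}{m}\ge\Gamma+\tfrac{\bigtriangleup_{p,q}^{\gamma,\delta}(n)}{m}$; and its mirror yields the lower bound on $\tfrac{\beta}{n}$. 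In each case one substitutes the appropriate monomial for $\mathcal{I}_{a}$, uses the already-established balance identity to simplify, and reads off that the surviving power of the family parameter is nonpositive precisely when the asserted inequality holds. One also checks, again via the monomial formulas, that no further independent constraint arises from families in which both cubes degenerate simultaneously, or drift to infinity away from $0$; these are dominated by combinations of the four listed conditions together with the balance identity.

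For sufficiency, I assume \eqref{local integ} and all of \eqref{3 lines} and bound the summand of $A_{p,q}^{(\alpha,\beta),(m,n)}$ for an arbitrary cube pair. Using the joint dilation to normalise and then splitting the set of cube pairs $(I,J)$ according to whether $d_I\lesssim\ell(I)$, whether $d_J\lesssim\ell(J)$, and the ordering of $\ell(I),\ell(J),d_I,d_J$, one reduces to finitely many sub-regions on each of which, by the monomial formulas for $\mathcal{I}_{-\gamma q}$ and $\mathcal{I}_{-\delta p'}$, the summand is comparable to a monomial $\ell(I)^{A}\ell(J)^{B}d_I^{C}d_J^{D}$ with $A,B,C,D$ affine in $\alpha,\beta,\gamma,\delta$. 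The balance identity makes the ``diagonal'' exponent (the one seen by the joint dilation) vanish, and a direct check shows that each of the four one-sided constraints in \eqref{3 lines} is exactly the statement that the corresponding remaining exponent has the sign forcing the monomial to stay bounded on that sub-region. Combining the finitely many regions yields $A_{p,q}^{(\alpha,\beta),(m,n)}(v_{\delta},w_{\gamma})<\infty$.

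The main obstacle is the bookkeeping in the sufficiency step: the monomial controlling $\mathcal{I}_{a}$ has genuinely different shapes in the ranges $a>-m$ and $-(m+n)<a\le-m$ (and the analogue with $m$ replaced by $n$), so the product $\mathcal{I}_{-\gamma q}^{1/q}\,\mathcal{I}_{-\delta p'}^{1/p'}$ breaks into several cases according to the signs of $\gamma q-m$, $\gamma q-n$, $\delta p'-m$, $\delta p'-n$, and in each the binding constraint among the four is a different one; matching these against the precise definitions of $\bigtriangleup_{p,q}^{\gamma,\delta}(m)$ and $\bigtriangleup_{p,q}^{\gamma,\delta}(n)$ is delicate. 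A secondary point requiring care is the critical endpoint configurations, where one of the box integrals carries a logarithmic factor: one must verify that whenever \eqref{local integ} and \eqref{3 lines} hold, such logarithms are always dominated by a strictly negative power of some other parameter, so that the supremum remains finite at the boundary of the admissible region.
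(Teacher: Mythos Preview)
Your proposal is correct and follows essentially the same strategy as the paper: reduce the characteristic to explicit monomials in the side lengths and positions of the test rectangles, then read off the index constraints from the requirement that these monomials stay bounded over all admissible parameter values, with casework on whether $\gamma q$ and $\delta p'$ cross the thresholds $m$ and $n$.

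The one substantive difference is in the reduction step. You keep the four parameters $\ell(I),\ell(J),d_I,d_J$ and split into regions according to their relative sizes. The paper instead passes to the positive orthant, replaces $|(x,y)|$ by the comparable quantity $x_1+\cdots+x_m+y_1+\cdots+y_n$, and observes that sliding a rectangle along the simplex $\{x_1+\cdots+y_n=\nu\}$ leaves the resulting integrals unchanged; this collapses the problem to rectangles of the form $[0,s]^m\times[0,t]^n$ together with rectangles $[a,a+s]^m\times[a,a+t]^n$ far from the origin, i.e.\ to a three-parameter family. The simplex trick buys a shorter case list and makes the diagonal (balance) constraint and the four one-sided constraints appear more transparently as the conditions for boundedness in $s,t,a$ separately. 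Your approach is a valid alternative and arrives at the same endpoint; you should just be aware that the paper's reduction makes the bookkeeping you flag as ``the main obstacle'' noticeably lighter. Finally, your remark on the logarithmic endpoint cases is on target: in the paper these are precisely the subcases where one of $\gamma-\tfrac{m}{q},\gamma-\tfrac{n}{q},\delta-\tfrac{m}{p'},\delta-\tfrac{n}{p'}$ vanishes, and the corresponding non-strict inequality in \eqref{3 lines} becomes strict, consistent with the paper's convention that $0_+\le A$ is to be read as $0<A$.
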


For the proof of Theorem \ref{A char mn} see the Appendix.

\subsection{Two weight theorems - product weights}

When both $v\left( u,t\right) =v_{1}\left( u\right) v_{2}\left( t\right) $
and $w\left( x,y\right) =w_{1}\left( x\right) w_{2}\left( y\right) $ are
product weights, the one-parameter theory carries over fairly easily, and
also for product measures $\sigma =\sigma _{1}\times \sigma _{2}$ and $%
\omega =\omega _{1}\times \omega _{2}$. Recall that%
\begin{equation*}
\widehat{\mathbb{A}}_{p,q}^{\left( \alpha ,\beta \right) ,\left( m,n\right)
}\left( \sigma ,\omega \right) =\sup_{I\subset \mathbb{R}^{m},\ J\subset 
\mathbb{R}^{n}}\left\vert I\right\vert ^{\frac{\alpha }{m}-1}\left\vert
J\right\vert ^{\frac{\beta }{n}-1}\left( \diint\limits_{\mathbb{R}^{m}\times 
\mathbb{R}^{n}}\widehat{s}_{I\times J}^{q}d\omega \right) ^{\frac{1}{q}%
}\left( \diint\limits_{\mathbb{R}^{m}\times \mathbb{R}^{n}}\widehat{s}%
_{I\times J}^{p^{\prime }}d\sigma \right) ^{\frac{1}{p^{\prime }}},
\end{equation*}%
and so for product measures $\sigma =\sigma _{1}\times \sigma _{2}$ and $%
\omega =\omega _{1}\times \omega _{2}$, we have%
\begin{eqnarray*}
\widehat{\mathbb{A}}_{p,q}^{\left( \alpha ,\beta \right) ,\left( m,n\right)
}\left( \sigma ,\omega \right) &=&\sup_{I\subset \mathbb{R}^{m}}\left\vert
I\right\vert ^{\frac{\alpha }{m}-1}\left( \int_{I}\widehat{s}_{I}^{q}d\omega
_{1}\right) ^{\frac{1}{q}}\left( \int_{I}\widehat{s}_{I}^{p^{\prime
}}d\sigma _{1}\right) ^{\frac{1}{p^{\prime }}}\cdot \sup_{J\subset \mathbb{R}%
^{n}}\left\vert J\right\vert ^{\frac{\beta }{n}-1}\left( \int_{J}\widehat{s}%
_{J}^{q}d\omega _{2}\right) ^{\frac{1}{q}}\left( \int_{J}\widehat{s}%
_{J}^{p^{\prime }}d\sigma _{2}\right) ^{\frac{1}{p^{\prime }}} \\
&=&\widehat{\mathbb{A}}_{p,q}^{\alpha ,m}\left( \sigma _{1},\omega
_{1}\right) \cdot \widehat{\mathbb{A}}_{p,q}^{\beta ,n}\left( \sigma
_{2},\omega _{2}\right) .
\end{eqnarray*}

\begin{theorem}
Suppose that $1<p,q<\infty $ and $0<\frac{\alpha }{m},\frac{\beta }{n}<1$.
If both $\sigma =\sigma _{1}\times \sigma _{2}$ and $\omega =\omega
_{1}\times \omega _{2}$ are product measures on $\mathbb{R}^{m}\times 
\mathbb{R}^{n}$, then the norm inequality (\ref{2 weight arb meas}) is
characterized by the two-tailed Muckenhoupt condition (\ref{A hat fraktur}),
i.e.%
\begin{equation*}
\mathbb{N}_{p,q}^{\left( \alpha ,\beta \right) ,\left( m,n\right) }\left(
\sigma ,\omega \right) \approx \widehat{\mathbb{A}}_{p,q}^{\left( \alpha
,\beta \right) ,\left( m,n\right) }\left( \sigma ,\omega \right) \ .
\end{equation*}
\end{theorem}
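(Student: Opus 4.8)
The plan is to factor the two-parameter operator as a composition of the two one-parameter fractional integrals, $I_{\alpha}^{m}$ acting in the first variable and $I_{\beta}^{n}$ in the second, and then to lift the one-parameter two-weight theorem of Sawyer--Wheeden (Theorem \ref{Saw and Wheed}) to this setting by the same iteration-and-Minkowski scheme that underlies the one-weight Theorem \ref{one weight product}. One direction is free of charge: Lemma \ref{tails} gives $\widehat{\mathbb{A}}_{p,q}^{(\alpha,\beta),(m,n)}(\sigma,\omega)\le\mathbb{N}_{p,q}^{(\alpha,\beta),(m,n)}(\sigma,\omega)$ for arbitrary locally finite $\sigma,\omega$, so it remains to prove $\mathbb{N}_{p,q}^{(\alpha,\beta),(m,n)}(\sigma,\omega)\lesssim\widehat{\mathbb{A}}_{p,q}^{(\alpha,\beta),(m,n)}(\sigma,\omega)$ when $\sigma=\sigma_{1}\times\sigma_{2}$ and $\omega=\omega_{1}\times\omega_{2}$; we may and do restrict attention to $1<p\le q<\infty$, which is the substantive range.

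For the sufficiency, since the kernel $|x-u|^{\alpha-m}|y-t|^{\beta-n}$ is a product and $\sigma=\sigma_{1}\times\sigma_{2}$, Tonelli's theorem gives, for $f\ge0$,
\[
I_{\alpha,\beta}^{m,n}(f\sigma)(x,y)=\int_{\mathbb{R}^{n}}|y-t|^{\beta-n}\,h_{t}(x)\,d\sigma_{2}(t)=I_{\beta}^{n}\bigl(h_{\,\cdot\,}(x)\,\sigma_{2}\bigr)(y),
\]
where $h_{t}(x)\equiv I_{\alpha}^{m}\bigl(f(\cdot,t)\sigma_{1}\bigr)(x)$. Freezing $x$, applying the one-parameter inequality in $y$ against $(\sigma_{2},\omega_{2})$, and then integrating in $x$ against $\omega_{1}$ yields
\[
\bigl\|I_{\alpha,\beta}^{m,n}(f\sigma)\bigr\|_{L^{q}(\omega)}\le\mathbb{N}_{p,q}^{\beta,n}(\sigma_{2},\omega_{2})\Bigl(\int_{\mathbb{R}^{m}}\Bigl(\int_{\mathbb{R}^{n}}h_{t}(x)^{p}\,d\sigma_{2}(t)\Bigr)^{q/p}d\omega_{1}(x)\Bigr)^{1/q}.
\]
Because $q/p\ge1$, Minkowski's integral inequality moves the $L^{q/p}(d\omega_{1})$ norm of $x\mapsto\int h_{t}(x)^{p}\,d\sigma_{2}(t)$ inside the $d\sigma_{2}$-integral; applying the one-parameter inequality in $x$ against $(\sigma_{1},\omega_{1})$ and Tonelli once more, the right side is dominated by $\mathbb{N}_{p,q}^{\beta,n}(\sigma_{2},\omega_{2})\,\mathbb{N}_{p,q}^{\alpha,m}(\sigma_{1},\omega_{1})\,\|f\|_{L^{p}(\sigma)}$, so that
\[
\mathbb{N}_{p,q}^{(\alpha,\beta),(m,n)}(\sigma,\omega)\le\mathbb{N}_{p,q}^{\alpha,m}(\sigma_{1},\omega_{1})\cdot\mathbb{N}_{p,q}^{\beta,n}(\sigma_{2},\omega_{2}).
\]
Finally, Theorem \ref{Saw and Wheed} bounds each one-parameter operator norm by the corresponding one-parameter two-tailed characteristic; multiplying these two estimates and invoking the factorization $\widehat{\mathbb{A}}_{p,q}^{(\alpha,\beta),(m,n)}(\sigma,\omega)=\widehat{\mathbb{A}}_{p,q}^{\alpha,m}(\sigma_{1},\omega_{1})\cdot\widehat{\mathbb{A}}_{p,q}^{\beta,n}(\sigma_{2},\omega_{2})$ computed just above the theorem statement gives $\mathbb{N}_{p,q}^{(\alpha,\beta),(m,n)}(\sigma,\omega)\lesssim\widehat{\mathbb{A}}_{p,q}^{(\alpha,\beta),(m,n)}(\sigma,\omega)$, and together with Lemma \ref{tails} this yields the asserted equivalence $\mathbb{N}_{p,q}^{(\alpha,\beta),(m,n)}(\sigma,\omega)\approx\widehat{\mathbb{A}}_{p,q}^{(\alpha,\beta),(m,n)}(\sigma,\omega)$.

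The iteration itself is routine --- it is exactly the two-weight, product-measure analogue of the scheme behind Theorem \ref{one weight product} --- so the only points requiring attention are peripheral. Theorem \ref{Saw and Wheed} is stated for $p<q$ strictly, so the endpoint $p=q$ (which the hypotheses do not exclude) must be fed the analogous one-parameter two-weight statement valid in that range; the factorization step above is insensitive to this and goes through verbatim for all $p\le q$, so only the cited one-parameter input changes, and I expect identifying the correct $p=q$ reference to be the chief (and still minor) obstacle. One should also dispose of the degenerate cases in which one of the two one-parameter characteristics is $0$ (then the corresponding $\sigma_{i}$ or $\omega_{i}$ is the zero measure and both sides of the claimed equivalence vanish) or $+\infty$ (then the product is $+\infty$, and by Lemma \ref{tails} so is $\mathbb{N}_{p,q}^{(\alpha,\beta),(m,n)}(\sigma,\omega)$), after which the displayed chain of comparabilities may be read off directly.
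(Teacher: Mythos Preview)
Your proposal is correct and follows essentially the same route as the paper: the paper disposes of this theorem in a single sentence, invoking the one-parameter Sawyer--Wheeden theorem (Theorem \ref{Saw and Wheed}) together with the measure version of the iteration Theorem \ref{iteration}, and your argument simply unpacks that measure-version iteration explicitly (freeze one variable, apply the one-parameter bound, Minkowski, then the other one-parameter bound) and couples it with the factorization $\widehat{\mathbb{A}}_{p,q}^{(\alpha,\beta),(m,n)}(\sigma,\omega)=\widehat{\mathbb{A}}_{p,q}^{\alpha,m}(\sigma_{1},\omega_{1})\cdot\widehat{\mathbb{A}}_{p,q}^{\beta,n}(\sigma_{2},\omega_{2})$ already recorded just above the theorem. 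Your caveat about the endpoint $p=q$ is well taken and applies equally to the paper's one-line proof, since Theorem \ref{Saw and Wheed} is stated only for $p<q$.
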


As we will see later, the proof of this theorem follows immediately from the 
$1$-parameter version, Theorem \ref{Saw and Wheed}, together with the
measure version of the iteration Theorem \ref{iteration}.

\subsection{Two weight $T1$ or testing conditions}

Recall that a weight $u$ satisfies the product doubling condition if $%
\left\vert 2R\right\vert _{u}\leq C\left\vert R\right\vert _{u}$ for all
rectangles $R$, and that this condition implies the weaker\ product reverse
doubling condition. The theorem here shows that under certain side
conditions on the weights $\sigma $ and $\omega $, the characteristic $%
A_{p,q}^{\left( \alpha ,\beta \right) ,\left( m,n\right) }\left( \sigma
,\omega \right) $ controls the testing conditions for $I_{\alpha ,\beta
}^{m,n}$.

\begin{theorem}
\label{testing}Suppose that $1<p<q<\infty $ and $0<\frac{\alpha }{m},\frac{%
\beta }{n}<1$ satisfy%
\begin{equation*}
\frac{1}{p}-\frac{1}{q}<\min \left\{ \frac{\alpha }{m},\frac{\beta }{n}%
\right\} ,
\end{equation*}%
and suppose that the locally finite positive Borel measures $\sigma $ and $%
\omega $ satisfy the product doubling condition and have product reverse
doubling exponent $\left( \varepsilon ,\varepsilon ^{\prime }\right) $ that
satisfies%
\begin{equation}
1-\frac{\alpha }{m}<\varepsilon <\frac{1-\frac{\alpha }{m}}{\frac{1}{q}+%
\frac{1}{p^{\prime }}}\text{ and }1-\frac{\beta }{n}<\varepsilon ^{\prime }<%
\frac{1-\frac{\beta }{n}}{\frac{1}{q}+\frac{1}{p^{\prime }}}.
\label{rev doub exp}
\end{equation}%
Then if the two weight characteristic $A_{p,q}^{\left( \alpha ,\beta \right)
,\left( m,n\right) }\left( \sigma ,\omega \right) $ is finite, the following
testing (or $T1$) conditions hold: for all rectangles $R\subset \mathbb{R}%
^{m}\times \mathbb{R}^{n}$,%
\begin{eqnarray}
\left\{ \diint\limits_{\mathbb{R}^{m}\times \mathbb{R}^{n}}I_{\alpha ,\beta
}^{m,n}\left( \mathbf{1}_{R}\sigma \right) \left( x,y\right) ^{q}\ d\omega
\left( x,y\right) \right\} ^{\frac{1}{q}} &\leq &C_{p,q}\mathbb{A}%
_{p,q}^{\left( \alpha ,m\right) ,\left( \beta ,n\right) }\left( \sigma
,\omega \right) \left\vert R\right\vert _{\sigma }^{\frac{1}{p}},
\label{testing conditions} \\
\left\{ \diint\limits_{\mathbb{R}^{m}\times \mathbb{R}^{n}}I_{\alpha ,\beta
}^{m,n}\left( \mathbf{1}_{R}\omega \right) \left( x,y\right) ^{p^{\prime }}\
d\sigma \left( x,y\right) \right\} ^{\frac{1}{p^{\prime }}} &\leq &C_{p,q}%
\mathbb{A}_{p,q}^{\left( \alpha ,m\right) ,\left( \beta ,n\right) }\left(
\sigma ,\omega \right) \left\vert R\right\vert _{\omega }^{\frac{1}{%
q^{\prime }}}.  \notag
\end{eqnarray}
\end{theorem}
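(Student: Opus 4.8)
The plan is to prove the first inequality in (\ref{testing conditions}) for an arbitrary rectangle and an arbitrary pair of measures meeting the hypotheses; the second inequality then comes for free, since the second inequality for the data $\left( \sigma ,\omega ,p,q\right) $ is literally the first inequality for the data $\left( \omega ,\sigma ,q^{\prime },p^{\prime }\right) $, and the hypotheses --- the strict subbalance $\frac{1}{p}-\frac{1}{q}<\min \left\{ \frac{\alpha }{m},\frac{\beta }{n}\right\} $, product doubling of $\sigma $ and $\omega $, and the reverse-doubling constraint (\ref{rev doub exp}) --- are all invariant under $\left( \sigma ,\omega ,p,q\right) \mapsto \left( \omega ,\sigma ,q^{\prime },p^{\prime }\right) $ (and $\mathbb{A}_{q^{\prime },p^{\prime }}^{\left( \alpha ,\beta \right) ,\left( m,n\right) }\left( \omega ,\sigma \right) =\mathbb{A}_{p,q}^{\left( \alpha ,\beta \right) ,\left( m,n\right) }\left( \sigma ,\omega \right) $ by inspection of (\ref{A fraktur})). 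Note too that $p<q$ is exactly what makes the interval in (\ref{rev doub exp}) nonempty, since $\frac{1}{q}+\frac{1}{p^{\prime }}<1$ iff $p<q$. So fix $R=I_{0}\times J_{0}$, abbreviate $\mathbb{A}=\mathbb{A}_{p,q}^{\left( \alpha ,\beta \right) ,\left( m,n\right) }\left( \sigma ,\omega \right) $, let $I_{0}=I_{0}^{\left( 0\right) }\subset I_{0}^{\left( 1\right) }\subset \cdots $ be the concentric dyadic dilates of $I_{0}$ (side lengths doubling at each step), put $A_{0}=2I_{0}$ and $A_{k}=I_{0}^{\left( k+1\right) }\setminus I_{0}^{\left( k\right) }$ for $k\geq 1$, define $B_{l}\subset \mathbb{R}^{n}$ analogously from $J_{0}$, and decompose $\mathbb{R}^{m}\times \mathbb{R}^{n}=\bigsqcup_{k,l\geq 0}A_{k}\times B_{l}$. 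It suffices to bound $\diint_{A_{k}\times B_{l}}I_{\alpha ,\beta }^{m,n}\left( \mathbf{1}_{R}\sigma \right) ^{q}\,d\omega $ by $C\,\mathbb{A}^{q}\,2^{-c\left( k+l\right) }\left\vert R\right\vert _{\sigma }^{q/p}$ for each $k,l$ and sum.

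On the far pieces $k,l\geq 1$ the kernel is essentially constant, $\left\vert x-u\right\vert ^{\alpha -m}\left\vert y-t\right\vert ^{\beta -n}\approx \left( 2^{k}\left\vert I_{0}\right\vert ^{1/m}\right) ^{\alpha -m}\left( 2^{l}\left\vert J_{0}\right\vert ^{1/n}\right) ^{\beta -n}$ for $\left( x,y\right) \in A_{k}\times B_{l}$, $\left( u,t\right) \in R$, so there $I_{\alpha ,\beta }^{m,n}\left( \mathbf{1}_{R}\sigma \right) $ is comparable to that constant times $\left\vert R\right\vert _{\sigma }$. Integrating $d\omega $, using $\omega \left( A_{k}\times B_{l}\right) \leq \omega \left( I_{0}^{\left( k+1\right) }\times J_{0}^{\left( l+1\right) }\right) $, and feeding this rectangle into the defining supremum (\ref{A fraktur}) replaces its $\omega $-mass by $\mathbb{A}^{q}$ times an explicit power of its side lengths times its $\sigma $-mass to the power $-q/p^{\prime }$; all powers of $2^{k},2^{l},\left\vert I_{0}\right\vert ,\left\vert J_{0}\right\vert $ then cancel exactly (using $\left\vert I_{0}^{\left( k\right) }\right\vert =2^{km}\left\vert I_{0}\right\vert $ and $0<\alpha <m$, $0<\beta <n$), and product reverse doubling of $\sigma $ supplies $\sigma \left( I_{0}^{\left( k+1\right) }\times J_{0}^{\left( l+1\right) }\right) \gtrsim 2^{c\left( k+l\right) }\left\vert R\right\vert _{\sigma }$ (valid for any positive reverse-doubling exponents), whence the piece is $\lesssim \mathbb{A}^{q}\,2^{-c\left( k+l\right) q/p^{\prime }}\left\vert R\right\vert _{\sigma }^{q/p}$, which sums. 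On a mixed piece, say $k=0$, $l\geq 1$, the kernel is constant in $y$ but not in $x$: expanding the $x$-integral over the dyadic annuli $\left\{ \left\vert x-u\right\vert \sim 2^{-j}\left\vert I_{0}\right\vert ^{1/m}\right\} \cap I_{0}$, bounding the $\sigma $-mass of each annular rectangle by $\lesssim 2^{-jm\varepsilon }\left\vert R\right\vert _{\sigma }$ (first-variable reverse doubling), and summing $\sum_{j\geq 0}2^{j\left( m-\alpha \right) -jm\varepsilon }$ --- convergent exactly because $\varepsilon >1-\frac{\alpha }{m}$ --- yields a pointwise bound for $I_{\alpha ,\beta }^{m,n}\left( \mathbf{1}_{R}\sigma \right) $ on $2I_{0}\times B_{l}$, after which one repeats the far-piece argument with second-variable reverse doubling.

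The local piece $k=l=0$ is the heart of the matter; a naive pointwise bound there is false in general (for $\sigma =\delta _{p_{0}}$ with $p_{0}\in R$ the function $I_{\alpha ,\beta }^{m,n}\left( \mathbf{1}_{R}\sigma \right) $ is unbounded), so reverse doubling must be used essentially. Expand $I_{\alpha ,\beta }^{m,n}\left( \mathbf{1}_{R}\sigma \right) $ over the product family of dyadic annular rectangles $\left\{ \left\vert x-u\right\vert \sim 2^{-j}\left\vert I_{0}\right\vert ^{1/m}\right\} \times \left\{ \left\vert y-t\right\vert \sim 2^{-i}\left\vert J_{0}\right\vert ^{1/n}\right\} $, $j,i\geq 0$; for each $\left( j,i\right) $ tile $2I_{0}\times 2J_{0}$ by rectangles of those dimensions, bound the contribution on each tile by the $\sigma $-mass of a fixed dilate --- controlled by $\lesssim 2^{-jm\varepsilon -in\varepsilon ^{\prime }}\left\vert R\right\vert _{\sigma }$ via product reverse doubling --- pass to $L^{q}\left( \omega \right) $ norms using disjointness of the tiles and product doubling of $\omega $ to collapse $\sum_{\nu }\omega \left( \text{tile}\right) \approx \left\vert R\right\vert _{\omega }$, and finally apply Minkowski's inequality in $\left( j,i\right) $. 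The double geometric series $\sum_{j,i\geq 0}2^{j\left( m-\alpha \right) -jm\varepsilon }2^{i\left( n-\beta \right) -in\varepsilon ^{\prime }}$ converges by the lower bounds in (\ref{rev doub exp}), giving $\left\Vert I_{\alpha ,\beta }^{m,n}\left( \mathbf{1}_{R}\sigma \right) \right\Vert _{L^{q}\left( A_{0}\times B_{0},\omega \right) }\lesssim \left\vert I_{0}\right\vert ^{\frac{\alpha }{m}-1}\left\vert J_{0}\right\vert ^{\frac{\beta }{n}-1}\left\vert R\right\vert _{\sigma }\left\vert R\right\vert _{\omega }^{1/q}$, and one last use of (\ref{A fraktur}) for $R$ turns the right side into $\mathbb{A}\left\vert R\right\vert _{\sigma }^{1/p}$. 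The upper bounds in (\ref{rev doub exp}) only serve to keep an auxiliary geometric series convergent while balancing the regional estimates against one another, and cost nothing, since a reverse-doubling exponent may always be decreased. The main obstacle is exactly this local estimate: one must arrange the double annular sum so that, in both variables at once, the reverse-doubling gain $2^{-jm\varepsilon }$, $2^{-in\varepsilon ^{\prime }}$ beats the kernel growth $2^{j\left( m-\alpha \right) }$, $2^{i\left( n-\beta \right) }$ --- the very content of $1-\frac{\alpha }{m}<\varepsilon $ and $1-\frac{\beta }{n}<\varepsilon ^{\prime }$ --- while the off-diagonal room $p<q$ carried by $\mathbb{A}$ survives the reassembly.
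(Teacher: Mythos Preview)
Your argument is correct and takes a genuinely different route from the paper's. You decompose the domain of $(x,y)$ into dyadic annular rectangles $A_k\times B_l$ around $R$ and treat far, mixed, and local regions separately; in the local region $2I_0\times 2J_0$ a further annular decomposition in $(u,t)$ together with product reverse doubling of $\sigma$ yields a \emph{uniform} pointwise bound $I_{\alpha,\beta}^{m,n}(\mathbf{1}_R\sigma)(x,y)\lesssim |I_0|^{\frac{\alpha}{m}-1}|J_0|^{\frac{\beta}{n}-1}|R|_\sigma$ (your tiling of $2I_0\times 2J_0$ is in fact superfluous here, since the bound does not depend on the tile), after which one application of the $\mathbb{A}_{p,q}$ condition on $R$ (or $2R$) finishes. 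The paper instead decomposes the \emph{operator} into conical pieces $\bigtriangleup_j I_{\alpha,\beta}$ according to the ratio $|y-t|/|x-u|$ and proves in Lemma~\ref{gain} that each conical block, tested on $\mathbf{1}_R\sigma$, decays like $2^{-\varepsilon'|j-k|}$ in the discrepancy between the cone index $j$ and the log-eccentricity $k$ of $R$; the testing inequality then follows by summing over $j$ via Minkowski.

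Your approach is more elementary and more directly parallel to the standard $1$-parameter testing argument; it also covers the case $\frac{\alpha}{m}\neq\frac{\beta}{n}$ without modification, whereas the paper's Lemma~\ref{gain} and its Corollary are stated under the extra hypothesis $\frac{\alpha}{m}=\frac{\beta}{n}$. The paper's conical decomposition, on the other hand, isolates the genuinely two-parameter feature --- the interaction between kernel anisotropy and rectangle eccentricity --- in a way that is more geometric and potentially more portable to related product operators. Your remark that only the \emph{lower} bounds in (\ref{rev doub exp}) are used in the argument, and that the upper bounds cost nothing because a reverse doubling exponent may always be lowered, is correct.
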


\begin{remark}
The testing condition in the first line of (\ref{testing conditions}) only
requires the reverse doubling assumption on $\sigma $, while the testing
condition in the second line only requires reverse doubling of $\omega $.
\end{remark}

The proofs of our positive results in the one weight case involve the
standard techniques of iteration, Lebesgue's differentiation theorem and
Minkowski's inequality, and the proof of the product version of the
Stein-Weiss two power weight extension involves the sandwiching technique as
well, and finally the derivation of the testing conditions from the
characteristic and reverse doubling assumptions on the weights requires a
quasiorthogonality argument. We begin with the simpler one weight norm
inequality, and the special case of the two weight inequality when the
weights are product weights. Some of this material generalizes naturally
from product fractional integrals to \emph{iterated operators} to which we
now turn.

\section{Iterated operators}

The product fractional integral $I_{\alpha ,\beta }^{m,n}f\equiv \Omega
_{\alpha ,\beta }^{m,n}\ast f$, where 
\begin{equation*}
\Omega _{\alpha ,\beta }^{m,n}\left( x,y\right) =\left\vert x\right\vert
^{\alpha -m}\left\vert y\right\vert ^{\beta -n},\ \ \ \ \ \left( x,y\right)
\in \mathbb{R}^{m}\times \mathbb{R}^{n},
\end{equation*}%
is an example of an \emph{iterated operator}. In order to precisely define
what we mean by an iterated operator, we denote the collection of
nonnegative measurable functions on $\mathbb{R}^{n}$ by 
\begin{equation*}
\mathcal{N}\left( \mathbb{R}^{n}\right) \equiv \left\{ g:\mathbb{R}%
^{n}\rightarrow \left[ 0,\infty \right] :g\text{ is Lebesgue measurable}%
\right\} ,
\end{equation*}%
and we refer to a mapping $T:\mathcal{N}\left( \mathbb{R}^{n}\right)
\rightarrow \mathcal{N}\left( \mathbb{R}^{n}\right) $ from $\mathcal{N}%
\left( \mathbb{R}^{n}\right) $ to itself as an \emph{operator} on $\mathcal{N%
}\left( \mathbb{R}^{n}\right) $, without any assumption of additional
properties. If $T_{1}$ is an operator on $\mathcal{N}\left( \mathbb{R}%
^{m}\right) $, we define its \emph{product extension} to an operator $%
T_{1}\otimes \delta _{0}$ on $\mathcal{N}\left( \mathbb{R}^{m}\times \mathbb{%
R}^{n}\right) $ by%
\begin{equation*}
\left( T_{1}\otimes \delta _{0}\right) f\left( x,y\right) =T_{1}f^{y}\left(
x\right) ,\ \ \ \ \ f\in \mathcal{N}\left( \mathbb{R}^{m}\times \mathbb{R}%
^{n}\right) ,
\end{equation*}%
and similarly, if $T_{2}$ is an operator on $\mathcal{N}\left( \mathbb{R}%
^{n}\right) $ we define its \emph{product extension} to an operator $\delta
_{0}\otimes T_{2}$ on $\mathcal{N}\left( \mathbb{R}^{m}\times \mathbb{R}%
^{n}\right) $ by%
\begin{equation*}
\left( \delta _{0}\otimes T_{2}\right) f\left( x,y\right) =T_{2}f_{x}\left(
y\right) ,\ \ \ \ \ f\in \mathcal{N}\left( \mathbb{R}^{m}\times \mathbb{R}%
^{n}\right) .
\end{equation*}

\begin{definition}
\label{iterated operator}If $T_{1}$ is an operator on $\mathcal{N}\left( 
\mathbb{R}^{m}\right) $ and $T_{2}$ is an operator on $\mathcal{N}\left( 
\mathbb{R}^{n}\right) $, then the composition operator 
\begin{equation*}
T=\left( \delta _{0}\otimes T_{2}\right) \circ \left( T_{1}\otimes \delta
_{0}\right)
\end{equation*}%
on $\mathcal{N}\left( \mathbb{R}^{m}\times \mathbb{R}^{n}\right) $ is called
an \emph{iterated operator}.
\end{definition}

To see that $I_{\alpha ,\beta }^{m,n}$ is an iterated operator, define $%
\Omega _{\alpha }^{m}\left( x\right) =\left\vert x\right\vert ^{\alpha -m}$
and $\Omega _{\beta }^{n}\left( y\right) =\left\vert y\right\vert ^{\beta
-n} $ and set $I_{\gamma }^{k}g=\Omega _{\gamma }^{k}\ast g$. Then extend
the operator $I_{\alpha }^{m}$ from $\mathbb{R}^{m}$ to the product space $%
\mathbb{R}^{m}\times \mathbb{R}^{n}$ by defining%
\begin{eqnarray*}
\left( I_{\alpha }^{m}\otimes \delta _{0}\right) f\left( x,y\right) &=& 
\left[ \Omega _{\alpha }^{m}\otimes \delta _{0}\right] \ast f\left(
x,y\right) \\
&=&\int_{\mathbb{R}^{n}}\int_{\mathbb{R}^{m}}\Omega _{\alpha }^{m}\left(
x-u\right) \delta _{0}\left( y-v\right) f\left( u,v\right) dudv \\
&=&\int_{\mathbb{R}^{m}}\Omega _{\alpha }^{m}\left( x-u\right) f\left(
u,y\right) du \\
&=&I_{\alpha }^{m}f^{y}\left( x\right) ,
\end{eqnarray*}%
where $f^{y}\left( u\right) \equiv f\left( u,y\right) $, and similarly define%
\begin{equation*}
\left( \delta _{0}\otimes I_{\beta }^{n}\right) f\left( x,y\right) =I_{\beta
}^{n}f_{x}\left( y\right) ,
\end{equation*}%
where $f_{x}\left( v\right) \equiv f\left( x,v\right) $. Then from $\Omega
_{\alpha ,\beta }^{m,n}\left( x,y\right) =\Omega _{\alpha }^{m}\left(
x\right) \Omega _{\beta }^{n}\left( y\right) $ we have%
\begin{eqnarray*}
I_{\alpha ,\beta }^{m,n}f\left( x,y\right) &=&\int_{\mathbb{R}^{n}}\int_{%
\mathbb{R}^{m}}\Omega _{\alpha ,\beta }^{m,n}\left( x-u,y-v\right) f\left(
u,v\right) dudv \\
&=&\int_{\mathbb{R}^{n}}\Omega _{\beta }^{n}\left( y-v\right) \left\{ \int_{%
\mathbb{R}^{m}}\Omega _{\alpha }^{m}\left( x-u\right) f^{v}\left( u\right)
du\right\} dv \\
&=&\int_{\mathbb{R}^{n}}\Omega _{\beta }^{n}\left( y-v\right) \left\{
I_{\alpha }^{m}f^{v}\left( x\right) \right\} dv \\
&=&\int_{\mathbb{R}^{n}}\Omega _{\beta }^{n}\left( y-v\right) \left\{ \left(
I_{\alpha }^{m}\otimes \delta _{0}\right) f\left( x,\nu \right) \right\} dv
\\
&=&\int_{\mathbb{R}^{n}}\Omega _{\beta }^{n}\left( y-v\right) \left\{ \left[
\left( I_{\alpha }^{m}\otimes \delta _{0}\right) f\right] _{x}\left( \nu
\right) \right\} dv \\
&=&I_{\beta }^{n}\left[ \left( I_{\alpha }^{m}\otimes \delta _{0}\right) f%
\right] _{x}\left( y\right) \\
&=&\left( \delta _{0}\otimes I_{\beta }^{n}\right) \circ \left( I_{\alpha
}^{m}\otimes \delta _{0}\right) f\left( x,y\right) .
\end{eqnarray*}%
Thus 
\begin{equation*}
I_{\alpha ,\beta }^{m,n}=\left( \delta _{0}\otimes I_{\beta }^{n}\right)
\circ \left( I_{\alpha }^{m}\otimes \delta _{0}\right) ,
\end{equation*}%
and similarly we have 
\begin{equation*}
I_{\alpha ,\beta }^{m,n}=\left( I_{\alpha }^{m}\otimes \delta _{0}\right)
\circ \left( \delta _{0}\otimes I_{\beta }^{n}\right) ,
\end{equation*}%
which expresses $I_{\alpha ,\beta }^{m,n}$ as an iterated operator, namely
the compostion of two commuting operators $I_{\alpha }^{m}\otimes \delta
_{0} $ and $\delta _{0}\otimes I_{\beta }^{n}$.

More generally, we can consider the operator%
\begin{equation*}
Tf\left( x,y\right) =\left( K\ast f\right) \left( x,y\right) =\int_{\mathbb{R%
}^{n}}\int_{\mathbb{R}^{m}}K\left( x-u,y-v\right) f\left( u,v\right) dudv\ ,
\end{equation*}%
where $K$ is a \emph{product kernel} on $\mathbb{R}^{m}\times \mathbb{R}^{n}$%
,%
\begin{equation*}
K\left( x,y\right) =K_{1}\left( x\right) K_{2}\left( y\right) ,\ \ \ \ \
\left( x,y\right) \in \mathbb{R}^{m}\times \mathbb{R}^{n},
\end{equation*}%
and obtain the factorizations%
\begin{equation*}
T=\left( K_{1}\otimes \delta _{0}\right) \circ \left( \delta _{0}\otimes
K_{2}\right) =\left( \delta _{0}\otimes K_{2}\right) \circ \left(
K_{1}\otimes \delta _{0}\right)
\end{equation*}%
of $T$ into iterated operators where%
\begin{eqnarray*}
\left( K_{1}\otimes \delta _{0}\right) \ast g\left( x,y\right) &=&\int_{%
\mathbb{R}^{n}}\int_{\mathbb{R}^{m}}K_{1}\left( x-u\right) \delta _{0}\left(
y-v\right) g\left( u,v\right) dudv \\
&=&\int_{\mathbb{R}^{m}}K_{1}\left( x-u\right) g\left( u,y\right) du \\
&=&K_{1}\ast g^{y}\left( x\right) ,
\end{eqnarray*}%
and%
\begin{equation*}
\left( \delta _{0}\otimes K_{2}\right) \ast h\left( x,y\right) =K_{2}\ast
g_{x}\left( y\right) .
\end{equation*}

As a final example, let $T_{1}=M_{\mathbb{R}^{m}}$ be the Hardy-Littlewood
maximal operator on $\mathbb{R}^{m}$ and let $T_{2}=M_{\mathbb{R}^{n}}$ be
the Hardy-Littlewood maximal operator on $\mathbb{R}^{n}$. Then the iterated
operator 
\begin{equation}
T=\left( \delta _{0}\otimes T_{2}\right) \circ \left( T_{1}\otimes \delta
_{0}\right) =\left( \delta _{0}\otimes M_{\mathbb{R}^{n}}\right) \circ
\left( M_{\mathbb{R}^{m}}\otimes \delta _{0}\right)  \label{strong max}
\end{equation}%
is usually denoted $M_{\mathbb{R}^{n}}\left( M_{\mathbb{R}^{m}}\right) $,
and the other iterated operator by $M_{\mathbb{R}^{m}}\left( M_{\mathbb{R}%
^{n}}\right) $. They both dominate the strong maximal operator $\mathcal{M}$
given in (\ref{def strong max}).

\subsection{One weight inequalities for iterated operators}

Define the iterated Lebesgue spaces $L_{m,n}^{p,q}$ on $\mathbb{R}^{m}\times 
\mathbb{R}^{n}$ by%
\begin{equation*}
\left\Vert F\right\Vert _{L_{m,n}^{p,q}}\equiv \left\Vert \left\Vert
F\right\Vert _{L^{p}\left( \mathbb{R}^{m}\right) }\right\Vert _{L^{q}\left( 
\mathbb{R}^{n}\right) }=\left\{ \int_{\mathbb{R}^{n}}\left\{ \int_{\mathbb{R}%
^{m}}F\left( x,y\right) ^{p}dx\right\} ^{\frac{q}{p}}dy\right\} ^{\frac{1}{q}%
}.
\end{equation*}%
In the proof of the next theorem we will use Minkowski's inequality for
nonnegative functions,%
\begin{equation*}
\left\Vert F\right\Vert _{L_{m,n}^{p,q}}\leq \left\Vert F\right\Vert
_{L_{n,m}^{q,p}},\ \ \ \ \ \text{for all }F\geq 0\text{ and }1\leq p\leq
q\leq \infty ,
\end{equation*}%
which written out in full is%
\begin{equation*}
\left\{ \int_{\mathbb{R}^{m}}\left\{ \int_{\mathbb{R}^{n}}F\left( x,y\right)
^{p}dy\right\} ^{\frac{q}{p}}dx\right\} ^{\frac{1}{q}}\leq \left\{ \int_{%
\mathbb{R}^{n}}\left\{ \int_{\mathbb{R}^{m}}F\left( x,y\right)
^{q}dx\right\} ^{\frac{p}{q}}dy\right\} ^{\frac{1}{p}}.
\end{equation*}

\begin{theorem}
\label{iter op}Let $1\leq p\leq q\leq \infty $ and $T_{1}:\mathcal{N}\left( 
\mathbb{R}^{m}\right) \rightarrow \mathcal{N}\left( \mathbb{R}^{m}\right) $
and $T_{2}:\mathcal{N}\left( \mathbb{R}^{n}\right) \rightarrow \mathcal{N}%
\left( \mathbb{R}^{n}\right) $. Suppose that $v\left( x,y\right) \geq 0$ on $%
\mathbb{R}^{m}\times \mathbb{R}^{n}$ satisfies%
\begin{equation}
\left\Vert T_{1}g\right\Vert _{L^{q}\left( \left( v^{y}\right) ^{q}\right)
}\leq C_{1}\left\Vert g\right\Vert _{L^{p}\left( \left( v^{y}\right)
^{p}\right) }\ ,\ \ \ \ \ \text{for all }g\geq 0,  \label{T1}
\end{equation}%
uniformly for $y\in \mathbb{R}^{n}$, and%
\begin{equation}
\left\Vert T_{2}h\right\Vert _{L^{q}\left( \left( v_{x}\right) ^{q}\right)
}\leq C_{2}\left\Vert h\right\Vert _{L^{p}\left( \left( v_{x}\right)
^{p}\right) }\ ,\ \ \ \ \ \text{for all }h\geq 0,  \label{T2}
\end{equation}%
uniformly for $x\in \mathbb{R}^{m}$. Then the iterated operator $T=\left(
\delta _{0}\otimes T_{2}\right) \circ \left( T_{1}\otimes \delta _{0}\right) 
$ satisfies%
\begin{equation*}
\left\Vert Tf\right\Vert _{L^{q}\left( v^{q}\right) }\leq
C_{1}C_{2}\left\Vert f\right\Vert _{L^{p}\left( v^{p}\right) }\ ,\ \ \ \ \ 
\text{for all }f\geq 0.
\end{equation*}
\end{theorem}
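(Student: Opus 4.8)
The plan is to peel the two operators off one at a time, applying the slice bounds $(\ref{T1})$ and $(\ref{T2})$ in succession, and to bridge the two resulting mixed-norm expressions with Minkowski's inequality for nonnegative functions --- which is precisely the one place where the hypothesis $p\le q$ gets used. Write $g=\left( T_{1}\otimes \delta _{0}\right) f$, so that $g\left( x,y\right) =T_{1}f^{y}\left( x\right) $ and $Tf\left( x,y\right) =\left( \delta _{0}\otimes T_{2}\right) g\left( x,y\right) =T_{2}g_{x}\left( y\right) $. Since every function in sight is nonnegative, all the applications of Tonelli's theorem below are legitimate.

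First I would apply the $T_{2}$ bound $(\ref{T2})$ in the $y$ variable. For each fixed $x\in \mathbb{R}^{m}$, taking $h=g_{x}$ in $(\ref{T2})$ gives $\left\Vert Tf\left( x,\cdot \right) \right\Vert _{L^{q}\left( \left( v_{x}\right) ^{q}\right) }\leq C_{2}\left\Vert g_{x}\right\Vert _{L^{p}\left( \left( v_{x}\right) ^{p}\right) }$. Raising this to the power $q$, integrating in $x\in\mathbb{R}^{m}$, and recognizing the left-hand side as $\left\Vert Tf\right\Vert _{L^{q}\left( v^{q}\right) }^{q}$, one obtains
\[
\left\Vert Tf\right\Vert _{L^{q}\left( v^{q}\right) }\leq C_{2}\left\{ \int_{\mathbb{R}^{m}}\left( \int_{\mathbb{R}^{n}}g\left( x,y\right) ^{p}v\left( x,y\right) ^{p}\,dy\right) ^{\frac{q}{p}}dx\right\} ^{\frac{1}{q}},
\]
that is, $C_{2}$ times the iterated norm of $gv$ with the $L^{p}$ norm (in $y$) innermost and the $L^{q}$ norm (in $x$) outermost.

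Next I would invoke Minkowski's inequality for nonnegative functions --- valid since $1\le p\le q\le\infty$ --- to interchange the two norms, dominating the right-hand side above by
\[
C_{2}\left\{ \int_{\mathbb{R}^{n}}\left( \int_{\mathbb{R}^{m}}g\left( x,y\right) ^{q}v\left( x,y\right) ^{q}\,dx\right) ^{\frac{p}{q}}dy\right\} ^{\frac{1}{p}}.
\]
Now for each fixed $y\in\mathbb{R}^{n}$ the inner integral is exactly $\left\Vert T_{1}f^{y}\right\Vert _{L^{q}\left( \left( v^{y}\right) ^{q}\right) }^{q}$, so the $T_{1}$ bound $(\ref{T1})$ gives $\left\Vert T_{1}f^{y}\right\Vert _{L^{q}\left( \left( v^{y}\right) ^{q}\right) }\leq C_{1}\left\Vert f^{y}\right\Vert _{L^{p}\left( \left( v^{y}\right) ^{p}\right) }$, whence the inner parenthesis raised to the power $p/q$ is at most $C_{1}^{p}\int_{\mathbb{R}^{m}}f\left( x,y\right) ^{p}v\left( x,y\right) ^{p}\,dx$. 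Substituting this and applying Tonelli once more collapses the whole expression to $C_{1}C_{2}\left\Vert f\right\Vert _{L^{p}\left( v^{p}\right) }$, which is the assertion.

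The only step that is not pure bookkeeping is the Minkowski interchange, and it is the sole place the assumption $p\le q$ enters, so that is the \emph{hard part} --- though in this form it is completely standard. The endpoints require only cosmetic care: when $p=q$ the Minkowski step reduces to Tonelli, and when $q=\infty$ (or $p=\infty$) the $L^{q}$ (respectively $L^{p}$) norms are interpreted as essential suprema throughout. The one technical point worth flagging is that the Tonelli steps tacitly use joint measurability of $g=\left( T_{1}\otimes \delta _{0}\right) f$ on $\mathbb{R}^{m}\times\mathbb{R}^{n}$; this is immediate for the operators of interest here (convolution with $\Omega_{\alpha}^{m}$, or the Hardy-Littlewood and strong maximal operators), and could be imposed as a standing hypothesis on $T_{1}$ if one wants the statement in full generality.
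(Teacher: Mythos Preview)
Your proof is correct and follows essentially the same approach as the paper: apply the $T_{2}$ slice bound first, use Minkowski's inequality (with $p\le q$) to swap the order of integration, then apply the $T_{1}$ slice bound. The paper carries out exactly these three steps in the same order; your additional remarks on the endpoints and on joint measurability of $\left(T_{1}\otimes\delta_{0}\right)f$ are valid side comments that the paper omits.
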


\begin{proof}
We have%
\begin{eqnarray*}
\left\Vert Tf\right\Vert _{L^{q}\left( v^{q}\right) } &=&\left\{ \int_{%
\mathbb{R}^{m}}\int_{\mathbb{R}^{n}}\left( \delta _{0}\otimes T_{2}\right)
\circ \left( T_{1}\otimes \delta _{0}\right) f\left( x,y\right) ^{q}v\left(
x,y\right) ^{q}dydx\right\} ^{\frac{1}{q}} \\
&=&\left\{ \int_{\mathbb{R}^{m}}\left[ \int_{\mathbb{R}^{n}}\left\vert T_{2}%
\left[ \left( T_{1}\otimes \delta _{0}\right) f\right] _{x}\left( y\right)
\right\vert ^{q}v_{x}\left( y\right) ^{q}dy\right] dx\right\} ^{\frac{1}{q}}
\\
&=&\left\{ \int_{\mathbb{R}^{m}}\left\Vert T_{2}\left[ \left( T_{1}\otimes
\delta _{0}\right) f\right] _{x}\right\Vert _{L^{q}\left( \left(
v_{x}\right) ^{q}\right) }^{q}dx\right\} ^{\frac{1}{q}} \\
&\leq &C_{2}\left\{ \int_{\mathbb{R}^{m}}\left\Vert \left[ \left(
T_{1}\otimes \delta _{0}\right) f\right] _{x}\right\Vert _{L^{p}\left(
\left( v_{x}\right) ^{p}\right) }^{q}dx\right\} ^{\frac{1}{q}} \\
&=&C_{2}\left\{ \int_{\mathbb{R}^{m}}\left\{ \int_{\mathbb{R}^{n}}\left(
T_{1}\otimes \delta _{0}\right) f\left( x,y\right) ^{p}v\left( x,y\right)
^{p}dy\right\} ^{\frac{q}{p}}dx\right\} ^{\frac{1}{q}},
\end{eqnarray*}%
where we have used $h=\left[ \left( T_{1}\otimes \delta _{0}\right) f\right]
_{x}\geq 0$ in (\ref{T2}). Then by Minkowski's inequality applied to the
nonnegative function $F=\left( T_{1}\otimes \delta _{0}\right) f\left(
x,y\right) \ v\left( x,y\right) $, this is dominated by%
\begin{eqnarray*}
&&C_{2}\left\{ \int_{\mathbb{R}^{n}}\left\{ \int_{\mathbb{R}^{m}}\left(
T_{1}\otimes \delta _{0}\right) f\left( x,y\right) ^{q}v\left( x,y\right)
^{q}dx\right\} ^{\frac{p}{q}}dy\right\} ^{\frac{1}{p}} \\
&=&C_{2}\left\{ \int_{\mathbb{R}^{n}}\left\{ \int_{\mathbb{R}%
^{m}}T_{1}f^{y}\left( x\right) ^{q}v^{y}\left( x\right) ^{q}dx\right\} ^{%
\frac{p}{q}}dy\right\} ^{\frac{1}{p}} \\
&=&C_{2}\left\{ \int_{\mathbb{R}^{n}}\left\Vert T_{1}f^{y}\right\Vert
_{L^{q}\left( \left( v^{y}\right) ^{q}\right) }^{p}dy\right\} ^{\frac{1}{p}}
\\
&\leq &C_{2}C_{1}\left\{ \int_{\mathbb{R}^{n}}\left\Vert f^{y}\right\Vert
_{L^{p}\left( \left( v^{y}\right) ^{p}\right) }^{p}dy\right\} ^{\frac{1}{p}%
}=C_{2}C_{1}\left\Vert f\right\Vert _{L^{p}\left( v^{p}\right) }\ ,
\end{eqnarray*}%
where we have used $g=f^{y}\geq 0$ in (\ref{T1}).
\end{proof}

The following porisms, or `corollaries of the proof', of Theorem \ref{iter
op} will find application in proving Theorem \ref{original A1}\ below.

\begin{description}
\item[Porism1] \label{porism}If we replace (\ref{T2}) with the more general 
\emph{two weight} inequality%
\begin{equation}
\left\Vert T_{2}h\right\Vert _{L^{q}\left( \left( w_{x}\right) ^{q}\right)
}\leq C_{2}\left\Vert h\right\Vert _{L^{p}\left( \left( v_{x}\right)
^{p}\right) }\ ,\ \ \ \ \ \text{for all }h\geq 0,  \label{T1'}
\end{equation}%
for some weight $w\left( x,y\right) \geq 0$ on $\mathbb{R}^{m}\times \mathbb{%
R}^{n}$, then the iterated operator $T=\left( \delta _{0}\otimes
T_{2}\right) \circ \left( T_{1}\otimes \delta _{0}\right) $ satisfies the
two weight inequality%
\begin{equation*}
\left\Vert Tf\right\Vert _{L^{q}\left( w^{q}\right) }\leq
C_{1}C_{2}\left\Vert f\right\Vert _{L^{p}\left( v^{p}\right) }\ ,\ \ \ \ \ 
\text{for all }f\geq 0.
\end{equation*}
\end{description}

To prove this Porism, we modify the first display in the proof of Theorem %
\ref{iter op} to this, 
\begin{eqnarray*}
\left\Vert Tf\right\Vert _{L^{q}\left( w^{q}\right) } &=&\left\{ \int_{%
\mathbb{R}^{m}}\int_{\mathbb{R}^{n}}\left( \delta _{0}\otimes T_{2}\right)
\circ \left( T_{1}\otimes \delta _{0}\right) f\left( x,y\right) ^{q}w\left(
x,y\right) ^{q}dydx\right\} ^{\frac{1}{q}} \\
&=&\left\{ \int_{\mathbb{R}^{m}}\left[ \int_{\mathbb{R}^{n}}\left\vert T_{2}%
\left[ \left( T_{1}\otimes \delta _{0}\right) f\right] _{x}\left( y\right)
\right\vert ^{q}w_{x}\left( y\right) ^{q}dy\right] dx\right\} ^{\frac{1}{q}}
\\
&=&\left\{ \int_{\mathbb{R}^{m}}\left\Vert T_{2}\left[ \left( T_{1}\otimes
\delta _{0}\right) f\right] _{x}\right\Vert _{L^{q}\left( \left(
w_{x}\right) ^{q}\right) }^{q}dx\right\} ^{\frac{1}{q}} \\
&\leq &C_{2}\left\{ \int_{\mathbb{R}^{m}}\left\Vert \left[ \left(
T_{1}\otimes \delta _{0}\right) f\right] _{x}\right\Vert _{L^{p}\left(
\left( v_{x}\right) ^{p}\right) }^{q}dx\right\} ^{\frac{1}{q}} \\
&=&C_{2}\left\{ \int_{\mathbb{R}^{m}}\left\{ \int_{\mathbb{R}^{n}}\left(
T_{1}\otimes \delta _{0}\right) f\left( x,y\right) ^{p}v\left( x,y\right)
^{p}dy\right\} ^{\frac{q}{p}}dx\right\} ^{\frac{1}{q}},
\end{eqnarray*}%
and then the remainder of the proof of Theorem \ref{iter op} applies
verbatim.

There is also the following symmetrical porism whose proof is left to the
reader.

\begin{description}
\item[Porism2] \label{porism2}If $\widehat{T}=\left( T_{1}\otimes \delta
_{0}\right) \circ \left( \delta _{0}\otimes T_{2}\right) $ is the
composition of the two iterated operators in the reverse order, and if%
\begin{equation*}
\left\Vert T_{1}g\right\Vert _{L^{q}\left( \left( w^{y}\right) ^{q}\right)
}\leq C_{1}\left\Vert g\right\Vert _{L^{p}\left( \left( v^{y}\right)
^{p}\right) }\ ,\ \ \ \ \ \text{for all }g\geq 0,
\end{equation*}%
uniformly for $y\in \mathbb{R}^{n}$, and%
\begin{equation*}
\left\Vert T_{2}h\right\Vert _{L^{q}\left( \left( v_{x}\right) ^{q}\right)
}\leq C_{2}\left\Vert h\right\Vert _{L^{p}\left( \left( v_{x}\right)
^{p}\right) }\ ,\ \ \ \ \ \text{for all }h\geq 0,
\end{equation*}%
uniformly for $x\in \mathbb{R}^{m}$, then%
\begin{equation*}
\left\Vert Tf\right\Vert _{L^{q}\left( w^{q}\right) }\leq
C_{1}C_{2}\left\Vert f\right\Vert _{L^{p}\left( v^{p}\right) }\ ,\ \ \ \ \ 
\text{for all }f\geq 0.
\end{equation*}
\end{description}

In the special case that%
\begin{equation}
\frac{1}{p}-\frac{1}{q}=\frac{\alpha }{m}=\frac{\beta }{n},  \label{star}
\end{equation}%
we can exploit the equivalence of the product $A_{p,q}$ condition (\ref{Apq}%
) with the iterated $A_{p,q}$ condition (\ref{Apq}) to obtain a
characterization of the one weight $L^{p}\rightarrow L^{q}$ inequality for $%
I_{\alpha ,\beta }^{m,n}$. In fact, condition (\ref{star}) is actually
necessary for the product $A_{p,q}$ condition (\ref{Apq}) to be finite.

\begin{definition}
We set%
\begin{equation*}
A_{p,q}^{\left( \alpha ,\beta \right) ,\left( m,n\right) }\left( w\right)
\equiv \sup_{I\subset \mathbb{R}^{m},\ J\subset \mathbb{R}^{n}}\left\vert
I\right\vert ^{\frac{\alpha }{m}+\frac{1}{q}-\frac{1}{p}}\left\vert
J\right\vert ^{\frac{\beta }{n}+\frac{1}{q}-\frac{1}{p}}\left( \frac{%
\left\vert I\times J\right\vert _{w^{q}}}{\left\vert I\times J\right\vert }%
\right) ^{\frac{1}{q}}\left( \frac{\left\vert I\times J\right\vert
_{w^{p^{\prime }}}}{\left\vert I\times J\right\vert }\right) ^{\frac{1}{%
p^{\prime }}}.
\end{equation*}
\end{definition}

\begin{claim}
\label{bal and diag}If $N_{p,q}^{\left( \alpha ,\beta \right) ,\left(
m,n\right) ,}\left( w\right) <\infty $ for some weight $w$, then (\ref{star}%
) holds.
\end{claim}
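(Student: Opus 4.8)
The plan is to argue in two stages: first to show that finiteness of the one weight operator norm $N_{p,q}^{\left(\alpha,\beta\right),\left(m,n\right)}\left(w\right)$ forces finiteness of the rectangle characteristic $A_{p,q}^{\left(\alpha,\beta\right),\left(m,n\right)}\left(w\right)$ of (\ref{Apq}), and then to show that finiteness of that characteristic \emph{alone} already forces the balanced and diagonal equalities (\ref{star}). Throughout one uses the standing assumptions $0<\alpha\le m$ and $0<\beta\le n$, and it suffices to let $I\subset\mathbb{R}^{m}$ and $J\subset\mathbb{R}^{n}$ range over cubes.

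For the first stage I would fix a cube-pair $I\times J$ and, for $R<\infty$, test the one weight inequality (\ref{norm one weight}) against $f_{R}=\mathbf{1}_{\left(I\times J\right)\cap\{w^{-p^{\prime}}\le R\}}\,w^{-p^{\prime}}$; the truncation makes $f_{R}w\in L^{p}$, with $\left\Vert f_{R}w\right\Vert _{L^{p}}^{p}=\iint_{\left(I\times J\right)\cap\{w^{-p^{\prime}}\le R\}}w^{-p^{\prime}}=:A_{R}$. For $\left(x,y\right),\left(u,t\right)\in I\times J$ one has $\left\vert x-u\right\vert \le\operatorname{diam}I\approx\left\vert I\right\vert ^{1/m}$ and $\left\vert y-t\right\vert \le\operatorname{diam}J\approx\left\vert J\right\vert ^{1/n}$, so, since $\alpha-m\le 0$ and $\beta-n\le 0$,
\begin{equation*}
I_{\alpha,\beta}^{m,n}f_{R}\left(x,y\right)\ \gtrsim\ \left\vert I\right\vert ^{\frac{\alpha}{m}-1}\left\vert J\right\vert ^{\frac{\beta}{n}-1}A_{R}\qquad\text{for }\left(x,y\right)\in I\times J .
\end{equation*}
Substituting this into (\ref{norm one weight}), dividing through by $A_{R}^{1/p}$, and letting $R\to\infty$ by monotone convergence would give, for every cube-pair,
\begin{equation*}
\left\vert I\right\vert ^{\frac{\alpha}{m}-1}\left\vert J\right\vert ^{\frac{\beta}{n}-1}\Bigl(\iint_{I\times J}w^{q}\Bigr)^{\frac1q}\Bigl(\iint_{I\times J}w^{-p^{\prime}}\Bigr)^{\frac1{p^{\prime}}}\ \lesssim\ N_{p,q}\left(w\right) ,
\end{equation*}
so in particular $w^{q}$ and $w^{-p^{\prime}}$ are locally integrable and $A_{p,q}^{\left(\alpha,\beta\right),\left(m,n\right)}\left(w\right)\lesssim N_{p,q}\left(w\right)<\infty$. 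Alternatively one can bypass this computation by quoting Lemma \ref{tails} and Remark \ref{disjoint support}, which under the identification (\ref{identif}) give $A_{p,q}\left(w\right)=\mathbb{A}_{p,q}\left(\sigma,\omega\right)\lesssim\widehat{\mathbb{A}}_{p,q}\left(\sigma,\omega\right)\le\mathbb{N}_{p,q}\left(\sigma,\omega\right)=N_{p,q}\left(w\right)$.

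For the second stage, setting $\tfrac1\lambda=\tfrac1q+\tfrac1{p^{\prime}}$ (so that $\tfrac\lambda q+\tfrac\lambda{p^{\prime}}=1$ and $0<\lambda\le\min\{q,p^{\prime}\}$), Hölder's inequality applied to $1=w^{\lambda}\cdot w^{-\lambda}$ over $I\times J$ gives $\left\vert I\right\vert \left\vert J\right\vert \le\bigl(\iint_{I\times J}w^{q}\bigr)^{\lambda/q}\bigl(\iint_{I\times J}w^{-p^{\prime}}\bigr)^{\lambda/p^{\prime}}$, equivalently $\bigl(\iint_{I\times J}w^{q}\bigr)^{1/q}\bigl(\iint_{I\times J}w^{-p^{\prime}}\bigr)^{1/p^{\prime}}\ge\bigl(\left\vert I\right\vert \left\vert J\right\vert \bigr)^{\frac1q+\frac1{p^{\prime}}}$. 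Since $-1+\tfrac1q+\tfrac1{p^{\prime}}=-\bigl(\tfrac1p-\tfrac1q\bigr)$, this would yield
\begin{equation*}
A_{p,q}^{\left(\alpha,\beta\right),\left(m,n\right)}\left(w\right)\ \ge\ \left\vert I\right\vert ^{\frac{\alpha}{m}-\left(\frac1p-\frac1q\right)}\left\vert J\right\vert ^{\frac{\beta}{n}-\left(\frac1p-\frac1q\right)}\qquad\text{for all cubes }I\subset\mathbb{R}^{m},\ J\subset\mathbb{R}^{n} .
\end{equation*}
Because $\left\vert I\right\vert $ and $\left\vert J\right\vert $ can be sent to $0$ or to $\infty$ independently of one another, the finiteness of $A_{p,q}^{\left(\alpha,\beta\right),\left(m,n\right)}\left(w\right)$ from the first stage is possible only if both exponents vanish, i.e. $\frac{\alpha}{m}=\frac1p-\frac1q=\frac{\beta}{n}$, which is precisely (\ref{star}).

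The only step I expect to require genuine care is the test-function computation in the first stage, since it must be run when $w^{-p^{\prime}}$ and $w^{q}$ are a priori only known to be measurable — this is exactly why the truncation parameter $R$ and the monotone-convergence passage to the limit are built into the argument (and why, if one prefers, the issue can be avoided altogether by invoking Lemma \ref{tails} instead). The Hölder estimate and the scaling argument in the second stage are entirely routine.
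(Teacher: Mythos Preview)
Your proposal is correct and follows essentially the same approach as the paper: use H\"older's inequality to bound the normalized averages product from below by $1$, combine this with $A_{p,q}\left(w\right)\lesssim N_{p,q}\left(w\right)$, and then vary $\left\vert I\right\vert$ and $\left\vert J\right\vert$ independently to force both exponents to vanish. The only cosmetic difference is that your H\"older step (splitting $1=w^{\lambda}\cdot w^{-\lambda}$ with $\tfrac1\lambda=\tfrac1q+\tfrac1{p^{\prime}}$) is a single direct application, whereas the paper proceeds in two steps, first writing $1=w^{p^{\prime}/(p^{\prime}+1)}\cdot w^{-p^{\prime}/(p^{\prime}+1)}$ and then upgrading the resulting $L^{1}$ average of $w$ to an $L^{q}$ average; your version is marginally cleaner, and you are also more explicit than the paper about the first stage (the paper simply writes $\le N_{p,q}^{\alpha,m}\left(w\right)$ at the end, tacitly invoking Lemma~\ref{tails}).
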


\begin{proof}
Following the proof of Theorem \ref{Muck and Wheed sharp} in the Appendix,
we apply H\"{o}lder's inequality with dual exponents $\frac{p^{\prime }+1}{%
p^{\prime }}$ and $p^{\prime }+1$ to obtain%
\begin{eqnarray*}
1 &=&\left\{ \frac{1}{\left\vert I\times J\right\vert }\int \int_{I\times
J}w^{\frac{p^{\prime }}{p^{\prime }+1}}w^{-\frac{p^{\prime }}{p^{\prime }+1}%
}\right\} ^{\frac{p^{\prime }+1}{p^{\prime }}} \\
&\leq &\left\{ \left( \frac{1}{\left\vert I\times J\right\vert }\int
\int_{I\times J}w\right) ^{\frac{p^{\prime }}{p^{\prime }+1}}\left( \frac{1}{%
\left\vert I\times J\right\vert }\int \int_{I\times J}w^{-p^{\prime
}}\right) ^{\frac{1}{p^{\prime }+1}}\right\} ^{\frac{p^{\prime }+1}{%
p^{\prime }}} \\
&=&\left( \frac{1}{\left\vert I\times J\right\vert }\int \int_{I\times
J}w\right) \left( \frac{1}{\left\vert I\times J\right\vert }\int
\int_{I\times J}w^{-p^{\prime }}\right) ^{\frac{1}{p^{\prime }}} \\
&\leq &\left( \frac{1}{\left\vert I\times J\right\vert }\int \int_{I\times
J}w^{q}\right) ^{\frac{1}{q}}\left( \frac{1}{\left\vert I\times J\right\vert 
}\int \int_{I\times J}w^{-p^{\prime }}\right) ^{\frac{1}{p^{\prime }}}.
\end{eqnarray*}%
Then we conclude that%
\begin{eqnarray*}
&&\left\vert I\right\vert ^{\frac{\alpha }{m}+\frac{1}{q}-\frac{1}{p}%
}\left\vert J\right\vert ^{\frac{\beta }{n}+\frac{1}{q}-\frac{1}{p}} \\
&\leq &\left\vert I\right\vert ^{\frac{\alpha }{m}+\frac{1}{q}-\frac{1}{p}%
}\left\vert J\right\vert ^{\frac{\beta }{n}+\frac{1}{q}-\frac{1}{p}}\left( 
\frac{1}{\left\vert I\times J\right\vert }\int \int_{I\times J}w^{q}\right)
^{\frac{1}{q}}\left( \frac{1}{\left\vert I\times J\right\vert }\int
\int_{I\times J}w^{-p^{\prime }}\right) ^{\frac{1}{p^{\prime }}} \\
&=&\left\vert I\right\vert ^{\frac{\alpha }{m}-1}\left\vert J\right\vert ^{%
\frac{\beta }{n}-1}\left( \int \int_{I\times J}w^{q}\right) ^{\frac{1}{q}%
}\left( \int \int_{I\times J}w^{-p^{\prime }}\right) ^{\frac{1}{p^{\prime }}%
}\leq N_{p,q}^{\alpha ,m}\left( w\right)
\end{eqnarray*}%
for all rectangles $I\times J$, which implies $\frac{\alpha }{m}=\frac{1}{p}-%
\frac{1}{q}=\frac{\beta }{n}$ as required.
\end{proof}

\subsection{Proof of Theorem \protect\ref{one weight product}}

Now we turn to the proof of Theorem \ref{one weight product}.

\begin{proof}[Proof of Theorem \protect\ref{one weight product}]
By Claim \ref{bal and diag} the balanced and diagonal condition (\ref{star})
holds. Thus we have the necessity of (\ref{Apq}) follows from Lemma \ref%
{tails} above:%
\begin{equation*}
\left( \frac{1}{\left\vert I\right\vert \left\vert J\right\vert }\int
\int_{I\times J}w\left( u,v\right) ^{-p^{\prime }}dudv\right) ^{\frac{1}{%
p^{\prime }}}\left( \frac{1}{\left\vert I\right\vert \left\vert J\right\vert 
}\int \int_{I\times J}w\left( x,y\right) ^{q}\ dxdy\right) ^{\frac{1}{q}%
}\leq N_{p,q}\left( w\right) .
\end{equation*}

By letting the cubes $J$ and $I$ shrink separately to points $y\in \mathbb{R}%
^{n}$ and $x\in \mathbb{R}^{m}$, we obtain (\ref{iter Apq}). Then from the
one weight theorem of Muckenhoupt and Wheeden above, Theorem \ref{Muck and
Wheed}, we conclude that both (\ref{T1}) and (\ref{T2}) hold with $%
T_{1}g=\Omega _{\alpha }^{m}\ast g$ and $T_{2}h=\Omega _{\beta }^{n}\ast h$.
Then the conclusion of Theorem \ref{iter op} proves the norm inequality (\ref%
{norm one weight}).

Finally, the estimate (\ref{characteristic power}) follows upon using the
estimate of Lacey, Moen, P\'{e}rez and Torres \cite[LaMoPeTo]{LaMoPeTo},
which gives%
\begin{equation*}
C_{1}\lesssim A_{p,q}\left( w^{y}\right) ^{q\max \left\{ 1,\frac{p^{\prime }%
}{q}\right\} \left( 1-\frac{\alpha }{m}\right) }\text{ and }C_{2}\lesssim
A_{p,q}\left( w_{x}\right) ^{q\max \left\{ 1,\frac{p^{\prime }}{q}\right\}
\left( 1-\frac{\beta }{n}\right) },
\end{equation*}%
uniformly in $x$ and $y$, upon noting that the characteristic $A_{p,q}$ used
in \cite[LaMoPeTo]{LaMoPeTo} is the $q^{th}$ power of that defined here.
Then using that%
\begin{equation*}
q\max \left\{ 1,\frac{p^{\prime }}{q}\right\} \left( 1-\frac{\alpha }{m}%
\right) =\max \left\{ q,p^{\prime }\right\} \left( \frac{1}{q}+\frac{1}{%
p^{\prime }}\right) =1+\max \left\{ \frac{p^{\prime }}{q},\frac{q}{p^{\prime
}}\right\} ,
\end{equation*}%
we obtain (\ref{characteristic power}). Sharpness of the exponent follows
upon taking product power weights and product power functions and then
arguing as in the previous work of Buckley \cite[Buc]{Buc} and Lacey, Moen, P%
\'{e}rez and Torres \cite{LaMoPeTo}.
\end{proof}

\subsection{Two weight inequalities for product weights}

If in addition we consider \emph{product} weights, then we can prove \emph{%
two weight} versions of the theorem and corollary above using essentially
the same proof strategy, namely iteration of one parameter operators.

\begin{theorem}
\label{iteration}Let $1\leq p\leq q\leq \infty $ and $T_{1}:\mathcal{N}%
\left( \mathbb{R}^{m}\right) \rightarrow \mathcal{N}\left( \mathbb{R}%
^{m}\right) $ and $T_{2}:\mathcal{N}\left( \mathbb{R}^{n}\right) \rightarrow 
\mathcal{N}\left( \mathbb{R}^{n}\right) $. Suppose that $w\left( x,y\right)
=w_{1}\left( x\right) w_{2}\left( y\right) $ and $v\left( x,y\right)
=v_{1}\left( x\right) v_{2}\left( y\right) $ are both product weights, and
that the weight pairs $\left( w_{1},v_{1}\right) $ and $\left(
w_{2},v_{2}\right) $ on $\mathbb{R}^{m}$ and $\mathbb{R}^{n}$ respectively
satisfy%
\begin{equation}
\left\Vert T_{1}g\right\Vert _{L^{q}\left( w_{1}^{q}\right) }\leq
C_{1}\left\Vert g\right\Vert _{L^{p}\left( v_{1}^{p}\right) }\ ,\ \ \ \ \ 
\text{for all }g\geq 0,  \label{two T1}
\end{equation}%
and%
\begin{equation}
\left\Vert T_{2}h\right\Vert _{L^{q}\left( w_{2}^{q}\right) }\leq
C_{2}\left\Vert h\right\Vert _{L^{p}\left( v_{2}^{p}\right) }\ ,\ \ \ \ \ 
\text{for all }h\geq 0.  \label{two T2}
\end{equation}%
Then the iterated operator $T=\left( \delta _{0}\otimes T_{2}\right) \circ
\left( T_{1}\otimes \delta _{0}\right) $ satisfies%
\begin{equation*}
\left\Vert Tf\right\Vert _{L^{q}\left( w^{q}\right) }\leq
C_{1}C_{2}\left\Vert f\right\Vert _{L^{p}\left( v^{p}\right) }\ ,\ \ \ \ \ 
\text{for all }f\geq 0.
\end{equation*}
\end{theorem}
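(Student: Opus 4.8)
The plan is to run the argument of Theorem~\ref{iter op} almost verbatim; the only new ingredient is that the product structure $w(x,y)=w_{1}(x)w_{2}(y)$ and $v(x,y)=v_{1}(x)v_{2}(y)$ lets each of the two one-parameter steps be carried out with the \emph{given} two weight inequalities (\ref{two T1}) and (\ref{two T2}) in place of a single weight. The point is that for fixed $x\in\mathbb{R}^{m}$ the $y$-slices are $w_{x}=w_{1}(x)\,w_{2}$ and $v_{x}=v_{1}(x)\,v_{2}$, while for fixed $y\in\mathbb{R}^{n}$ the $x$-slices are $w^{y}=w_{2}(y)\,w_{1}$ and $v^{y}=v_{2}(y)\,v_{1}$; in each case the scalar factors can be pulled out of the $L^{p}$ and $L^{q}$ norms by homogeneity, leaving the factor weights $w_{1},w_{2},v_{1},v_{2}$, to which the hypotheses directly apply. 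The intermediate weight that naturally appears is $u(x,y)=w_{1}(x)\,v_{2}(y)$.

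First I would write, exactly as in the proof of Theorem~\ref{iter op} with $g=(T_{1}\otimes\delta_{0})f$,
\[
\left\Vert Tf\right\Vert_{L^{q}(w^{q})}^{q}=\int_{\mathbb{R}^{m}}\left\Vert T_{2}\bigl[(T_{1}\otimes\delta_{0})f\bigr]_{x}\right\Vert_{L^{q}((w_{x})^{q})}^{q}\,dx .
\]
Since $w_{x}=w_{1}(x)\,w_{2}$ and $v_{x}=v_{1}(x)\,v_{2}$ with $w_{1}(x)$ and $v_{1}(x)$ scalars, homogeneity of the norms together with (\ref{two T2}) gives
\[
\left\Vert T_{2}\bigl[(T_{1}\otimes\delta_{0})f\bigr]_{x}\right\Vert_{L^{q}((w_{x})^{q})}=w_{1}(x)\left\Vert T_{2}\bigl[(T_{1}\otimes\delta_{0})f\bigr]_{x}\right\Vert_{L^{q}(w_{2}^{q})}\leq C_{2}\,w_{1}(x)\left\Vert\bigl[(T_{1}\otimes\delta_{0})f\bigr]_{x}\right\Vert_{L^{p}(v_{2}^{p})}.
\]
Raising this to the $q$-th power, integrating in $x$, and bringing the scalar $w_{1}(x)$ inside the inner $L^{p}$ integral, one finds that $\left\Vert Tf\right\Vert_{L^{q}(w^{q})}^{q}$ is at most $C_{2}^{q}\int_{\mathbb{R}^{m}}\bigl(\int_{\mathbb{R}^{n}}F(x,y)^{p}\,dy\bigr)^{q/p}\,dx$, where $F(x,y)\equiv(T_{1}\otimes\delta_{0})f(x,y)\,w_{1}(x)\,v_{2}(y)$.

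Next, since $p\leq q$, Minkowski's inequality for nonnegative functions (the same mechanism recalled just before Theorem~\ref{iter op}) gives $\int_{\mathbb{R}^{m}}\bigl(\int_{\mathbb{R}^{n}}F^{p}\,dy\bigr)^{q/p}\,dx\leq\bigl(\int_{\mathbb{R}^{n}}\bigl(\int_{\mathbb{R}^{m}}F^{q}\,dx\bigr)^{p/q}\,dy\bigr)^{q/p}$. Now pull the scalar $v_{2}(y)$ out of the inner $x$-integral and note that $\int_{\mathbb{R}^{m}}(T_{1}\otimes\delta_{0})f(x,y)^{q}\,w_{1}(x)^{q}\,dx=\left\Vert T_{1}f^{y}\right\Vert_{L^{q}(w_{1}^{q})}^{q}$, which by (\ref{two T1}) is at most $C_{1}^{q}\left\Vert f^{y}\right\Vert_{L^{p}(v_{1}^{p})}^{q}$. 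Substituting, and using $v(x,y)^{p}=v_{1}(x)^{p}v_{2}(y)^{p}$, the two integrals collapse to a single integral over $\mathbb{R}^{m}\times\mathbb{R}^{n}$ and yield $\left\Vert Tf\right\Vert_{L^{q}(w^{q})}\leq C_{1}C_{2}\left\Vert f\right\Vert_{L^{p}(v^{p})}$, which is the assertion.

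I do not expect a genuine obstacle here: the argument is a direct specialization of the proof of Theorem~\ref{iter op} (equivalently, it is what the mechanism behind Porism1 and Porism2 gives once one identifies the intermediate weight $u=w_{1}\otimes v_{2}$). The only two points deserving a word of care are (i) checking that the scalar factors separate cleanly so that the one-parameter hypotheses (\ref{two T1}) and (\ref{two T2}) really do apply on the slices --- this is exactly where the product hypothesis on $w$ and $v$ enters, and it is precisely what fails for non-product weights --- and (ii) that Minkowski's inequality is applied in the direction licensed by $p\leq q$, with the usual conventions handling the endpoints $p=1$ and $q=\infty$. Finally, specializing to $T_{1}=I_{\alpha}^{m}$ and $T_{2}=I_{\beta}^{n}$ and feeding in the one-parameter two weight theorem of Sawyer and Wheeden (Theorem~\ref{Saw and Wheed}) for the pairs $(v_{1},w_{1})$ and $(v_{2},w_{2})$ then yields the promised characterization of the norm inequality (\ref{2 weight arb meas}) for product weights.
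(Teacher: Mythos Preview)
Your proof is correct and follows essentially the same route as the paper's: both first apply (\ref{two T2}) on the $y$-slices after factoring out the scalar $w_{1}(x)$, then invoke Minkowski's inequality with the intermediate function $F(x,y)=(T_{1}\otimes\delta_{0})f(x,y)\,w_{1}(x)\,v_{2}(y)$, and finally apply (\ref{two T1}) on the $x$-slices. Your identification of the intermediate product weight $u=w_{1}\otimes v_{2}$ and your remarks about where the product hypothesis enters match the paper's argument exactly.
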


\begin{proof}
We have%
\begin{eqnarray*}
\left\Vert Tf\right\Vert _{L^{q}\left( w^{q}\right) } &=&\left\{ \int_{%
\mathbb{R}^{m}}\int_{\mathbb{R}^{n}}\left( \delta _{0}\otimes T_{2}\right)
\circ \left( T_{1}\otimes \delta _{0}\right) f\left( x,y\right) ^{q}w\left(
x,y\right) ^{q}dydx\right\} ^{\frac{1}{q}} \\
&=&\left\{ \int_{\mathbb{R}^{m}}\left[ \int_{\mathbb{R}^{n}}\left\vert T_{2}%
\left[ \left( T_{1}\otimes \delta _{0}\right) f\right] _{x}\left( y\right)
\right\vert ^{q}w_{1}\left( x\right) ^{q}w_{2}\left( y\right) ^{q}dy\right]
dx\right\} ^{\frac{1}{q}} \\
&=&\left\{ \int_{\mathbb{R}^{m}}\left\Vert T_{2}\left[ \left( T_{1}\otimes
\delta _{0}\right) f\right] _{x}\right\Vert _{L^{q}\left( \left(
w_{2}\right) ^{q}\right) }^{q}w_{1}\left( x\right) ^{q}dx\right\} ^{\frac{1}{%
q}} \\
&\leq &C_{2}\left\{ \int_{\mathbb{R}^{m}}\left\Vert \left[ \left(
T_{1}\otimes \delta _{0}\right) f\right] _{x}\right\Vert _{L^{p}\left(
\left( v_{2}\right) ^{p}\right) }^{q}w_{1}\left( x\right) ^{q}dx\right\} ^{%
\frac{1}{q}} \\
&=&C_{2}\left\{ \int_{\mathbb{R}^{m}}\left\{ \int_{\mathbb{R}^{n}}\left(
T_{1}\otimes \delta _{0}\right) f\left( x,y\right) ^{p}v_{2}\left( y\right)
^{p}w_{1}\left( x\right) ^{p}dy\right\} ^{\frac{q}{p}}dx\right\} ^{\frac{1}{q%
}},
\end{eqnarray*}%
where we have used $h=\left[ \left( T_{1}\otimes \delta _{0}\right) f\right]
_{x}\geq 0$ in (\ref{two T2}). Then by Minkowski's inequality applied to the
nonnegative function $F=\left( T_{1}\otimes \delta _{0}\right) f\left(
x,y\right) \ v_{2}\left( y\right) w_{1}\left( x\right) $, this is dominated
by%
\begin{eqnarray*}
&&C_{2}\left\{ \int_{\mathbb{R}^{n}}\left\{ \int_{\mathbb{R}^{m}}\left(
T_{1}\otimes \delta _{0}\right) f\left( x,y\right) ^{q}v_{2}\left( y\right)
^{q}w_{1}\left( x\right) ^{q}dx\right\} ^{\frac{p}{q}}dy\right\} ^{\frac{1}{p%
}} \\
&=&C_{2}\left\{ \int_{\mathbb{R}^{n}}\left\{ \int_{\mathbb{R}%
^{m}}T_{1}f^{y}\left( x\right) ^{q}w_{1}\left( x\right) ^{q}dx\right\} ^{%
\frac{p}{q}}v_{2}\left( y\right) ^{p}dy\right\} ^{\frac{1}{p}} \\
&=&C_{2}\left\{ \int_{\mathbb{R}^{n}}\left\Vert T_{1}f^{y}\right\Vert
_{L^{q}\left( \left( w_{1}\right) ^{q}\right) }^{p}v_{2}\left( y\right)
^{p}dy\right\} ^{\frac{1}{p}} \\
&\leq &C_{2}C_{1}\left\{ \int_{\mathbb{R}^{n}}\left\Vert f^{y}\right\Vert
_{L^{p}\left( \left( v_{1}\right) ^{p}\right) }^{p}v_{2}\left( y\right)
^{p}dy\right\} ^{\frac{1}{p}}=C_{2}C_{1}\left\Vert f\right\Vert
_{L^{p}\left( v^{p}\right) }\ ,
\end{eqnarray*}%
where we have used $g=f^{y}\geq 0$ in (\ref{two T1}).
\end{proof}

\begin{corollary}
\label{product weights}Suppose $1<p<q<\infty $ and $\frac{1}{p}-\frac{1}{q}=%
\frac{\alpha }{m}=\frac{\beta }{n}$. Let $w\left( x,y\right) =w_{1}\left(
x\right) w_{2}\left( y\right) $ and $v\left( x,y\right) =v_{1}\left(
x\right) v_{2}\left( y\right) $ be a pair of nonnegative \emph{product}
weights on $\mathbb{R}^{m}\times \mathbb{R}^{n}$. Then%
\begin{equation}
\left\{ \int_{\mathbb{R}^{m}}\int_{\mathbb{R}^{n}}I_{\alpha ,\beta
}^{m,n}f\left( x,y\right) ^{q}\ w\left( x,y\right) ^{q}\ dxdy\right\} ^{%
\frac{1}{q}}\leq C_{p,q}\left( w,v\right) \left\{ \int_{\mathbb{R}^{m}}\int_{%
\mathbb{R}^{n}}f\left( x,y\right) ^{p}\ v\left( x,y\right) ^{p}\
dxdy\right\} ^{\frac{1}{p}}  \label{norm two weight}
\end{equation}%
for all $f\geq 0$ \emph{if and only if}%
\begin{eqnarray}
&&\sup_{I\subset \mathbb{R}^{m},\ J\subset \mathbb{R}^{n}}\left( \frac{1}{%
\left\vert I\right\vert \left\vert J\right\vert }\int \int_{\mathbb{R}%
^{m}\times \mathbb{R}^{n}}\left( \widehat{s}_{I,J}w\right) \left( x,y\right)
^{q}\ dxdy\right) ^{\frac{1}{q}}  \label{two weight Apq hat} \\
&&\ \ \ \ \ \ \ \ \ \ \ \ \ \ \ \ \ \ \ \ \ \ \ \ \ \times \left( \frac{1}{%
\left\vert I\right\vert \left\vert J\right\vert }\int \int_{\mathbb{R}%
^{m}\times \mathbb{R}^{n}}\left( \widehat{s}_{I,J}v^{-1}\right) \left(
x,y\right) ^{p^{\prime }}\ dxdy\right) ^{\frac{1}{p^{\prime }}}\equiv 
\widetilde{A}_{p,q}\left( w,v\right) <\infty ,  \notag
\end{eqnarray}%
where $\widehat{s}_{I,J}\left( x,y\right) =\widehat{s}_{I}\left( x\right) 
\widehat{s}_{J}\left( y\right) =\left( 1+\frac{\left\vert x-c_{I}\right\vert 
}{\left\vert I\right\vert ^{\frac{1}{m}}}\right) ^{\alpha -m}\left( 1+\frac{%
\left\vert y-c_{J}\right\vert }{\left\vert J\right\vert ^{\frac{1}{m}}}%
\right) ^{\beta -n}$. Moreover, 
\begin{equation}
C_{p,q}\left( w,v\right) \approx \widetilde{A}_{p,q}\left( w,v\right) .
\label{characteristic equiv}
\end{equation}
\end{corollary}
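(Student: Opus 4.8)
The plan is to obtain Corollary \ref{product weights} by combining three ingredients already in hand: the $1$-parameter two weight theorem of Sawyer and Wheeden (Theorem \ref{Saw and Wheed}), the iteration theorem for product weights (Theorem \ref{iteration}), and the fact that for product measures the two-tailed characteristic factors over the two variables. First I would pass to the measure formulation (\ref{identif}), $d\sigma =v^{-p^{\prime }}dxdy$ and $d\omega =w^{q}dxdy$: since $v=v_{1}v_{2}$ and $w=w_{1}w_{2}$, the measures $\sigma =\sigma _{1}\times \sigma _{2}$ and $\omega =\omega _{1}\times \omega _{2}$ are product measures, with $d\sigma _{i}$ and $d\omega _{i}$ built from $v_{i}$ and $w_{i}$ in the obvious way. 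I would then record that the characteristic $\widetilde{A}_{p,q}\left( w,v\right) $ in (\ref{two weight Apq hat}) is literally $\widehat{\mathbb{A}}_{p,q}^{\left( \alpha ,\beta \right) ,\left( m,n\right) }\left( \sigma ,\omega \right) $: the normalizing powers $\left\vert I\right\vert ^{\frac{\alpha }{m}-1}\left\vert J\right\vert ^{\frac{\beta }{n}-1}$ match the factors $\left( \left\vert I\right\vert \left\vert J\right\vert \right) ^{-1/q}\left( \left\vert I\right\vert \left\vert J\right\vert \right) ^{-1/p^{\prime }}$ precisely because the balanced and diagonal condition gives $\frac{1}{q}+\frac{1}{p^{\prime }}=1-\left( \frac{1}{p}-\frac{1}{q}\right) =1-\frac{\alpha }{m}=1-\frac{\beta }{n}$. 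By the product-measure factorization of the two-tailed characteristic noted earlier in the paper, $\widetilde{A}_{p,q}\left( w,v\right) =\widehat{A}_{p,q}^{\alpha ,m}\left( v_{1},w_{1}\right) \cdot \widehat{A}_{p,q}^{\beta ,n}\left( v_{2},w_{2}\right) $.

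For the \emph{sufficiency} half, suppose $\widetilde{A}_{p,q}\left( w,v\right) <\infty $. One may assume neither weight is a.e. degenerate (otherwise (\ref{norm two weight}) is trivial), so each of the two $1$-parameter factors above is strictly positive and hence each is finite. Since $1<p<q<\infty $, Theorem \ref{Saw and Wheed} applied on $\mathbb{R}^{m}$ and on $\mathbb{R}^{n}$ yields $\left\Vert I_{\alpha }^{m}g\right\Vert _{L^{q}\left( w_{1}^{q}\right) }\leq C_{1}\left\Vert g\right\Vert _{L^{p}\left( v_{1}^{p}\right) }$ and $\left\Vert I_{\beta }^{n}h\right\Vert _{L^{q}\left( w_{2}^{q}\right) }\leq C_{2}\left\Vert h\right\Vert _{L^{p}\left( v_{2}^{p}\right) }$ with $C_{1}\approx \widehat{A}_{p,q}^{\alpha ,m}\left( v_{1},w_{1}\right) $ and $C_{2}\approx \widehat{A}_{p,q}^{\beta ,n}\left( v_{2},w_{2}\right) $; these are exactly hypotheses (\ref{two T1}) and (\ref{two T2}) of Theorem \ref{iteration} with $T_{1}=I_{\alpha }^{m}$ and $T_{2}=I_{\beta }^{n}$. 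As established in the section on iterated operators, $I_{\alpha ,\beta }^{m,n}=\left( \delta _{0}\otimes I_{\beta }^{n}\right) \circ \left( I_{\alpha }^{m}\otimes \delta _{0}\right) $, so Theorem \ref{iteration} gives (\ref{norm two weight}) with $C_{p,q}\left( w,v\right) \leq C_{1}C_{2}\lesssim \widetilde{A}_{p,q}\left( w,v\right) $.

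For the \emph{necessity} half, suppose (\ref{norm two weight}) holds, i.e. $\mathbb{N}_{p,q}^{\left( \alpha ,\beta \right) ,\left( m,n\right) }\left( \sigma ,\omega \right) =C_{p,q}\left( w,v\right) <\infty $. This direction needs no product structure: Lemma \ref{tails} applies to arbitrary locally finite $\sigma ,\omega $ and gives $\widetilde{A}_{p,q}\left( w,v\right) =\widehat{\mathbb{A}}_{p,q}^{\left( \alpha ,\beta \right) ,\left( m,n\right) }\left( \sigma ,\omega \right) \leq \mathbb{N}_{p,q}^{\left( \alpha ,\beta \right) ,\left( m,n\right) }\left( \sigma ,\omega \right) =C_{p,q}\left( w,v\right) $. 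Together with the sufficiency estimate this yields the two-sided comparison $C_{p,q}\left( w,v\right) \approx \widetilde{A}_{p,q}\left( w,v\right) $, which is (\ref{characteristic equiv}), completing the proof.

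The whole argument is bookkeeping once Theorems \ref{Saw and Wheed}, \ref{iteration} and Lemma \ref{tails} are available; there is no serious obstacle. The two points that deserve a moment of care are (i) the exact identification of $\widetilde{A}_{p,q}\left( w,v\right) $ with $\widehat{\mathbb{A}}_{p,q}^{\left( \alpha ,\beta \right) ,\left( m,n\right) }\left( \sigma ,\omega \right) $ and its factorization over the two variables -- here one must keep straight the normalizing powers of $\left\vert I\right\vert $ and $\left\vert J\right\vert $ (which is where the balanced \emph{and} diagonal hypothesis $\frac{\alpha }{m}=\frac{\beta }{n}=\frac{1}{p}-\frac{1}{q}$ is used) as well as the paper's ordering convention for the weight pair -- and (ii) the remark that finiteness of the \emph{product} of the two $1$-parameter characteristics forces finiteness of \emph{each} factor, so that Theorem \ref{Saw and Wheed} may legitimately be invoked on each of $\mathbb{R}^{m}$ and $\mathbb{R}^{n}$.
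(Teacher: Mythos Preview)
Your proposal is correct and follows essentially the same approach as the paper: factor the two-tailed characteristic over the product weights, apply Theorem \ref{Saw and Wheed} in each variable to obtain the hypotheses (\ref{two T1}) and (\ref{two T2}), and then invoke the iteration Theorem \ref{iteration} to conclude sufficiency together with the upper bound $C_{p,q}(w,v)\lesssim \widetilde{A}_{p,q}(w,v)$. For necessity the paper simply says it is ``a standard exercise in adapting the one parameter argument in Sawyer and Wheeden''; your choice to invoke Lemma \ref{tails} directly is cleaner and entirely legitimate here, since the balanced diagonal hypothesis forces $0<\alpha<m$ and $0<\beta<n$, so the lemma applies and gives $\widetilde{A}_{p,q}(w,v)=\widehat{\mathbb{A}}_{p,q}^{(\alpha,\beta),(m,n)}(\sigma,\omega)\leq \mathbb{N}_{p,q}^{(\alpha,\beta),(m,n)}(\sigma,\omega)=C_{p,q}(w,v)$ without further work.
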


\begin{proof}
The necessity of (\ref{two weight Apq hat}) is again a standard exercise in
adapting the one parameter argument in Sawyer and Wheeden to the setting of
product weights.

Now we turn to the sufficiency of (\ref{two weight Apq hat}). Since our
weights $w$ and $v$ are product weights, the double integrals on the left
hand side of (\ref{two weight Apq hat}) each factor as a product of
integrals over $\mathbb{R}^{m}$ and $\mathbb{R}^{n}$ separately, e.g.%
\begin{eqnarray*}
&&\left( \frac{1}{\left\vert I\right\vert \left\vert J\right\vert }\int
\int_{\mathbb{R}^{m}\times \mathbb{R}^{n}}\left( \widehat{s}_{I,J}w\right)
\left( x,y\right) ^{q}\ dxdy\right) ^{\frac{1}{q}} \\
&=&\left( \frac{1}{\left\vert I\right\vert }\int_{\mathbb{R}^{m}}\left( 
\widehat{s}_{I}w_{1}\right) \left( x\right) ^{q}\ dx\right) ^{\frac{1}{q}%
}\left( \frac{1}{\left\vert J\right\vert }\int_{\mathbb{R}^{n}}\left( 
\widehat{s}_{J}w_{2}\right) \left( y\right) ^{q}\ dy\right) ^{\frac{1}{q}}.
\end{eqnarray*}%
As a consequence, the characteristic $\widehat{A}_{p,q}\left( w,v\right) $
defined in (\ref{two weight Apq hat}) can be rewritten as%
\begin{equation}
\widetilde{A}_{p,q}\left( w,v\right) =\widehat{A}_{p,q}\left(
w_{1},v_{1}\right) \widehat{A}_{p,q}\left( w_{2},v_{2}\right) .
\label{rewrite}
\end{equation}

From the two weight theorem of Sawyer and Wheeden above, Theorem \ref{Saw
and Wheed}, we conclude that (\ref{two T1}) holds with $T_{1}g=\Omega
_{\alpha }^{m}\ast g$ and constant $C_{1}=C\widehat{A}_{p,q}\left(
w_{1},v_{1}\right) $, and that (\ref{two T2}) holds with $T_{2}h=\Omega
_{\beta }^{n}\ast h$ and constant $C_{2}=C\widehat{A}_{p,q}\left(
w_{2},v_{2}\right) $. This precise dependence on $\widehat{A}_{p,q}\left(
w_{2},v_{2}\right) $ is not explicitly stated in \ref{Saw and Wheed}, but it
is easily checked by tracking the constants in the proof given there. See
also the detailed proof in the appendix below. Then the conclusion of the
theorem proves the norm inequality (\ref{norm two weight}), and also the
equivalence (\ref{characteristic equiv}), in view of (\ref{rewrite}).
\end{proof}

\begin{remark}
If we restrict the function $f$ in the norm inequality in the corollary
above to be a product function $f\left( x,y\right) =f_{1}\left( x\right)
f_{2}\left( y\right) $, then $\left( \Omega _{\alpha ,\beta }^{m,n}\ast
f\right) \left( x,y\right) $ is the product function $\Omega _{\alpha
}^{m}\ast f_{1}\left( x\right) \ \Omega _{\beta }^{n}\ast f_{2}\left(
y\right) $, and there is a particularly trivial proof of the norm bound for
such $f$:%
\begin{eqnarray*}
&&\left\{ \int_{\mathbb{R}^{m}}\int_{\mathbb{R}^{n}}\left( \Omega _{\alpha
,\beta }^{m,n}\ast f\right) \left( x,y\right) ^{q}\ w\left( x,y\right) ^{q}\
dxdy\right\} ^{\frac{1}{q}} \\
&=&\left\{ \int_{\mathbb{R}^{m}}\Omega _{\alpha }^{m}\ast f_{1}\left(
x\right) ^{q}\ w_{1}\left( x\right) ^{q}\ dx\right\} ^{\frac{1}{q}}\left\{
\int_{\mathbb{R}^{n}}\Omega _{\beta }^{n}\ast f_{2}\left( y\right) ^{q}\
w_{2}\left( y\right) ^{q}\ dy\right\} ^{\frac{1}{q}} \\
&\leq &C_{1}\left\{ \int_{\mathbb{R}^{m}}f_{1}\left( x\right) ^{p}\
w_{1}\left( x\right) ^{p}\ dx\right\} ^{\frac{1}{p}}C_{2}\left\{ \int_{%
\mathbb{R}^{n}}f_{2}\left( y\right) ^{p}\ w_{2}\left( y\right) ^{p}\
dy\right\} ^{\frac{1}{p}} \\
&=&C_{1}C_{2}\left\{ \int_{\mathbb{R}^{m}}\int_{\mathbb{R}^{n}}f\left(
x,y\right) ^{p}\ w\left( x,y\right) ^{p}\ dxdy\right\} ^{\frac{1}{p}}.
\end{eqnarray*}
\end{remark}

\begin{remark}
\label{pointwise}We have the following poinwise limit for $w_{1}$ above:%
\begin{equation}
\lim_{I\rightarrow x_{0}}\left( \frac{1}{\left\vert I\right\vert }\int_{%
\mathbb{R}^{m}}\left( \widehat{s}_{I}w_{1}\right) \left( x\right) ^{q}\
dx\right) ^{\frac{1}{q}}=Cw_{1}\left( x_{0}\right) ,\ \ \ \ \ \text{for a.e. 
}x_{0}\in \mathbb{R}^{m}.  \label{limit}
\end{equation}%
Indeed, with $I_{0}\equiv \left[ -\frac{1}{2},\frac{1}{2}\right] $, the
function 
\begin{equation*}
\widehat{s}_{I_{0}}\left( x\right) ^{q}=\left( 1+\left\vert x\right\vert
\right) ^{-\left( m-\alpha \right) q}=\left( 1+\left\vert x\right\vert
\right) ^{-m-m\frac{q}{p^{\prime }}},\ \ \ \ \ x\in \mathbb{R}^{m},
\end{equation*}%
is such that the family $\left\{ \frac{1}{r^{m}}\widehat{s}%
_{I_{0}}^{q}\left( \frac{\cdot }{r}\right) \right\} _{r>0}$ is an
approximate identity on $\mathbb{R}^{m}$, and thus (\ref{limit}) holds at
every Lebesgue point of $w_{1}^{q}$, since $w_{1}^{q}$ obviously satisfies
the growth condition,%
\begin{equation}
\int_{\mathbb{R}^{m}}\left( 1+\left\vert x\right\vert \right) ^{-m\left( 1+%
\frac{q}{p^{\prime }}\right) }w_{1}\left( x\right) ^{q}dx=\int_{\mathbb{R}%
^{m}}\left( 1+\left\vert x\right\vert \right) ^{-q\left( m-\alpha \right)
}w_{1}\left( x\right) ^{q}dx\leq C<\infty ,  \label{growth}
\end{equation}%
when $v_{1}$ is not the trivial weight identically infinity. Similar
pointwise limits hold for the remaining three functions $w_{2}$, $v_{1}$ and 
$v_{2}$.
\end{remark}

\section{Proof of Theorem \protect\ref{original A1} and Example \protect\ref%
{half example}}

We begin with this easy lemma.

\begin{lemma}
\label{easy}Suppose that $1<p,q<\infty $ and $0<\frac{\alpha }{m},\frac{%
\beta }{n}<1$ satisfy the half-balanced condition%
\begin{equation*}
\frac{1}{p}-\frac{1}{q}=\min \left\{ \frac{\alpha }{m},\frac{\beta }{n}%
\right\} .
\end{equation*}%
If $u$ is a positive weight on $\mathbb{R}^{m}\times \mathbb{R}^{n}$, then 
\begin{equation*}
A_{p,q}\left( u\right) \leq \min \left\{ \left\Vert u^{q}\right\Vert
_{A_{1}\times A_{1}},\left\Vert u^{-p^{\prime }}\right\Vert _{A_{1}\times
A_{1}}\right\} .
\end{equation*}
\end{lemma}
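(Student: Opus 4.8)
Recall that here $A_{p,q}(u)=\sup_{I,J}\left( \frac{1}{|I||J|}\int \int_{I\times J}u^{q}\right) ^{1/q}\left( \frac{1}{|I||J|}\int \int_{I\times J}u^{-p^{\prime }}\right) ^{1/p^{\prime }}$, the scale-invariant mean-value form of the characteristic. The plan is to unwind the product $A_{1}$ hypothesis into a pointwise lower bound for $u^{q}$ on rectangles and feed it into the $u^{-p^{\prime }}$ factor through the identity $u^{-p^{\prime }}=(u^{q})^{-p^{\prime }/q}$. First I would fix a rectangle $R=I\times J$, suppose $u^{q}\in A_{1}\times A_{1}$, and set $c=\left\Vert u^{q}\right\Vert _{A_{1}\times A_{1}}^{-1}$; the defining property of the product $A_{1}$ class (equivalently $\mathcal{M}(u^{q})\leq c^{-1}u^{q}$ a.e.\ for the strong maximal operator $\mathcal{M}$) gives
\[
u^{q}(x,y)\ \geq \ c\,\frac{1}{|R|}\int \int_{R}u^{q}\qquad \text{for a.e. }(x,y)\in R.
\]
Since $-p^{\prime }/q<0$, raising both sides to the power $-p^{\prime }/q$ reverses the inequality, so $u^{-p^{\prime }}(x,y)\leq (c\,\frac{1}{|R|}\int \int_{R}u^{q})^{-p^{\prime }/q}$ for a.e.\ $(x,y)\in R$; averaging over $R$ and then taking the $1/p^{\prime }$ power gives
\[
\left( \frac{1}{|R|}\int \int_{R}u^{-p^{\prime }}\right) ^{1/p^{\prime }}\ \leq \ c^{-1/q}\left( \frac{1}{|R|}\int \int_{R}u^{q}\right) ^{-1/q}.
\]
Multiplying by $\left( \frac{1}{|R|}\int \int_{R}u^{q}\right) ^{1/q}$ the two mean values of $u^{q}$ cancel and we are left with $\left( \frac{1}{|R|}\int \int_{R}u^{q}\right) ^{1/q}\left( \frac{1}{|R|}\int \int_{R}u^{-p^{\prime }}\right) ^{1/p^{\prime }}\leq c^{-1/q}$, uniformly over all rectangles $R$. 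Taking the supremum and using $c^{-1}=\left\Vert u^{q}\right\Vert _{A_{1}\times A_{1}}\geq 1$ yields $A_{p,q}(u)\leq \left\Vert u^{q}\right\Vert _{A_{1}\times A_{1}}^{1/q}\leq \left\Vert u^{q}\right\Vert _{A_{1}\times A_{1}}$.

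For the remaining term of the minimum I would invoke the elementary duality $A_{p,q}(u)=A_{q^{\prime },p^{\prime }}(1/u)$, which is immediate from the displayed formula after swapping the two factors and using $(q^{\prime })^{\prime }=q$. If instead $u^{-p^{\prime }}\in A_{1}\times A_{1}$, then $(1/u)^{p^{\prime }}=u^{-p^{\prime }}\in A_{1}\times A_{1}$, so the case already proved---applied with $(q^{\prime },p^{\prime })$ in place of $(p,q)$ and $1/u$ in place of $u$---gives $A_{p,q}(u)=A_{q^{\prime },p^{\prime }}(1/u)\leq \left\Vert (1/u)^{p^{\prime }}\right\Vert _{A_{1}\times A_{1}}=\left\Vert u^{-p^{\prime }}\right\Vert _{A_{1}\times A_{1}}$. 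Combining the two estimates proves the asserted bound by $\min \{\left\Vert u^{q}\right\Vert _{A_{1}\times A_{1}},\left\Vert u^{-p^{\prime }}\right\Vert _{A_{1}\times A_{1}}\}$, with the usual convention that the $A_{1}\times A_{1}$ norm of a weight not in the class is $+\infty $.

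There is essentially no obstacle here---consistent with the lemma being labelled ``easy''---since the entire content is that the product $A_{1}$ condition collapses the product of the two mean values in the characteristic, and the estimate is insensitive to the indices beyond the normalization fixed by the half-balanced hypothesis. The only points worth a word are that the pointwise manipulation is legitimate because a weight in $A_{1}\times A_{1}$ is finite and positive a.e., so $(u^{q})^{-p^{\prime }/q}$ is well defined and all the averages occurring are positive and finite on each $R$; and that one must read the product $A_{1}$ inequality in the rectangle-average form used above, which is exactly its definition.
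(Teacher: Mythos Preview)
Your proof is correct and follows essentially the same route as the paper: in both cases the product $A_{1}\times A_{1}$ hypothesis is converted into a pointwise bound on one of the weights over $R$, which collapses the product of the two averages and yields the sharper constants $\Vert u^{q}\Vert_{A_{1}\times A_{1}}^{1/q}$ and $\Vert u^{-p^{\prime}}\Vert_{A_{1}\times A_{1}}^{1/p^{\prime}}$ before relaxing to the stated bound. The only cosmetic difference is that the paper handles the second case by repeating the computation symmetrically (using $\inf_{R}u$ in place of the average), whereas you invoke the duality $A_{p,q}(u)=A_{q^{\prime},p^{\prime}}(1/u)$; both are equally valid and equally short.
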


\begin{proof}[Proof of Lemma \protect\ref{easy}]
Suppose that $u^{-p^{\prime }}\in A_{1}\times A_{1}$. Then 
\begin{equation*}
\frac{1}{\left\vert I\times J\right\vert }\diint\limits_{I\times
J}u^{-p^{\prime }}\leq \left\Vert u^{-p^{\prime }}\right\Vert _{A_{1}\times
A_{1}}\ \left( \inf_{I\times J}u^{-p^{\prime }}\right)
\end{equation*}%
for all rectangles $I\times J$, and so%
\begin{eqnarray*}
&&\left( \frac{1}{\left\vert I\times J\right\vert }\diint\limits_{I\times
J}u\left( x,y\right) ^{q}\ dxdy\right) ^{\frac{1}{q}}\left( \frac{1}{%
\left\vert I\times J\right\vert }\diint\limits_{I\times J}u\left( x,y\right)
^{-p^{\prime }}\ dxdy\right) ^{\frac{1}{p^{\prime }}} \\
&\leq &\left( \frac{1}{\left\vert I\times J\right\vert }\diint\limits_{I%
\times J}u\left( x,y\right) ^{q}\ dxdy\right) ^{\frac{1}{q}}\left\Vert
u^{-p^{\prime }}\right\Vert _{A_{1}\times A_{1}}^{\frac{1}{p^{\prime }}%
}\left( \inf_{I\times J}v^{-1}\right) \\
&=&\left\Vert u^{-p^{\prime }}\right\Vert _{A_{1}\times A_{1}}^{\frac{1}{%
p^{\prime }}}\left( \frac{1}{\left\vert I\times J\right\vert }%
\diint\limits_{I\times J}\left( \frac{u\left( x,y\right) }{\sup_{I\times J}u}%
\right) ^{q}\ dxdy\right) ^{\frac{1}{q}}\leq \left\Vert u^{-p^{\prime
}}\right\Vert _{A_{1}\times A_{1}}^{\frac{1}{p^{\prime }}}\ ,
\end{eqnarray*}%
which proves the second assertion for $u^{-p^{\prime }}$. Similarly, if $%
u^{q}\in A_{1}\times A_{1}$, then%
\begin{eqnarray*}
&&\left( \frac{1}{\left\vert I\times J\right\vert }\diint\limits_{I\times
J}u\left( x,y\right) ^{q}\ dxdy\right) ^{\frac{1}{q}}\left( \frac{1}{%
\left\vert I\times J\right\vert }\diint\limits_{I\times J}u\left( x,y\right)
^{-p^{\prime }}\ dxdy\right) ^{\frac{1}{p^{\prime }}} \\
&\leq &\left\Vert u^{q}\right\Vert _{A_{1}\times A_{1}}^{\frac{1}{q}}\left(
\inf_{I\times J}u\right) \left( \frac{1}{\left\vert I\times J\right\vert }%
\diint\limits_{I\times J}u\left( x,y\right) ^{-p^{\prime }}\ dxdy\right) ^{%
\frac{1}{p^{\prime }}} \\
&=&\left\Vert u^{q}\right\Vert _{A_{1}\times A_{1}}^{\frac{1}{q}}\left( 
\frac{1}{\left\vert I\times J\right\vert }\diint\limits_{I\times J}\left( 
\frac{\inf_{I\times J}u}{u\left( x,y\right) }\right) ^{p^{\prime }}\
dxdy\right) ^{\frac{1}{p^{\prime }}}\leq \left\Vert u^{q}\right\Vert
_{A_{1}\times A_{1}}^{\frac{1}{q}}\ ,
\end{eqnarray*}%
and this completes the proof of Lemma \ref{easy}.
\end{proof}

\subsection{Theorem \protect\ref{original A1}}

We can now prove Theorem \ref{original A1} easily using Lemma \ref{easy}.

\begin{proof}[Proof of Theorem \protect\ref{original A1}]
Assume now that $v^{-p^{\prime }}\in A_{1}\times A_{1}$ and $\frac{1}{p}-%
\frac{1}{q}=\frac{\alpha }{m}<\frac{\beta }{n}$, the other cases being
similar. Then we have%
\begin{equation*}
A_{p,q}^{\left( \alpha ,\beta \right) ,\left( m,n\right) }\left( v,w\right)
\equiv \sup_{I\subset \mathbb{R}^{m},\ J\subset \mathbb{R}^{n}}\left\vert
J\right\vert ^{\frac{\beta }{n}-1}\left( \frac{1}{\left\vert I\right\vert }%
\diint\limits_{I\times J}w^{q}\right) ^{\frac{1}{q}}\left( \frac{1}{%
\left\vert I\right\vert }\diint\limits_{I\times J}v^{-p^{\prime }}\right) ^{%
\frac{1}{p^{\prime }}},
\end{equation*}%
and if we let $I$ shrink to a point $x\in \mathbb{R}^{m}$, we obtain the $1$%
-parameter conclusion that%
\begin{equation*}
\left( w_{x}\left( y\right) ,v_{x}\left( y\right) \right) \in A_{p,q}^{\beta
,n}\ ,\ \ \ \ \ \text{uniformly a.e. }x\in \mathbb{R}^{m}.
\end{equation*}%
Then Minkowski's inequality with $p\leq q$ gives%
\begin{eqnarray*}
&&\left\{ \int_{\mathbb{R}^{m}}\int_{\mathbb{R}^{n}}I_{\alpha ,\beta
}^{m,n}f\left( x,y\right) ^{q}\ w\left( x,y\right) ^{q}\ dxdy\right\} ^{%
\frac{1}{q}} \\
&=&\left\{ \int_{\mathbb{R}^{m}}\left[ \int_{\mathbb{R}^{n}}f\ast _{1}\Omega
_{\alpha }^{m}\ast _{2}\Omega _{\beta }^{n}\left( x,y\right) ^{q}\ w\left(
x,y\right) ^{q}\ dy\right] dx\right\} ^{\frac{1}{q}} \\
&\leq &A_{p,q}^{\beta ,n}\left\{ \int_{\mathbb{R}^{m}}\left[ \int_{\mathbb{R}%
^{n}}f\ast _{1}\Omega _{\alpha }^{m}\left( x,y\right) ^{p}\ v\left(
x,y\right) ^{p}\ dy\right] ^{\frac{q}{p}}dx\right\} ^{\frac{1}{q}} \\
&\leq &A_{p,q}^{\beta ,n}\left( v,w\right) \left\{ \int_{\mathbb{R}^{m}}%
\left[ \int_{\mathbb{R}^{n}}f\ast _{1}\Omega _{\alpha }^{m}\left( x,y\right)
^{q}\ v\left( x,y\right) ^{q}\ dy\right] ^{\frac{p}{q}}dx\right\} ^{\frac{1}{%
p}} \\
&\leq &A_{p,q}^{\beta ,n}\left( v,w\right) \ A_{1}\times A_{1}\left(
v\right) \ \left\{ \int_{\mathbb{R}^{m}}\int_{\mathbb{R}^{n}}f\left(
x,y\right) ^{p}\ v\left( x,y\right) ^{p}\ dxdy\right\} ^{\frac{1}{p}},
\end{eqnarray*}%
where in the final line we have applied the $1$-parameter one weight
inequality in Theorem \ref{Muck and Wheed} since $v\in A_{p,q}$ by Lemma \ref%
{easy}. This completes the proof of Theorem \ref{original A1}.
\end{proof}

\subsection{Example \protect\ref{half example}}

Now we verify the properties claimed in Example \ref{half example}. First we
note the following discretization of $\frac{1}{\left\vert I\right\vert }%
\int_{\mathbb{R}^{m}}\widehat{s}_{I}^{p^{\prime }}d\sigma $: 
\begin{eqnarray*}
&&\sum_{k=0}^{\infty }2^{k\left[ \left( \alpha -m\right) p^{\prime }+m\right]
}\frac{1}{\left\vert 2^{k}I\right\vert }\int_{2^{k}I}d\sigma \\
&=&\sum_{k=0}^{\infty }2^{k\left[ \left( \alpha -m\right) p^{\prime }+m%
\right] }\frac{1}{\left\vert 2^{k}I\right\vert }\sum_{\ell
=0}^{k}\int_{2^{\ell }I\setminus 2^{\ell -1}I}d\sigma =\sum_{\ell
=0}^{\infty }\left( \sum_{k=\ell }^{\infty }2^{k\left( \alpha -m\right)
p^{\prime }}\right) \int_{2^{\ell }I\setminus 2^{\ell -1}I}d\sigma \\
&\approx &\sum_{\ell =0}^{\infty }2^{\ell \left( \alpha -m\right) p^{\prime
}}\int_{2^{\ell }I\setminus 2^{\ell -1}I}d\sigma \left( x\right) \approx 
\frac{1}{\left\vert I\right\vert }\int_{\mathbb{R}^{m}}\left( 1+\frac{%
\left\vert x-c_{I}\right\vert }{\left\vert I\right\vert ^{\frac{1}{m}}}%
\right) ^{\left( \alpha -m\right) p^{\prime }}d\sigma \left( x\right) =\frac{%
1}{\left\vert I\right\vert }\int_{\mathbb{R}^{m}}\widehat{s}_{I}^{p^{\prime
}}d\sigma ,
\end{eqnarray*}%
since $\alpha <m$. Thus we have 
\begin{eqnarray*}
\overline{\mathbb{A}}_{p,q}^{\alpha ,m}\left( \sigma ,\omega ;I\right)
^{p^{\prime }} &\equiv &\left\{ \left\vert I\right\vert ^{\frac{\alpha }{m}%
-\Gamma }\left( \frac{1}{\left\vert I\right\vert }\int_{I}d\omega \right) ^{%
\frac{1}{q}}\left( \frac{1}{\left\vert I\right\vert }\int_{\mathbb{R}^{m}}%
\widehat{s}_{I}^{p^{\prime }}d\sigma \right) ^{\frac{1}{p^{\prime }}%
}\right\} ^{p^{\prime }} \\
&\approx &\left\vert I\right\vert ^{\left( \frac{\alpha }{m}-\Gamma \right)
p^{\prime }}\left( \frac{1}{\left\vert I\right\vert }\int_{I}d\omega \right)
^{\frac{p^{\prime }}{q}}\sum_{k=0}^{\infty }2^{k\left[ \left( \alpha
-m\right) p^{\prime }+m\right] }\frac{1}{\left\vert 2^{k}I\right\vert }%
\int_{2^{k}I}d\sigma \left( x\right) \\
&=&\sum_{k=0}^{\infty }2^{-k\left( \alpha -m\Gamma \right) p^{\prime
}}\left\vert 2^{k}I\right\vert ^{\left( \frac{\alpha }{m}-\Gamma \right)
p^{\prime }}2^{k\left[ \left( \alpha -m\right) p^{\prime }+m\right] }2^{km%
\frac{p^{\prime }}{q}}\left( \frac{1}{\left\vert 2^{k}I\right\vert }%
\int_{I}d\omega \right) ^{\frac{p^{\prime }}{q}}\left( \frac{1}{\left\vert
2^{k}I\right\vert }\left\vert 2^{k}I\right\vert _{\sigma }\right) \\
&=&\sum_{k=0}^{\infty }2^{-k\left( \alpha -m\Gamma \right) p^{\prime }}2^{k 
\left[ \left( \alpha -m\right) p^{\prime }+m\right] }2^{km\frac{p^{\prime }}{%
q}}\left\{ \left\vert 2^{k}I\right\vert ^{\left( \frac{\alpha }{m}-\Gamma
\right) p^{\prime }}\left( \frac{1}{\left\vert 2^{k}I\right\vert }%
\int_{I}d\omega \right) ^{\frac{p^{\prime }}{q}}\left( \frac{1}{\left\vert
2^{k}I\right\vert }\left\vert 2^{k}I\right\vert _{\sigma }\right) \right\} \\
&=&\sum_{k=0}^{\infty }\left\{ \left\vert 2^{k}I\right\vert ^{\left( \frac{%
\alpha }{m}-\Gamma \right) }\left( \frac{1}{\left\vert 2^{k}I\right\vert }%
\int_{I}d\omega \right) ^{\frac{1}{q}}\left( \frac{1}{\left\vert
2^{k}I\right\vert }\left\vert 2^{k}I\right\vert _{\sigma }\right) ^{\frac{1}{%
p^{\prime }}}\right\} ^{p^{\prime }}.
\end{eqnarray*}%
Now recall that 
\begin{equation*}
v\left( y\right) =\left\vert y\right\vert ^{-\frac{m}{q}}\text{ and }w\left(
x\right) =\left( 1+\left\vert x\right\vert \right) ^{-m},
\end{equation*}%
so that $d\omega \left( x\right) =w\left( x\right) ^{q}dx=\left(
1+\left\vert x\right\vert \right) ^{-mp^{\prime }}dx$ and $d\sigma \left(
y\right) =v\left( y\right) ^{-p^{\prime }}dy=\left\vert y\right\vert ^{\frac{%
mp^{\prime }}{q}}dy$. Then for $Q=\left[ -R,R\right] ^{m}$ we have%
\begin{eqnarray}
\mathbb{A}_{p,q}^{\alpha ,m}\left( \sigma ,\omega \right) \left[ Q\right]
&\approx &\left\vert Q\right\vert ^{\frac{\alpha }{m}-\Gamma }\left( \frac{1%
}{\left\vert Q\right\vert }\int_{Q}\left( 1+\left\vert x\right\vert \right)
^{-mp^{\prime }}dx\right) ^{\frac{1}{q}}\left( \frac{1}{\left\vert
Q\right\vert }\int_{Q}\left\vert y\right\vert ^{\frac{mp^{\prime }}{q}%
}dy\right) ^{\frac{1}{p^{\prime }}}  \label{centered} \\
&\approx &R^{\alpha -m\Gamma -m\frac{1}{q}-m\frac{1}{p^{\prime }}}\left(
\int_{0}^{R}\left( 1+r\right) ^{-\frac{mp^{\prime }}{q}}r^{m-1}dr\right) ^{%
\frac{1}{q}}\left( \int_{0}^{R}r^{\frac{mp^{\prime }}{q}+m-1}dr\right) ^{%
\frac{1}{p^{\prime }}}  \notag \\
&\approx &R^{\alpha -m}\min \left\{ R,1\right\} ^{m}R^{\frac{m}{q}+\frac{m}{%
p^{\prime }}}  \notag \\
&=&R^{\alpha +\frac{m}{q}-\frac{m}{p}}\min \left\{ R,1\right\} ^{m}\leq
C<\infty  \notag
\end{eqnarray}%
since $\alpha +\beta =\alpha +\frac{m}{q}-\frac{m}{p}=0$. Now if $Q=z+\left[
-R,R\right] ^{m}$ with $R\leq \frac{1}{10}\left\vert z\right\vert $, we have%
\begin{eqnarray*}
\mathbb{A}_{p,q}^{\alpha ,m}\left( \sigma ,\omega \right) \left[ Q\right]
&=&\left\vert Q\right\vert ^{\frac{\alpha }{m}-\Gamma }\left( \frac{1}{%
\left\vert Q\right\vert }\int_{Q}\left( 1+\left\vert x\right\vert \right)
^{-mp^{\prime }}dx\right) ^{\frac{1}{q}}\left( \frac{1}{\left\vert
Q\right\vert }\int_{Q}\left\vert y\right\vert ^{\frac{mp^{\prime }}{q}%
}dy\right) ^{\frac{1}{p^{\prime }}} \\
&\approx &R^{\alpha -m\Gamma }\left( 1+\left\vert z\right\vert \right)
^{-m}\left\vert z\right\vert ^{\frac{m}{q}}=\left( 1+\left\vert z\right\vert
\right) ^{-m}\left\vert z\right\vert ^{\frac{m}{q}}\leq C<\infty
\end{eqnarray*}%
since $0\leq \frac{m}{q}\leq m$. On the other hand, if $Q=z+\left[ -R,R%
\right] ^{m}$ with $R>\frac{1}{10}\left\vert z\right\vert $, then since $%
d\sigma \left( y\right) =\left\vert y\right\vert ^{\frac{mp^{\prime }}{q}}dy$
is doubling and $d\omega \left( x\right) =\left( 1+\left\vert x\right\vert
\right) ^{-mp^{\prime }}dx$ is radially decreasing, we have%
\begin{eqnarray*}
\mathbb{A}_{p,q}^{\alpha ,m}\left( \sigma ,\omega \right) \left[ Q\right]
&=&\left\vert Q\right\vert ^{\frac{\alpha }{m}-\Gamma }\left( \frac{1}{%
\left\vert Q\right\vert }\int_{Q}\left( 1+\left\vert x\right\vert \right)
^{-mp^{\prime }}dx\right) ^{\frac{1}{q}}\left( \frac{1}{\left\vert
Q\right\vert }\int_{Q}\left\vert y\right\vert ^{\frac{mp^{\prime }}{q}%
}dy\right) ^{\frac{1}{p^{\prime }}} \\
&\approx &\left\vert \left[ -R,R\right] ^{m}\right\vert ^{\frac{\alpha }{m}%
-\Gamma }\left( \frac{1}{\left\vert Q\right\vert }\int_{Q}\left(
1+\left\vert x\right\vert \right) ^{-mp^{\prime }}dx\right) ^{\frac{1}{q}%
}\left( \frac{1}{\left[ -R,R\right] ^{m}}\int_{\left[ -R,R\right]
^{m}}\left\vert y\right\vert ^{\frac{mp^{\prime }}{q}}dy\right) ^{\frac{1}{%
p^{\prime }}} \\
&\lesssim &\left\vert \left[ -R,R\right] ^{m}\right\vert ^{\frac{\alpha }{m}%
-\Gamma }\left( \frac{1}{\left[ -R,R\right] ^{m}}\int_{\left[ -R,R\right]
^{m}}\left( 1+\left\vert x\right\vert \right) ^{-mp^{\prime }}dx\right) ^{%
\frac{1}{q}}\left( \frac{1}{\left[ -R,R\right] ^{m}}\int_{\left[ -R,R\right]
^{m}}\left\vert y\right\vert ^{\frac{mp^{\prime }}{q}}dy\right) ^{\frac{1}{%
p^{\prime }}}
\end{eqnarray*}%
which by (\ref{centered}) is bounded by $C$.

On the other hand, with $I=\left[ -1,1\right] ^{m}$, we have%
\begin{eqnarray*}
&&\left\vert 2^{k}I\right\vert ^{\left( \frac{\alpha }{m}-\Gamma \right)
}\left( \frac{1}{\left\vert 2^{k}I\right\vert }\int_{I}d\omega \right) ^{%
\frac{1}{q}}\left( \frac{1}{\left\vert 2^{k}I\right\vert }\left\vert
2^{k}I\right\vert _{\sigma }\right) ^{\frac{1}{p^{\prime }}} \\
&\approx &\left\vert 2^{k}I\right\vert ^{\left( \frac{\alpha }{m}-\Gamma
\right) }\left( \frac{1}{\left\vert 2^{k}I\right\vert }\int_{2^{k}I}d\omega
\right) ^{\frac{1}{q}}\left( \frac{1}{\left\vert 2^{k}I\right\vert }%
\left\vert 2^{k}I\right\vert _{\sigma }\right) ^{\frac{1}{p^{\prime }}} \\
&\approx &2^{k\left[ \alpha -m\left( \frac{1}{p}-\frac{1}{q}\right) \right]
}\ 2^{-km\left[ \frac{1}{q}+\frac{1}{p^{\prime }}\right] }\left(
\int_{2^{k}I}d\sigma \right) ^{\frac{1}{p^{\prime }}}=2^{k\left( \alpha
-m\right) }\left( \int_{2^{k}I}d\sigma \right) ^{\frac{1}{p^{\prime }}}.
\end{eqnarray*}%
Thus we have 
\begin{eqnarray*}
\overline{\mathbb{A}}_{p,q}^{\alpha ,m}\left( \sigma ,\omega \right)
^{p^{\prime }} &\gtrsim &\sum_{k=0}^{\infty }\left\{ 2^{k\left( \alpha
-m\right) }\left( \int_{2^{k}I}d\sigma \right) ^{\frac{1}{p^{\prime }}%
}\right\} ^{p^{\prime }}=\sum_{k=0}^{\infty }2^{kp^{\prime }\left( \alpha
-m\right) }\left( \int_{2^{k}I}\left\vert y\right\vert ^{\frac{mp^{\prime }}{%
q}}dy\right) \\
&\approx &\sum_{k=0}^{\infty }2^{kp^{\prime }\left( \alpha -m\right) }\
2^{k\left( \frac{mp^{\prime }}{q}+m\right) }=\sum_{k=0}^{\infty
}2^{kp^{\prime }\left( \alpha +\frac{m}{q}-\frac{m}{p}\right) }=\infty
\end{eqnarray*}%
since $\alpha +\frac{m}{q}-\frac{m}{p}=0$. So we have that $A_{p,q}^{\alpha
,m}\left( v,w\right) <\infty $ and $\overline{A}_{p,q}^{\alpha ,m}\left(
v,w\right) =\infty $.

Finally note that $v^{-p^{\prime }}\in A_{p^{\prime }}$ (which is equivalent
to $v^{p}\in A_{p}$) since%
\begin{equation*}
-m<\frac{mp^{\prime }}{q}<m\left( p^{\prime }-1\right) \text{, i.e. }-\frac{q%
}{p^{\prime }}<1<\frac{q}{p}.
\end{equation*}

\section{Proof of Theorem \protect\ref{power weights}\label{Section power
weights}}

Here we give the proof of our main positive two weight result, Theorem \ref%
{power weights}, beginning with the necessity of (\ref{p at most q}) and (%
\ref{finiteness of A}) for the norm inequality (\ref{pwni}), i.e. the proof
of $\left( 1\right) \Longrightarrow \left( 2\right) $. The necessity of (\ref%
{p at most q}) follows from the inequality 
\begin{equation*}
\left\vert x-u\right\vert ^{\alpha -m}\left\vert y-t\right\vert ^{\beta
-n}\geq \left\vert \left( x-u,y-t\right) \right\vert ^{\alpha +\beta -\left(
m+n\right) },
\end{equation*}%
which shows that the $1$-parameter fractional integral $I_{\alpha +\beta
}^{m+n}f$ is dominated by the $2$-parameter fractional integral $I_{\alpha
,\beta }^{m,n}f$ when $f$ is nonnegative. Thus Theorem \ref{Stein-Weiss
sharp} shows that $p\leq q$. Local integrability of the kernel is necessary
for the norm inequality, and so $\alpha ,\beta >0$. The necessity of
finiteness of the two-tailed Muckenhoupt characteristic $\widehat{A}%
_{p,q}^{\left( \alpha ,\beta \right) ,\left( m,n\right) }\left( v_{\delta
},w_{\gamma }\right) $ now follows from Proposition \ref{tails} at the end
of the appendix, and then we use $A_{p,q}^{\left( \alpha ,\beta \right)
,\left( m,n\right) }\left( v_{\delta },w_{\gamma }\right) \leq \widehat{A}%
_{p,q}^{\left( \alpha ,\beta \right) ,\left( m,n\right) }\left( v_{\delta
},w_{\gamma }\right) $.\newline

Now we turn to proving $\left( 2\right) \Longrightarrow \left( 3\right) $,
i.e. that (\ref{p at most q}) and (\ref{finiteness of A}) imply the
conditions (\ref{Formula}), (\ref{gamma+delta}) and (\ref{star'}) on
indices, namely%
\begin{eqnarray}
&&{\frac{1}{p}}-{\frac{1}{q}}+{\frac{{\gamma +\delta }}{m+n}}={\frac{\alpha
+\beta }{m+n},}  \label{recall'} \\
&&{\gamma +\delta }\geq 0,  \notag \\
&&\beta -\frac{n}{p}<\delta \text{ and }\alpha -\frac{m}{p}<\delta \text{
when }\gamma \geq 0\geq \delta ,  \notag \\
&&\beta -\frac{n}{q^{\prime }}<\gamma \text{ and }\alpha -\frac{m}{q^{\prime
}}<\gamma \text{ when }\delta \geq 0\geq \gamma .  \notag
\end{eqnarray}%
So assume both (\ref{p at most q}) and (\ref{finiteness of A}). We begin
with the necessity of the equality in the top line of (\ref{recall'}). This
follows immediately from the calculation,%
\begin{eqnarray*}
&&\left\vert tI\right\vert ^{\frac{\alpha }{m}-1}\left\vert tJ\right\vert ^{%
\frac{\beta }{n}-1}\left( \diint\limits_{tI\times tJ}\left\vert \left(
x,y\right) \right\vert ^{-\gamma q}dxdy\right) ^{\frac{1}{q}}\left(
\diint\limits_{tI\times tJ}\left\vert \left( x,y\right) \right\vert
^{-\delta p^{\prime }}dxdy\right) ^{\frac{1}{p^{\prime }}} \\
&&t^{\alpha -m}t^{\beta -n}t^{-\gamma +\frac{m+n}{q}}t^{-\delta +\frac{m+n}{%
p^{\prime }}}\left\vert I\right\vert ^{\frac{\alpha }{m}-1}\left\vert
J\right\vert ^{\frac{\beta }{n}-1}\left( \diint\limits_{I\times J}\left\vert
\left( x,y\right) \right\vert ^{-\gamma q}dxdy\right) ^{\frac{1}{q}}\left(
\diint\limits_{I\times J}\left\vert \left( x,y\right) \right\vert ^{-\delta
p^{\prime }}dxdy\right) ^{\frac{1}{p^{\prime }}}
\end{eqnarray*}%
which implies $\alpha -m+\beta -n-\gamma +\frac{m+n}{q}-\delta +\frac{m+n}{%
p^{\prime }}=0$. Next we note that power weights have support equal to the
entire Euclidean space, and so Remark \ref{disjoint support} shows that%
\begin{equation*}
\frac{\alpha }{m},\frac{\beta }{n}\geq \frac{1}{p}-\frac{1}{q},
\end{equation*}%
and combining this with (\ref{Formula}) gives the second line in (\ref%
{recall'}). Finally, we turn to proving the necessity of the third and
fourth lines in (\ref{recall'}).

For this, we consider $P\times Q$ to be centered at the origin. Define the
truncated cubes $P^{\varepsilon }=P\setminus \{|x|<\varepsilon \}\subset 
\mathbb{R}^{m}$ and $Q^{\varepsilon }=Q\setminus \{|y|<\varepsilon \}\subset 
\mathbb{R}^{n}$ for some $\varepsilon >0$. Let $0<{\lambda }<1$. We set $%
|Q|^{\frac{1}{n}}=1$ and $|P|^{\frac{1}{m}}={\lambda }$. Suppose that ${%
\frac{\alpha }{m}}-\left( {\frac{1}{p}}-{\frac{1}{q}}\right) >0$. Then we
have%
\begin{eqnarray}
&&  \label{A_pq Decay Est} \\
&&\lim_{{\lambda }\rightarrow 0}|P|^{{\frac{\alpha }{m}}-\left( {\frac{1}{p}}%
-{\frac{1}{q}}\right) }|Q|^{{\frac{\beta }{n}}-\left( {\frac{1}{p}}-{\frac{1%
}{q}}\right) }\left\{ {\frac{1}{|P||Q|}}\iint_{P\times Q^{\varepsilon
}}\left( {\frac{1}{|x|+|y|}}\right) ^{{\gamma }q}dxdy\right\} ^{\frac{1}{q}%
}\left\{ {\frac{1}{|P||Q|}}\int_{P\times Q^{\varepsilon }}\left( {\frac{1}{%
|x|+|y|}}\right) ^{\delta \left( {\frac{p}{p-1}}\right) }dxdy\right\} ^{%
\frac{p-1}{p}}  \notag \\
&=&\lim_{{\lambda }\rightarrow 0}{\lambda }^{\alpha -m\left( {\frac{1}{p}}-{%
\frac{1}{q}}\right) }\left\{ {\frac{1}{|P||Q|}}\iint_{P\times Q^{\varepsilon
}}\left( {\frac{1}{|x|+|y|}}\right) ^{{\gamma }q}dxdy\right\} ^{\frac{1}{q}%
}\left\{ {\frac{1}{|P||Q|}}\int_{P\times Q^{\varepsilon }}\left( {\frac{1}{%
|x|+|y|}}\right) ^{\delta \left( {\frac{p}{p-1}}\right) }dxdy\right\} ^{%
\frac{p-1}{p}}  \notag \\
&=&\left( \lim_{{\lambda }\rightarrow 0}{\lambda }^{\alpha -m\left( {\frac{1%
}{p}}-{\frac{1}{q}}\right) }\right) \left\{ \int_{Q^{\varepsilon }}\left( {%
\frac{1}{|y|}}\right) ^{{\gamma }q}dy\right\} ^{\frac{1}{q}}\left\{
\int_{Q^{\varepsilon }}\left( {\frac{1}{|y|}}\right) ^{\delta \left( {\frac{p%
}{p-1}}\right) }dy\right\} ^{\frac{p-1}{p}}=0\text{ for every }\varepsilon
>0.  \notag
\end{eqnarray}

\textbf{Case 1:} Suppose ${\gamma }>0,\delta \leq 0$. Let $|Q|^{\frac{1}{n}%
}=1$ and $|P|^{\frac{1}{m}}={\lambda }$. Suppose $\alpha -m\left( {\frac{1}{p%
}}-{\frac{1}{q}}\right) =0$. Then we have%
\begin{eqnarray}
&&  \label{Characteristic Est1 Case1} \\
&&|P|^{{\frac{\alpha }{m}}-\left( {\frac{1}{p}}-{\frac{1}{q}}\right) }|Q|^{{%
\frac{\beta }{n}}-\left( {\frac{1}{p}}-{\frac{1}{q}}\right) }\left\{ {\frac{1%
}{|P||Q|}}\iint_{P\times Q^{\varepsilon }}\left( {\frac{1}{|x|+|y|}}\right)
^{{\gamma }q}dxdy\right\} ^{\frac{1}{q}}\left\{ {\frac{1}{|P||Q|}}%
\int_{P\times Q^{\varepsilon }}\left( {\frac{1}{|x|+|y|}}\right) ^{\delta
\left( {\frac{p}{p-1}}\right) }dxdy\right\} ^{\frac{p-1}{p}}  \notag \\
&\gtrsim &\left\{ \int_{Q}\left( {\frac{1}{{\lambda }+|y|}}\right) ^{{\gamma 
}q}dy\right\} ^{\frac{1}{q}}\left\{ \int_{Q}\left( {\frac{1}{|y|}}\right)
^{\delta \left( {\frac{p}{p-1}}\right) }dy\right\} ^{\frac{p-1}{p}}\gtrsim
\left\{ \int_{{\lambda }<|y|\leq 1}\left( {\frac{1}{{\lambda }+|y|}}\right)
^{{\gamma }q}dy\right\} ^{\frac{1}{q}}.  \notag
\end{eqnarray}
A direct computation shows 
\begin{equation}
\int_{{\lambda }<|y|\leq 1}\left( {\frac{1}{{\lambda }+|y|}}\right) ^{{%
\gamma }q}dy\approx \ln \left( {\frac{1+{\lambda }}{2{\lambda }}}\right) 
\text{\ if }{\gamma }={\frac{n}{q}}  \label{Necessary Case1 Est1}
\end{equation}%
and 
\begin{equation}
\int_{{\lambda }<|x_{i}|\leq 1}\left( {\frac{1}{{\lambda }+|y|}}\right) ^{{%
\gamma }q}dy\approx \left( {\frac{1}{2{\lambda }}}\right) ^{{\gamma }%
q-n}-\left( {\frac{1}{{\lambda }+1}}\right) ^{{\gamma }q-n}\text{\ if }{%
\gamma }>{\frac{n}{q}}.  \label{Necessary Case1 Est2}
\end{equation}%
Using (\ref{Characteristic Est1 Case1}), (\ref{Necessary Case1 Est1}) and (%
\ref{Necessary Case1 Est2}), and letting ${\lambda }\rightarrow 0$, we
obtain that 
\begin{equation}
{\gamma }<{\frac{n}{q}}.  \label{Necessity est1}
\end{equation}%
On the other hand, suppose that $\alpha -m\left( {\frac{1}{p}}-{\frac{1}{q}}%
\right) >0$. Then we have%
\begin{eqnarray}
&&  \label{Characteristic Est2 Case1} \\
&&|P|^{{\frac{\alpha }{m}}-\left( {\frac{1}{p}}-{\frac{1}{q}}\right) }|Q|^{{%
\frac{\beta }{n}}-\left( {\frac{1}{p}}-{\frac{1}{q}}\right) }\left\{ {\frac{1%
}{|P||Q|}}\iint_{P\times Q^{\varepsilon }}\left( {\frac{1}{|x|+|y|}}\right)
^{{\gamma }q}dxdy\right\} ^{\frac{1}{q}}\left\{ {\frac{1}{|P||Q|}}%
\int_{P\times Q^{\varepsilon }}\left( {\frac{1}{|x|+|y|}}\right) ^{\delta
\left( {\frac{p}{p-1}}\right) }dxdy\right\} ^{\frac{p-1}{p}}  \notag \\
&\gtrsim &\left( {\lambda }\right) ^{\alpha -m\left( {\frac{1}{p}}-{\frac{1}{%
q}}\right) }\left\{ \int_{Q}\left( {\frac{1}{{\lambda }+|y|}}\right) ^{{%
\gamma }q}dy\right\} ^{\frac{1}{q}}\left\{ \int_{Q}\left( {\frac{1}{|y|}}%
\right) ^{\delta \left( {\frac{p}{p-1}}\right) }dy\right\} ^{\frac{p-1}{p}} 
\notag \\
&\gtrsim &\left( {\lambda }\right) ^{\alpha -m\left( {\frac{1}{p}}-{\frac{1}{%
q}}\right) }\left\{ \int_{0<|y|\leq {\lambda }}\left( {\frac{1}{{\lambda }}}%
\right) ^{{\gamma }q}dy\right\} ^{\frac{1}{q}}\gtrsim \left( {\lambda }%
\right) ^{{\frac{n}{q}}-{\gamma }+\alpha -m\left( {\frac{1}{p}}-{\frac{1}{q}}%
\right) }.  \notag
\end{eqnarray}
Recall the estimate in (\ref{A_pq Decay Est}). Now we note that (\ref%
{Characteristic Est2 Case1}) converges to zero as ${\lambda }\longrightarrow
0$. By putting (\ref{Characteristic Est2 Case1}) together with (\ref%
{Necessity est1}), we obtain 
\begin{equation}
{\gamma }<{\frac{n}{q}}+\alpha -m\left( {\frac{1}{p}}-{\frac{1}{q}}\right) .
\label{Necessary Est1}
\end{equation}%
The formula in (\ref{Formula}) implies that (\ref{Necessary Est1}) is
equivalent to 
\begin{equation}
\beta -{\frac{n}{p}}<\delta .  \label{Necessary Est1*}
\end{equation}%
Switching the roles of $P$ and $Q$ in the argument above shows that 
\begin{equation}
\alpha -{\frac{m}{p}}<\delta .  \label{Necessary Est1**}
\end{equation}

\textbf{Case Two:} Consider ${\gamma }\leq 0,\delta >0$. Let $|Q|^{\frac{1}{n%
}}=1$ and $|P|^{\frac{1}{m}}={\lambda }$. Suppose $\alpha -m\left( {\frac{1}{%
p}}-{\frac{1}{q}}\right) =0$. Then we have%
\begin{eqnarray}
&&  \label{Characteristic Est1 Case2} \\
&&|P|^{{\frac{\alpha }{m}}-\left( {\frac{1}{p}}-{\frac{1}{q}}\right) }|Q|^{{%
\frac{\beta }{n}}-\left( {\frac{1}{p}}-{\frac{1}{q}}\right) }\left\{ {\frac{1%
}{|P||Q|}}\iint_{P\times Q^{\varepsilon }}\left( {\frac{1}{|x|+|y|}}\right)
^{{\gamma }q}dxdy\right\} ^{\frac{1}{q}}\left\{ {\frac{1}{|P||Q|}}%
\int_{P\times Q^{\varepsilon }}\left( {\frac{1}{|x|+|y|}}\right) ^{\delta
\left( {\frac{p}{p-1}}\right) }dxdy\right\} ^{\frac{p-1}{p}}  \notag \\
&\gtrsim &\left\{ \int_{Q}\left( {\frac{1}{|y|}}\right) ^{{\gamma }%
q}dy\right\} ^{\frac{1}{q}}\left\{ \int_{Q}\left( {\frac{1}{{\lambda }+|y|}}%
\right) ^{\delta \left( {\frac{p}{p-1}}\right) }dy\right\} ^{\frac{p-1}{p}%
}\gtrsim \left\{ \int_{{\lambda }<|y|\leq 1}\left( {\frac{1}{{\lambda }+|y|}}%
\right) ^{\delta \left( {\frac{p}{p-1}}\right) }dy\right\} ^{\frac{p-1}{p}}.
\notag
\end{eqnarray}%
A direct computation shows 
\begin{equation}
\int_{{\lambda }<|y|\leq 1}\left( {\frac{1}{{\lambda }+|y|}}\right) ^{\delta
\left( {\frac{p}{p-1}}\right) }dy\approx \ln \left( {\frac{1+{\lambda }}{2{%
\lambda }}}\right) \text{ if }\delta =n\left( {\frac{p-1}{p}}\right)
\label{Necessary Case2 Est1}
\end{equation}%
and 
\begin{equation}
\int_{{\lambda }<|y|\leq 1}\left( {\frac{1}{{\lambda }+|y|}}\right) ^{\delta
\left( {\frac{p}{p-1}}\right) }dy\approx \left( {\frac{1}{2{\lambda }}}%
\right) ^{\delta \left( {\frac{p}{p-1}}\right) -n}-\left( {\frac{1}{{\lambda 
}+1}}\right) ^{\delta \left( {\frac{p}{p-1}}\right) -n}\text{ if }\delta
>n\left( {\frac{p-1}{p}}\right) .  \label{Necessary Case2 Est2}
\end{equation}%
Using (\ref{Characteristic Est1 Case2}), (\ref{Necessary Case2 Est1}) and (%
\ref{Necessary Case2 Est2}), and letting ${\lambda }\longrightarrow 0$, we
obtain 
\begin{equation}
\delta <n\left( {\frac{p-1}{p}}\right) .  \label{Necessity est2}
\end{equation}%
On the other hand, suppose that $\alpha -m\left( {\frac{1}{p}}-{\frac{1}{q}}%
\right) >0$. Then we have%
\begin{eqnarray}
&&  \label{Characteristic Est2 Case2} \\
&&|P|^{{\frac{\alpha }{m}}-\left( {\frac{1}{p}}-{\frac{1}{q}}\right) }|Q|^{{%
\frac{\beta }{n}}-\left( {\frac{1}{p}}-{\frac{1}{q}}\right) }\left\{ {\frac{1%
}{|P||Q|}}\iint_{P\times Q^{\varepsilon }}\left( {\frac{1}{|x|+|y|}}\right)
^{{\gamma }q}dxdy\right\} ^{\frac{1}{q}}\left\{ {\frac{1}{|P||Q|}}%
\int_{P\times Q^{\varepsilon }}\left( {\frac{1}{|x|+|y|}}\right) ^{\delta
\left( {\frac{p}{p-1}}\right) }dxdy\right\} ^{\frac{p-1}{p}}  \notag \\
&\gtrsim &\left( {\lambda }\right) ^{\alpha -m\left( {\frac{1}{p}}-{\frac{1}{%
q}}\right) }\left\{ \int_{Q}\left( {\frac{1}{|y|}}\right) ^{{\gamma }%
q}dy\right\} ^{\frac{1}{q}}\left\{ \int_{Q}\left( {\frac{1}{{\lambda }+|y|}}%
\right) ^{\delta \left( {\frac{p}{p-1}}\right) }dy\right\} ^{\frac{p-1}{p}} 
\notag \\
&\gtrsim &\left( {\lambda }\right) ^{\alpha -m\left( {\frac{1}{p}}-{\frac{1}{%
q}}\right) }\left\{ \int_{0<|y|\leq {\lambda }}\left( {\frac{1}{{\lambda }}}%
\right) ^{\delta \left( {\frac{p}{p-1}}\right) }dy\right\} ^{\frac{p-1}{p}%
}\gtrsim \left( {\lambda }\right) ^{\left( {\frac{p-1}{p}}\right) n-\delta
+\alpha -m\left( {\frac{1}{p}}-{\frac{1}{q}}\right) }.  \notag
\end{eqnarray}%
Recall the estimate in (\ref{A_pq Decay Est}). Now we note that (\ref%
{Characteristic Est2 Case2}) converges to zero as ${\lambda }\longrightarrow
0$. By putting (\ref{Characteristic Est2 Case2}) together with (\ref%
{Necessity est2}), we have 
\begin{equation}
\delta <n\left( {\frac{p-1}{p}}\right) +\alpha -m\left( {\frac{1}{p}}-{\frac{%
1}{q}}\right) .  \label{Necessary Est2}
\end{equation}%
The formula in (\ref{Formula}) implies that (\ref{Necessary Est2}) is
equivalent to 
\begin{equation}
\beta -n\left( {\frac{q-1}{q}}\right) <{\gamma }.  \label{Necessary Est2*}
\end{equation}%
Switching the roles of $P$ and $Q$ in the argument above shows that 
\begin{equation}
\alpha -m\left( {\frac{q-1}{q}}\right) <{\gamma }.  \label{Necessary Est2**}
\end{equation}%
This completes the proof that (\ref{recall'}) is necessary for (\ref{p at
most q}) and (\ref{finiteness of A}), and hence we have proved $\left(
2\right) \Longrightarrow \left( 3\right) $.\newline

Now we turn to proving $\left( 3\right) \Longrightarrow \left( 1\right) $,
i.e. that (\ref{p at most q}) and (\ref{recall'}) are sufficient for the
norm inequality (\ref{pwni}). To this end we use Young's inequality 
\begin{equation}
a^{1-\theta }b^{\theta }\leq \sqrt{a^{2}+b^{2}}\leq a+b,  \label{Young}
\end{equation}%
valid for $a,b\geq 0$ and $0\leq \theta \leq 1$, in order to define weight
pairs to which the sandwiching principle can be applied. The special cases $%
\alpha =m$ and $\beta =n$, along with some additional exceptional cases,
will be treated at the end of the proof.

\subsection{The nonexceptional cases}

We assume that $\alpha <m$ and $\beta <n$ and show here that $%
N_{p,q}^{\left( \alpha ,\beta \right) ,\left( m,n\right) }\left( v_{\delta
},w_{\gamma }\right) <\infty $ follows from (\ref{p at most q}), (\ref%
{Formula}), (\ref{gamma+delta}) and (\ref{star'}) when both $\gamma \geq 0$
and $\delta \geq 0$, and also when either $\gamma <0$ or $\delta <0$.

\textbf{Case 1}: First we suppose that $\gamma \geq 0$ and $\delta \geq 0$.
Define the weight pair 
\begin{equation*}
\left( V\left( u,t\right) ,W\left( x,y\right) \right) \equiv \left(
\left\vert u\right\vert ^{\frac{\delta _{1}m}{m+n}}\left\vert t\right\vert ^{%
\frac{\delta _{2}n}{m+n}},\left( \frac{1}{\left\vert x\right\vert }\right) ^{%
\frac{\gamma _{1}m}{m+n}}\left( \frac{1}{\left\vert y\right\vert }\right) ^{%
\frac{\gamma _{2}n}{m+n}}\right)
\end{equation*}%
where the indices $\delta _{1}$, $\delta _{2}$, $\gamma _{1}$, $\gamma _{2}$
satisfy%
\begin{equation}
\frac{\delta _{1}m}{m+n}+\frac{\delta _{2}n}{m+n}=\delta \text{ and }\frac{%
\gamma _{1}m}{m+n}+\frac{\gamma _{2}n}{m+n}=\gamma ,  \label{prov 1}
\end{equation}%
and%
\begin{eqnarray}
\frac{\alpha -\left( \frac{\gamma _{1}m}{m+n}+\frac{\delta _{1}m}{m+n}%
\right) }{m} &=&\Gamma =\frac{\beta -\left( \frac{\gamma _{2}n}{m+n}+\frac{%
\delta _{2}n}{m+n}\right) }{n},  \label{prov 2} \\
\text{i.e. }\frac{\gamma _{1}m}{m+n}+\frac{\delta _{1}m}{m+n} &=&\alpha
-m\Gamma \text{ and }\frac{\gamma _{2}n}{m+n}+\frac{\delta _{2}n}{m+n}=\beta
-n\Gamma  \notag
\end{eqnarray}%
Solving for 
\begin{equation*}
\bigtriangleup _{1}\equiv \frac{\delta _{1}m}{m+n},\ \ \ \bigtriangleup
_{2}\equiv \frac{\delta _{2}n}{m+n},\ \ \ \Gamma _{1}\equiv \frac{\gamma
_{1}m}{m+n}\gamma _{1},\text{\ \ \ }\Gamma _{2}\equiv \frac{\gamma _{2}n}{m+n%
},
\end{equation*}%
we obtain the system%
\begin{equation}
\left[ 
\begin{array}{cccc}
1 & 1 & 0 & 0 \\ 
0 & 1 & 0 & 1 \\ 
0 & 0 & 1 & 1 \\ 
1 & 0 & 1 & 0%
\end{array}%
\right] \left( 
\begin{array}{c}
\bigtriangleup _{1} \\ 
\bigtriangleup _{2} \\ 
\Gamma _{1} \\ 
\Gamma _{2}%
\end{array}%
\right) =\left( 
\begin{array}{c}
\delta \\ 
\beta -n\Gamma \\ 
\gamma \\ 
\alpha -m\Gamma%
\end{array}%
\right) ,  \label{system}
\end{equation}%
which in reduced row echelon form is%
\begin{equation*}
\left[ 
\begin{array}{cccc}
1 & 0 & 0 & -1 \\ 
0 & 1 & 0 & 1 \\ 
0 & 0 & 1 & 1 \\ 
0 & 0 & 0 & 0%
\end{array}%
\right] \left( 
\begin{array}{c}
\bigtriangleup _{1} \\ 
\bigtriangleup _{2} \\ 
\Gamma _{1} \\ 
\Gamma _{2}%
\end{array}%
\right) =\left( 
\begin{array}{c}
\delta -\beta +n\Gamma \\ 
\beta -n\Gamma \\ 
\gamma \\ 
\alpha -m\Gamma -\delta +\beta -n\Gamma%
\end{array}%
\right) .
\end{equation*}%
The system is solvable since $\alpha -m\Gamma -\delta +\beta -n\Gamma =0$ by
the power weight equality in the first line of (\ref{3 lines}), and the
general solution to the system (\ref{system}) is thus given by%
\begin{equation*}
\left( 
\begin{array}{c}
\bigtriangleup _{1} \\ 
\bigtriangleup _{2} \\ 
\Gamma _{1} \\ 
\Gamma _{2}%
\end{array}%
\right) =\left( 
\begin{array}{c}
\delta -\beta +n\Gamma \\ 
\beta -n\Gamma \\ 
\gamma \\ 
0%
\end{array}%
\right) +\lambda \left( 
\begin{array}{c}
-1 \\ 
1 \\ 
1 \\ 
-1%
\end{array}%
\right) \equiv \mathbf{z}_{\lambda },\ \ \ \ \ \lambda \in \mathbb{R}.
\end{equation*}%
Among these solution vectors $\mathbf{z}_{\lambda }$, we will find a vector
satisfying all of the constraint inequalities needed below.

Now by Young's inequality (\ref{Young}) and (\ref{prov 1}), we have%
\begin{equation*}
\frac{w\left( x,y\right) }{v\left( u,t\right) }\leq \frac{W\left( x,y\right) 
}{V\left( u,t\right) },
\end{equation*}%
and so by the sandwiching principle, Lemma \ref{sandwich}, we have 
\begin{equation}
N_{p,q}^{\left( \alpha ,\beta \right) ,\left( m,n\right) }\left( v,w\right)
\leq N_{p,q}^{\left( \alpha ,\beta \right) ,\left( m,n\right) }\left(
V,W\right) .  \label{N sand}
\end{equation}%
Moreover, the weights $V,W$ are product weights,%
\begin{eqnarray*}
V\left( u,t\right) &=&V_{1}\left( u\right) V_{2}\left( t\right) \text{ and }%
W\left( x,y\right) =W_{1}\left( x\right) W_{2}\left( y\right) ; \\
V_{1}\left( u\right) &=&\left\vert u\right\vert ^{\frac{\delta _{1}m}{m+n}%
},\ \ \ V_{2}\left( t\right) =\left\vert t\right\vert ^{\frac{\delta _{2}n}{%
m+n}}, \\
W_{1}\left( x\right) &=&\left( \frac{1}{\left\vert x\right\vert }\right) ^{%
\frac{\gamma _{1}m}{m+n}},\ \ \ W_{2}\left( y\right) =\left( \frac{1}{%
\left\vert y\right\vert }\right) ^{\frac{\gamma _{2}n}{m+n}},
\end{eqnarray*}%
where the $1$-parameter weight pairs $\left( V_{1}\left( u\right)
,W_{1}\left( x\right) \right) $ and $\left( V_{2}\left( t\right)
,W_{2}\left( y\right) \right) $ each satisfy the hypotheses of Theorem \ref%
{Stein-Weiss} on $\mathbb{R}^{m}$ and $\mathbb{R}^{n}$ respectively for an
appropriate choice of solution vector above.

Indeed, to see this, note that the first weight pair $\left( V_{1}\left(
u\right) ,W_{1}\left( x\right) \right) =\left( \left\vert u\right\vert
^{\bigtriangleup _{1}},\left\vert x\right\vert ^{-\Gamma _{1}}\right) $ on $%
\mathbb{R}^{m}$ satisfies the equality 
\begin{equation}
\frac{1}{p}-\frac{1}{q}=\frac{\alpha -\left( \Gamma _{1}+\bigtriangleup
_{1}\right) }{m}  \label{V1 W1 equ}
\end{equation}%
by (\ref{prov 2}). We also claim the inequalities%
\begin{equation}
q\Gamma _{1}<m\text{ and }p^{\prime }\bigtriangleup _{1}<m\text{ and }\Gamma
_{1}+\bigtriangleup _{1}\geq 0,  \label{V1 W1 inequ}
\end{equation}%
for an appropriate family of solution vectors $\mathbf{z}_{\lambda }$. The
third inequality actually holds for all solution vectors $\mathbf{z}%
_{\lambda }$ since 
\begin{equation*}
\Gamma _{1}+\bigtriangleup _{1}=\gamma +\delta -\beta +n\Gamma \geq 0,
\end{equation*}%
by (\ref{3 lines}). Thus the equality (\ref{V1 W1 equ}), and the
inequalities (\ref{V1 W1 inequ}), all hold for those solution vectors $%
\mathbf{z}_{\lambda }$ satisfying%
\begin{eqnarray}
&&\Gamma _{1}<\frac{m}{q}\text{ and }\bigtriangleup _{1}<\frac{m}{p^{\prime }%
};  \label{Sol 1} \\
&\text{i.e }&\gamma +\lambda <\frac{m}{q}\text{ and }\delta -\beta +n\Gamma
-\lambda <\frac{m}{p^{\prime }};  \notag \\
&\text{i.e.}&\delta -\beta +n\Gamma -\frac{m}{p^{\prime }}<\lambda <\frac{m}{%
q}-\gamma .  \notag
\end{eqnarray}

The second weight pair $\left( V_{2}\left( t\right) ,W_{2}\left( y\right)
\right) =\left( \left\vert t\right\vert ^{\bigtriangleup _{2}},\left\vert
y\right\vert ^{-\Gamma _{2}}\right) $ on $\mathbb{R}^{n}$ satisfies the
equality 
\begin{equation}
\frac{1}{p}-\frac{1}{q}=\frac{\alpha -\left( \Gamma _{2}+\bigtriangleup
_{2}\right) }{n}  \label{V2 W2 equ}
\end{equation}%
by (\ref{prov 2}). We also claim the inequalities%
\begin{equation}
q\Gamma _{2}<n\text{ and }p^{\prime }\bigtriangleup _{2}<n\text{ and }\Gamma
_{2}+\bigtriangleup _{2}\geq 0,  \label{V2 W2 inequ}
\end{equation}%
for an appropriate family of solution vectors $\mathbf{z}_{\lambda }$. The
third inequality actually holds for all solution vectors $\mathbf{z}%
_{\lambda }$ since 
\begin{equation*}
\Gamma _{2}+\bigtriangleup _{2}=\alpha +\beta -\left( m+n\right) \Gamma
=\gamma +\delta \geq 0,
\end{equation*}%
by (\ref{3 lines}). Thus the equality (\ref{V2 W2 equ}), and the
inequalities (\ref{V2 W2 inequ}), all hold for those solution vectors $%
\mathbf{z}_{\lambda }$ satisfying%
\begin{eqnarray}
&&\Gamma _{2}<\frac{n}{q}\text{ and }\bigtriangleup _{2}<\frac{n}{p^{\prime }%
};  \label{Sol 2} \\
&\text{i.e.}&-\lambda <\frac{n}{q}\text{ and }\beta -n\Gamma +\lambda <\frac{%
n}{p^{\prime }};  \notag \\
&\text{i.e.}&-\frac{n}{q}<\lambda <\frac{n}{p^{\prime }}-\beta +n\Gamma . 
\notag
\end{eqnarray}

In order to find $\lambda $ satisfying (\ref{Sol 1}) and (\ref{Sol 2})
simultaneously, we must establish the four strict inequalities in%
\begin{equation*}
\max \left\{ \delta -\beta +n\Gamma -\frac{m}{p^{\prime }},-\frac{n}{q}%
\right\} <\min \left\{ \frac{m}{q}-\gamma ,\frac{n}{p^{\prime }}-\beta
+n\Gamma \right\} .
\end{equation*}%
Now two of these four strict inequalities follow from the local
integrability of the weights $\left\vert \left( x,y\right) \right\vert
^{-\gamma q}$ and $\left\vert \left( u,t\right) \right\vert ^{-\gamma
p^{\prime }}$ on $\mathbb{R}^{m}\times \mathbb{R}^{n}$, namely%
\begin{eqnarray*}
\delta -\beta +n\Gamma -\frac{m}{p^{\prime }} &<&\frac{n}{p^{\prime }}-\beta
+n\Gamma \text{ and }-\frac{n}{q}<\frac{m}{q}-\gamma ; \\
\text{i.e. }\delta &<&\frac{m+n}{p^{\prime }}\text{ and }\gamma <\frac{m+n}{q%
}.
\end{eqnarray*}%
The other two strict inequalities follow from the assumptions that $\alpha
<m $ and $\beta <n$, namely%
\begin{eqnarray*}
&&\delta -\beta +n\Gamma -\frac{m}{p^{\prime }}<\frac{m}{q}-\gamma \text{
and }-\frac{n}{q}<\frac{n}{p^{\prime }}-\beta +n\Gamma ; \\
\text{i.e. } &&\delta +\gamma <\beta -n\Gamma +m\left( \frac{1}{p^{\prime }}+%
\frac{1}{q}\right) \text{ and }\beta <n\left( \frac{1}{p^{\prime }}+\frac{1}{%
q}\right) +n\Gamma ; \\
\text{i.e. } &&\delta +\gamma <\beta -n\Gamma +m\left( 1-\Gamma \right) 
\text{ and }\beta <n\left( 1-\Gamma \right) +n\Gamma ; \\
\text{i.e. } &&\left( m+n\right) \Gamma +\left( \delta +\gamma \right)
<m+\beta \text{ and }\beta <n; \\
\text{i.e. } &&\alpha +\beta <m+\beta \text{ and }\beta <n,
\end{eqnarray*}%
where in the final line above we have used the power weight equality from
the first line of (\ref{3 lines}).

Thus there does indeed exist a choice of $\gamma \in \mathbb{R}$ so that the
equalities (\ref{V1 W1 equ}), (\ref{V2 W2 equ}) and inequalities (\ref{V1 W1
inequ}), (\ref{V2 W2 inequ}) all hold. It now follows from Theorem \ref%
{iteration} that%
\begin{eqnarray*}
N_{p,q}^{\left( \alpha ,\beta \right) ,\left( m,n\right) }\left( V,W\right)
&\leq &N_{p,q}^{\alpha ,m}\left( V_{1},W_{1}\right) \ N_{p,q}^{\beta
,n}\left( V_{2},W_{2}\right) \\
&\leq &CA_{p,q}^{\alpha ,m}\left( V_{1},W_{1}\right) \ A_{p,q}^{\beta
,n}\left( V_{2},W_{2}\right) <\infty ,
\end{eqnarray*}%
and combined with (\ref{N sand}) this yields%
\begin{equation*}
N_{p,q}^{\left( \alpha ,\beta \right) ,\left( m,n\right) }\left( v,w\right)
<\infty ,
\end{equation*}%
in the case $\gamma \geq 0$ and $\delta \geq 0$.

\textbf{Case 2}: Next we suppose that $\gamma <0<\delta $ and use the fourth
line in (\ref{recall'}). Let $\rho \equiv \gamma +$ $\delta \geq 0$ and $%
\eta \equiv -\gamma >0$. Then by Young's inequality (\ref{Young}), the
weight pairs%
\begin{eqnarray*}
\left( V\left( u,t\right) ,W\left( x,y\right) \right) &\equiv &\left(
\left\vert u\right\vert ^{\frac{\rho _{1}m}{m+n}+\eta _{1}}\left\vert
t\right\vert ^{\frac{\rho _{1}n}{m+n}},\left\vert x\right\vert ^{\eta
}\right) , \\
\left( V^{\prime }\left( u,t\right) ,W^{\prime }\left( x,y\right) \right)
&\equiv &\left( \left\vert u\right\vert ^{\frac{\rho _{2}m}{m+n}}\left\vert
t\right\vert ^{\frac{\rho _{2}n}{m+n}+\eta _{2}},\left\vert y\right\vert
^{\eta }\right) ,
\end{eqnarray*}%
where $\rho _{j}+\eta _{j}=\rho +\eta =\delta >0$ for $j=1,2$, satisfy%
\begin{eqnarray*}
\frac{w\left( x,y\right) }{v\left( u,t\right) } &=&\frac{\left( \left\vert
x\right\vert ^{2}+\left\vert y\right\vert ^{2}\right) ^{-\frac{\gamma }{2}}}{%
\left( \left\vert u\right\vert ^{2}+\left\vert t\right\vert ^{2}\right) ^{%
\frac{\delta }{2}}}=\frac{\left( \left\vert x\right\vert ^{2}+\left\vert
y\right\vert ^{2}\right) ^{\frac{\eta }{2}}}{\left( \left\vert u\right\vert
^{2}+\left\vert t\right\vert ^{2}\right) ^{\frac{\rho +\eta }{2}}} \\
&\lesssim &\frac{\left\vert x\right\vert ^{\eta }+\left\vert y\right\vert
^{\eta }}{\left( \left\vert u\right\vert ^{2}+\left\vert t\right\vert
^{2}\right) ^{\frac{\rho _{1}}{2}}\left( \left\vert u\right\vert
^{2}+\left\vert t\right\vert ^{2}\right) ^{\frac{\eta _{1}}{2}}}+\frac{%
\left\vert x\right\vert ^{\eta }+\left\vert y\right\vert ^{\eta }}{\left(
\left\vert u\right\vert ^{2}+\left\vert t\right\vert ^{2}\right) ^{\frac{%
\rho _{2}}{2}}\left( \left\vert u\right\vert ^{2}+\left\vert t\right\vert
^{2}\right) ^{\frac{\eta _{2}}{2}}} \\
&\lesssim &\frac{\left\vert x\right\vert ^{\eta }}{\left( \left\vert
u\right\vert ^{\frac{m}{m+n}}\left\vert t\right\vert ^{\frac{n}{m+n}}\right)
^{\rho _{1}}\left\vert u\right\vert ^{\eta _{1}}}+\frac{\left\vert
y\right\vert ^{\eta }}{\left( \left\vert u\right\vert ^{\frac{m}{m+n}%
}\left\vert t\right\vert ^{\frac{n}{m+n}}\right) ^{\rho _{2}}\left\vert
t\right\vert ^{\eta _{2}}}=\frac{W\left( x,y\right) }{V\left( u,t\right) }+%
\frac{W^{\prime }\left( x,y\right) }{V^{\prime }\left( u,t\right) },
\end{eqnarray*}%
and so by the sandwiching principle, Lemma \ref{sandwich}, we have 
\begin{equation*}
N_{p,q}^{\left( \alpha ,\beta \right) ,\left( m,n\right) }\left( v,w\right)
\leq N_{p,q}^{\left( \alpha ,\beta \right) ,\left( m,n\right) }\left(
V,W\right) +N_{p,q}^{\left( \alpha ,\beta \right) ,\left( m,n\right) }\left(
V^{\prime },W^{\prime }\right) .
\end{equation*}%
Moreover, the weights $V,V^{\prime },W,W^{\prime }$ are product weights,%
\begin{eqnarray*}
V\left( u,t\right) &=&V_{1}\left( u\right) V_{2}\left( t\right) \text{ and }%
V^{\prime }\left( u,t\right) =V_{1}^{\prime }\left( u\right) V_{2}^{\prime
}\left( t\right) , \\
W\left( x,y\right) &=&W_{1}\left( x\right) W_{2}\left( y\right) \text{ and }%
W^{\prime }\left( x,y\right) =W_{1}^{\prime }\left( x\right) W_{2}^{\prime
}\left( y\right) ,
\end{eqnarray*}%
where%
\begin{equation*}
\left\{ 
\begin{array}{cccc}
V_{1}\left( u\right) =\left\vert u\right\vert ^{\frac{\rho _{1}m}{m+n}+\eta
_{1}} & V_{2}\left( t\right) =\left\vert t\right\vert ^{\frac{\rho _{1}n}{m+n%
}} & V_{1}^{\prime }\left( u\right) =\left\vert u\right\vert ^{\frac{\rho
_{2}m}{m+n}} & V_{2}^{\prime }\left( t\right) =\left\vert t\right\vert ^{%
\frac{\rho _{2}n}{m+n}+\eta _{2}} \\ 
W_{1}\left( x\right) =\left\vert x\right\vert ^{\eta } & W_{2}\left(
y\right) =1 & W_{1}^{\prime }\left( x\right) =1 & W_{2}^{\prime }\left(
y\right) =\left\vert y\right\vert ^{\eta }%
\end{array}%
\right. ,
\end{equation*}%
and where the $1$-parameter weight pairs $\left( V_{1}\left( u\right)
,W_{1}\left( x\right) \right) ,\left( V_{1}^{\prime }\left( u\right)
,W_{1}^{\prime }\left( x\right) \right) $ on $\mathbb{R}^{m}$ and $\left(
V_{2}\left( t\right) ,W_{2}\left( y\right) \right) ,\left( V_{2}^{\prime
}\left( t\right) ,W_{2}^{\prime }\left( y\right) \right) $ on $\mathbb{R}%
^{n} $ each satisfy the hypotheses of Theorem \ref{Stein-Weiss} provided we
choose $\rho _{1}$ and $\eta _{1}$ to satisfy%
\begin{eqnarray}
\frac{\alpha -\left( -\eta +\frac{\rho _{1}m}{m+n}+\eta _{1}\right) }{m}
&=&\Gamma =\frac{\alpha +\beta -\rho }{m+n},  \label{satisfy} \\
\text{and }\frac{\beta -\left( -0+\frac{\rho _{1}n}{m+n}\right) }{n}
&=&\Gamma =\frac{\alpha +\beta -\rho }{m+n},  \notag
\end{eqnarray}%
i.e.%
\begin{eqnarray*}
\frac{\alpha }{m}-\frac{\rho _{1}}{m+n}+\frac{\eta -\eta _{1}}{m} &=&\frac{%
\alpha +\beta -\rho }{m+n}=\frac{\beta }{n}-\frac{\rho _{1}}{m+n}; \\
\frac{\alpha }{m}+\frac{\eta -\eta _{1}}{m} &=&\frac{\alpha +\beta +\rho
_{1}-\rho }{m+n}=\frac{\beta }{n}; \\
\frac{\beta }{n}-\frac{\alpha }{m} &=&\frac{\eta -\eta _{1}}{m}\text{ and }%
\frac{\alpha +\beta }{m+n}=\frac{\beta }{n}+\frac{\rho -\rho _{1}}{m+n}; \\
\frac{\eta -\eta _{1}}{m} &=&\frac{\beta }{n}-\frac{\alpha }{m}\text{ and }%
\frac{\rho -\rho _{1}}{m+n}=\frac{\alpha +\beta }{m+n}-\frac{\beta }{n}; \\
\frac{\eta _{1}}{m} &=&\frac{\alpha +\eta }{m}-\frac{\beta }{n}\text{ and }%
\frac{\rho _{1}}{m+n}=\frac{\rho -\left( \alpha +\beta \right) }{m+n}+\frac{%
\beta }{n},
\end{eqnarray*}%
so that%
\begin{equation}
\eta _{1}=\alpha +\eta -\frac{m}{n}\beta \text{ and }\rho _{1}=\rho -\left(
\alpha +\beta \right) +\frac{m+n}{n}\beta ,  \label{def 1}
\end{equation}%
and where $\rho _{2}$ and $\eta _{2}$ will be chosen below. Once we have
established the appropriate hypotheses of Theorem \ref{Stein-Weiss}, Theorem %
\ref{iteration} will show that 
\begin{equation*}
N_{p,q}^{\left( \alpha ,\beta \right) ,\left( m,n\right) }\left( v,w\right)
<\infty ,
\end{equation*}%
in the case $\gamma <0<\delta $.

To show that these four weight pairs satisfy the hypotheses of Theorem \ref%
{Stein-Weiss}, we first note that%
\begin{equation}
\rho _{1}=\left( m+n\right) \left[ \frac{\left( \gamma +\delta \right)
-\left( \alpha +\beta \right) }{m+n}+\frac{\beta }{n}\right] =\left(
m+n\right) \left( \frac{\beta }{n}-\Gamma \right) \geq 0  \label{rho 1 0}
\end{equation}%
by the power weight equality in (\ref{3 lines}), and the third line in (\ref%
{3 lines}), and also note that 
\begin{eqnarray*}
\rho _{1}+\eta _{1} &=&\left( m+n\right) \left\{ \frac{\rho -\left( \alpha
+\beta \right) }{m+n}+\frac{\beta }{n}\right\} +m\left\{ \frac{\alpha +\eta 
}{m}-\frac{\beta }{n}\right\} \\
&=&\rho -\left( \alpha +\beta \right) +\frac{m+n}{n}\beta +\left( \alpha
+\eta \right) -\frac{m}{n}\beta =\rho +\eta .
\end{eqnarray*}%
Next, we verify that the first weight pair $\left( V_{1}\left( u\right)
,W_{1}\left( x\right) \right) =\left( \left\vert u\right\vert ^{\frac{\rho
_{1}m}{m+n}+\eta _{1}},\left\vert x\right\vert ^{\eta }\right) $ on $\mathbb{%
R}^{m}$ satisfies the $1$-parameter power weight equality%
\begin{equation}
\frac{1}{p}-\frac{1}{q}=\frac{\alpha -\left( -\eta +\frac{\rho _{1}m}{m+n}%
+\eta _{1}\right) }{m},  \label{equ diag}
\end{equation}%
as well as the $1$-parameter constraint inequalities%
\begin{equation}
q\left( -\eta \right) <m\text{ and }p^{\prime }\left( \frac{\rho _{1}m}{m+n}%
+\eta _{1}\right) <m\text{ and }-\eta +\frac{\rho _{1}m}{m+n}+\eta _{1}\geq
0.  \label{inequ diag}
\end{equation}%
The equality (\ref{equ diag}) follows immediately from (\ref{def 1}), the
first line in (\ref{3 lines}), and (\ref{satisfy}). The first inequality in (%
\ref{inequ diag}) is trivial, and the third inequality in (\ref{inequ diag})
is%
\begin{equation*}
-\eta +\frac{\rho _{1}m}{m+n}+\eta _{1}\geq 0,
\end{equation*}%
which follows from (\ref{def 1}):%
\begin{eqnarray}
-\eta +\frac{m}{m+n}\rho _{1}+\eta _{1} &=&-\eta +\frac{m}{m+n}\left[ \rho
-\left( \alpha +\beta \right) +\frac{m+n}{n}\beta \right] +\alpha +\eta -%
\frac{m}{n}\beta  \label{just proved in} \\
&=&-\eta +\frac{m}{m+n}\left( \rho -\left( \alpha +\beta \right) \right) +%
\frac{m}{n}\beta +\alpha +\eta -\frac{m}{n}\beta  \notag \\
&=&\frac{m}{m+n}\left( \rho -\left( \alpha +\beta \right) \right) +\alpha 
\notag \\
&=&m\left( \frac{\rho -\left( \alpha +\beta \right) }{m+n}+\frac{\alpha }{m}%
\right) =m\left( \frac{\alpha }{m}-\Gamma \right) \geq 0.  \notag
\end{eqnarray}%
The second inequality in (\ref{inequ diag}) is%
\begin{equation}
\frac{1}{m}\left( \frac{\rho _{1}m}{m+n}+\eta _{1}\right) <\frac{1}{%
p^{\prime }},  \label{second inequ}
\end{equation}%
which, using $\gamma =-\eta $ with the equality $-\eta +\frac{m}{m+n}\rho
_{1}+\eta _{1}=m\left( \frac{\alpha }{m}-\Gamma \right) =\alpha -m\Gamma $
just proved above in (\ref{just proved in}), is equivalent to 
\begin{eqnarray*}
&&\frac{1}{m}\left( \alpha -m\Gamma +\eta \right) <\frac{1}{p^{\prime }}; \\
&\Longleftrightarrow &\frac{\alpha +\eta }{m}-\Gamma <\frac{1}{p^{\prime }};
\\
&\Longleftrightarrow &\frac{\alpha }{m}<\Gamma +\frac{\gamma }{m}+\frac{1}{%
p^{\prime }}; \\
&\Longleftrightarrow &\frac{\alpha }{m}-\frac{1}{q^{\prime }}<\frac{\gamma }{%
m}.
\end{eqnarray*}

Next we note that the weight pair $\left( V_{2}\left( t\right) ,W_{2}\left(
y\right) \right) =\left( \left\vert t\right\vert ^{\frac{\rho _{1}n}{m+n}%
},1\right) $ on $\mathbb{R}^{n}$ satisfies the $1$-parameter power weight
equality%
\begin{equation*}
\frac{1}{p}-\frac{1}{q}=\frac{\beta -\left( -0+\frac{\rho _{1}m}{m+n}\right) 
}{n},
\end{equation*}%
and the $1$-parameter constraint inequalities,%
\begin{equation*}
q\left( -0\right) <n\text{ and }p^{\prime }\left( \frac{\rho _{1}n}{m+n}%
\right) <n\text{ and }-0+\frac{\rho _{1}m}{m+n}\geq 0.
\end{equation*}%
Indeed, for the equality we use (\ref{def 1}), the first line in (\ref{3
lines}), and (\ref{satisfy}). The first of the constraint inequalities is
trivial and the third constraint inequality follows from (\ref{rho 1 0}).
The second of the constraint inequalities, namely $p^{\prime }\left( \frac{%
\rho _{1}n}{m+n}\right) <n$, is equivalent to%
\begin{eqnarray*}
&&\left( \frac{n}{m+n}\right) \left( m+n\right) \left( \frac{\beta }{n}%
-\Gamma \right) <\frac{n}{p^{\prime }}; \\
\text{i.e. } &&\frac{\beta }{n}<\Gamma +\frac{1}{p^{\prime }}.
\end{eqnarray*}%
However from the third line in (\ref{3 lines}) and the assumption that $%
\gamma <0$ we have%
\begin{eqnarray*}
\frac{\beta }{n} &\leq &\Gamma +\frac{\gamma +\delta }{n}-\frac{\left(
\delta -\frac{n}{p^{\prime }}\right) _{+}}{n} \\
&=&\Gamma +\frac{\gamma }{n}+\frac{1}{p^{\prime }}+\frac{\delta -\frac{n}{%
p^{\prime }}}{n}-\left( \frac{\delta }{n}-\frac{1}{p^{\prime }}\right) _{+}
\\
&=&\Gamma +\frac{\gamma }{n}+\frac{1}{p^{\prime }}-\left( \frac{\delta }{n}-%
\frac{1}{p^{\prime }}\right) _{-} \\
&\leq &\Gamma +\frac{\gamma }{n}+\frac{1}{p^{\prime }}<\Gamma +\frac{1}{%
p^{\prime }},
\end{eqnarray*}%
and this time there is no exceptional endpoint case. As indicated above,
this completes the proof in the case $\gamma <0<\delta $.

The same arguments apply to the weight pair $\left( V^{\prime },W^{\prime
}\right) $, which we now sketch briefly. The weight pairs $\left(
V_{2}^{\prime }\left( t\right) ,W_{2}^{\prime }\left( y\right) \right)
=\left( \left\vert t\right\vert ^{\frac{\rho _{2}n}{m+n}+\eta
_{2}},\left\vert y\right\vert ^{\eta }\right) $ and $\left( V_{1}^{\prime
}\left( u\right) ,W_{1}^{\prime }\left( x\right) \right) =\left( \left\vert
u\right\vert ^{\frac{\rho _{2}m}{m+n}},1\right) $ each satisfy the
hypotheses of Theorem \ref{Stein-Weiss} provided we choose $\rho _{2}$ and $%
\eta _{2}$ to satisfy%
\begin{eqnarray}
\frac{\beta -\left( -\eta +\frac{\rho _{2}n}{m+n}+\eta _{2}\right) }{n}
&=&\Gamma =\frac{\alpha +\beta -\rho }{m+n},  \label{satisfy'} \\
\text{and }\frac{\alpha -\left( -0+\frac{\rho _{2}m}{m+n}\right) }{m}
&=&\Gamma =\frac{\alpha +\beta -\rho }{m+n},  \notag
\end{eqnarray}%
i.e.%
\begin{eqnarray*}
\frac{\alpha }{m}-\frac{\rho _{2}}{m+n} &=&\frac{\alpha +\beta -\rho }{m+n}=%
\frac{\beta }{n}-\frac{\rho _{2}}{m+n}+\frac{\eta -\eta _{2}}{n}; \\
\frac{\alpha }{m} &=&\frac{\alpha +\beta +\rho _{2}-\rho }{m+n}=\frac{\beta 
}{n}+\frac{\eta -\eta _{2}}{n}; \\
\frac{\alpha }{m}-\frac{\beta }{n} &=&\frac{\eta -\eta _{2}}{n}\text{ and }%
\frac{\alpha +\beta }{m+n}=\frac{\alpha }{m}+\frac{\rho -\rho _{2}}{m+n}; \\
\frac{\eta _{2}}{n} &=&\frac{\beta +\eta }{n}-\frac{\alpha }{m}\text{ and }%
\frac{\rho _{2}}{m+n}=\frac{\rho -\left( \alpha +\beta \right) }{m+n}+\frac{%
\alpha }{m};
\end{eqnarray*}%
so that%
\begin{equation}
\eta _{2}=\beta +\eta -\frac{n}{m}\alpha \text{ and }\rho _{2}=\rho -\left(
\alpha +\beta \right) +\frac{m+n}{m}\alpha .  \label{def 1'}
\end{equation}

All of the constraint inequalities hold by arguments similar to those above,
except of course in the analogous exceptional endpoint case, and by way of
example we treat just the second of the constraint inequalities for the
weight pair that gives rise to the exceptional endpoint case. The second
constraint inequality for the weight pair $\left( V_{2}^{\prime }\left(
t\right) ,W_{2}^{\prime }\left( y\right) \right) =\left( \left\vert
t\right\vert ^{\frac{\rho _{2}m}{m+n}+\eta _{2}},\left\vert y\right\vert
^{\eta }\right) $ on $\mathbb{R}^{n}$ is%
\begin{equation}
\frac{1}{n}\left( \frac{\rho _{2}m}{m+n}+\eta _{2}\right) <\frac{1}{%
p^{\prime }}.  \label{which is}
\end{equation}%
Using $\gamma =-\eta $ with the equality 
\begin{eqnarray*}
&&-\eta +\frac{n}{m+n}\rho _{2}+\eta _{2} \\
&=&-\eta +\frac{n}{m+n}\left[ \rho -\left( \alpha +\beta \right) +\frac{m+n}{%
m}\alpha \right] +\beta +\eta -\frac{n}{m}\alpha \\
&=&-\eta +\frac{n}{m+n}\left( \rho -\left( \alpha +\beta \right) \right) +%
\frac{n}{m}\alpha +\beta +\eta -\frac{n}{m}\alpha \\
&=&\frac{n}{m+n}\left( \rho -\left( \alpha +\beta \right) \right) +\beta \\
&=&n\left( \frac{\rho -\left( \alpha +\beta \right) }{m+n}+\frac{\beta }{n}%
\right) =n\left( \frac{\beta }{n}-\Gamma \right) ,
\end{eqnarray*}%
we see that (\ref{which is}) is equivalent to%
\begin{eqnarray*}
&&\frac{1}{n}\left( n\left( \frac{\beta }{n}-\Gamma \right) +\eta \right) <%
\frac{1}{p^{\prime }} \\
&\Longleftrightarrow &\frac{\beta }{n}<\Gamma +\frac{1}{p^{\prime }}+\frac{%
\gamma }{n} \\
&\Longleftrightarrow &\frac{\beta }{n}-\frac{1}{q^{\prime }}<\frac{\gamma }{n%
}.
\end{eqnarray*}

\textbf{Case 3}: The case $\delta <0<\gamma $ is handled similarly using the
third line in (\ref{recall'}).

\subsection{The exceptional cases $\protect\alpha =m$ or $\protect\beta =n$}

Here we consider the two cases where $\alpha =m$ or $\beta =n$. We first
show that the power weight norm inequality (\ref{pwni}) holds when $\alpha
=m $ and $p\leq q$ and $A_{p,q}^{\left( \alpha ,\beta \right) ,\left(
m,n\right) }\left( v_{\delta },w_{\gamma }\right) <\infty $. To see this, we
note that from the first line in (\ref{corres}) that 
\begin{eqnarray*}
0 &<&\beta <n, \\
\frac{m}{q} &<&\gamma <\frac{m+n}{q}, \\
\frac{m}{p^{\prime }} &<&\delta <\frac{m+n}{p^{\prime }}.
\end{eqnarray*}%
Then\ we compute that%
\begin{equation*}
I_{m,\beta }^{m,n}f\left( x,y\right) =\int \int_{\mathbb{R}^{m}\times 
\mathbb{R}^{n}}\left\vert y-t\right\vert ^{\beta -n}f\left( u,t\right)
dudt=\int_{\mathbb{R}^{n}}\left\vert y-t\right\vert ^{\beta -n}F\left(
t\right) dt=I_{\beta }^{n}F\left( y\right) ,
\end{equation*}%
where $F\left( t\right) \equiv \int_{\mathbb{R}^{m}}f\left( u,t\right) du$.
Thus we have%
\begin{eqnarray*}
&&\int \int_{\mathbb{R}^{m}\times \mathbb{R}^{n}}\left\vert I_{m,\beta
}^{m,n}f\left( x,y\right) \right\vert ^{q}\ \left\vert \left( x,y\right)
\right\vert ^{-\gamma q}dxdy \\
&=&\int_{\mathbb{R}^{n}}\left\vert I_{\beta }^{n}F\left( y\right)
\right\vert ^{q}\left( \int_{\mathbb{R}^{m}}\left\vert \left( x,y\right)
\right\vert ^{-\gamma q}dx\right) dy \\
&\approx &\int_{\mathbb{R}^{n}}\left\vert I_{\beta }^{n}F\left( y\right)
\right\vert ^{q}\left\vert y\right\vert ^{m-\gamma q}dy,
\end{eqnarray*}%
since 
\begin{equation*}
\int_{\mathbb{R}^{m}}\left\vert \left( x,y\right) \right\vert ^{-\gamma
q}dx\approx \int_{\mathbb{R}^{m}}\left\vert \left( \left\vert x\right\vert
+\left\vert y\right\vert \right) \right\vert ^{-\gamma q}dx=\left\vert
y\right\vert ^{m-\gamma q}\int_{\mathbb{R}^{m}}\left\vert \left( \frac{%
\left\vert x\right\vert }{\left\vert y\right\vert }+1\right) \right\vert
^{-\gamma q}d\left( \frac{x}{\left\vert y\right\vert }\right) \approx
\left\vert y\right\vert ^{m-\gamma q}
\end{equation*}%
for $m-\gamma q<0$. We also have%
\begin{eqnarray*}
\int_{\mathbb{R}^{n}}\left\vert F\left( t\right) \right\vert ^{p}\left\vert
t\right\vert ^{\left( \delta -\frac{m}{p^{\prime }}\right) p}dt &=&\int_{%
\mathbb{R}^{n}}\left\vert \int_{\mathbb{R}^{m}}f\left( u,t\right)
du\right\vert ^{p}\left\vert t\right\vert ^{\left( \delta -\frac{m}{%
p^{\prime }}\right) p}dt \\
&\leq &\int_{\mathbb{R}^{n}}\left\{ \int_{\mathbb{R}^{m}}\left\vert f\left(
u,t\right) \right\vert ^{p}\left( \left\vert u\right\vert +\left\vert
t\right\vert \right) ^{\delta p}du\right\} \left\{ \int_{\mathbb{R}%
^{m}}\left( \left\vert u\right\vert +\left\vert t\right\vert \right)
^{-\delta p^{\prime }}du\right\} ^{p-1}\left\vert t\right\vert ^{\left(
\delta -\frac{m}{p^{\prime }}\right) p}dt \\
&\approx &\int_{\mathbb{R}^{n}}\left\{ \int_{\mathbb{R}^{m}}\left\vert
f\left( u,t\right) \right\vert ^{p}\left( \left\vert u\right\vert
+\left\vert t\right\vert \right) ^{\delta p}du\right\} \left\{ \left\vert
t\right\vert ^{m-\delta p^{\prime }}\right\} ^{p-1}\left\vert t\right\vert
^{\left( \delta -\frac{m}{p^{\prime }}\right) p}dt \\
&=&\int \int_{\mathbb{R}^{m}\times \mathbb{R}^{n}}\left\vert f\left(
u,t\right) \right\vert ^{p}\left( \left\vert u\right\vert +\left\vert
t\right\vert \right) ^{\delta p}dudt,
\end{eqnarray*}%
since%
\begin{equation*}
\int_{\mathbb{R}^{m}}\left( \left\vert u\right\vert +\left\vert t\right\vert
\right) ^{-\delta p^{\prime }}du\approx \left\vert t\right\vert ^{m-\delta
p^{\prime }}
\end{equation*}%
for $m-\delta p^{\prime }<0$. Thus we conclude that (\ref{pwni}) holds
provided we have the $1$-parameter power weight norm inequality%
\begin{equation*}
\left( \int_{\mathbb{R}^{n}}\left\vert I_{\beta }^{n}F\left( y\right)
\right\vert ^{q}\left\vert y\right\vert ^{-\left( \gamma -\frac{m}{q}\right)
q}dy\right) ^{\frac{1}{q}}\lesssim \left( \int_{\mathbb{R}^{n}}\left\vert
F\left( t\right) \right\vert ^{p}\left\vert t\right\vert ^{\left( \delta -%
\frac{m}{p^{\prime }}\right) p}dt\right) ^{\frac{1}{p}}.
\end{equation*}%
But this inequality holds by Theorem \ref{Stein-Weiss} since the $1$%
-parameter power weight equality holds,%
\begin{eqnarray*}
\beta -\left( \gamma -\frac{m}{q}\right) -\left( \delta -\frac{m}{p^{\prime }%
}\right) &=&\beta -\gamma -\delta +m\left( \frac{1}{q}+\frac{1}{p^{\prime }}%
\right) \\
&=&\alpha +\beta -\gamma -\delta +m\left( \frac{1}{q}+\frac{1}{p^{\prime }}%
-1\right) \\
&=&\left( m+n\right) \Gamma -m\Gamma =n\Gamma ,
\end{eqnarray*}%
and\ each of the following four constraint inequalities holds,%
\begin{eqnarray*}
&&0<\beta <n, \\
&&\left( \gamma -\frac{m}{q}\right) <\frac{n}{q}, \\
&&\left( \delta -\frac{m}{p^{\prime }}\right) <\frac{n}{p^{\prime }}, \\
&&\left( \gamma -\frac{m}{q}\right) +\left( \delta -\frac{m}{p^{\prime }}%
\right) \geq 0.
\end{eqnarray*}

A similar argument shows that (\ref{pwni}) holds when $\beta =n$ and $p\leq
q $ and $A_{p,q}^{\left( \alpha ,\beta \right) ,\left( m,n\right) }\left(
v_{\delta },w_{\gamma }\right) <\infty $.

\section{Proof of Theorem \protect\ref{testing}}

Define the eccentricity $\varkappa \left( R\right) $ of a rectangle $%
R=I\times J$ to be $\varkappa \left( R\right) =\frac{\ell \left( I\right) }{%
\ell \left( J\right) }$. For $j\in \mathbb{Z}$ define the conical operator $%
\bigtriangleup _{j}I_{\alpha ,\beta }$ acting on a measure $\mu $ by%
\begin{equation*}
\bigtriangleup _{j}I_{\alpha ,\beta }\mu \left( x,y\right) \equiv
\diint\limits_{\mathbb{S}_{j}+\left( x,y\right) }\left( \frac{1}{\left\vert
x-u\right\vert }\right) ^{m-\alpha }\left( \frac{1}{\left\vert
y-t\right\vert }\right) ^{n-\beta }d\mu \left( u,t\right) ,
\end{equation*}%
where $\mathbb{S}_{j}\equiv \left\{ \left( x,y\right) \in \mathbb{R}%
^{m}\times \mathbb{R}^{n}:2^{-j-1}\leq \frac{\left\vert y\right\vert }{%
\left\vert x\right\vert }<2^{-j+1}\right\} $ is a cone with aperature
roughly $2^{-j}$ and slope roughly $2^{-j}$.

\begin{lemma}
\label{gain}Suppose $1<p<q<\infty $, $0<\alpha ,\beta <1$ and that both $%
\sigma $ and $\omega $ are rectangle doubling, and that the reverse doubling
exponent $\varepsilon $ for $\sigma $ satisfies%
\begin{equation*}
1-\varepsilon <\frac{\alpha }{m}=\frac{\beta }{n}.
\end{equation*}%
For $\frac{1}{p}-\frac{1}{q}<\frac{\alpha }{m}=\frac{\beta }{n}$ we have%
\begin{equation}
\left( \diint\limits_{\mathbb{R}^{m}\times \mathbb{R}^{n}}\left\vert
\bigtriangleup _{j}I_{\alpha ,\beta }\left( \mathbf{1}_{R}\sigma \right)
\left( x,y\right) \right\vert ^{q}d\omega \left( x,y\right) \right) ^{\frac{1%
}{q}}\lesssim 2^{-\varepsilon ^{\prime }\left\vert j-k\right\vert }\left(
\diint\limits_{R}d\sigma \right) ^{\frac{1}{p}},  \label{cone block decay}
\end{equation}%
for all rectangles $R$ with eccentricity $\varkappa \left( R\right) =2^{-k}$%
, and where $\varepsilon ^{\prime }>0$ depends on $p,q,\alpha ,\beta
,\varepsilon $.
\end{lemma}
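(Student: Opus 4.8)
The plan is to freeze the cone. Since $(u,t)-(x,y)\in\mathbb{S}_{j}$ forces $|y-t|\approx 2^{-j}|x-u|$, on that cone the product kernel collapses,
\begin{equation*}
|x-u|^{\alpha-m}\,|y-t|^{\beta-n}\ \approx\ 2^{j(n-\beta)}\,|x-u|^{(\alpha+\beta)-(m+n)},
\end{equation*}
so $\bigtriangleup_{j}I_{\alpha,\beta}\mu$ is comparable to $2^{j(n-\beta)}$ times the ordinary $(\alpha+\beta)$-potential on $\mathbb{R}^{m+n}$, cut off to the fixed-aperture cone $\mathbb{S}_{j}+(x,y)$. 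Equivalently one may rescale the $\mathbb{R}^{n}$-variables by $2^{j}$, converting $\bigtriangleup_{j}I_{\alpha,\beta}$ into $2^{j(n-\beta)}\bigtriangleup_{0}I_{\alpha,\beta}$ acting on the pushed-forward data; this preserves the doubling and reverse-doubling exponents, shows the estimate is scale-invariant so that one may normalise $\ell(J)=1$, and dominates $\mathbb{A}_{p,q}$ over the relevant eccentricity class by $\mathbb{A}_{p,q}^{(\alpha,\beta),(m,n)}(\sigma,\omega)$. That last quantity is finite by the standing hypothesis of Theorem \ref{testing}, and throughout the implied constants are allowed to depend on it and on the doubling and reverse-doubling constants.

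Next I would decompose $\bigtriangleup_{j}I_{\alpha,\beta}(\mathbf{1}_{R}\sigma)$ dyadically in the radial variable $r=|x-u|$. The piece at radius $r$ equals, up to constants, $2^{j(n-\beta)}\,r^{(\alpha+\beta)-(m+n)}\,\sigma\!\left(R\cap R_{j}(x,y;r)\right)$, where $R_{j}(x,y;r)$ is the cone-annulus of $\mathbb{S}_{j}+(x,y)$ at radius $r$ --- a rectangle of eccentricity $\approx 2^{j}$ and $\mathbb{R}^{m}$-side $\approx r$ --- and, as a function of $(x,y)$, this piece is supported in a cone-collar of $R$ at distance $\approx r$. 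Passing to $L^{q}(\omega)$, the collars at distinct radii inside a fixed cone have boundedly overlapping $\omega$-supports, which --- since $q>1$ --- lets one replace the lossy $\ell^{1}$-sum over radii by an $\ell^{q}$-sum; one then inserts $\mathbb{A}_{p,q}(\sigma,\omega)$ to trade the $\omega$-mass of each collar and the radial power for a $\sigma$-mass of the associated rectangle, using the doubling of $\omega$ to compare the $\omega$-mass of a collar with that of the rectangle supplied by the trade.

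The decay $2^{-\varepsilon'|j-k|}$ and the convergence of the radial sum then come out of three disjoint ranges of $r$, each tied to one hypothesis. For $r$ below the short side of $R$, the set $R\cap R_{j}(x,y;r)$ is a cube of side $\approx r$; product reverse doubling of $\sigma$ bounds its mass by a power of $r/\ell(I)$ times $\sigma(R)$, and summing that against $r^{(\alpha+\beta)-(m+n)}$ converges exactly because $1-\varepsilon<\frac{\alpha}{m}=\frac{\beta}{n}$ --- the same summability threshold as in the $1$-parameter Sawyer--Wheeden argument --- while the thinness of $R$ already supplies a factor $\approx 2^{-\varepsilon'|j-k|}$. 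For $r$ between the two sides of $R$, an isotropic cone of aperture $\approx 2^{j}$ can only reach a sub-rectangle of $R$ shrunk by a factor $\gtrsim 2^{|j-k|}$ in one coordinate direction regardless of $r$, so product reverse doubling of $\sigma$ again yields $\sigma(R\cap R_{j}(x,y;r))\lesssim 2^{-\varepsilon'|j-k|}\sigma(R)$ uniformly. For $r$ above the long side of $R$ one has $R\cap R_{j}(x,y;r)=R$ and no reverse-doubling gain, but the collar is now a rectangle of eccentricity $\approx 2^{j}$ containing $R$, and the $\mathbb{A}_{p,q}$ trade leaves a geometric factor with positive exponent proportional to $\frac{\alpha}{m}-\left(\frac1p-\frac1q\right)>0$ --- precisely the positive smoothing order of the $(\alpha+\beta)$-potential on $\mathbb{R}^{m+n}$ --- which makes the tail sum converge and, evaluated at the radius $r\approx\ell(J)$ where this regime begins, again outputs a factor $2^{-\varepsilon'|j-k|}$. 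Collecting the three ranges gives the asserted bound for any $0<\varepsilon'<\min\{\varepsilon,\ \frac{\alpha}{m}-(\frac1p-\frac1q)\}$.

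The step I expect to be the real obstacle is securing this \emph{uniformly in $r$}: one must verify that the $2^{-\varepsilon'|j-k|}$ gain is produced at every dyadic radius, with the three ranges meeting cleanly so that no scale is left without a gain, and then organise the $(x,y)$-integration --- via the bounded overlap of the collars and the doubling of $\omega$ --- so that the per-scale gains are not destroyed by the triangle inequality. Once this quasi-orthogonality across the cone is in place, the surviving estimates are the routine geometric-series bookkeeping of the $1$-parameter weighted theory carried out slice by slice.
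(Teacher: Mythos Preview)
Your kernel collapse on the cone and your division into three radial ranges capture the right intuitions, and your Range~1 computation is essentially what the paper does in its diagonal region. But the organisation you propose is genuinely different from the paper's, and the difference bites at exactly the obstacle you flag.

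The paper does not decompose in the radial variable first. It partitions the \emph{output} $(x,y)$-plane into three regions $\mathcal{R}_{1},\mathcal{R}_{2},\mathcal{R}_{3}$ according to the ratio $|x|/|y|$ (having normalised $R$ to a unit block so that $k=0$). In the off-diagonal region $\mathcal{R}_{1}$ it then tiles the \emph{input} $R$ into $2^{j}$ thin rectangles $R(r)$ of size $1\times 2^{-j}$, and the key geometric fact is that the cone sends these input tiles to essentially disjoint output regions $R^{\ast}(r)$. That disjointness is what makes the $L^{q}(\omega)$-norm split cleanly into $\sum_{r}$, after which H\"older and reverse doubling of $\sigma$ on the thin tiles give $\sum_{r}\bigl(2^{-\varepsilon j}|R|_{\sigma}\bigr)^{q/p-1}|R(r)|_{\sigma}\lesssim 2^{-\varepsilon'j}|R|_{\sigma}^{q/p}$. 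Only in the diagonal region $\mathcal{R}_{2}$, where $(x,y)$ sits near $R$, does the paper fall back on a radial decomposition $R_{\ell}(r)$ of size $2^{-\ell}\times 2^{-\ell-j}$, summed \emph{pointwise} in $\ell$ before the $L^{q}$-norm is taken; the convergence of $\sum_{\ell}2^{\ell(m-\alpha+n-\beta-(m+n)\varepsilon)}$ uses exactly your threshold $1-\varepsilon<\alpha/m$.

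Your bounded-overlap claim for the collars is where your plan breaks. For $(x,y)$ inside or very near $R$, every radial shell with $r\lesssim\ell(I)$ contributes, so the small-$r$ collars all cover $R$ and the overlap is logarithmic, not bounded; you cannot upgrade the $\ell^{1}$-sum over radii to $\ell^{q}$ by disjointness there. The paper sidesteps this by separating $\mathcal{R}_{2}$ and handling it via the pointwise geometric sum in $\ell$, while in $\mathcal{R}_{1}$ the disjointness comes from tiling $R$ in the \emph{transverse} direction rather than from radial shells. So your ingredients are right, but the decomposition that delivers the quasi-orthogonality is input tiling against output regions, not a shell decomposition of the kernel.
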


\begin{proof}
We will prove the special case when $\ell \left( I\right) =1$ and $\varkappa
\left( R\right) =1$, i.e. $R$ is a square of side length $1$. The general
case is similar. We now place the origin so that $R$ is the block $B_{0}=%
\left[ 1,2\right] ^{m}\times \left[ 1,2\right] ^{n}$.

We first consider the region $\mathcal{R}_{1}\equiv \left\{ \left(
x,y\right) :\left\vert x\right\vert \leq \frac{1}{4}\left\vert y\right\vert
\right\} $. In this region the sum over $j<0$ is easy. So we consider $j>0$.
To see that (\ref{cone block decay}) holds with integration on the left
restricted to $\mathcal{R}_{1}$, we begin by noting that%
\begin{eqnarray*}
&&\bigtriangleup _{j}I_{\alpha ,\beta }\left( \mathbf{1}_{R}\sigma \right)
\left( x,y\right) =\diint\limits_{R\cap \left\{ \mathbb{S}_{j}+\left(
x,y\right) \right\} }\left( \frac{1}{\left\vert x-u\right\vert }\right)
^{m-\alpha }\left( \frac{1}{\left\vert y-t\right\vert }\right) ^{n-\beta
}d\sigma \left( u,t\right) \\
&=&\sum_{r=1}^{2^{j}}\left[ \diint\limits_{R\left( r\right) }\left( \frac{1}{%
\left\vert x-u\right\vert }\right) ^{m-\alpha }\left( \frac{1}{\left\vert
y-t\right\vert }\right) ^{n-\beta }d\sigma \left( u,t\right) \right] \mathbf{%
1}_{R^{\ast }\left( r\right) }\left( x,y\right) ,
\end{eqnarray*}%
where the tiles $R\left( r\right) $ are rectangles of size $1$ by $2^{-j}$
and the tiles $R^{\ast }\left( r\right) $ are slightly enlarged reflections
of the $R\left( r\right) $ centered on the $y$-axis. Now we continue by
computing%
\begin{eqnarray*}
&&\diint\limits_{\mathcal{R}_{1}}\left\vert \bigtriangleup _{j}I_{\alpha
,\beta }\left( \mathbf{1}_{R}\sigma \right) \left( x,y\right) \right\vert
^{q}d\omega \left( x,y\right) \\
&=&\diint\limits_{\mathbb{R}^{m}\times \mathbb{R}^{n}}\left\{
\sum_{r=1}^{2^{j}}\left[ \diint\limits_{R\left( r\right) }\left( \frac{1}{%
\left\vert x-u\right\vert }\right) ^{m-\alpha }\left( \frac{1}{\left\vert
y-t\right\vert }\right) ^{n-\beta }d\sigma \left( u,t\right) \right] \mathbf{%
1}_{R^{\ast }\left( r\right) }\left( x,y\right) \right\} ^{q}d\omega \left(
x,y\right) \\
&\approx &\sum_{r=1}^{2^{j}}\diint\limits_{R^{\ast }\left( r\right) }\left\{
\diint\limits_{R\left( r\right) }\left( \frac{1}{\left\vert x-u\right\vert }%
\right) ^{m-\alpha }\left( \frac{1}{\left\vert y-t\right\vert }\right)
^{n-\beta }d\sigma \left( u,t\right) \right\} ^{q}d\omega \left( x,y\right) ,
\end{eqnarray*}%
where matters have been reduced to the diagonal terms since $R^{\ast }\left(
r\right) \cap R^{\ast }\left( s\right) =\emptyset $ unless $\left\vert
r-s\right\vert \leq c_{0}$ (this follows easily from the fact that the tiles 
$R\left( r\right) $ are pairwise disjoint in $r$). Now we apply H\"{o}lder's
inequality to obtain%
\begin{eqnarray}
&&\diint\limits_{\mathbb{R}^{m}\times \mathbb{R}^{n}}\left\vert I_{\alpha
,\beta }\left( \mathbf{1}_{B_{0}}\sigma \right) \right\vert ^{q}w^{q}
\label{apply H} \\
&\lesssim &\sum_{r=1}^{2^{j}}\diint\limits_{R^{\ast }\left( r\right)
}\left\{ \diint\limits_{R\left( r\right) }d\sigma \right\} ^{\frac{q}{p}%
}\left\{ \diint\limits_{R\left( r\right) }\left( \frac{1}{\left\vert
x-u\right\vert }\right) ^{\left( m-\alpha \right) p^{\prime }}\left( \frac{1%
}{\left\vert y-t\right\vert }\right) ^{\left( n-\beta \right) p^{\prime
}}d\sigma \left( u,t\right) \right\} ^{\frac{q}{p^{\prime }}}w\left(
x,y\right) ^{q}dxdy  \notag \\
&\lesssim &\left( A_{p,q}^{\alpha ,\beta }\right)
^{q}\sum_{r=1}^{2^{j}}\left\{ \diint\limits_{R\left( r\right) }d\sigma
\right\} ^{\frac{q}{p}}=\left( A_{p,q}^{\alpha ,\beta }\right)
^{q}\sum_{r=1}^{2^{j}}\left\{ \diint\limits_{R\left( r\right) }d\sigma
\right\} ^{\frac{q}{p}-1}\left( \diint\limits_{R\left( r\right) }d\sigma
\right)  \notag \\
&\lesssim &\left( A_{p,q}^{\alpha ,\beta }\right)
^{q}\sum_{r=1}^{2^{j}}\left\{ 2^{-\varepsilon j}\diint\limits_{R}d\sigma
\right\} ^{\frac{q}{p}-1}\left( \diint\limits_{R\left( r\right) }d\sigma
\right) =2^{-\varepsilon ^{\prime }j}\left( \diint\limits_{R}d\sigma \right)
^{\frac{q}{p}}.  \notag
\end{eqnarray}

Now we consider the region $\mathcal{R}_{2}\equiv \left\{ \left( x,y\right) :%
\frac{1}{4}\left\vert y\right\vert \leq \left\vert x\right\vert \leq
4\left\vert y\right\vert \right\} $. Here we must perform an additional
calculation involving the intersection of the tile $R\left( r\right) $ and
the cone $\mathbb{S}_{j}+\left( x,y\right) $:%
\begin{eqnarray*}
&&\diint\limits_{\mathcal{R}_{2}}\left\vert \bigtriangleup _{j}I_{\alpha
,\beta }\left( \mathbf{1}_{R\left( r\right) \cap \left[ \mathbb{S}%
_{j}+\left( x,y\right) \right] }\sigma \right) \left( x,y\right) \right\vert
^{q}d\omega \left( x,y\right) \\
&=&\diint\limits_{\mathcal{R}_{2}}\left\{ \sum_{r=1}^{2^{j}}\left[
\diint\limits_{R\left( r\right) \cap \left[ \mathbb{S}_{j}+\left( x,y\right) %
\right] }\left( \frac{1}{\left\vert x-u\right\vert }\right) ^{m-\alpha
}\left( \frac{1}{\left\vert y-t\right\vert }\right) ^{n-\beta }d\sigma
\left( u,t\right) \right] \mathbf{1}_{R^{\ast }\left( r\right) }\left(
x,y\right) \right\} ^{q}d\omega \left( x,y\right) ,
\end{eqnarray*}%
where now $R^{\ast }\left( r\right) $ can overlap $R\left( r\right) $
considerably. However, for each fixed $\left( x,y\right) \in R\left(
r\right) $ we further decompose 
\begin{equation*}
R\left( r\right) \cap \left[ \mathbb{S}_{j}+\left( x,y\right) \right]
=\dbigcup\limits_{\ell }R_{\ell }\left( r\right)
\end{equation*}%
where the widths of the $R_{\ell }\left( r\right) $ form a geometric
sequence that approaches $0$ as $R_{\ell }\left( r\right) $ approaches $%
\left( x,y\right) $, and the dependence of $R_{\ell }\left( r\right) $ on $%
\left( x,y\right) $ is suppressed. Moreover, the tiles $R_{\ell }\left(
r\right) $ are roughly rectangles of dimension $2^{-\ell }\times 2^{-\ell
-j} $, and so%
\begin{eqnarray*}
&&\diint\limits_{R\left( r\right) \cap \left[ \mathbb{S}_{j}+\left(
x,y\right) \right] }\left( \frac{1}{\left\vert x-u\right\vert }\right)
^{m-\alpha }\left( \frac{1}{\left\vert y-t\right\vert }\right) ^{n-\beta
}d\sigma \left( u,t\right) \lesssim \sum_{\ell =1}^{\infty
}\diint\limits_{R_{\ell }\left( r\right) }\left( \frac{1}{2^{-\ell }}\right)
^{m-\alpha }\left( \frac{1}{2^{-\ell -j}}\right) ^{n-\beta }d\sigma \left(
u,t\right) \\
&\approx &2^{j\left( n-\beta \right) }\sum_{\ell =1}^{\infty }2^{\ell \left(
m-\alpha +n-\beta \right) }\left\vert R_{\ell }\left( r\right) \right\vert
_{\sigma }\lesssim 2^{j\left( n-\beta \right) }\sum_{\ell =1}^{\infty
}2^{\ell \left( m-\alpha +n-\beta \right) }\left( 2^{-\varepsilon \left(
m\ell +n\ell +jn\right) }\left\vert R\right\vert _{\sigma }\right) \\
&\lesssim &2^{j\left( n-\beta -\varepsilon n\right) }\sum_{\ell =1}^{\infty
}2^{\ell \left( m-\alpha +n-\beta -\left( m+n\right) \varepsilon \right)
}\left\vert R\right\vert _{\sigma }\approx 2^{jn\left( 1-\frac{\beta }{n}%
-\varepsilon \right) }\left\vert R\right\vert _{\sigma }
\end{eqnarray*}%
provided $1-\varepsilon <\frac{\alpha }{m}=\frac{\beta }{n}$. Thus we have%
\begin{eqnarray*}
&&\diint\limits_{\mathcal{R}_{2}}\left\vert \bigtriangleup _{j}I_{\alpha
,\beta }\left( \mathbf{1}_{R}\sigma \right) \right\vert ^{q}d\omega \\
&=&\diint\limits_{\mathcal{R}_{2}}\left\{ \sum_{r=1}^{2^{j}}\sum_{\ell
=1}^{N}\left[ \diint\limits_{R_{\ell }\left( r\right) \cap \left[ \mathbb{S}%
_{j}+\left( x,y\right) \right] }\left( \frac{1}{\left\vert x-u\right\vert }%
\right) ^{m-\alpha }\left( \frac{1}{\left\vert y-t\right\vert }\right)
^{n-\beta }d\sigma \left( u,t\right) \right] \mathbf{1}_{R_{\ell }^{\ast
}\left( r\right) }\left( x,y\right) \right\} ^{q}d\omega \left( x,y\right) \\
&\lesssim &\diint\limits_{\mathcal{R}_{2}}\left\{
\sum_{r=1}^{2^{j}}\sum_{\ell =1}^{N}\left[ 2^{jn\left( 1-\frac{\beta }{n}%
-\varepsilon \right) }\left\vert R\right\vert _{\sigma }\right] \mathbf{1}%
_{R_{\ell }^{\ast }\left( r\right) }\left( x,y\right) \right\} ^{q}d\omega
\left( x,y\right) \\
&\lesssim &\sum_{r=1}^{2^{j}}\diint\limits_{\mathcal{R}_{2}}\left\{
\sum_{\ell =1}^{N}\left[ 2^{jn\left( 1-\frac{\beta }{n}-\varepsilon \right)
}\left\vert R\right\vert _{\sigma }\right] \mathbf{1}_{R_{\ell }^{\ast
}\left( r\right) }\left( x,y\right) \right\} ^{q}d\omega \left( x,y\right) \\
&\approx &\sum_{r=1}^{2^{j}}\sum_{\ell =1}^{N}\diint\limits_{\mathcal{R}%
_{2}}\left\{ \left[ 2^{jn\left( 1-\frac{\beta }{n}-\varepsilon \right)
}\left\vert R\right\vert _{\sigma }\right] \mathbf{1}_{R_{\ell }^{\ast
}\left( r\right) }\left( x,y\right) \right\} ^{q}d\omega \left( x,y\right) ,
\end{eqnarray*}%
since the $R_{\ell }^{\ast }\left( r\right) $ are essentially pairwise
disjoint in both $r$ and $\ell $ (there is also decay in the kernel in the
parameter $\ell $). Now we apply H\"{o}lder's inequality and continue as in (%
\ref{apply H}) above.

Region $\mathcal{R}_{3}\equiv \left\{ \left( x,y\right) :\left\vert
x\right\vert \leq 4\left\vert y\right\vert \right\} $ is handled
symmetrically to Region $\mathcal{R}_{1}$, and this completes the proof of
Lemma \ref{gain}.
\end{proof}

Now we can easily obtain the testing condition.

\begin{corollary}
Suppose $1<p<q<\infty $, $0<\alpha ,\beta <1$ and that both $\sigma $ and $%
\omega $ are rectangle doubling, and that the reverse doubling exponent $%
\varepsilon $ for $\sigma $ satisfies%
\begin{equation*}
1-\varepsilon <\frac{\alpha }{m}=\frac{\beta }{n}.
\end{equation*}%
For $\frac{1}{p}-\frac{1}{q}<\frac{\alpha }{m}=\frac{\beta }{n}$ we have the
testing condition%
\begin{equation*}
\left( \diint\limits_{\mathbb{R}^{m}\times \mathbb{R}^{n}}\left\vert
I_{\alpha ,\beta }\left( \mathbf{1}_{R}\sigma \right) \right\vert
^{q}d\omega \right) ^{\frac{1}{q}}\lesssim \mathbb{A}_{p,q}^{\left( \alpha
,\beta \right) ,\left( m,n\right) }\left( \sigma ,\omega \right) \left(
\diint\limits_{R}d\sigma \right) ^{\frac{1}{p}},\ \ \ \ \ \text{for all }%
R=I\times J,
\end{equation*}%
as well as the dual testing condition in which the roles of the measures $%
\sigma $ and $\omega $ are reversed, and the exponents $p,q$ are replaced
with $q^{\prime },p^{\prime }$ respectively.
\end{corollary}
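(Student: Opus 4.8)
The plan is to integrate the conical pieces $\bigtriangleup_jI_{\alpha,\beta}$ supplied by Lemma \ref{gain} and sum the geometric decay in $|j-k|$. First I would record the elementary geometric fact that the translated cones $\mathbb{S}_j+(x,y)$, $j\in\mathbb{Z}$, have bounded overlap and their union exhausts $\mathbb{R}^m\times\mathbb{R}^n$ apart from the coordinate degeneracies where the kernel is in any case singular: for $(u,t)$ with $u\neq x$ and $t\neq y$ the ratio $|y-t|/|x-u|$ lies in the slab $[2^{-j-1},2^{-j+1})$ for at least one and at most three consecutive values of $j$. Consequently, for $\mu\geq 0$,
\begin{equation*}
I_{\alpha,\beta}\mu(x,y)\le\sum_{j\in\mathbb{Z}}\bigtriangleup_jI_{\alpha,\beta}\mu(x,y)\le 3\,I_{\alpha,\beta}\mu(x,y),
\end{equation*}
so it suffices to control $\bigl\|\sum_j\bigtriangleup_jI_{\alpha,\beta}(\mathbf{1}_R\sigma)\bigr\|_{L^q(\omega)}$.

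Next, by the triangle inequality in $L^q(\omega)$ and then Lemma \ref{gain} — tracking the factor $\mathbb{A}_{p,q}^{(\alpha,\beta),(m,n)}(\sigma,\omega)$ which appears explicitly in line (\ref{apply H}) of its proof, although it was suppressed in the statement (\ref{cone block decay}) — for a rectangle $R$ of eccentricity $\varkappa(R)=2^{-k}$ I obtain
\begin{equation*}
\Bigl\|I_{\alpha,\beta}(\mathbf{1}_R\sigma)\Bigr\|_{L^q(\omega)}\le\sum_{j\in\mathbb{Z}}\Bigl\|\bigtriangleup_jI_{\alpha,\beta}(\mathbf{1}_R\sigma)\Bigr\|_{L^q(\omega)}\lesssim\mathbb{A}_{p,q}^{(\alpha,\beta),(m,n)}(\sigma,\omega)\Bigl(\diint_R d\sigma\Bigr)^{1/p}\sum_{j\in\mathbb{Z}}2^{-\varepsilon'|j-k|}.
\end{equation*}
Since $\sum_{j\in\mathbb{Z}}2^{-\varepsilon'|j-k|}=\frac{1+2^{-\varepsilon'}}{1-2^{-\varepsilon'}}$ is finite and independent of $k$, this is precisely the asserted testing inequality. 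Here I also use that Lemma \ref{gain} is stated for rectangles of arbitrary eccentricity $2^{-k}$ (the case $\varkappa(R)=1$ being the one written out), and that a preliminary anisotropic rescaling reduces the general position and scale to $\ell(I)=1$, since both $I_{\alpha,\beta}$ — after the substitution of the dilated measures — and $\mathbb{A}_{p,q}^{(\alpha,\beta),(m,n)}$, being a supremum over all rectangles, transform compatibly under dilations that fix the eccentricity.

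Finally, for the dual testing condition I would note that the kernel of $\bigtriangleup_jI_{\alpha,\beta}$ is symmetric under $(x,y)\leftrightarrow(u,t)$: the cone $\mathbb{S}_j$ is invariant under $(x,y)\mapsto(-x,-y)$, so $\bigtriangleup_jI_{\alpha,\beta}$ is self-adjoint with respect to Lebesgue measure. Hence the identical argument, run with the roles of $\sigma$ and $\omega$ interchanged, with $(p,q)$ replaced by $(q',p')$ (so that $1<q'<p'<\infty$ and $\frac{1}{q'}-\frac{1}{p'}=\frac{1}{p}-\frac{1}{q}<\frac{\alpha}{m}=\frac{\beta}{n}$), and using the reverse doubling of $\omega$ in place of that of $\sigma$, yields the dual estimate; one then rewrites the constant via $\mathbb{A}_{q',p'}^{(\alpha,\beta),(m,n)}(\omega,\sigma)=\mathbb{A}_{p,q}^{(\alpha,\beta),(m,n)}(\sigma,\omega)$. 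The only substantive input is Lemma \ref{gain} itself, which is already proved; the points to be careful about are purely bookkeeping — confirming that the $\mathbb{A}_{p,q}$-dependence survives the summation with a constant uniform in the eccentricity $k$ and in the scale and position of $R$, and carrying out the standard reduction of the general-eccentricity, general-scale form of Lemma \ref{gain} to the unit square treated in its proof.
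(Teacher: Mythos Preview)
Your proposal is correct and follows essentially the same route as the paper: decompose $I_{\alpha,\beta}$ into the conical pieces $\bigtriangleup_j I_{\alpha,\beta}$, apply Minkowski's inequality in $L^q(\omega)$, invoke Lemma~\ref{gain} for the geometric decay $2^{-\varepsilon'|j-k|}$, and sum. Your write-up is in fact more careful than the paper's own proof (which contains some index typos and leaves the dual case to the reader), and your remarks on bounded overlap of the cones, tracking the $\mathbb{A}_{p,q}$ constant from line~(\ref{apply H}), and the self-adjointness argument for the dual inequality are all apt.
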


\begin{proof}
We prove the testing condition, and leave the dual testing condition to the
reader. Suppose that $R$ has eccentricity $\varkappa \left( R\right) =2^{-k}$%
. Then from Minkowski's inequality and Lemma \ref{gain} we have%
\begin{eqnarray*}
\left( \diint\limits_{\mathbb{R}^{m}\times \mathbb{R}^{n}}\left\vert
I_{\alpha ,\beta }\left( \mathbf{1}_{R}\sigma \right) \right\vert
^{q}d\omega \right) ^{\frac{1}{q}} &=&\left( \diint\limits_{\mathbb{R}%
^{m}\times \mathbb{R}^{n}}\left\vert \sum_{i\in \mathbb{Z}}I_{\alpha ,\beta
}\left( \mathbf{1}_{R}\sigma \right) \right\vert ^{q}d\omega \right) ^{\frac{%
1}{q}} \\
&\leq &\sum_{i\in \mathbb{Z}}\left( \diint\limits_{\mathbb{R}^{m}\times 
\mathbb{R}^{n}}\left\vert I_{\alpha ,\beta }\left( \mathbf{1}_{R}\sigma
\right) \right\vert ^{q}d\omega \right) ^{\frac{1}{q}} \\
&\leq &\sum_{i\in \mathbb{Z}}2^{-\varepsilon ^{\prime }\left\vert
j-k\right\vert }\left( \diint\limits_{R}d\sigma \right) ^{\frac{1}{p}%
}=C_{\varepsilon ^{\prime }}\left( \diint\limits_{R}d\sigma \right) ^{\frac{1%
}{p}}.
\end{eqnarray*}
\end{proof}

\section{Appendix}

\subsection{Sharpness in the Muckenhoupt-Wheeden theorem}

Here we give a sharp form of the Muckenhoupt-Wheeden Theorem \ref{Muck and
Wheed}.

\begin{theorem}
\label{Muck and Wheed sharp}Let $0<\alpha <m$, $1<p,q<\infty $, and let $%
w\left( x\right) $ be a nonnegative weight on $\mathbb{R}^{m}$. Then%
\begin{equation}
\left\{ \int_{\mathbb{R}^{m}}I_{\alpha }^{m}f\left( x\right) ^{q}\ w\left(
x\right) ^{q}\ dx\right\} ^{\frac{1}{q}}\leq N_{p,q}^{\alpha ,m}\left(
w\right) \left\{ \int_{\mathbb{R}^{m}}f\left( x\right) ^{p}\ w\left(
x\right) ^{p}\ dx\right\} ^{\frac{1}{p}}  \label{wni MW}
\end{equation}%
for all $f\geq 0$ and $N_{p,q}^{\alpha ,m}\left( w\right) <\infty $ \emph{if
and only if} both%
\begin{equation}
\frac{1}{p}-\frac{1}{q}=\frac{\alpha }{m},  \label{bal MW'}
\end{equation}%
and%
\begin{equation*}
A_{p,q}\left( w\right) \equiv \sup_{\text{cubes }I\subset \mathbb{R}%
^{m}}\left( \frac{1}{\left\vert I\right\vert }\int_{I}w\left( x\right) ^{q}\
dx\right) ^{\frac{1}{q}}\left( \frac{1}{\left\vert I\right\vert }%
\int_{I}w\left( x\right) ^{-p^{\prime }}\ dx\right) ^{\frac{1}{p^{\prime }}%
}<\infty .
\end{equation*}
\end{theorem}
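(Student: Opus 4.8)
The plan is to split the equivalence into its two implications, the ``if'' direction being essentially a citation. Suppose first that (\ref{bal MW'}) holds and $A_{p,q}(w)<\infty$. Since $0<\alpha<m$, (\ref{bal MW'}) forces $\frac{1}{p}-\frac{1}{q}=\frac{\alpha}{m}\in(0,1)$, so in particular $1<p<q<\infty$, and Theorem \ref{Muck and Wheed} then applies verbatim to give (\ref{wni MW}) with $N_{p,q}^{\alpha,m}(w)<\infty$. Thus all the work lies in the converse: assuming (\ref{wni MW}) holds for all $f\ge0$ with $N_{p,q}^{\alpha,m}(w)<\infty$, we must recover both the balanced condition (\ref{bal MW'}) and the finiteness of $A_{p,q}(w)$. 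Throughout the converse we may assume $0<w<\infty$ a.e., the remaining cases being degenerate.

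For the converse I would first run the standard testing argument to control the ``raw'' characteristic
\begin{equation*}
\overline{A}\equiv\sup_{\text{cubes }I\subset\mathbb{R}^{m}}|I|^{\frac{\alpha}{m}-1}\left(\int_{I}w^{q}\right)^{\frac{1}{q}}\left(\int_{I}w^{-p^{\prime}}\right)^{\frac{1}{p^{\prime}}}.
\end{equation*}
Fix a cube $I$ and test (\ref{wni MW}) on $f=\mathbf{1}_{I}\,w^{-p^{\prime}}$; to avoid presupposing $w^{-p^{\prime}}\in L^{1}_{\mathrm{loc}}$ one instead tests on $f_{N}=\mathbf{1}_{I}w^{-p^{\prime}}\mathbf{1}_{\{w^{-p^{\prime}}\le N\}}$ and lets $N\to\infty$ by monotone convergence. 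Since $f^{p}w^{p}=\mathbf{1}_{I}w^{-p^{\prime}}$, the right side of (\ref{wni MW}) equals $N_{p,q}^{\alpha,m}(w)\left(\int_{I}w^{-p^{\prime}}\right)^{1/p}$, while for $x\in I$ one has $|x-u|\le\operatorname{diam}(I)\lesssim|I|^{1/m}$ for $u\in I$, hence $I_{\alpha}^{m}f(x)\gtrsim|I|^{\frac{\alpha}{m}-1}\int_{I}w^{-p^{\prime}}$. Restricting the left integral of (\ref{wni MW}) to $I$ and cancelling one factor of $\left(\int_{I}w^{-p^{\prime}}\right)^{1/p}$ gives $|I|^{\frac{\alpha}{m}-1}\left(\int_{I}w^{q}\right)^{1/q}\left(\int_{I}w^{-p^{\prime}}\right)^{1/p^{\prime}}\le C_{m}N_{p,q}^{\alpha,m}(w)$, and taking the supremum over $I$ yields $\overline{A}\le C_{m}N_{p,q}^{\alpha,m}(w)<\infty$.

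It remains to deduce (\ref{bal MW'}) from $\overline{A}<\infty$ and then to identify $\overline{A}$ with $A_{p,q}(w)$. For the first step I would argue exactly as in the proof of Claim \ref{bal and diag}: H\"{o}lder's inequality with the dual exponents $\frac{p^{\prime}+1}{p^{\prime}}$ and $p^{\prime}+1$ gives
\begin{equation*}
1\le\left(\frac{1}{|I|}\int_{I}w\right)\left(\frac{1}{|I|}\int_{I}w^{-p^{\prime}}\right)^{\frac{1}{p^{\prime}}}\le\left(\frac{1}{|I|}\int_{I}w^{q}\right)^{\frac{1}{q}}\left(\frac{1}{|I|}\int_{I}w^{-p^{\prime}}\right)^{\frac{1}{p^{\prime}}},
\end{equation*}
the last step being Jensen's inequality since $q\ge1$; rewriting this as $|I|^{\frac{1}{q}+\frac{1}{p^{\prime}}}\le\left(\int_{I}w^{q}\right)^{1/q}\left(\int_{I}w^{-p^{\prime}}\right)^{1/p^{\prime}}$ and multiplying by $|I|^{\frac{\alpha}{m}-1}$ produces
\begin{equation*}
|I|^{\frac{\alpha}{m}+\frac{1}{q}-\frac{1}{p}}\le|I|^{\frac{\alpha}{m}-1}\left(\int_{I}w^{q}\right)^{\frac{1}{q}}\left(\int_{I}w^{-p^{\prime}}\right)^{\frac{1}{p^{\prime}}}\le\overline{A}
\end{equation*}
for all cubes $I$ of all sizes. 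If $\frac{\alpha}{m}+\frac{1}{q}-\frac{1}{p}\ne0$, the left side is unbounded as $|I|$ runs to $\infty$ or to $0$, contradicting $\overline{A}<\infty$; hence $\frac{1}{p}-\frac{1}{q}=\frac{\alpha}{m}$, which is (\ref{bal MW'}). Under (\ref{bal MW'}) we have $|I|^{\frac{\alpha}{m}-1}=|I|^{-\frac{1}{q}-\frac{1}{p^{\prime}}}$, so $\overline{A}=A_{p,q}(w)$, whence $A_{p,q}(w)<\infty$ and the converse is complete. The conceptual heart is this final scaling-over-cube-sizes step; everything else is the routine testing computation, the only slightly delicate point being the truncation needed to make that computation legitimate for an arbitrary nonnegative weight.
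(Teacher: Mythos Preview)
Your proof is correct and follows essentially the same route as the paper's. The paper's proof is terser: it simply writes $A_{p,q}^{\alpha,m}(w)\le N_{p,q}^{\alpha,m}(w)$ as a known fact (the $1$-parameter analogue of Lemma~\ref{tails}) rather than carrying out the testing-on-$\mathbf{1}_{I}w^{-p'}$ computation explicitly, and then runs the identical H\"older/Jensen argument you give (which is exactly Claim~\ref{bal and diag} specialized to one parameter) to force the balanced exponent relation.
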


\begin{proof}
Given Theorem \ref{Muck and Wheed}, it remains only to prove that (\ref{bal
MW'}) is necessary for (\ref{wni MW}). To see this, we apply H\"{o}lder's
inequality with dual exponents $\frac{p^{\prime }+1}{p^{\prime }}$ and $%
p^{\prime }+1$ to obtain%
\begin{eqnarray*}
1 &=&\left\{ \frac{1}{\left\vert I\right\vert }\int_{I}w^{\frac{p^{\prime }}{%
p^{\prime }+1}}w^{-\frac{p^{\prime }}{p^{\prime }+1}}\right\} ^{\frac{%
p^{\prime }+1}{p^{\prime }}} \\
&\leq &\left\{ \left( \frac{1}{\left\vert I\right\vert }\int_{I}w\right) ^{%
\frac{p^{\prime }}{p^{\prime }+1}}\left( \frac{1}{\left\vert I\right\vert }%
\int_{I}w^{-p^{\prime }}\right) ^{\frac{1}{p^{\prime }+1}}\right\} ^{\frac{%
p^{\prime }+1}{p^{\prime }}} \\
&=&\left( \frac{1}{\left\vert I\right\vert }\int_{I}w\right) \left( \frac{1}{%
\left\vert I\right\vert }\int_{I}w^{-p^{\prime }}\right) ^{\frac{1}{%
p^{\prime }}}\leq \left( \frac{1}{\left\vert I\right\vert }%
\int_{I}w^{q}\right) ^{\frac{1}{q}}\left( \frac{1}{\left\vert I\right\vert }%
\int_{I}w^{-p^{\prime }}\right) ^{\frac{1}{p^{\prime }}}.
\end{eqnarray*}%
Then we conclude that%
\begin{equation*}
\left\vert I\right\vert ^{\frac{\alpha }{m}-1+\frac{1}{q}+\frac{1}{p^{\prime
}}}\leq \left\vert I\right\vert ^{\frac{\alpha }{m}-1+\frac{1}{q}+\frac{1}{%
p^{\prime }}}\left( \frac{1}{\left\vert I\right\vert }\int_{I}w^{q}\right) ^{%
\frac{1}{q}}\left( \frac{1}{\left\vert I\right\vert }\int_{I}w^{-p^{\prime
}}\right) ^{\frac{1}{p^{\prime }}}\leq A_{p,q}^{\alpha ,m}\left( w\right)
\leq N_{p,q}^{\alpha ,m}\left( w\right)
\end{equation*}%
for all cubes $I$, which implies $\frac{\alpha }{m}-1+\frac{1}{q}+\frac{1}{%
p^{\prime }}=0$ as required.
\end{proof}

\subsection{Sharpness in the Stein-Weiss theorem}

Here we give a sharp form of the Stein-Weiss Theorem \ref{Stein-Weiss}.

\begin{theorem}
\label{Stein-Weiss sharp}Let 
\begin{eqnarray*}
-\infty &<&\alpha ,\beta ,\gamma ,\delta <\infty , \\
1 &<&p,q<\infty , \\
m &\in &\mathbb{N}.
\end{eqnarray*}%
Then the power weighted norm inequality%
\begin{equation}
\left\{ \int_{\mathbb{R}^{m}}I_{\alpha }^{m}f\left( x\right) ^{q}\
\left\vert x\right\vert ^{-\gamma q}\ dx\right\} ^{\frac{1}{q}}\leq
N_{p,q}\left( w,v\right) \left\{ \int_{\mathbb{R}^{m}}f\left( x\right) ^{p}\
\left\vert x\right\vert ^{\delta p}\ dx\right\} ^{\frac{1}{p}},\ \ \ \ \ 
\text{for all }f\geq 0,  \label{pwni'}
\end{equation}%
holds \emph{if and only if} the power weight equality%
\begin{equation}
\frac{1}{p}-\frac{1}{q}=\frac{\alpha -\left( \gamma +\delta \right) }{m}
\label{pwe'}
\end{equation}%
holds along with the constraint inequalities,%
\begin{equation}
0<\alpha <m,  \label{con 1}
\end{equation}%
\begin{equation}
p\leq q,  \label{con 2}
\end{equation}%
\begin{equation}
q\gamma <m,  \label{con 3}
\end{equation}%
\begin{equation}
p^{\prime }\delta <m,  \label{con 4}
\end{equation}%
and%
\begin{equation}
\gamma +\delta \geq 0.  \label{con 5}
\end{equation}
\end{theorem}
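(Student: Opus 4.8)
The plan is to obtain the sufficiency direction directly from Theorem~\ref{Stein-Weiss} and to prove the necessity direction by stripping off the constraints one at a time with elementary test functions, isolating $p\le q$ as the one genuinely delicate point. For sufficiency, note that the hypotheses (\ref{pwe'}) and (\ref{con 1})--(\ref{con 5}) are exactly those of Theorem~\ref{Stein-Weiss} for the pair $w=|x|^{-\gamma}$, $v=|x|^{\delta}$ --- in particular (\ref{con 2}) supplies the required $p\le q$ --- so (\ref{pwni'}) follows with nothing further to check.

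For necessity, assume (\ref{pwni'}) with $N_{p,q}(w,v)<\infty$, and let $f$ be a bounded nonnegative bump supported in the annulus $\{1\le|u|\le2\}$ (so that $\|f\,|x|^{\delta}\|_{p}<\infty$ and $w^q=|x|^{-\gamma q}$ is bounded above and below on the support). If $\alpha\le0$, the non-integrable singularity of $|x-u|^{\alpha-m}$ at $u=x$ forces $I_\alpha^m f\equiv+\infty$ on a set of positive measure, making the left side of (\ref{pwni'}) infinite while the right side is finite; hence $\alpha>0$. With the same $f$ one has $I_\alpha^m f(x)\gtrsim\int f>0$ for $|x|\le\frac12$, so the left side of (\ref{pwni'}) dominates $\bigl(\int_{|x|\le1/2}|x|^{-\gamma q}\,dx\bigr)^{1/q}$, which is finite only if $\gamma q<m$, giving (\ref{con 3}). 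Since the kernel of $I_\alpha^m$ is symmetric, the adjoint of (\ref{pwni'}) is again a Stein--Weiss inequality with $(p,q,\gamma,\delta)$ replaced by $(q',p',\delta,\gamma)$ (and the same constant, by the duality identity); applying the previous step to it yields $p'\delta<m$, which is (\ref{con 4}).

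Next I would exploit the dilation covariance $I_\alpha^m\bigl(f(t\cdot)\bigr)(x)=t^{-\alpha}(I_\alpha^m f)(tx)$: replacing $f$ by $f(t\cdot)$ multiplies $\|(I_\alpha^m f)w\|_q$ by $t^{\gamma-\alpha-m/q}$ and $\|fv\|_p$ by $t^{-\delta-m/p}$. Since $N_{p,q}(w,v)<\infty$ provides a test function with both norms positive and finite (any bump away from the origin works, $v,w$ being positive a.e.), letting $t\to0$ and $t\to\infty$ forces the two powers of $t$ to agree, which is precisely (\ref{pwe'}). Combining (\ref{pwe'}) with (\ref{con 3}) and (\ref{con 4}) then gives $\alpha=m(\frac1p-\frac1q)+\gamma+\delta<m(\frac1p-\frac1q)+\frac{m}{q}+\frac{m}{p'}=m$, completing (\ref{con 1}). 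For (\ref{con 5}), test (\ref{pwni'}) on $f=\mathbf{1}_{I}$ with $I=z+[-R,R]^m$ and $R\ll|z|$: using $I_\alpha^m\mathbf{1}_{I}\gtrsim R^{\alpha}$ on a fixed fraction of $I$ together with the near-constancy of $|x|^{-\gamma q}$ and $|x|^{\delta p}$ on $I$, and then (\ref{pwe'}) to cancel the power of $R$, one arrives at $(R/|z|)^{\gamma+\delta}\lesssim N_{p,q}(w,v)$; letting $R/|z|\to0$ forces $\gamma+\delta\ge0$.

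The remaining inequality $p\le q$ is the main obstacle, since it resists any single-bump argument; I would obtain it by superposition over the disjoint dyadic annuli $A_k=\{2^k\le|x|<2^{k+1}\}$, in the spirit of Maz'ja's criterion for the failure of Hardy inequalities when $q<p$. Fix a nonnegative bump $f_0$ supported in $A_0$ and set $f_k(x)=f_0(2^{-k}x)$, normalized so that $\|f_k v\|_p=1$; the scaling computation above together with (\ref{pwe'}) shows $\|(I_\alpha^m f_k)\mathbf{1}_{A_{k}}\,w\|_q=c$ for a positive constant $c$ independent of $k$. For a finite index set $S$ with $|S|=M$, positivity of the kernel gives $I_\alpha^m\bigl(\sum_{k\in S}f_k\bigr)\ge I_\alpha^m f_k$ on $A_k$, hence $\|I_\alpha^m(\sum_{k\in S}f_k)\,w\|_q\ge c\,M^{1/q}$, while $\|(\sum_{k\in S}f_k)v\|_p=M^{1/p}$; inserting these into (\ref{pwni'}) yields $c\,M^{1/q}\le N_{p,q}(w,v)\,M^{1/p}$ for all $M$, which is impossible as $M\to\infty$ unless $p\le q$. (An alternative is to dominate $I_\alpha^m$ from below, on the region $|u|<|x|/2$, by a genuine one-dimensional Hardy operator with power weights and quote Maz'ja's characterization \cite{Maz} directly.) Assembling all these steps establishes (\ref{pwe'}) and (\ref{con 1})--(\ref{con 5}), which together with the sufficiency direction completes the proof.
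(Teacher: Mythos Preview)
Your proof is correct, and your overall necessity argument is in the same spirit as the paper's but differs in two substantive ways. First, for the scaling equality (\ref{pwe'}) and for $\gamma+\delta\ge0$, the paper routes everything through the Muckenhoupt characteristic: it invokes Proposition~\ref{tails'} to get $A_{p,q}(v,w)<\infty$ and then reads off both conditions from the analysis in Theorem~\ref{A char mn}. You instead obtain (\ref{pwe'}) directly from the dilation covariance $I_\alpha^m(f(t\cdot))(x)=t^{-\alpha}(I_\alpha^m f)(tx)$ and get $\gamma+\delta\ge0$ by testing on small cubes far from the origin; this is more self-contained and avoids the machinery of tailed characteristics. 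Second, for $p\le q$ the paper passes to radial functions, bounds $I_\alpha^m f$ from below by a one-dimensional Hardy operator, and then quotes Maz'ja's characterization \cite{Maz} for $q<p$ to reach a contradiction. Your superposition argument over dyadic annuli, exploiting the positivity of the kernel and the already-established scaling identity, is genuinely more elementary and avoids any external input beyond what is already in hand. Both routes are valid; the paper's has the advantage of tying the result to the $A_{p,q}$ framework used elsewhere, while yours is shorter and requires nothing outside the statement itself.
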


\begin{proof}
The sufficiency of these conditions for (\ref{pwni'}) is the classical
theorem of Stein and Weiss, so we turn to proving their necessity. The
required local integrability of the power weights $\left\vert x\right\vert
^{-\gamma q}$ and $\left\vert x\right\vert ^{-\delta p^{\prime }}$ shows
that (\ref{con 3}) and (\ref{con 4}) hold. The kernel $\left\vert
x-y\right\vert ^{\alpha -m}$ of the convolution operator must be locally
integrable on $\mathbb{R}^{m}$ and this implies that $0<\alpha $. Using
this, we can now use the argument in the proof of Proposition \ref{tails'}
to prove that finiteness of the Muckenhoupt characteristic $A_{p,q}\left(
w,v\right) $ is necessary, and this in turn implies both (\ref{con 5}) and
the power weight equality (\ref{pwe'}) just as in the proof of Theorem \ref%
{A char mn} above for the $2$-parameter case. From (\ref{pwe'}), (\ref{con 3}%
) and (\ref{con 4}) we now obtain%
\begin{equation*}
\alpha =m\left( \frac{1}{p}-\frac{1}{q}\right) +\left( \gamma +\delta
\right) <m\left( \frac{1}{p}-\frac{1}{q}\right) +\left( \frac{m}{q}+\frac{m}{%
p^{\prime }}\right) =m,
\end{equation*}%
which completes the proof that (\ref{con 1}) holds.

Finally we turn to proving (\ref{con 2}). Let $f\left( y\right) =f\left(
s\right) $, $s=\left\vert y\right\vert $, be a radial function on $\mathbb{R}%
^{m}$. Then $I_{\alpha }f\left( x\right) =I_{\alpha }f\left( r\right) $, $%
r=\left\vert x\right\vert $, is also radial and%
\begin{equation*}
I_{\alpha }f\left( x\right) =\int_{\mathbb{R}^{m}}\left\vert x-y\right\vert
^{\alpha -m}f\left( y\right) dy\gtrsim \int_{\left\vert y\right\vert \leq
\left\vert x\right\vert }\left\vert x\right\vert ^{\alpha -m}f\left(
y\right) dy=r^{\alpha -m}\int_{0}^{r}f\left( s\right) s^{m-1}ds.
\end{equation*}%
Now suppose, in order to derive a contradiction, that (\ref{pwni'}) holds.
Then we have%
\begin{eqnarray*}
&&\left\{ \int_{0}^{\infty }\left( \int_{0}^{r}f\left( s\right)
s^{m-1}ds\right) ^{q}\ r^{\left( \alpha -m\right) q-\gamma q}\
r^{n-1}dr\right\} ^{\frac{1}{q}}\lesssim \left\{ \int_{\mathbb{R}%
^{m}}I_{\alpha }^{m}f\left( x\right) ^{q}\ \left\vert x\right\vert ^{-\gamma
q}\ dx\right\} ^{\frac{1}{q}} \\
&&\ \ \ \ \ \leq N_{p,q}\left( w,v\right) \left\{ \int_{\mathbb{R}%
^{m}}f\left( x\right) ^{p}\ \left\vert x\right\vert ^{\delta p}\ dx\right\}
^{\frac{1}{p}}=N_{p,q}\left( w,v\right) \left\{ \int_{0}^{\infty }f\left(
s\right) \ s^{\delta p+m-1}ds\right\} ^{\frac{1}{p}}
\end{eqnarray*}%
for all $f\geq 0$. With $g\left( s\right) \equiv f\left( s\right) s^{m-1}$,
this last inequality can be rewritten as%
\begin{equation}
\left\{ \int_{0}^{\infty }\left( \int_{0}^{r}g\left( s\right) ds\right)
^{q}\ v\left( r\right) dr\right\} ^{\frac{1}{q}}\lesssim N_{p,q}\left(
w,v\right) \left\{ \int_{0}^{\infty }g\left( s\right) \ u\left( s\right)
ds\right\} ^{\frac{1}{p}}  \label{Hardy}
\end{equation}%
for all $g\geq 0$, and where the weights are given by $v\left( r\right)
=r^{\left( \alpha -m\right) q-\gamma q+n-1}$ and $u\left( s\right)
=s^{\delta p}$. By a result of Maz'ja \cite{Maz}, the two weight Hardy
inequality (\ref{Hardy}) with $q<p$ holds if and only if%
\begin{equation*}
\int_{0}^{\infty }\left[ \left( \int_{0}^{r}u^{1-p^{\prime }}\right) ^{\frac{%
1}{p^{\prime }}}\left( \int_{r}^{\infty }v\right) ^{\frac{1}{p}}\right]
^{\rho }v\left( r\right) dr<\infty ,
\end{equation*}%
where $\frac{1}{\rho }=\frac{1}{q}-\frac{1}{p}>0$. But since the weights $u$
and $v$ are power functions, the integrand above is also a power function,
and hence cannot belong to any Lebesgue space $L^{\rho }\left( 0,\infty
\right) $, thus providing the required contradiction.
\end{proof}

\subsection{Optimal powers of Muckenhoupt characteristics}

Recall the one parameter two-tailed characteristic $\widehat{A}_{p,q}$ given
above by%
\begin{equation*}
\widehat{A}_{p,q}\left( v,w\right) \equiv \sup_{Q\subset \mathbb{R}%
^{n}}\left( \frac{1}{\left\vert Q\right\vert }\int_{\mathbb{R}^{n}}\left[ 
\widehat{s}_{Q}\left( x\right) w\left( x\right) \right] ^{q}\ dx\right) ^{%
\frac{1}{q}}\left( \frac{1}{\left\vert Q\right\vert }\int_{\mathbb{R}^{n}}%
\left[ \widehat{s}_{Q}\left( x\right) v\left( x\right) ^{-1}\right]
^{p^{\prime }}\ dx\right) ^{\frac{1}{p^{\prime }}},
\end{equation*}%
where 
\begin{equation*}
\widehat{s}_{Q}\left( x\right) \equiv \left( 1+\frac{\left\vert
x-c_{Q}\right\vert }{\left\vert Q\right\vert ^{\frac{1}{n}}}\right) ^{\alpha
-n},\ \ \ \ \ c_{Q}\text{ is the center of }Q,
\end{equation*}%
and 
\begin{eqnarray*}
\frac{1}{q} &=&\frac{1}{p}-\frac{\alpha }{n}, \\
\text{i.e. }\frac{1}{q}+\frac{1}{p^{\prime }} &=&1-\frac{\alpha }{n},
\end{eqnarray*}%
From Theorem \ref{Saw and Wheed} we know that the characteristic $\widehat{A}%
_{p,q}\left( w,v\right) $ is finite \emph{if and only if} the following norm
inequality for the fractional integral $I_{\alpha }^{n}$ holds:%
\begin{equation}
\left\{ \int_{\mathbb{R}^{n}}I_{\alpha }^{n}f\left( x\right) ^{q}\ w\left(
x\right) ^{q}\ dx\right\} ^{\frac{1}{q}}\leq C_{p,q}\left( v,w\right)
\left\{ \int_{\mathbb{R}^{n}}f\left( x\right) ^{p}\ v\left( x\right) ^{p}\
dx\right\} ^{\frac{1}{p}}.  \label{strong type}
\end{equation}%
Moreover, it is claimed there that 
\begin{equation}
C_{p,q}\left( v,w\right) \approx \widehat{A}_{p,q}\left( v,w\right) ,
\label{C equiv A hat}
\end{equation}%
and this equivalence can be verified by carefully tracking the constants in
the proof of Theorem 1 in \cite{SaWh}. We also have the same consequence for
the one-tailed characteristic.

\begin{description}
\item[Porism] The same arguments as used in the proof of Theorem 1 in \cite%
{SaWh} also prove that 
\begin{equation*}
C_{\alpha ,\beta }\left( v,w\right) \approx \overline{A}_{p,q}\left(
v,w\right) ,
\end{equation*}%
where $\overline{A}_{p,q}$ is the one-tailed characteristic, 
\begin{eqnarray*}
\overline{A}_{p,q}\left( v,w\right) &=&\sup_{Q\subset \mathbb{R}^{n}}\left( 
\frac{1}{\left\vert Q\right\vert }\int_{Q}w^{q}\right) ^{\frac{1}{q}}\left( 
\frac{1}{\left\vert Q\right\vert }\int_{\mathbb{R}^{n}}\widehat{s}%
_{Q}^{p^{\prime }}v^{-p^{\prime }}\right) ^{\frac{1}{p^{\prime }}} \\
&&+\sup_{Q\subset \mathbb{R}^{n}}\left( \frac{1}{\left\vert Q\right\vert }%
\int_{\mathbb{R}^{n}}\widehat{s}_{Q}^{q}w^{q}\right) ^{\frac{1}{q}}\left( 
\frac{1}{\left\vert Q\right\vert }\int_{Q}v^{-p^{\prime }}\right) ^{\frac{1}{%
p^{\prime }}}.
\end{eqnarray*}
\end{description}

\subsubsection{Comparison with the inequality of Lacey, Moen, P\'{e}rez and
Torres}

Here we give a simple and instructive proof of the `$A_{1}$ conjecture' in
the setting of fractional integrals. Recall that from \cite{LaMoPeTo} we
have the estimate%
\begin{equation}
C_{p,q}\left( w\right) \lesssim A_{p,q}\left( w\right) ^{1+\max \left\{ 
\frac{p^{\prime }}{q},\frac{q}{p^{\prime }}\right\} },  \label{LMPT}
\end{equation}%
and if we restrict the two weight result above to the case $w=v$, we have
the equivalence%
\begin{equation*}
C_{\alpha ,\beta }\left( w,w\right) \approx \widehat{A}_{p,q}\left(
w,w\right) .
\end{equation*}%
Thus the estimate (\ref{LMPT}) is equivalent to 
\begin{equation}
\widehat{A}_{p,q}\left( w\right) \lesssim A_{p,q}\left( w\right) ^{\rho },
\label{hat inequ}
\end{equation}%
and also equivalent to%
\begin{equation}
\overline{A}_{p,q}\left( w\right) \lesssim A_{p,q}\left( w\right) ^{\rho },
\label{bar inequ}
\end{equation}%
where $\rho \equiv 1+\max \left\{ \frac{p^{\prime }}{q},\frac{q}{p^{\prime }}%
\right\} $. Written out in full, one half of inequality (\ref{bar inequ}) is%
\begin{eqnarray}
&&\sup_{Q\subset \mathbb{R}^{n}}\left( \frac{1}{\left\vert Q\right\vert }%
\int_{\mathbb{R}^{n}}\widehat{s}_{Q}^{q}w^{q}\right) ^{\frac{1}{q}}\left( 
\frac{1}{\left\vert Q\right\vert }\int_{Q}w^{-p^{\prime }}\right) ^{\frac{1}{%
p^{\prime }}}  \label{full} \\
&\lesssim &\left\{ \sup_{Q\subset \mathbb{R}^{n}}\left( \frac{1}{\left\vert
Q\right\vert }\int_{Q}w^{q}\right) ^{\frac{1}{q}}\left( \frac{1}{\left\vert
Q\right\vert }\int_{Q}w^{-p^{\prime }}\right) ^{\frac{1}{p^{\prime }}%
}\right\} ^{\rho }.  \notag
\end{eqnarray}

\begin{claim}
The inequality (\ref{bar inequ}) holds directly, without any reference to
norm inequalities at all.
\end{claim}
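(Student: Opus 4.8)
The plan is to prove the two halves of $\overline{A}_{p,q}(w)\lesssim A_{p,q}(w)^{\rho}$, $\rho=1+\max\{p'/q,q/p'\}$, separately and directly from finiteness of $A:=A_{p,q}(w)$ (which we may assume, else there is nothing to prove), writing $\sigma=w^{-p'}$, $\omega=w^{q}$. Because of the built-in $\sigma\leftrightarrow\omega$, $(p,q)\leftrightarrow(q',p')$ symmetry in the definition of $\overline{A}_{p,q}$, it suffices to bound the second supremum in $\overline{A}_{p,q}$, namely
\begin{equation*}
\sup_{Q}\Bigl(\tfrac{1}{|Q|}\int_{\mathbb{R}^{n}}\widehat{s}_{Q}^{\,q}\,d\omega\Bigr)^{1/q}\Bigl(\tfrac{1}{|Q|}\int_{Q}d\sigma\Bigr)^{1/p'}\;\lesssim\;A^{\,1+p'/q},
\end{equation*}
and then run the identical argument with the roles of $\omega$ and $\sigma$ (and of $q$ and $p'$) interchanged to bound the first supremum by $A^{\,1+q/p'}$. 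Since $A_{p,q}(w)\ge 1$ by H\"older's inequality (as in the proof of Theorem \ref{Muck and Wheed sharp}), the sum of the two is $\lesssim A^{\rho}$, and this splitting is precisely what makes the exponent $1+\max\{p'/q,q/p'\}$ appear.

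First I would decompose the tail into dyadic annuli: since $\widehat{s}_{Q}(x)\approx 2^{k(\alpha-n)}$ for $x\in 2^{k}Q\setminus 2^{k-1}Q$ and $q(\alpha-n)=-n(1+q/p')$ by the balance relation $\alpha/n-1=-(1/q+1/p')$, bounding the $\omega$-mass of each annulus by that of $2^{k}Q$ gives
\begin{equation*}
\tfrac{1}{|Q|}\int_{\mathbb{R}^{n}}\widehat{s}_{Q}^{\,q}\,d\omega\;\lesssim\;\tfrac{1}{|Q|}\sum_{k\ge 0}2^{-kn(1+q/p')}\,|2^{k}Q|_{\omega}.
\end{equation*}
Applying the defining inequality of $A$ on each dilate, $|2^{k}Q|_{\omega}\,|2^{k}Q|_{\sigma}^{q/p'}\le A^{q}|2^{k}Q|^{1+q/p'}=A^{q}(2^{kn}|Q|)^{1+q/p'}$, the powers of $2^{kn}$ cancel exactly and we are left with
\begin{equation*}
\tfrac{1}{|Q|}\int_{\mathbb{R}^{n}}\widehat{s}_{Q}^{\,q}\,d\omega\;\lesssim\;A^{q}\,|Q|^{q/p'}\sum_{k\ge 0}|2^{k}Q|_{\sigma}^{-q/p'},
\end{equation*}
so everything reduces to a geometric lower bound for $|2^{k}Q|_{\sigma}$ in $k$.

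The crucial input — and the step I expect to be the main obstacle — is a \emph{quantitative} reverse doubling estimate for $\sigma$ with the correct dependence on $A$. Raising the $A_{p,q}$ inequality to the power $p'$ shows $\sigma=w^{-p'}\in A_{1+p'/q}$ with constant at most $A^{p'}$, hence $\sigma$ is an $A_{\infty}$ weight with Fujii--Wilson characteristic $[\sigma]_{A_{\infty}}\lesssim_{n}A^{p'}$, and the sharp reverse doubling inequality for $A_{\infty}$ weights then yields $|2^{k}Q|_{\sigma}\gtrsim_{n}2^{kn\eta}|Q|_{\sigma}$ with $\eta\gtrsim_{n,p,q}A^{-p'}$; consequently $\sum_{k\ge 0}|2^{k}Q|_{\sigma}^{-q/p'}\lesssim|Q|_{\sigma}^{-q/p'}\sum_{k\ge0}2^{-kn\eta q/p'}\lesssim_{n,p,q}|Q|_{\sigma}^{-q/p'}\,\eta^{-1}\lesssim A^{p'}|Q|_{\sigma}^{-q/p'}$, the factor $\eta^{-1}\lesssim A^{p'}$ being exactly the number of dilations needed before the geometric decay takes effect. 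Substituting back, $\frac{1}{|Q|}\int\widehat{s}_{Q}^{\,q}\,d\omega\lesssim A^{q+p'}|Q|^{q/p'}|Q|_{\sigma}^{-q/p'}$; multiplying by $(|Q|_{\sigma}/|Q|)^{q/p'}$ and taking $q$-th roots leaves exactly $A^{1+p'/q}$, and the symmetric argument (using $\omega=w^{q}\in A_{1+q/p'}$, constant $\le A^{q}$) gives the companion bound $A^{1+q/p'}$ for the first supremum; their maximum is $A^{\rho}$. The one delicate point throughout is that the reverse doubling exponent must degrade no worse than $A^{-p'}$ (resp. $A^{-q}$): a cruder estimate of $[\sigma]_{A_{\infty}}$ in terms of $A$, or of the reverse-doubling exponent in terms of $[\sigma]_{A_{\infty}}$, would inflate the final power, so one must invoke the sharp bounds $[\sigma]_{A_{\infty}}\le c_{n}[\sigma]_{A_{1+p'/q}}$ and $\eta\approx 1/([\sigma]_{A_{\infty}})$; everything else is bookkeeping.
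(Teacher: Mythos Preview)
Your proposal is correct and follows essentially the same architecture as the paper: decompose the tail $\int \widehat{s}_Q^{\,q}\,d\omega$ into dyadic annuli $2^kQ$, apply the $A_{p,q}$ condition on each dilate, observe that the powers of $2^{kn}$ cancel exactly, and reduce the remaining sum to a quantitative reverse doubling estimate for $\sigma=w^{-p'}$ of the form $\eta\gtrsim A^{-p'}$ (and symmetrically $\eta\gtrsim A^{-q}$ for $\omega$). The paper carries out precisely this reduction in display~(\ref{control}) and the subsequent claim~(\ref{delta}).

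The only substantive difference is in how the reverse doubling exponent is obtained. You pass through the Fujii--Wilson $A_\infty$ characteristic, invoking the bound $[\sigma]_{A_\infty}\lesssim[\sigma]_{A_{1+p'/q}}=A^{p'}$ and then a sharp reverse doubling estimate in terms of $[\sigma]_{A_\infty}$. The paper instead gives a completely self-contained argument: a three-term H\"older inequality (splitting $f=f^{\varepsilon}f^{1-\varepsilon}$ with exponents $q$, $pq/(q-p)$, $p'$) applied to $f=\mathbf{1}_{Q/3}$ yields $\lvert Q\rvert_{w^q}/\lvert\tfrac13 Q\rvert_{w^q}\le 3^{n(1+q/p')}A^{q}$ directly, and then an elementary iteration over dyadic children gives $\delta(w^q)\approx A^{-q}$ and, symmetrically, $\delta(w^{-p'})\approx A^{-p'}$. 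The paper's route is more elementary (no appeal to the Fujii--Wilson machinery or the Hyt\"onen--P\'erez sharp reverse H\"older inequality) and keeps the argument self-contained; your route is shorter to state but outsources the key quantitative step to external results whose sharp form you must be careful to cite correctly. Either way the final exponent $1+\max\{p'/q,\,q/p'\}$ emerges for the same reason, namely that the gain from the geometric series is precisely $1/\delta$.
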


\begin{proof}
Take the $q^{th}$ power of the left hand side of (\ref{full}) and fix a cube 
$Q$ which comes close to achieving the supremum over all cubes. Then we write%
\begin{eqnarray}
&&\left( \frac{1}{\left\vert Q\right\vert }\int_{\mathbb{R}^{n}}\widehat{s}%
_{Q}^{q}w^{q}\right) \left( \frac{1}{\left\vert Q\right\vert }%
\int_{Q}w^{-p^{\prime }}\right) ^{\frac{q}{p^{\prime }}}  \label{control} \\
&\lesssim &\left( \frac{1}{\left\vert Q\right\vert }\int_{Q}w^{-p^{\prime
}}\right) ^{\frac{q}{p^{\prime }}}\sum_{k=0}^{\infty }2^{k\left[ \left(
\alpha -n\right) q+n\right] }\frac{1}{\left\vert 2^{k}Q\right\vert }%
\int_{2^{k}Q}w^{q}  \notag \\
&\lesssim &\left( \frac{1}{\left\vert Q\right\vert }\int_{Q}w^{-p^{\prime
}}\right) ^{\frac{q}{p^{\prime }}}\sum_{k=0}^{\infty }2^{k\left[ \left(
\alpha -n\right) q+n\right] }A_{p,q}\left( w\right) ^{q}\left( \frac{1}{%
\left\vert 2^{k}Q\right\vert }\int_{2^{k}Q}w^{-p^{\prime }}\right) ^{-\frac{q%
}{p^{\prime }}}  \notag \\
&=&A_{p,q}\left( w\right) ^{q}\sum_{k=0}^{\infty }\left( \frac{\left\vert
Q\right\vert _{w^{-p^{\prime }}}}{\left\vert 2^{k}Q\right\vert
_{w^{-p^{\prime }}}}\right) ^{\frac{q}{p^{\prime }}}\lesssim A_{p,q}\left(
w\right) ^{q}\sum_{k=0}^{\infty }\left( 2^{-k\delta }\right) ^{\frac{q}{%
p^{\prime }}}  \notag \\
&\lesssim &A_{p,q}\left( w\right) ^{q}\frac{1}{1-2^{-\delta \frac{q}{%
p^{\prime }}}}\approx A_{p,q}\left( w\right) ^{q}\frac{1}{\delta },  \notag
\end{eqnarray}%
where $\delta $ is the reverse doubling exponent for the $A_{p}$ weight $%
w^{-p^{\prime }}$,%
\begin{equation*}
\left\vert Q\right\vert _{w^{-p^{\prime }}}\leq C2^{-k\delta }\left\vert
2^{k}Q\right\vert _{w^{-p^{\prime }}}\text{ for all cubes }Q\text{.}
\end{equation*}

Now we claim that the reverse doubling exponents $\delta =\delta \left(
w^{-p^{\prime }}\right) $ and $\delta \left( w^{q}\right) $ for the weights $%
w^{-p^{\prime }}$ and $w^{q}$ satisfy%
\begin{equation}
\frac{1}{\delta \left( w^{-p^{\prime }}\right) }\leq C_{p,q,n}\
A_{p,q}\left( w\right) ^{p^{\prime }}\text{and }\frac{1}{\delta \left(
w^{q}\right) }\leq C_{p,q,n}\ A_{p,q}\left( w\right) ^{q}.  \label{delta}
\end{equation}%
Indeed, we have for all $f\geq 0$ and any $0<\varepsilon <1$,%
\begin{eqnarray*}
&&\left( \frac{1}{\left\vert Q\right\vert }\int_{Q}f\right) \left(
\int_{Q}w^{q}\right) ^{\frac{1}{q}} \\
&=&\left( \frac{1}{\left\vert Q\right\vert }\int_{Q}\left( f^{\varepsilon
}w\right) \left( f^{1-\varepsilon }\right) \left( w^{-1}\right) \right)
\left( \int_{Q}w^{q}\right) ^{\frac{1}{q}} \\
&\leq &\left( \frac{1}{\left\vert Q\right\vert }\int_{Q}\left(
f^{\varepsilon }w\right) ^{q}\right) ^{\frac{1}{q}}\left( \frac{1}{%
\left\vert Q\right\vert }\int_{Q}\left( f^{1-\varepsilon }\right) ^{\frac{pq%
}{q-p}}\right) ^{\frac{1}{p}-\frac{1}{q}}\left( \frac{1}{\left\vert
Q\right\vert }\int_{Q}w^{-p^{\prime }}\right) ^{\frac{1}{p^{\prime }}}\left(
\int_{Q}w^{q}\right) ^{\frac{1}{q}} \\
&=&\left( \frac{1}{\left\vert Q\right\vert }\int_{Q}w^{q}\right) ^{\frac{1}{q%
}}\left( \frac{1}{\left\vert Q\right\vert }\int_{Q}w^{-p^{\prime }}\right) ^{%
\frac{1}{p^{\prime }}}\left( \int_{Q}f^{\varepsilon q}w^{q}\right) ^{\frac{1%
}{q}}\left( \frac{1}{\left\vert Q\right\vert }\int_{Q}f^{\left(
1-\varepsilon \right) \frac{pq}{q-p}}\right) ^{\frac{1}{p}-\frac{1}{q}} \\
&\leq &A_{p,q}\left( w\right) \left( \int_{Q}f^{\varepsilon q}w^{q}\right) ^{%
\frac{1}{q}}\left( \frac{1}{\left\vert Q\right\vert }\int_{Q}f^{\left(
1-\varepsilon \right) \frac{pq}{q-p}}\right) ^{\frac{1}{p}-\frac{1}{q}},
\end{eqnarray*}%
and now plugging in $f=\mathbf{1}_{\frac{1}{3}Q}$ we get%
\begin{eqnarray*}
\left( \frac{\left\vert \frac{1}{3}Q\right\vert }{\left\vert Q\right\vert }%
\right) \left( \int_{Q}w^{q}\right) ^{\frac{1}{q}} &\leq &A_{p,q}\left(
w\right) \left( \int_{\frac{1}{3}Q}w^{q}\right) ^{\frac{1}{q}}\left( \frac{%
\left\vert \frac{1}{3}Q\right\vert }{\left\vert Q\right\vert }\right) ^{%
\frac{1}{p}-\frac{1}{q}}; \\
\frac{\left\vert Q\right\vert _{w^{q}}}{\left\vert \frac{1}{3}Q\right\vert
_{w^{q}}} &\leq &3^{n\left( 1+\frac{q}{p^{\prime }}\right) }A_{p,q}\left(
w\right) ^{q}.
\end{eqnarray*}%
With $\ell \left( Q\right) $ denoting the side length of $Q$, we have%
\begin{eqnarray*}
\left\vert 3Q\right\vert _{w^{q}} &\leq &\sum_{\alpha \in \left\{
-1,0,1\right\} ^{n}\setminus \left( 0,...,0\right) }\left\vert 3\left(
Q+\ell \left( Q\right) \mathbf{e}_{1}\right) \right\vert _{w^{q}} \\
&\leq &\sum_{\alpha \in \left\{ -1,0,1\right\} ^{n}\setminus \left(
0,...,0\right) }3^{n\left( 1+\frac{q}{p^{\prime }}\right) }A_{p,q}\left(
w\right) ^{q}\left\vert Q+\ell \left( Q\right) \mathbf{e}_{1}\right\vert
_{w^{q}} \\
&=&3^{n\left( 1+\frac{q}{p^{\prime }}\right) }A_{p,q}\left( w\right)
^{q}\left\vert 3Q\setminus Q\right\vert _{w^{q}}\ ,
\end{eqnarray*}%
and so%
\begin{equation*}
\frac{\left\vert Q\right\vert _{w^{q}}}{\left\vert 3Q\right\vert _{w^{q}}}=%
\frac{\left\vert 3Q\right\vert _{w^{q}}-\left\vert 3Q\setminus Q\right\vert
_{w^{q}}}{\left\vert 3Q\right\vert _{w^{q}}}\leq 1-\frac{1}{3^{n\left( 1+%
\frac{q}{p^{\prime }}\right) }A_{p,q}\left( w\right) ^{q}}\equiv \gamma \in
\left( 1-3^{-n\left( 1+\frac{q}{p^{\prime }}\right) },1\right) .
\end{equation*}%
Iterating, we get%
\begin{equation*}
\frac{\left\vert \frac{1}{3^{k}}Q\right\vert _{w^{q}}}{\left\vert
Q\right\vert _{w^{q}}}=\frac{\left\vert \frac{1}{3^{k}}Q\right\vert _{w^{q}}%
}{\left\vert \frac{1}{3^{k-1}}Q\right\vert _{w^{q}}}\frac{\left\vert \frac{1%
}{3^{k-1}}Q\right\vert _{w^{q}}}{\left\vert \frac{1}{3^{k-2}}Q\right\vert
_{w^{q}}}...\frac{\left\vert \frac{1}{3}Q\right\vert _{w^{q}}}{\left\vert
Q\right\vert _{w^{q}}}\leq \gamma ^{k},
\end{equation*}%
from which we obtain%
\begin{equation*}
\frac{\left\vert \frac{1}{2^{k}}Q\right\vert _{w^{q}}}{\left\vert
Q\right\vert _{w^{q}}}=\frac{\left\vert \frac{1}{3^{\frac{\ln 2}{\ln 3}k}}%
Q\right\vert _{w^{q}}}{\left\vert Q\right\vert _{w^{q}}}\leq C\gamma ^{\frac{%
\ln 2}{\ln 3}k}=C2^{\frac{\ln \gamma }{\ln 3}k}=C2^{-k\delta },
\end{equation*}%
where%
\begin{equation*}
\delta =\delta \left( w^{q}\right) =\frac{\ln \frac{1}{\gamma }}{\ln 3}=%
\frac{1}{\ln 3}\ln \frac{1}{1-\frac{1}{3^{n\left( 1+\frac{q}{p^{\prime }}%
\right) }A_{p,q}\left( w\right) ^{q}}}\approx \frac{1}{A_{p,q}\left(
w\right) ^{q}}.
\end{equation*}%
This proves the second assertion in (\ref{delta}), and the proof of the
first assertion is similar.

Thus from (\ref{control}) we obtain%
\begin{eqnarray*}
\overline{A}_{p,q}\left( w\right) &=&\sup_{Q}\left( \frac{1}{\left\vert
Q\right\vert }\int_{\mathbb{R}^{n}}\widehat{s}_{Q}^{q}w^{q}\right) ^{\frac{1%
}{q}}\left( \frac{1}{\left\vert Q\right\vert }\int_{Q}w^{-p^{\prime
}}\right) ^{\frac{1}{p^{\prime }}} \\
&\leq &C_{p,q,n}\ A_{p,q}\left( w\right) \left( \frac{1}{\delta \left(
w^{-p^{\prime }}\right) }\right) ^{\frac{1}{q}} \\
&\leq &C_{p,q,n}\ A_{p,q}\left( w\right) ^{1+\frac{q}{p^{\prime }}}.
\end{eqnarray*}%
Similarly we obtain that the expression%
\begin{eqnarray*}
&&\sup_{Q\subset \mathbb{R}^{n}}\left( \frac{1}{\left\vert Q\right\vert }%
\int_{\mathbb{R}^{n}}w^{q}\right) ^{\frac{1}{q}}\left( \frac{1}{\left\vert
Q\right\vert }\int_{Q}\left[ \widehat{s}_{Q}w^{-1}\right] ^{p^{\prime
}}\right) ^{\frac{1}{p^{\prime }}} \\
&=&\sup_{Q\subset \mathbb{R}^{n}}\left( \frac{1}{\left\vert Q\right\vert }%
\int_{Q}\left[ \widehat{s}_{Q}w^{-1}\right] ^{p^{\prime }}\right) ^{\frac{1}{%
p^{\prime }}}\left( \frac{1}{\left\vert Q\right\vert }\int_{\mathbb{R}^{n}}%
\left[ w^{-1}\right] ^{-\left( q^{\prime }\right) ^{\prime }}\right) ^{\frac{%
1}{\left( q^{\prime }\right) ^{\prime }}}
\end{eqnarray*}%
is dominated by%
\begin{equation*}
C_{q^{\prime },p^{\prime },n}\ A_{q^{\prime },p^{\prime }}\left(
w^{-1}\right) ^{1+\frac{p^{\prime }}{q}}=C_{p,q,n}\ A_{p,q}\left( w\right)
^{1+\frac{p^{\prime }}{q}}.
\end{equation*}%
This completes the proof of the claim.
\end{proof}

Thus we have given in the Claim above a simple direct proof of the following
theorem using the proof of Theorem 1 in \cite{SaWh}.

\begin{theorem}
(Lacey, Moen, P\'{e}rez and Torres \cite{LaMoPeTo}) With $p,q,n$ as above we
have%
\begin{equation*}
C_{p,q}\left( w\right) \leq C_{p,q,n}\ A_{p,q}\left( w\right) ^{1+\max
\left\{ \frac{p^{\prime }}{q},\frac{q}{p^{\prime }}\right\} }.
\end{equation*}
\end{theorem}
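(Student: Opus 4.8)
The plan is to read off the theorem as an immediate consequence of two results already established above: the Porism following Theorem \ref{Saw and Wheed}, which gives the equivalence $C_{\alpha ,\beta }\left( v,w\right) \approx \overline{A}_{p,q}\left( v,w\right) $ between the strong type norm of $I_{\alpha }^{n}:L^{p}\left( v^{p}\right) \rightarrow L^{q}\left( w^{q}\right) $ and the one-tailed Muckenhoupt characteristic $\overline{A}_{p,q}\left( v,w\right) $, and the Claim just proved, namely inequality (\ref{bar inequ}), $\overline{A}_{p,q}\left( w\right) \lesssim A_{p,q}\left( w\right) ^{\rho }$ with $\rho \equiv 1+\max \left\{ \frac{p^{\prime }}{q},\frac{q}{p^{\prime }}\right\} $, which was obtained directly from the definitions with no recourse to any norm inequality.

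First I would specialize the Porism to the diagonal weight pair $v=w$, obtaining
\begin{equation*}
C_{p,q}\left( w\right) =C_{\alpha ,\beta }\left( w,w\right) \leq C_{p,q,n}\ \overline{A}_{p,q}\left( w,w\right) =C_{p,q,n}\ \overline{A}_{p,q}\left( w\right) ,
\end{equation*}
where $\overline{A}_{p,q}\left( w\right) $ is the sum of the two suprema displayed in the Porism with $v=w$. Then I would invoke the Claim in the form (\ref{bar inequ}), i.e. $\overline{A}_{p,q}\left( w\right) \lesssim A_{p,q}\left( w\right) ^{\rho }$, and string the two inequalities together to conclude $C_{p,q}\left( w\right) \leq C_{p,q,n}\ A_{p,q}\left( w\right) ^{1+\max \left\{ \frac{p^{\prime }}{q},\frac{q}{p^{\prime }}\right\} }$, which is the assertion.

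Since the substantive content is entirely inside the Claim, the only point needing care is bookkeeping in the proof of (\ref{bar inequ}): one must match each of the two suprema comprising $\overline{A}_{p,q}\left( w\right) $ with the correct estimate there, and apply the duality normalization $A_{q^{\prime },p^{\prime }}\left( w^{-1}\right) =A_{p,q}\left( w\right) $ with indices $q^{\prime },p^{\prime }$ in place of $p,q$, so that the second supremum is controlled by $A_{p,q}\left( w\right) ^{1+q/p^{\prime }}$ while the first is controlled by $A_{p,q}\left( w\right) ^{1+p^{\prime }/q}$; taking the larger exponent produces $\rho $. I would also record the remark, already noted after Theorem \ref{one weight product}, that the characteristic used in \cite{LaMoPeTo} is the $q^{\text{th}}$ power of the one here, so that the exponent $1+\max \left\{ \frac{p^{\prime }}{q},\frac{q}{p^{\prime }}\right\} $ agrees with the exponent in the form of the theorem quoted there. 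The sharpness of this exponent, which is not part of the present statement, follows from product power weights and product power functions as in \cite{Buc} and \cite{LaMoPeTo}.
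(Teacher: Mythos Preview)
Your proposal is correct and follows exactly the route the paper takes: the theorem is stated as an immediate corollary of the Porism $C_{\alpha ,\beta }(w,w)\approx \overline{A}_{p,q}(w)$ together with the Claim $\overline{A}_{p,q}(w)\lesssim A_{p,q}(w)^{\rho }$, and your bookkeeping (duality, matching each tailed supremum to its exponent, and taking the larger of $1+p'/q$ and $1+q/p'$) is precisely what the paper does in the proof of the Claim.
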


\subsection{The product fractional integral, the Dirac mass and a modified
example}

Here we\ consider both the two weight norm inequality (\ref{2 weight arb
meas}) and the two-tailed characteristic (\ref{A hat fraktur}) in the
special case when $\sigma =\delta _{\left( 0,0\right) }$, and show that they
are equivalent in this case. When $\sigma =\delta _{\left( 0,0\right) }$ the
two weight norm inequality (\ref{2 weight arb meas}) yields%
\begin{eqnarray*}
\mathbb{N}_{p,q}^{\left( \alpha ,\beta \right) ,\left( m,n\right) }\left(
\delta _{\left( 0,0\right) },\omega \right) &=&\left\{ \int_{\mathbb{R}%
^{m}}\int_{\mathbb{R}^{n}}I_{\alpha ,\beta }^{m,n}\delta _{\left( 0,0\right)
}\left( x,y\right) ^{q}\ d\omega \left( x,y\right) \right\} ^{\frac{1}{q}} \\
&=&\left\{ \int_{\mathbb{R}^{m}}\int_{\mathbb{R}^{n}}\left( \left\vert
x\right\vert ^{\alpha -m}\left\vert y\right\vert ^{\beta -n}\right) ^{q}\
d\omega \left( x,y\right) \right\} ^{\frac{1}{q}},
\end{eqnarray*}%
and since $\widehat{s}_{I\times J}\left( x,y\right) \equiv \left( 1+\frac{%
\left\vert x-c_{I}\right\vert }{\left\vert I\right\vert ^{\frac{1}{m}}}%
\right) ^{\alpha -m}\left( 1+\frac{\left\vert y-c_{J}\right\vert }{%
\left\vert J\right\vert ^{\frac{1}{n}}}\right) ^{\beta -n}$, (\ref{A hat
fraktur}) yields%
\begin{eqnarray*}
&&\widehat{\mathbb{A}}_{p,q}^{\left( \alpha ,\beta \right) ,\left(
m,n\right) }\left( \delta _{\left( 0,0\right) },\omega \right) \\
&=&\sup_{I\times J\subset \mathbb{R}^{m}\times \mathbb{R}^{n}}\left\vert
I\right\vert ^{\frac{\alpha }{m}-1}\left\vert J\right\vert ^{\frac{\beta }{n}%
-1}\left( \diint\limits_{\mathbb{R}^{m}\times \mathbb{R}^{n}}\widehat{s}%
_{I\times J}\left( x,y\right) ^{q}d\omega \left( x,y\right) \right) ^{\frac{1%
}{q}}\left( \diint\limits_{\mathbb{R}^{m}\times \mathbb{R}^{n}}\widehat{s}%
_{I\times J}\left( u,t\right) ^{p^{\prime }}d\delta _{\left( 0,0\right)
}\left( u,t\right) \right) ^{\frac{1}{p^{\prime }}} \\
&=&\sup_{I\times J\subset \mathbb{R}^{m}\times \mathbb{R}^{n}}\left(
\diint\limits_{\mathbb{R}^{m}\times \mathbb{R}^{n}}\left\vert I\right\vert
^{\left( \frac{\alpha }{m}-1\right) q}\left( 1+\frac{\left\vert
x-c_{I}\right\vert }{\left\vert I\right\vert ^{\frac{1}{m}}}\right) ^{\left(
\alpha -m\right) q}\left\vert J\right\vert ^{\left( \frac{\beta }{n}%
-1\right) q}\left( 1+\frac{\left\vert y-c_{J}\right\vert }{\left\vert
J\right\vert ^{\frac{1}{n}}}\right) ^{\left( \beta -n\right) q}d\omega
\left( x,y\right) \right) ^{\frac{1}{q}} \\
&&\ \ \ \ \ \ \ \ \ \ \ \ \ \ \ \ \ \ \ \ \ \ \ \ \ \ \ \ \ \ \ \ \ \ \ \ \
\ \ \ \times \left( 1+\frac{\left\vert c_{I}\right\vert }{\left\vert
I\right\vert ^{\frac{1}{m}}}\right) ^{\alpha -m}\left( 1+\frac{\left\vert
c_{J}\right\vert }{\left\vert J\right\vert ^{\frac{1}{n}}}\right) ^{\beta -n}
\\
&=&\sup_{I\times J\subset \mathbb{R}^{m}\times \mathbb{R}^{n}}\left(
\diint\limits_{\mathbb{R}^{m}\times \mathbb{R}^{n}}\left( \left\vert
I\right\vert ^{\frac{1}{m}}+\left\vert x-c_{I}\right\vert \right) ^{\left(
\alpha -m\right) q}\left( 1+\frac{\left\vert c_{I}\right\vert }{\left\vert
I\right\vert ^{\frac{1}{m}}}\right) ^{\left( \alpha -m\right) q}\left(
\left\vert J\right\vert ^{\frac{1}{n}}+\left\vert y-c_{J}\right\vert \right)
^{\left( \beta -n\right) q}\left( 1+\frac{\left\vert c_{J}\right\vert }{%
\left\vert J\right\vert ^{\frac{1}{n}}}\right) ^{\left( \beta -n\right)
q}d\omega \left( x,y\right) \right) ^{\frac{1}{q}}.
\end{eqnarray*}%
If $I\times J$ has center $\left( c_{I},c_{J}\right) =\left( 0,0\right) $,
then this supremum is equal to or greater than%
\begin{equation*}
\left( \diint\limits_{\mathbb{R}^{m}\times \mathbb{R}^{n}}\left\vert
x\right\vert ^{\left( \alpha -m\right) q}\left\vert y\right\vert ^{\left(
\beta -n\right) q}d\omega \left( x,y\right) \right) ^{\frac{1}{q}}=\mathbb{N}%
_{p,q}^{\left( \alpha ,\beta \right) ,\left( m,n\right) }\left( \delta
_{\left( 0,0\right) },\omega \right) .
\end{equation*}%
Thus from Lemma \ref{tails} we conclude that%
\begin{equation*}
\mathbb{N}_{p,q}^{\left( \alpha ,\beta \right) ,\left( m,n\right) }\left(
\delta _{\left( 0,0\right) },\omega \right) \approx \widehat{\mathbb{A}}%
_{p,q}^{\left( \alpha ,\beta \right) ,\left( m,n\right) }\left( \delta
_{\left( 0,0\right) },\omega \right) .
\end{equation*}

\subsection{Reverse doubling and Muckenhoupt type conditions}

Recall the reverse doubling condition with exponent $\varepsilon >0$, and
rectangle reverse doubling condition with exponent pair $\left( \varepsilon
^{1},\varepsilon ^{2}\right) $.

\begin{definition}
We say that a measure $\mu $ on $\mathbb{R}^{m}$ satisfies the \emph{reverse
doubling condition} $\left( R_{ev}D_{oub}\right) $ with exponent $%
\varepsilon >0$ if 
\begin{equation*}
\left\vert sI\right\vert _{\mu }\leq Cs^{m\varepsilon }\ \left\vert
I\right\vert _{\mu },\ \ \ \ \ 0<s<1,
\end{equation*}%
for some constant $C$; and we say that a measure $\mu $ on $\mathbb{R}%
^{m}\times \mathbb{R}^{n}$ satisfies the \emph{rectangle reverse doubling
condition} $\left( R_{ect}R_{ev}D_{oub}\right) $ with exponent pair $\left(
\varepsilon ^{1},\varepsilon ^{2}\right) $ if 
\begin{equation*}
\left\vert sI\times tJ\right\vert _{\mu }\leq Cs^{m\varepsilon
^{1}}t^{n\varepsilon ^{2}}\ \left\vert I\times J\right\vert _{\mu },\ \ \ \
\ 0<s,t<1,
\end{equation*}%
for some constant $C$. If $\varepsilon =\varepsilon ^{1}=\varepsilon ^{2}$,
then we say simply that $\varepsilon $ is the reverse doubling exponent for $%
\mu $.
\end{definition}

Note that unlike doubling measures, nontrivial reverse doubling measures can
vanish on open subsets. In particular, Cantor measures are typically reverse
doubling, while never doubling. If both weights are reverse doubling, then
the two-tailed, one-tailed, and no-tailed Muckenhoupt conditions are all
equivalent.

\begin{lemma}
\label{reverse}Suppose that $\sigma $ and $\omega $ are reverse rectangle
doubling measures on $\mathbb{R}^{n}$, and that 
\begin{equation*}
0<\alpha <m,0<\beta <n,1<p,q<\infty .
\end{equation*}%
Then we have the following equivalence: 
\begin{equation*}
\widehat{\mathbb{A}}_{p,q}^{\left( \alpha ,\beta \right) ,\left( m,n\right)
}\left( \sigma ,\omega \right) \approx \overline{\mathbb{A}}_{p,q}^{\left(
\alpha ,\beta \right) ,\left( m,n\right) }\left( \sigma ,\omega \right)
\approx \mathbb{A}_{p,q}^{\left( \alpha ,\beta \right) ,\left( m,n\right)
}\left( \sigma ,\omega \right) .
\end{equation*}
\end{lemma}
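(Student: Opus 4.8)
The chain $\mathbb{A}_{p,q}^{\left( \alpha ,\beta \right) ,\left( m,n\right) }\left( \sigma ,\omega \right) \lesssim \overline{\mathbb{A}}_{p,q}^{\left( \alpha ,\beta \right) ,\left( m,n\right) }\left( \sigma ,\omega \right) \lesssim \widehat{\mathbb{A}}_{p,q}^{\left( \alpha ,\beta \right) ,\left( m,n\right) }\left( \sigma ,\omega \right) $ is immediate, since $\widehat{s}_{I\times J}\approx 1$ on $I\times J$ and discarding a tail therefore only decreases the corresponding characteristic. The plan is thus to bound $\widehat{\mathbb{A}}$, hence also $\overline{\mathbb{A}}$, by a constant multiple of $\mathbb{A}$; when $\mathbb{A}=\infty$ there is nothing to prove, so assume $\mathbb{A}<\infty $. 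The engine is a dyadic decomposition of each tail integral into dilates of the base rectangle $R=I\times J$ about its center: because $\widehat{s}_{I\times J}\approx 2^{j\left( \alpha -m\right) }2^{k\left( \beta -n\right) }$ on the block $\left( 2^{j}I\setminus 2^{j-1}I\right) \times \left( 2^{k}J\setminus 2^{k-1}J\right) $, and these weights decay geometrically since $\alpha <m$ and $\beta <n$, one has
\begin{equation*}
\diint \widehat{s}_{I\times J}^{\,q}\,d\omega \approx \sum_{j,k\geq 0}\left( 2^{j\left( \alpha -m\right) }2^{k\left( \beta -n\right) }\right) ^{q}\left\vert 2^{j}I\times 2^{k}J\right\vert _{\omega },
\end{equation*}
and likewise with $\left( \omega ,q\right) $ replaced by $\left( \sigma ,p^{\prime }\right) $. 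The decisive bookkeeping identity is $2^{j\left( \alpha -m\right) }\left\vert 2^{j}I\right\vert ^{1-\frac{\alpha }{m}}=\left\vert I\right\vert ^{1-\frac{\alpha }{m}}$: the geometric weight coming from the tail is exactly the reciprocal of the volume factor $\left\vert 2^{j}I\right\vert ^{1-\frac{\alpha }{m}}$ produced when one applies $\mathbb{A}$ on the dilate.

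I would first handle the one-tailed characteristic $\overline{\mathbb{A}}$, where the computation is clean. In the term of $\overline{\mathbb{A}}\left( R\right) $ in which only the $\sigma $-integral carries a tail, estimate each dilate's $\sigma $-mass by $\left\vert 2^{j}I\times 2^{k}J\right\vert _{\sigma }\leq \left( \mathbb{A}\left\vert 2^{j}I\right\vert ^{1-\frac{\alpha }{m}}\left\vert 2^{k}J\right\vert ^{1-\frac{\beta }{n}}\right) ^{p^{\prime }}\left\vert 2^{j}I\times 2^{k}J\right\vert _{\omega }^{-p^{\prime }/q}$ from finiteness of $\mathbb{A}$. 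The volume factors annihilate the tail weights via the identity above, leaving $\sum_{j,k\geq 0}\left\vert 2^{j}I\times 2^{k}J\right\vert _{\omega }^{-p^{\prime }/q}$, which by the rectangle reverse doubling lower bound $\left\vert 2^{j}I\times 2^{k}J\right\vert _{\omega }\gtrsim 2^{jm\varepsilon ^{1}+kn\varepsilon ^{2}}\left\vert I\times J\right\vert _{\omega }$ is a convergent geometric series with sum $\lesssim \left\vert I\times J\right\vert _{\omega }^{-p^{\prime }/q}$. Reassembling with the surviving no-tail factor $\left\vert I\times J\right\vert _{\omega }^{1/q}$ and the normalization $\left\vert I\right\vert ^{\frac{\alpha }{m}-1}\left\vert J\right\vert ^{\frac{\beta }{n}-1}$ telescopes to $\lesssim \mathbb{A}$; the symmetric argument, now using reverse doubling of $\sigma $, controls the other term. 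Hence $\overline{\mathbb{A}}\lesssim \mathbb{A}$, so $\overline{\mathbb{A}}\approx \mathbb{A}$.

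It remains to prove $\widehat{\mathbb{A}}\lesssim \overline{\mathbb{A}}$, which is the product-and-measure analogue of the observation of Sawyer and Wheeden \cite{SaWh} (recorded before Theorem \ref{Saw and Wheed}) that the two-tailed $\widehat{A}_{p,q}$ condition may be replaced by the pair of one-tailed conditions, and this is the one step where I expect a genuine obstacle. Estimating the two tails of $\widehat{\mathbb{A}}\left( R\right) $ separately against $\mathbb{A}$ spends the Muckenhoupt information twice and yields only $\widehat{\mathbb{A}}\left( R\right) \lesssim \mathbb{A}^{2}$ divided by the no-tailed quantity attached to $R$, which is not uniformly $\lesssim \mathbb{A}$. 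The remedy, following \cite{SaWh}, is a more careful, quasi-orthogonal organization of the double tail sum in which each block is controlled by a one-tailed quantity evaluated on a dilate of $R$ rather than by the plain $\mathbb{A}$, so that the Muckenhoupt information enters only once per variable; the product factorization $\widehat{s}_{I\times J}\left( x,y\right) =\widehat{s}_{I}\left( x\right) \widehat{s}_{J}\left( y\right) $ and the monotonicity $\widehat{s}_{2^{j}I\times 2^{k}J}\geq \widehat{s}_{I\times J}$ under dilation are exactly what make the dilates' one-tailed quantities available for the present blocks, and reverse doubling then sums the resulting geometric series as in the previous paragraph. Combined with $\overline{\mathbb{A}}\approx \mathbb{A}$ this gives $\widehat{\mathbb{A}}\approx \overline{\mathbb{A}}\approx \mathbb{A}$, which is the assertion of the lemma.
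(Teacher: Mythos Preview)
Your treatment of $\overline{\mathbb{A}}\lesssim \mathbb{A}$ is correct and is essentially the paper's argument (organized dually: you bound $\left\vert 2^{j}I\times 2^{k}J\right\vert _{\sigma }$ via $\mathbb{A}$ and then use reverse doubling of $\omega$, whereas the paper uses reverse doubling of $\sigma$ on the no-tail factor and then $\mathbb{A}$; these are equivalent).

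The gap is in your $\widehat{\mathbb{A}}\lesssim \overline{\mathbb{A}}$ step. If you decompose, say, the $\sigma$-tail and on the $(j,k)$-block replace $\int \widehat{s}_{I\times J}^{q}\,d\omega$ by $\int \widehat{s}_{2^{j}I\times 2^{k}J}^{q}\,d\omega$ using the monotonicity $\widehat{s}_{2^{j}I\times 2^{k}J}\geq \widehat{s}_{I\times J}$, then bound by $\overline{\mathbb{A}}$ on the dilate, your own bookkeeping identity $2^{j(\alpha -m)}\left\vert 2^{j}I\right\vert ^{1-\alpha /m}=\left\vert I\right\vert ^{1-\alpha /m}$ shows that the tail weight and the volume factor cancel \emph{exactly}: you are left with $\overline{\mathbb{A}}^{\,p^{\prime }}\sum_{j,k\geq 0}1=\infty$. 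There is no residual measure-mass factor on which reverse doubling can act ``as in the previous paragraph.''

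What is missing is that reverse doubling must be applied to the \emph{tail integral itself}, not after the reduction to $\overline{\mathbb{A}}$. The paper's key observation is that applying reverse doubling of $\omega$ termwise in the dyadic decomposition of the tail gives
\[
\int \widehat{s}_{I\times J}^{\,q}\,d\omega \;\lesssim\; 2^{-jm\varepsilon ^{1}-kn\varepsilon ^{2}}\int \widehat{s}_{2^{j}I\times 2^{k}J}^{\,q}\,d\omega ,
\]
a strict quantitative improvement of the monotonicity you invoke. With this in hand, decomposing the $\sigma$-tail, passing the $\omega$-tail to the dilate with the gain $2^{-jm\varepsilon ^{1}-kn\varepsilon ^{2}}$, and then bounding by $\overline{\mathbb{A}}$ on $2^{j}I\times 2^{k}J$ leaves the convergent series $\sum_{j,k}2^{-jm\varepsilon ^{1}-kn\varepsilon ^{2}}$. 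So the pieces you list are right, but the order is essential: reverse doubling enters \emph{before} you pass to the one-tailed characteristic, to furnish the decay that monotonicity alone cannot.
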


\begin{proof}
We first consider the one-parameter equivalence 
\begin{equation}
\widehat{\mathbb{A}}_{p,q}^{\alpha ,m}\left( \sigma ,\omega \right) \approx 
\overline{\mathbb{A}}_{p,q}^{\alpha ,m}\left( \sigma ,\omega \right) \approx 
\mathbb{A}_{p,q}^{\alpha ,m}\left( \sigma ,\omega \right) ,
\label{one par equ}
\end{equation}%
since the extension to two parameters will follow the same proof with
obvious modifications. Since $\mathbb{A}_{p,q}^{\alpha ,m}\left( \sigma
,\omega \right) \leq \overline{\mathbb{A}}_{p,q}^{\alpha ,m}\left( \sigma
,\omega \right) \leq \widehat{\mathbb{A}}_{p,q}^{\alpha ,m}\left( \sigma
,\omega \right) $, it suffices to prove 
\begin{equation}
\widehat{\mathbb{A}}_{p,q}^{\alpha ,m}\left( \sigma ,\omega \right) \lesssim 
\overline{\mathbb{A}}_{p,q}^{\alpha ,m}\left( \sigma ,\omega \right)
\lesssim \mathbb{A}_{p,q}^{\alpha ,m}\left( \sigma ,\omega \right) ,
\label{suff 2 prove}
\end{equation}%
and we begin with the second inequality $\overline{\mathbb{A}}_{p,q}^{\alpha
,m}\left( \sigma ,\omega \right) \lesssim \mathbb{A}_{p,q}^{\alpha ,m}\left(
\sigma ,\omega \right) $ in (\ref{suff 2 prove}).

So fix a cube $I$ which comes close to achieving the supremum in $\overline{%
\mathbb{A}}_{p,q}^{\alpha ,m}\left( \sigma ,\omega \right) $ over all cubes,
and suppose the tail $\widehat{s}_{I}$ occurs in the factor for $\omega $.
We have%
\begin{eqnarray*}
&&\sum_{k=0}^{\infty }2^{k\left[ \left( \alpha -m\right) q+m\right] }\frac{1%
}{\left\vert 2^{k}I\right\vert }\int_{2^{k}I}d\omega \left( x\right) \\
&=&\sum_{k=0}^{\infty }2^{k\left[ \left( \alpha -m\right) q+m\right] }\frac{1%
}{\left\vert 2^{k}I\right\vert }\sum_{\ell =0}^{k}\int_{2^{\ell }I\setminus
2^{\ell -1}I}d\omega \left( x\right) =\sum_{\ell =0}^{\infty }\left(
\sum_{k=\ell }^{\infty }2^{k\left( \alpha -m\right) q}\right) \int_{2^{\ell
}I\setminus 2^{\ell -1}I}d\omega \left( x\right) \\
&\approx &\sum_{\ell =0}^{\infty }2^{\ell \left( \alpha -m\right)
q}\int_{2^{\ell }I\setminus 2^{\ell -1}I}d\omega \left( x\right) \approx 
\frac{1}{\left\vert I\right\vert }\int_{\mathbb{R}^{m}}\left( 1+\frac{%
\left\vert x-c_{I}\right\vert }{\left\vert I\right\vert ^{\frac{1}{m}}}%
\right) ^{\left( \alpha -m\right) q}d\omega \left( x\right) =\frac{1}{%
\left\vert I\right\vert }\int_{\mathbb{R}^{m}}\widehat{s}_{I}^{q}d\omega .
\end{eqnarray*}%
since $\alpha <m$. Then we write%
\begin{eqnarray}
&&  \label{control two weights} \\
&&\overline{\mathbb{A}}_{p,q}^{\alpha ,m}\left( \sigma ,\omega \right)
^{q}\approx \left\{ \left\vert I\right\vert ^{\frac{\alpha }{m}-\Gamma
}\left( \frac{1}{\left\vert I\right\vert }\int_{\mathbb{R}^{m}}\widehat{s}%
_{I}^{q}d\omega \right) ^{\frac{1}{q}}\left( \frac{1}{\left\vert
I\right\vert }\int_{I}d\sigma \right) ^{\frac{1}{p^{\prime }}}\right\} ^{q} 
\notag \\
&\approx &\left\vert I\right\vert ^{\left( \frac{\alpha }{m}-\Gamma \right)
q}\sum_{k=0}^{\infty }2^{k\left[ \left( \alpha -m\right) q+m\right] }\left( 
\frac{1}{\left\vert 2^{k}I\right\vert }\int_{2^{k}I}d\omega \right) \left( 
\frac{1}{\left\vert I\right\vert }\int_{I}d\sigma \right) ^{\frac{q}{%
p^{\prime }}}  \notag \\
&\lesssim &\left\vert I\right\vert ^{\left( \frac{\alpha }{m}-\Gamma \right)
q}\sum_{k=0}^{\infty }2^{k\left[ \left( \alpha -m\right) q+m\right] }\left( 
\frac{1}{\left\vert 2^{k}I\right\vert }\int_{2^{k}I}d\omega \right) \left( 
\frac{1}{\left\vert I\right\vert }C2^{-k\delta }\left\vert 2^{k}I\right\vert
_{\sigma }\right) ^{\frac{q}{p^{\prime }}}  \notag \\
&=&\sum_{k=0}^{\infty }2^{-k\left( \alpha -m\Gamma \right) q}\left\vert
2^{k}I\right\vert ^{\left( \frac{\alpha }{m}-\Gamma \right) q}2^{k\left[
\left( \alpha -m\right) q+m\right] }\left( \frac{1}{\left\vert
2^{k}I\right\vert }\int_{2^{k}I}d\omega \right) 2^{km\frac{q}{p^{\prime }}%
}\left( \frac{1}{\left\vert 2^{k}I\right\vert }C2^{-k\delta }\left\vert
2^{k}I\right\vert _{\sigma }\right) ^{\frac{q}{p^{\prime }}}  \notag \\
&=&\sum_{k=0}^{\infty }C2^{-km\varepsilon }2^{-k\left( \alpha -m\Gamma
\right) q}2^{k\left[ \left( \alpha -m\right) q+m\right] }2^{km\frac{q}{%
p^{\prime }}}\left\{ \left\vert 2^{k}I\right\vert ^{\left( \frac{\alpha }{m}%
-\Gamma \right) q}\left( \frac{1}{\left\vert 2^{k}I\right\vert }%
\int_{2^{k}I}d\omega \right) \left( \frac{1}{\left\vert 2^{k}I\right\vert }%
\left\vert 2^{k}I\right\vert _{\sigma }\right) ^{\frac{q}{p^{\prime }}%
}\right\} ,  \notag
\end{eqnarray}%
where $\varepsilon >0$ is the reverse doubling exponent for the weight $%
\sigma $,%
\begin{equation*}
\left\vert Q\right\vert _{\sigma }\leq C2^{-km\varepsilon }\left\vert
2^{k}Q\right\vert _{\sigma }\text{ for all cubes }Q.
\end{equation*}%
We then dominate the term in braces by $\mathbb{A}_{p,q}^{\alpha ,m}\left(
\sigma ,\omega \right) $ to obtain that%
\begin{equation*}
\overline{\mathbb{A}}_{p,q}^{\alpha ,m}\left( \sigma ,\omega \right)
^{q}\lesssim \mathbb{A}_{p,q}^{\alpha ,m}\left( \sigma ,\omega \right)
^{q}\sum_{k=0}^{\infty }2^{-km\varepsilon }\lesssim \mathbb{A}_{p,q}^{\alpha
,m}\left( \sigma ,\omega \right) ^{q},
\end{equation*}%
since $-k\left( \alpha -m\Gamma \right) q+k\left[ \left( \alpha -m\right) q+m%
\right] +km\frac{q}{p^{\prime }}=0$, and this completes the proof of the
second inequality in (\ref{suff 2 prove}). Note that if the cube $I$ which
comes close to achieving the supremum in $\overline{\mathbb{A}}%
_{p,q}^{\alpha ,m}\left( \sigma ,\omega \right) $ over all cubes has the
tail $\widehat{s}_{I}$ occurrings in the factor for $\sigma $, then the
above argument applies using reverse doubling for the weight $\omega $.

Now we consider the first inequality in (\ref{suff 2 prove}). Here we can
use reverse doubling for either $\sigma $ or $\omega $, and we will use a
decomposition that uses reverse doubling for $\omega $ with exponent $%
\varepsilon >0$, i.e. $\left\vert Q\right\vert _{\omega }\leq
C2^{-km\varepsilon }\left\vert 2^{k}Q\right\vert _{\omega }$ for all cubes $%
Q $. This then has the analogous consequence for the $\omega $-integrals
with tails: 
\begin{eqnarray*}
\int_{\mathbb{R}^{m}}\widehat{s}_{I}^{q}d\omega &\approx &\sum_{k=0}^{\infty
}2^{k\left[ \left( \alpha -m\right) q\right] }\left( \int_{2^{k}I}d\omega
\right) \leq \sum_{k=0}^{\infty }2^{k\left[ \left( \alpha -m\right) q\right]
}\left( C2^{-\ell m\varepsilon }\int_{2^{k+\ell }I}d\omega \right) \\
&=&C2^{-\ell m\varepsilon }\sum_{k=0}^{\infty }2^{k\left[ \left( \alpha
-m\right) q\right] }\left( \int_{2^{k+\ell }I}d\omega \right) \approx
C2^{-\ell m\varepsilon }\int_{\mathbb{R}^{m}}\widehat{s}_{2^{\ell
}I}^{q}d\omega .
\end{eqnarray*}%
Now let $I$ be a cube which comes close to achieving the supremum in $%
\widehat{\mathbb{A}}_{p,q}^{\alpha ,m}\left( \sigma ,\omega \right) $ over
all cubes. Then we have%
\begin{eqnarray*}
&&\widehat{\mathbb{A}}_{p,q}^{\alpha ,m}\left( \sigma ,\omega \right)
^{p^{\prime }}\approx \left\{ \left\vert I\right\vert ^{\frac{\alpha }{m}%
-\Gamma }\left( \frac{1}{\left\vert I\right\vert }\int_{\mathbb{R}^{m}}%
\widehat{s}_{I}^{q}d\omega \right) ^{\frac{1}{q}}\left( \frac{1}{\left\vert
I\right\vert }\int_{\mathbb{R}^{m}}\widehat{s}_{I}^{p^{\prime }}d\sigma
\right) ^{\frac{1}{p^{\prime }}}\right\} ^{p^{\prime }} \\
&\approx &\left\vert I\right\vert ^{\left( \frac{\alpha }{m}-\Gamma \right)
p^{\prime }}\left( \frac{1}{\left\vert I\right\vert }\int_{\mathbb{R}^{m}}%
\widehat{s}_{I}^{q}d\omega \right) ^{\frac{p^{\prime }}{q}}\sum_{\ell
=0}^{\infty }2^{\ell \left[ \left( \alpha -m\right) p^{\prime }+m\right]
}\left( \frac{1}{\left\vert 2^{\ell }I\right\vert }\int_{2^{\ell }I}d\sigma
\right) \\
&\lesssim &\left\vert I\right\vert ^{\left( \frac{\alpha }{m}-\Gamma \right)
p^{\prime }}\sum_{\ell =0}^{\infty }C2^{-\ell m\varepsilon }2^{\ell \left[
\left( \alpha -m\right) p^{\prime }+m\right] }\left( \frac{1}{\left\vert
I\right\vert }\int_{\mathbb{R}^{m}}\widehat{s}_{2^{\ell }I}^{q}d\omega
\right) ^{\frac{p^{\prime }}{q}}\left( \frac{1}{\left\vert 2^{\ell
}I\right\vert }\int_{2^{\ell }I}d\sigma \right) \\
&=&\sum_{\ell =0}^{\infty }C2^{-\ell m\varepsilon }2^{-\ell \left( \alpha
-m\Gamma \right) p^{\prime }}2^{\ell \left[ \left( \alpha -m\right)
p^{\prime }+m\right] }2^{\ell m\frac{p^{\prime }}{q}}\left\{ \left\vert
2^{\ell }I\right\vert ^{\left( \frac{\alpha }{m}-\Gamma \right) p^{\prime
}}\left( \frac{1}{\left\vert 2^{\ell }I\right\vert }\int_{\mathbb{R}^{m}}%
\widehat{s}_{2^{\ell }I}^{q}d\omega \right) ^{\frac{p^{\prime }}{q}}\left( 
\frac{1}{\left\vert 2^{\ell }I\right\vert }\int_{2^{\ell }I}d\sigma \right)
\right\} .
\end{eqnarray*}%
We then dominate the term in braces by $\overline{\mathbb{A}}_{p,q}^{\alpha
,m}\left( \sigma ,\omega \right) ^{p^{\prime }}$ to obtain that%
\begin{equation*}
\widehat{\mathbb{A}}_{p,q}^{\alpha ,m}\left( \sigma ,\omega \right)
^{p^{\prime }}\lesssim \overline{\mathbb{A}}_{p,q}^{\alpha ,m}\left( \sigma
,\omega \right) ^{p^{\prime }}\sum_{\ell =0}^{\infty }2^{-\ell m\varepsilon
}\lesssim \overline{\mathbb{A}}_{p,q}^{\alpha ,m}\left( \sigma ,\omega
\right) ^{p^{\prime }},
\end{equation*}%
since $-\ell \left( \alpha -m\Gamma \right) p^{\prime }+\ell \left[ \left(
\alpha -m\right) p^{\prime }+m\right] +\ell m\frac{p^{\prime }}{q}=0$, and
this completes the proof of the first inequality in (\ref{suff 2 prove}),
which in turn completes the proof of the one parameter equivalence (\ref{one
par equ}).

In order to prove the two parameter equivalence in Lemma \ref{reverse}, we
begin with%
\begin{equation*}
\sum_{k,j=0}^{\infty }2^{k\left[ \left( \alpha -m\right) q+m\right] +j\left[
\left( \beta -n\right) q+n\right] }\frac{1}{\left\vert 2^{k}I\times
2^{j}J\right\vert }\int_{2^{k}I\times 2^{j}J}d\omega \left( x\right) \approx 
\frac{1}{\left\vert I\times J\right\vert }\int_{\mathbb{R}^{m}\times \mathbb{%
R}^{n}}\widehat{s}_{I,J}^{q}d\omega ,
\end{equation*}%
and proceed with the two parameter analogue of (\ref{control two weights}),
followed by the straightforward modifications of the remaining arguments.
This completes our proof of Lemma \ref{reverse}.
\end{proof}

\subsection{Proof of Theorem \protect\ref{A char mn}}

Let $I=\dprod\limits_{i=1}^{m}\left[ a_{i},a_{i}+s\right] $ and $%
J=\dprod\limits_{j=1}^{n}\left[ b_{j},b_{j}+t\right] $ be cubes in $\mathbb{R%
}_{+}^{m}$ and $\mathbb{R}_{+}^{n}$ with side lengths $s>0$ and $t>0$
respectively. Then the local characteristic $A_{p,q}^{\left( \alpha ,\beta
\right) ,\left( m,n\right) }\left( v,w\right) \left[ I,J\right] $ is given by%
\begin{eqnarray*}
&&A_{p,q}^{\left( \alpha ,\beta \right) ,\left( m,n\right) }\left(
v,w\right) \left[ I,J\right] \\
&=&s^{\alpha -m}\ t^{\beta -n}\left( \int_{\left[ a,a+s\right] ^{m}}\int_{%
\left[ b,b+t\right] ^{n}}\left\vert \left( x,y\right) \right\vert ^{-\gamma
q}dxdy\right) ^{\frac{1}{q}}\left( \int_{\left[ a,a+s\right] ^{m}}\int_{%
\left[ b,b+t\right] ^{n}}\left\vert \left( x,y\right) \right\vert ^{-\delta
p^{\prime }}dxdy\right) ^{\frac{1}{p^{\prime }}} \\
&=&s^{\alpha -m}\ t^{\beta -n}\ \left[ \mathcal{I}_{\left\{ a,b\right\}
;\left( s,t\right) }^{m,n}\left( -\gamma q\right) \right] ^{\frac{1}{q}}\ %
\left[ \mathcal{I}_{\left\{ a,b\right\} ;\left( s,t\right) }^{m,n}\left(
-\delta p^{\prime }\right) \right] ^{\frac{1}{p^{\prime }}},
\end{eqnarray*}%
where%
\begin{eqnarray*}
\mathcal{I}_{\left( a,b\right) ;\left( s,t\right) }^{m,n}\left( \eta \right)
&\equiv &\int_{\dprod\limits_{i=1}^{m}\left[ a_{i},a_{i}+s\right]
}\int_{\dprod\limits_{j=1}^{n}\left[ b_{j},b_{j}+t\right] }\left\vert \left(
x,y\right) \right\vert ^{\eta }dxdy=\int_{\dprod\limits_{i=1}^{m}\left[
a_{i},a_{i}+s\right] }\int_{\dprod\limits_{j=1}^{n}\left[ b_{j},b_{j}+t%
\right] }\left( \left\vert x\right\vert ^{2}+\left\vert y\right\vert
^{2}\right) ^{\frac{\eta }{2}}dxdy \\
&\approx &\int_{\dprod\limits_{i=1}^{m}\left[ a_{i},a_{i}+s\right]
}\int_{\dprod\limits_{j=1}^{n}\left[ b_{j},b_{j}+t\right] }\left(
x_{1}+...+x_{m}+y_{1}+...+y_{n}\right) ^{\eta }dxdy\equiv \mathsf{I}_{\left(
a,b\right) ;\left( s,t\right) }^{m,n}\left( \eta \right) ,
\end{eqnarray*}%
for $\left( a,b\right) \in \mathbb{R}_{+}^{m}\times \mathbb{R}_{+}^{n}$. Two
immediate necessary conditions for the finiteness of $A_{p,q}^{\left( \alpha
,\beta \right) ,\left( m,n\right) }\left( v,w\right) $ are the local
integrability (\ref{local integ}) of the weights $w^{q}$ and $v^{-p^{\prime
}}$, namely $\gamma q<m+n$ and $\delta p^{\prime }<m+n$. Define the sans
serif local characteristic $\mathsf{A}_{p,q}^{\left( \alpha ,\beta \right)
,\left( m,n\right) }\left( v,w\right) \left[ I,J\right] $ by%
\begin{eqnarray}
\mathsf{A}_{p,q}^{\left( \alpha ,\beta \right) ,\left( m,n\right) }\left(
v,w\right) \left[ I,J\right] &\equiv &s^{\alpha -m}\ t^{\beta -n}\ \left[ 
\mathsf{I}_{\left\{ a,b\right\} ;\left( s,t\right) }^{m,n}\left( -\gamma
q\right) \right] ^{\frac{1}{q}}\ \left[ \mathsf{I}_{\left\{ a,b\right\}
;\left( s,t\right) }^{m,n}\left( -\delta p^{\prime }\right) \right] ^{\frac{1%
}{p^{\prime }}},  \label{def A tilda} \\
\text{for}\ \ \ \ \ \text{ }\left( a,b\right) &\in &\mathbb{R}_{+}^{m}\times 
\mathbb{R}_{+}^{n}\ .  \notag
\end{eqnarray}%
Symmetry considerations show that the characteristic $A_{p,q}^{\left( \alpha
,\beta \right) ,\left( m,n\right) }\left( v,w\right) $ satisfies%
\begin{equation*}
A_{p,q}^{\left( \alpha ,\beta \right) ,\left( m,n\right) }\left( v,w\right)
\equiv \sup_{I\times J\subset \mathbb{R}^{m}\times \mathbb{R}%
^{n}}A_{p,q}^{\left( \alpha ,\beta \right) ,\left( m,n\right) }\left(
v,w\right) \left[ I,J\right] \approx \sup_{I\times J\subset \mathbb{R}%
_{+}^{m}\times \mathbb{R}_{+}^{n}}\mathsf{A}_{p,q}^{\left( \alpha ,\beta
\right) ,\left( m,n\right) }\left( v,w\right) \left[ I,J\right] .
\end{equation*}

Suppose that $I=\dprod\limits_{i=1}^{m}\left[ a_{i},a_{i}+s\right] \subset 
\mathbb{R}_{+}^{m}$ and $J=\dprod\limits_{j=1}^{n}\left[ b_{j},b_{j}+t\right]
\subset \mathbb{R}_{+}^{n}$ are as above. If we slide (translate) the
rectangle $I\times J$ to a new position 
\begin{equation*}
I^{\prime }\times J^{\prime }=\dprod\limits_{i=1}^{m}\left[ a_{i}^{\prime
},a_{i}^{\prime }+s\right] \times \dprod\limits_{j=1}^{n}\left[
b_{j}^{\prime },b_{j}^{\prime }+t\right] \subset \mathbb{R}_{+}^{m}\times 
\mathbb{R}_{+}^{n}
\end{equation*}%
in which their centers 
\begin{equation*}
c_{I\times J}=\left( a,b\right) ,c_{I^{\prime }\times J^{\prime }}=\left(
a^{\prime },b^{\prime }\right) \in \mathbb{R}_{+}^{m}\times \mathbb{R}%
_{+}^{n}
\end{equation*}%
lie on the same simplex 
\begin{equation*}
S\left( \nu \right) \equiv \left\{ \left( x,y\right) \in \mathbb{R}%
_{+}^{m}\times \mathbb{R}_{+}^{n}:x_{1}+...+x_{m}+y_{1}+...+y_{n}=\nu
\right\} ,\ \ \ \ \ \nu >0,
\end{equation*}%
then the sans serif local characteristics are unchanged, i.e. 
\begin{equation*}
\mathsf{A}_{p,q}^{\left( \alpha ,\beta \right) ,\left( m,n\right) }\left(
v,w\right) \left[ I,J\right] =\mathsf{A}_{p,q}^{\left( \alpha ,\beta \right)
,\left( m,n\right) }\left( v,w\right) \left[ I^{\prime },J^{\prime }\right] .
\end{equation*}%
Thus it suffices to consider only rectangles $I\times J\subset \mathbb{R}%
_{+}^{m}\times \mathbb{R}_{+}^{n}$ having the special forms%
\begin{equation}
I=\left[ 0,s\right] ^{k-1}\times \left[ a_{k},a_{k}+s\right] \times \left[
0,s\right] ^{m-k}\text{ and }J=\left[ 0,t\right] ^{n},  \label{form a}
\end{equation}%
and%
\begin{equation}
I=\left[ 0,s\right] ^{m}\text{ and }J=\left[ 0,t\right] ^{\ell -1}\times %
\left[ b_{\ell },b_{\ell }+t\right] \times \left[ 0,t\right] ^{n-\ell },
\label{form b}
\end{equation}%
for some $1\leq k\leq m$ and $1\leq \ell \leq n$. We have thus shown that
the characteristic $A_{p,q}^{\left( \alpha ,\beta \right) ,\left( m,n\right)
}\left( v,w\right) $ is controlled by the supremum of the sans serif local
characteristics $\mathsf{A}_{p,q}^{\left( \alpha ,\beta \right) ,\left(
m,n\right) }\left( v,w\right) \left[ I,J\right] $ taken over rectangles $%
I\times J$ of the special forms given in (\ref{form a}) and (\ref{form b}):%
\begin{equation*}
A_{p,q}^{\left( \alpha ,\beta \right) ,\left( m,n\right) }\left( v,w\right)
\approx \sup_{I\times J\text{ as in (\ref{form a}) or (\ref{form b})}}%
\mathsf{A}_{p,q}^{\left( \alpha ,\beta \right) ,\left( m,n\right) }\left(
v,w\right) \left[ I,J\right] .
\end{equation*}

Let us further consider a rectangle $I\times J$ of the form (\ref{form a}),
and without loss of generality, we may take $k=1$ so that%
\begin{equation*}
I=\left[ a,a+s\right] \times \left[ 0,s\right] ^{m-1}\text{ and }J=\left[ 0,t%
\right] ^{n}.
\end{equation*}%
Recall that the corresponding sans serif local characteristic is given by%
\begin{eqnarray*}
&&\mathsf{A}_{p,q}^{\left( \alpha ,\beta \right) ,\left( m,n\right) }\left(
v,w\right) \left[ I,J\right] \\
&=&s^{\alpha -m}\ t^{\beta -n}\ \left[ \mathsf{I}_{\left\{ \left(
a,0,...,0\right) ,\mathbf{0}\right\} ;\left( s,t\right) }^{m,n}\left(
-\gamma q\right) \right] ^{\frac{1}{q}}\ \left[ \mathsf{I}_{\left\{ \left(
a,0,...,0\right) ,\mathbf{0}\right\} ;\left( s,t\right) }^{m,n}\left(
-\delta p^{\prime }\right) \right] ^{\frac{1}{p^{\prime }}},
\end{eqnarray*}%
where $\mathbf{0}=\left( 0,...,0\right) \in \mathbb{R}^{n}$ and%
\begin{equation*}
\mathsf{I}_{\left\{ \left( a,0,...,0,\right) ,\mathbf{0}\right\} ;\left(
s,t\right) }^{m,n}\left( \eta \right) =\int_{x\in \left[ a,a+s\right] \times %
\left[ 0,s\right] ^{m-1}}\left\{ \int_{y\in \left[ 0,t\right] ^{n}}\left(
x_{1}+...+x_{m}+y_{1}+...+y_{n}\right) ^{\eta }dy\right\} dx.
\end{equation*}

\begin{enumerate}
\item First we note that if $s\geq a>0$, then 
\begin{equation*}
\mathsf{I}_{\left\{ \left( a,0,...,0,\right) ,\mathbf{0}\right\} ;\left(
s,t\right) }^{m,n}\left( \eta \right) \approx \mathsf{I}_{\left\{ \mathbf{0},%
\mathbf{0}\right\} ;s,t}^{m,n}\left( \eta \right) ,\ \ \ \ \ \eta \geq 0,
\end{equation*}%
and 
\begin{equation*}
\mathsf{I}_{\left\{ \left( a,0,...,0,\right) ,\mathbf{0}\right\} ;\left(
s,t\right) }^{m,n}\left( \eta \right) \lesssim \mathsf{I}_{\left\{ \mathbf{0}%
,\mathbf{0}\right\} ;s,t}^{m,n}\left( \eta \right) ,\ \ \ \ \ -\left(
m+n\right) <\eta <0,
\end{equation*}%
as is easily seen using the doubling and monotonicity properties of the
locally integrable weight $\left( x_{1}+...+x_{m}+y_{1}+...+y_{n}\right)
^{\eta }$. We conclude using (\ref{local integ}) that%
\begin{eqnarray}
&&s^{\alpha -m}\ t^{\beta -n}\ \left[ \mathsf{I}_{\left\{ \left(
a,0,...,0\right) ,\mathbf{0}\right\} ;\left( s,t\right) }^{m,n}\left(
-\gamma q\right) \right] ^{\frac{1}{q}}\ \left[ \mathsf{I}_{\left\{ \left(
a,0,...,0\right) ,\mathbf{0}\right\} ;\left( s,t\right) }^{m,n}\left(
-\delta p^{\prime }\right) \right] ^{\frac{1}{p^{\prime }}}  \label{con that}
\\
&&\ \ \ \ \ \lesssim s^{\alpha -m}\ t^{\beta -n}\ \left[ \mathsf{I}_{\left\{ 
\mathbf{0},\mathbf{0}\right\} ;\left( s,t\right) }^{m,n}\left( -\gamma
q\right) \right] ^{\frac{1}{q}}\ \left[ \mathsf{I}_{\left\{ \mathbf{0},%
\mathbf{0}\right\} ;\left( s,t\right) }^{m,n}\left( -\delta p^{\prime
}\right) \right] ^{\frac{1}{p^{\prime }}},  \notag
\end{eqnarray}%
holds when $s\geq a>0$.

\item Now consider the case $0<s<a\leq t$. Then by doubling and monotonicity
we have 
\begin{equation*}
\mathsf{I}_{\left\{ \left( a,0,...,0,\right) ,\mathbf{0}\right\} ;\left(
s,t\right) }^{m,n}\left( \eta \right) \lesssim \mathsf{I}_{\left\{ \mathbf{0}%
,\mathbf{0}\right\} ;s,t}^{m,n}\left( \eta \right) ,\ \ \ \ \ -\left(
m+n\right) <\eta <\infty ,
\end{equation*}%
and so we conclude again using (\ref{local integ}) that (\ref{con that})
also holds when $0<s<a<t$.

\item Now consider the remaining case $0<s,t\leq a$. Since the weight $%
\left( x_{1}+...+x_{m}+y_{1}+...+y_{n}\right) ^{\eta }$ is roughly equal to
the constant $a^{\eta }$ on the cube $\left[ a,2a\right] \times \left[ 0,a%
\right] ^{m-1}\times \left[ 0,a\right] ^{n}$, we have%
\begin{eqnarray*}
&&\mathsf{A}_{p,q}^{\left( \alpha ,\beta \right) ,\left( m,n\right) }\left(
v,w\right) \left[ \left[ a,a+s\right] \times \left[ 0,s\right] ^{m-1},\left[
0,t\right] ^{n}\right] \\
&=&s^{\alpha -m1}\ t^{\beta -n}\ \left[ \mathsf{I}_{\left\{ \left(
a,0,...,0,\right) ,\mathbf{0}\right\} ;\left( s,t\right) }^{m,n}\left(
-\gamma q\right) \right] ^{\frac{1}{q}}\ \left[ \mathsf{I}_{\left\{ \left(
a,0,...,0,\right) ,\mathbf{0}\right\} ;\left( s,t\right) }^{m,n}\left(
-\delta p^{\prime }\right) \right] ^{\frac{1}{p^{\prime }}} \\
&=&s^{\alpha -m\Gamma }\ t^{\beta -n\Gamma }\ \left[ \frac{1}{s^{m}t^{n}}%
\mathsf{I}_{\left\{ \left( a,0,...,0,\right) ,\mathbf{0}\right\} ;\left(
s,t\right) }^{m,n}\left( -\gamma q\right) \right] ^{\frac{1}{q}}\ \left[ 
\frac{1}{s^{m}t^{n}}\mathsf{I}_{\left\{ \left( a,0,...,0,\right) ,\mathbf{0}%
\right\} ;\left( s,t\right) }^{m,n}\left( -\delta p^{\prime }\right) \right]
^{\frac{1}{p^{\prime }}} \\
&\approx &s^{\alpha -m\Gamma }\ t^{\beta -n\Gamma }\ a^{-\left( \gamma
+\delta \right) }\ ,
\end{eqnarray*}%
for $0<s,t<a$. We note for future reference that%
\begin{eqnarray*}
&&\mathsf{A}_{p,q}^{\left( \alpha ,\beta \right) ,\left( m,n\right) }\left(
v,w\right) \left[ \left[ a,a+s\right] \times \left[ 0,s\right] ^{m-1},\left[
0,t\right] ^{n}\right] \\
&\approx &\mathsf{A}_{p,q}^{\left( \alpha ,\beta \right) ,\left( m,n\right)
}\left( v,w\right) \left[ \left[ a,a+s\right] ^{m},\left[ a,a+t\right] ^{n}%
\right] .
\end{eqnarray*}
\end{enumerate}

Choosing $s=t=a$ in (3) we see that $a^{\left( \alpha +\beta \right) -\left(
\gamma +\delta \right) -\left( m+n\right) \Gamma }$ must be bounded for all $%
a>0$, which is equivalent to the balanced condition%
\begin{equation}
\Gamma =\frac{\left( \alpha +\beta \right) -\left( \gamma +\delta \right) }{%
m+n}.  \label{bal mn}
\end{equation}%
Now we also have that both $s^{\alpha -m\Gamma }\ $and $t^{\beta -n\Gamma }$
are bounded for $0<s,t\leq a$, which is equivalent to 
\begin{equation*}
\frac{\alpha }{m},\frac{\beta }{n}\geq \Gamma .
\end{equation*}%
If we fix $t=1$ and let $s=a\rightarrow \infty $, we obtain that the
boundedness of $a^{\alpha -m\Gamma }\ a^{-\left( \gamma +\delta \right) }$
for $a$ large implies%
\begin{equation*}
\frac{\alpha }{m}\leq \Gamma +\frac{\gamma +\delta }{m}.
\end{equation*}%
Similarly we have%
\begin{equation*}
\frac{\beta }{n}\leq \Gamma +\frac{\gamma +\delta }{n},
\end{equation*}%
from which we conclude the diagonal inequality%
\begin{equation}
\left\{ 
\begin{array}{c}
\Gamma \leq \frac{\alpha }{m}\leq \Gamma +\frac{\gamma +\delta }{m} \\ 
\Gamma \leq \frac{\beta }{n}\leq \Gamma +\frac{\gamma +\delta }{n}%
\end{array}%
\right. .  \label{diagonal}
\end{equation}%
It is now an easy matter to verify that the supremum taken over $0<s,t\leq a$
of the sans serif characteristics arising in (3), is finite if and only if
both the power weight equality (\ref{bal mn}) and diagonal inequalities (\ref%
{diagonal}) hold, i.e.%
\begin{eqnarray*}
&&\sup_{0<s,t\leq a}\mathsf{A}_{p,q}^{\left( \alpha ,\beta \right) ,\left(
m,n\right) }\left( v,w\right) \left[ \left[ a,a+s\right] \times \left[ 0,s%
\right] ^{m-1},\left[ 0,t\right] ^{n}\right] <\infty \\
&\Longleftrightarrow &\sup_{0<s,t<a}s^{\alpha -m\Gamma }\ t^{\beta -m\Gamma
}\ a^{-\left( \gamma +\delta \right) }<\infty \\
&\Longleftrightarrow &\left\{ 
\begin{array}{c}
\Gamma =\frac{\left( \alpha +\beta \right) -\left( \gamma +\delta \right) }{%
m+n} \\ 
\Gamma \leq \frac{\alpha }{m}\leq \Gamma +\frac{\gamma +\delta }{m} \\ 
\Gamma \leq \frac{\beta }{n}\leq \Gamma +\frac{\gamma +\delta }{n}%
\end{array}%
\right. .
\end{eqnarray*}

If we combine this with the conclusions of (1) and (2), along with a similar
analysis of the case when $I\times J$ has the form in (\ref{form b}), we
obtain the following proposition.

\begin{proposition}
Suppose that $\alpha ,\beta >0$, $1<p,q<\infty $, and $w\left( x,y\right)
=\left\vert \left( x,y\right) \right\vert ^{-\gamma }$ and $v\left(
x,y\right) =\left\vert \left( x,y\right) \right\vert ^{\delta }$ with $%
-\infty <\gamma ,\delta <\infty $ that satisfy (\ref{local integ}). Then 
\begin{eqnarray*}
&&A_{p,q}^{\left( \alpha ,\beta \right) ,\left( m,n\right) }\left( v,w\right)
\\
&\approx &\sup_{0<s,t<\infty }\mathsf{A}_{p,q}^{\left( \alpha ,\beta \right)
,\left( m,n\right) }\left( v,w\right) \left[ \left[ 0,s\right] ^{m},\left[
0,t\right] ^{n}\right] \\
&&+\sup_{0<s,t\leq a<\infty }\mathsf{A}_{p,q}^{\left( \alpha ,\beta \right)
,\left( m,n\right) }\left( v,w\right) \left[ \left[ a,a+s\right] ^{m},\left[
a,a+t\right] ^{n}\right] ,
\end{eqnarray*}%
where the second summand is finite if and only if $\left\{ 
\begin{array}{c}
\Gamma =\frac{\left( \alpha +\beta \right) -\left( \gamma +\delta \right) }{%
m+n} \\ 
\Gamma \leq \frac{\alpha }{m}\leq \Gamma +\frac{\gamma +\delta }{m} \\ 
\Gamma \leq \frac{\beta }{n}\leq \Gamma +\frac{\gamma +\delta }{n}%
\end{array}%
\right. $.
\end{proposition}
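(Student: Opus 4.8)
The plan is to reduce the supremum defining $A_{p,q}^{\left(\alpha,\beta\right),\left(m,n\right)}\left(v,w\right)$ to the two rectangle families named in the statement, and then to evaluate the second (off‑origin, equal‑arm) supremum by a direct scaling computation. First I would pass to the sans serif local characteristic $\mathsf{A}_{p,q}^{\left(\alpha,\beta\right),\left(m,n\right)}\left(v,w\right)\left[I,J\right]$ of (\ref{def A tilda}), which is comparable to $A_{p,q}^{\left(\alpha,\beta\right),\left(m,n\right)}\left(v,w\right)\left[I,J\right]$ after replacing the Euclidean norm $\left\vert\left(x,y\right)\right\vert$ by the equivalent linear form $x_{1}+\cdots+x_{m}+y_{1}+\cdots+y_{n}$ and restricting to the positive octant, the latter being harmless since one may reflect any rectangle into $\mathbb{R}_{+}^{m}\times\mathbb{R}_{+}^{n}$ without decreasing the characteristic. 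Because this linear form is invariant under translations that keep the center of $I\times J$ on a simplex $S\left(\nu\right)$, the quantity $\mathsf{A}\left[I,J\right]$ depends on $I\times J$ only through its side lengths $\left(s,t\right)$ and the value $\nu$ of the linear form at the center; sliding along simplices therefore lets us assume $I\times J$ has one of the two special forms (\ref{form a}) or (\ref{form b}).

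Next I would carry out the three‑case comparison for a rectangle of the form (\ref{form a}), say with $k=1$, using only doubling and monotonicity of the weight $\left(x_{1}+\cdots+y_{n}\right)^{\eta}$ together with the local integrability hypothesis (\ref{local integ}) applied to the negative exponents $\eta=-\gamma q$ and $\eta=-\delta p^{\prime}$. When $s\geq a$, or when $0<s<a\leq t$, monotonicity and doubling give $\mathsf{A}\left[I,J\right]\lesssim\mathsf{A}\left[\left[0,s\right]^{m},\left[0,t\right]^{n}\right]$, so such rectangles are absorbed into the first summand; when $0<s,t\leq a$ the integrand is essentially the constant $a^{\eta}$ on $I\times J$, whence $\mathsf{A}\left[I,J\right]\approx s^{\alpha-m\Gamma}\,t^{\beta-n\Gamma}\,a^{-\left(\gamma+\delta\right)}\approx\mathsf{A}\left[\left[a,a+s\right]^{m},\left[a,a+t\right]^{n}\right]$, absorbed into the second summand. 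The analogous analysis for form (\ref{form b}) is symmetric. Assembling these cases yields the asserted decomposition $A_{p,q}^{\left(\alpha,\beta\right),\left(m,n\right)}\left(v,w\right)\approx\sup_{0<s,t<\infty}\mathsf{A}\left[\left[0,s\right]^{m},\left[0,t\right]^{n}\right]+\sup_{0<s,t\leq a<\infty}\mathsf{A}\left[\left[a,a+s\right]^{m},\left[a,a+t\right]^{n}\right]$.

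For the second summand, the computation in the $0<s,t\leq a$ case identifies it with $\sup_{0<s,t\leq a}s^{\alpha-m\Gamma}t^{\beta-n\Gamma}a^{-\left(\gamma+\delta\right)}$ up to constants. Taking $s=t=a$ forces the exponent $\left(\alpha-m\Gamma\right)+\left(\beta-n\Gamma\right)-\left(\gamma+\delta\right)$ to vanish, i.e. the balanced equality (\ref{bal mn}); granting it, letting $s$ (resp. $t$) tend to $0$ forces $\Gamma\leq\frac{\alpha}{m}$ (resp. $\Gamma\leq\frac{\beta}{n}$), while taking $s=a\to\infty$ with $t$ fixed (resp. $t=a\to\infty$ with $s$ fixed) forces $\frac{\alpha}{m}\leq\Gamma+\frac{\gamma+\delta}{m}$ (resp. $\frac{\beta}{n}\leq\Gamma+\frac{\gamma+\delta}{n}$). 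Conversely, under (\ref{bal mn}) and the diagonal inequalities (\ref{diagonal}) one bounds $s^{\alpha-m\Gamma}t^{\beta-n\Gamma}a^{-\left(\gamma+\delta\right)}\leq a^{\left(\alpha-m\Gamma\right)+\left(\beta-n\Gamma\right)-\left(\gamma+\delta\right)}=1$, using $s,t\leq a$ and the nonnegativity of the exponents $\alpha-m\Gamma,\beta-n\Gamma$, so the supremum is finite. This is precisely the stated criterion.

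I expect the main obstacle to be the bookkeeping in the three‑case comparison: verifying, uniformly over all center positions $a$ and all side lengths $s,t$, that the mixed off‑origin rectangles of form (\ref{form a}) (and (\ref{form b})) are controlled either by the centered family or by the equal‑arm family. This requires invoking local integrability of the weights exactly when an exponent $\eta$ is negative, controlling the doubling constants of $\left(x_{1}+\cdots+y_{n}\right)^{\eta}$ near the boundary of the octant, and keeping the estimates $\lesssim$ and $\approx$ straight so that nothing is lost or double counted when the two suprema are recombined.
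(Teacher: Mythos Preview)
Your proposal is correct and follows essentially the same route as the paper: pass to the sans serif local characteristic on the positive octant, use simplex-sliding to reduce to the special forms (\ref{form a}) and (\ref{form b}), run the three-case comparison via doubling/monotonicity under (\ref{local integ}), and then read off (\ref{bal mn}) and (\ref{diagonal}) from the explicit expression $s^{\alpha-m\Gamma}t^{\beta-n\Gamma}a^{-(\gamma+\delta)}$ in the $0<s,t\leq a$ regime. The only addition in your write-up beyond the paper is the explicit converse check that (\ref{bal mn}) and (\ref{diagonal}) bound the supremum by $1$, which the paper leaves as ``an easy matter to verify''.
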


\subsubsection{Rectangles at the origin}

It now remains to determine under what conditions on the indices the first
summand is finite, i.e. when%
\begin{equation*}
\sup_{0<s,t<\infty }\mathsf{A}_{p,q}^{\left( \alpha ,\beta \right) ,\left(
m,n\right) }\left( v,w\right) \left[ \left[ 0,s\right] ^{m},\left[ 0,t\right]
^{n}\right] <\infty .
\end{equation*}%
For this we start with%
\begin{eqnarray*}
&&\mathsf{A}_{p,q}^{\left( \alpha ,\beta \right) ,\left( m,n\right) }\left(
v,w\right) \left[ \left[ 0,s\right] ^{m},\left[ 0,t\right] ^{n}\right] \\
&=&s^{\alpha -m}\ t^{\beta -n}\ \left[ \mathsf{I}_{\left\{ \mathbf{0},%
\mathbf{0}\right\} ;\left( s,t\right) }^{m,n}\left( -\gamma q\right) \right]
^{\frac{1}{q}}\ \left[ \mathsf{I}_{\left\{ \mathbf{0},\mathbf{0}\right\}
;\left( s,t\right) }^{m,n}\left( -\delta p^{\prime }\right) \right] ^{\frac{1%
}{p^{\prime }}}
\end{eqnarray*}%
where if $0<s\leq t<\infty $,%
\begin{eqnarray*}
\mathsf{I}_{\left\{ \mathbf{0},\mathbf{0}\right\} ;\left( s,t\right)
}^{m,n}\left( \eta \right) &=&\int_{x\in \left[ 0,s\right] ^{m}}\left\{
\int_{y\in \left[ 0,t\right] ^{n}}\left( \sigma _{1}+...+\sigma _{m}+\tau
_{1}+...+\tau _{n}\right) ^{\eta }d\tau _{1}...d\tau _{n}\right\} d\sigma
_{1}...d\sigma _{m} \\
&\approx &\int_{0}^{s}\left\{ \int_{0}^{t}\left( \sigma +\tau \right) ^{\eta
}\ \tau ^{n-1}d\tau \right\} \sigma ^{m-1}d\sigma .
\end{eqnarray*}

We note that%
\begin{equation*}
\int_{0}^{\lambda }\left( \rho +1\right) ^{\eta }\rho ^{m-1}d\rho \approx
\left\{ 
\begin{array}{ccc}
\lambda ^{m} & \text{ if } & 0<\lambda \leq 1 \\ 
\lambda ^{\left( m+\eta \right) _{+}} & \text{ if } & 1\leq \lambda <\infty 
\text{ and }m+\eta \neq 0 \\ 
1+\ln \lambda & \text{ if } & 1\leq \lambda <\infty \text{ and }m+\eta =0%
\end{array}%
\right. .
\end{equation*}%
Now suppose that $0<t\leq s<\infty $. Then we have with $x=x_{+}-x_{-}$ that 
\begin{eqnarray*}
\mathcal{I}_{s,t}^{m,n}\left( \eta \right) &\approx &\int_{0}^{t}\left\{
\int_{0}^{s}\left( \sigma +\tau \right) ^{\eta }\sigma ^{m-1}d\sigma
\right\} \tau ^{n-1}d\tau \\
&=&\int_{0}^{t}\left\{ \int_{\sigma =0}^{s}\left( \frac{\sigma }{\tau }%
+1\right) ^{\eta }\left( \frac{\sigma }{\tau }\right) ^{m-1}d\left( \frac{%
\sigma }{\tau }\right) \right\} \tau ^{\eta +m+n-1}d\tau \\
&=&\int_{0}^{t}\left\{ \int_{\sigma ^{\prime }=0}^{\frac{s}{\tau }}\left(
\sigma ^{\prime }+1\right) ^{\eta }\left( \sigma ^{\prime }\right)
^{m-1}d\sigma ^{\prime }\right\} \tau ^{\eta +m+n-1}d\tau \\
&\approx &\int_{0}^{t}\left( \frac{s}{\tau }\right) ^{\left( m+\eta \right)
_{+}}\tau ^{\eta +m+n-1}d\tau =s^{\left( m+\eta \right) _{+}}\ t^{n-\left(
m+\eta \right) _{-}}\ ,
\end{eqnarray*}%
provided $m+\eta \neq 0$ and $m+\eta >-n$, and%
\begin{eqnarray*}
\mathcal{I}_{s,t}^{m,n}\left( -m\right) &\approx &\int_{0}^{t}\left\{
\int_{\sigma ^{\prime }=0}^{\frac{s}{\tau }}\left( \sigma ^{\prime
}+1\right) ^{-m}\left( \sigma ^{\prime }\right) ^{m-1}d\sigma ^{\prime
}\right\} \tau ^{n-1}d\tau \approx \int_{0}^{t}\left( \ln \frac{s}{\tau }%
\right) \tau ^{n-1}d\tau \\
&=&\frac{1}{n}t^{n}\ln s-\int_{0}^{t}\left( \ln \tau \right) \tau
^{n-1}d\tau =\frac{1}{n}t^{n}\ln \frac{s}{t}+\frac{1}{n^{2}}t^{n}=\frac{t^{n}%
}{n}\left( \frac{1}{n}+\ln \frac{s}{t}\right) ,
\end{eqnarray*}%
since $\int x^{n-1}\ln x\,dx=\frac{1}{n}x^{n}\ln x-\frac{1}{n^{2}}x^{n}$. It
will be convenient to simply understand that 
\begin{equation*}
s^{0_{+}}\ t^{-0_{-}}=\ln \frac{s}{t}\text{ when }0<t\leq s<\infty .
\end{equation*}

Similarly, if $0<s\leq t<\infty $, then we have $\mathcal{I}%
_{s,t}^{m,n}\left( \eta \right) \approx s^{m-\left( n+\eta \right) _{-}}\
t^{\left( n+\eta \right) _{+}}$ provided $n+\eta >-m$, with the analogous
understanding that $s^{-0_{-}}\ t^{0_{+}}=\ln \frac{t}{s}$, and so
altogether we obtain%
\begin{equation*}
\mathcal{I}_{s,t}^{m,n}\left( \eta \right) \approx \left\{ 
\begin{array}{ccc}
s^{m-\left( n+\eta \right) _{-}}\ t^{\left( n+\eta \right) _{+}} & \text{ if 
} & 0<s\leq t<\infty \\ 
s^{\left( m+\eta \right) _{+}}\ t^{n-\left( m+\eta \right) _{-}} & \text{ if 
} & 0<t\leq s<\infty%
\end{array}%
\right. ,\ \ \ \ \ m+n+\eta >0.
\end{equation*}%
Thus if $0<s\leq t<\infty $ we have, using $m+n-\gamma q>0$ and $m+n-\delta
p^{\prime }>0$, that 
\begin{eqnarray*}
&&A_{p,q}^{\left( \alpha ,\beta \right) ,\left( m,n\right) }\left(
v,w\right) \left[ I,J\right] =s^{\alpha -m}\ t^{\beta -n}\ \mathcal{I}%
_{s,t}^{m,n}\left( -\gamma q\right) ^{\frac{1}{q}}\ \mathcal{I}%
_{s,t}^{m,n}\left( -\delta p^{\prime }\right) ^{\frac{1}{p^{\prime }}} \\
&=&s^{\alpha -m}\ t^{\beta -n}\left( s^{m-\left( n-\gamma q\right) _{-}}\
t^{\left( n-\gamma q\right) _{+}}\right) ^{\frac{1}{q}}\left( s^{m-\left(
n-\delta p^{\prime }\right) _{-}}\ t^{\left( n-\delta p^{\prime }\right)
_{+}}\right) ^{\frac{1}{p^{\prime }}} \\
&=&s^{\alpha -m+\frac{m-\left( n-\gamma q\right) _{-}}{q}+\frac{m-\left(
n-\delta p^{\prime }\right) _{-}}{p^{\prime }}}\ t^{\beta -n+\frac{\left(
n-\gamma q\right) _{+}}{q}+\frac{\left( n-\delta p^{\prime }\right) _{+}}{%
p^{\prime }}} \\
&=&s^{\alpha -m+\frac{m}{q}-\left( \frac{n}{q}-\gamma \right) _{-}+\frac{m}{%
p^{\prime }}-\left( \frac{n}{p^{\prime }}-\delta \right) _{-}}\ t^{\beta
-n+\left( \frac{n}{q}-\gamma \right) _{+}+\left( \frac{n}{p^{\prime }}%
-\delta \right) _{+}}\ ,
\end{eqnarray*}%
and similarly if $0<t\leq s<\infty $, we have%
\begin{eqnarray*}
&&A_{p,q}^{\left( \alpha ,\beta \right) ,\left( m,n\right) }\left(
v,w\right) \left[ I,J\right] =s^{\alpha -m}\ t^{\beta -n}\ \mathcal{I}%
_{s,t}^{m,n}\left( -\gamma q\right) ^{\frac{1}{q}}\ \mathcal{I}%
_{s,t}^{m,n}\left( -\delta p^{\prime }\right) ^{\frac{1}{p^{\prime }}} \\
&=&s^{\alpha -m}\ t^{\beta -n}\ \left( s^{\left( m-\gamma q\right) _{+}}\
t^{n-\left( m-\gamma q\right) _{-}}\right) ^{\frac{1}{q}}\ \left( s^{\left(
m-\delta p^{\prime }\right) _{+}}\ t^{n-\left( m-\delta p^{\prime }\right)
_{-}}\right) ^{\frac{1}{p^{\prime }}} \\
&=&s^{\alpha -m+\frac{\left( m-\gamma q\right) _{+}}{q}+\frac{\left(
m-\delta p^{\prime }\right) _{+}}{p^{\prime }}}\ t^{\beta -n+\frac{n-\left(
m-\gamma q\right) _{-}}{q}+\frac{n-\left( m-\delta p^{\prime }\right) _{-}}{%
p^{\prime }}n} \\
&=&s^{\alpha -m+\left( \frac{m}{q}-\gamma \right) _{+}+\left( \frac{m}{%
p^{\prime }}-\delta \right) _{+}}\ t^{\beta -n+\frac{n}{q}-\left( \frac{m}{q}%
-\gamma \right) _{-}+\frac{n}{p^{\prime }}-\left( \frac{m}{p^{\prime }}%
-\delta \right) _{-}}\ ,
\end{eqnarray*}%
with the understanding that%
\begin{eqnarray}
s^{0_{+}}\ t^{-0_{-}} &=&\ln \frac{s}{t}\text{ when }0<t\leq s<\infty ,
\label{understanding} \\
s^{-0_{-}}\ t^{0_{+}} &=&\ln \frac{t}{s}\text{ when }0<s\leq t<\infty . 
\notag
\end{eqnarray}%
In order to efficiently calculate the remaining conditions, we assume for
the moment that $m\leq n$, and remove this restriction at the end.

Now we consider separately the (at most) five cases determined by $\delta $
(since the open interval $\left( \frac{m}{q},\frac{n}{q}\right) $ is empty
if $m=n$):%
\begin{equation*}
\delta \in \left( -\infty ,\frac{m}{p^{\prime }}\right) ,\ \ \ \delta =\frac{%
m}{p^{\prime }},\ \ \ \delta \in \left( \frac{m}{p^{\prime }},\frac{n}{%
p^{\prime }}\right) ,\ \ \ \delta =\frac{n}{p^{\prime }},\ \ \ \delta \in
\left( \frac{n}{p^{\prime }},\infty \right) .
\end{equation*}%
\textbf{Case A}: $\delta <\frac{m}{p^{\prime }}$. Then for $0<s\leq t<\infty 
$, we have%
\begin{equation*}
A_{p,q}^{\left( \alpha ,\beta \right) ,\left( m,n\right) }\left( v,w\right) 
\left[ I,J\right] =s^{\alpha -m+\frac{m}{q}-\left( \frac{n}{q}-\gamma
\right) _{-}+\frac{m}{p^{\prime }}}\ t^{\beta -n+\left( \frac{n}{q}-\gamma
\right) _{+}+\frac{n}{p^{\prime }}-\delta }\ ,
\end{equation*}%
and for $0<t\leq s<\infty $, we have%
\begin{equation*}
A_{p,q}^{\left( \alpha ,\beta \right) ,\left( m,n\right) }\left( v,w\right) 
\left[ I,J\right] =s^{\alpha -m+\left( \frac{m}{q}-\gamma \right) _{+}+\frac{%
m}{p^{\prime }}-\delta }\ t^{\beta -n+\frac{n}{q}-\left( \frac{m}{q}-\gamma
\right) _{-}+\frac{n}{p^{\prime }}}\ .
\end{equation*}%
Now we consider separately within Case A the (at most) five subcases
determined by $\gamma $ (since the open interval $\left( \frac{m}{q},\frac{n%
}{q}\right) $ is empty if $m=n$):%
\begin{equation*}
\gamma \in \left( -\infty ,\frac{m}{q}\right) ,\ \ \ \gamma =\frac{m}{q},\ \
\ \gamma \in \left( \frac{m}{q},\frac{n}{q}\right) ,\ \ \ \gamma =\frac{n}{q}%
,\ \ \ \gamma \in \left( \frac{n}{q},\infty \right) .
\end{equation*}

\textbf{Subcase A1:} $\gamma \in \left( -\infty ,\frac{m}{q}\right) $. In
this subcase $\frac{m}{q}-\gamma >0$ and $\frac{n}{q}-\gamma >0$, and so for 
$0<s\leq t<\infty $, we have%
\begin{equation*}
A_{p,q}^{\left( \alpha ,\beta \right) ,\left( m,n\right) }\left( v,w\right) 
\left[ I,J\right] =s^{\alpha -m+\frac{m}{q}+\frac{m}{p^{\prime }}}\ t^{\beta
-n+\frac{n}{q}-\gamma +\frac{n}{p^{\prime }}-\delta }\ ,
\end{equation*}%
and for $0<t\leq s<\infty $, we have%
\begin{equation*}
A_{p,q}^{\left( \alpha ,\beta \right) ,\left( m,n\right) }\left( v,w\right) 
\left[ I,J\right] =s^{\alpha -m+\frac{m}{q}-\gamma +\frac{m}{p^{\prime }}%
-\delta }\ t^{\beta -n+\frac{n}{q}+\frac{n}{p^{\prime }}}\ .
\end{equation*}%
Thus in the presence of the power weight equality (\ref{bal mn}), the
boundedness of these two local characteristics $A_{p,q}^{\left( \alpha
,\beta \right) ,\left( m,n\right) }\left( v,w\right) $ in the indicated
ranges is equivalent to the four conditions:%
\begin{eqnarray*}
\alpha -m+\frac{m}{q}+\frac{m}{p^{\prime }} &\geq &0, \\
\alpha -m+\frac{m}{q}-\gamma +\frac{m}{p^{\prime }}-\delta &\leq &0, \\
\beta -n+\frac{n}{q}+\frac{n}{p^{\prime }} &\geq &0, \\
\beta -n+\frac{n}{q}-\gamma +\frac{n}{p^{\prime }}-\delta &\leq &0,
\end{eqnarray*}%
i.e.%
\begin{eqnarray*}
\Gamma &\leq &\frac{\alpha }{m}\leq \Gamma +\frac{\gamma +\delta }{m}, \\
\Gamma &\leq &\frac{\beta }{n}\leq \Gamma +\frac{\gamma +\delta }{n},
\end{eqnarray*}%
which are the second and third lines in (\ref{3 lines}) in this case.

\textbf{Subcase A2:} $\gamma =\frac{m}{q}$ and $m<n$. In this subcase $\frac{%
m}{q}-\gamma =0$ and $\frac{n}{q}-\gamma >0$, and so for $0<s\leq t<\infty $%
, we have%
\begin{equation*}
A_{p,q}^{\left( \alpha ,\beta \right) ,\left( m,n\right) }\left( v,w\right) 
\left[ I,J\right] =s^{\alpha -m+\frac{m}{q}+\frac{m}{p^{\prime }}}\ t^{\beta
-n+\frac{n}{q}-\gamma +\frac{n}{p^{\prime }}-\delta }\ ,
\end{equation*}%
and for $0<t\leq s<\infty $, we have from (\ref{understanding}) that%
\begin{equation*}
A_{p,q}^{\left( \alpha ,\beta \right) ,\left( m,n\right) }\left( v,w\right) 
\left[ I,J\right] =s^{\alpha -m+\frac{m}{p^{\prime }}-\delta }\ t^{\beta -n+%
\frac{n}{q}+\frac{n}{p^{\prime }}}\ln \frac{s}{t}\ .
\end{equation*}%
Thus the boundedness of these two local characteristics $A_{p,q}^{\left(
\alpha ,\beta \right) ,\left( m,n\right) }\left( v,w\right) $ in the
indicated ranges is equivalent to the four conditions:%
\begin{eqnarray*}
\alpha -m+\frac{m}{q}+\frac{m}{p^{\prime }} &\geq &0, \\
\alpha -m+\frac{m}{p^{\prime }}-\delta &<&0, \\
\beta -n+\frac{n}{q}+\frac{n}{p^{\prime }} &>&0, \\
\beta -n+\frac{n}{q}-\gamma +\frac{n}{p^{\prime }}-\delta &\leq &0,
\end{eqnarray*}%
i.e.%
\begin{eqnarray*}
\Gamma &\leq &\frac{\alpha }{m}<\Gamma +\frac{\gamma +\delta }{m}, \\
\Gamma &<&\frac{\beta }{n}\leq \Gamma +\frac{\gamma +\delta }{n},
\end{eqnarray*}%
which are the second and third lines in (\ref{3 lines}) in this subcase.

\textbf{Subcase A3:} $\gamma \in \left( \frac{m}{q},\frac{n}{q}\right) $. In
this subcase $\frac{m}{q}-\gamma <0$ and $\frac{n}{q}-\gamma >0$, and so for 
$0<s\leq t<\infty $, we have%
\begin{equation*}
A_{p,q}^{\left( \alpha ,\beta \right) ,\left( m,n\right) }\left( v,w\right) 
\left[ I,J\right] =s^{\alpha -m+\frac{m}{q}+\frac{m}{p^{\prime }}}\ t^{\beta
-n+\frac{n}{q}-\gamma +\frac{n}{p^{\prime }}-\delta }\ ,
\end{equation*}%
and for $0<t\leq s<\infty $, we have%
\begin{equation*}
A_{p,q}^{\left( \alpha ,\beta \right) ,\left( m,n\right) }\left( v,w\right) 
\left[ I,J\right] =s^{\alpha -m+\frac{m}{p^{\prime }}-\delta }\ t^{\beta -n+%
\frac{n}{q}+\frac{m}{q}-\gamma +\frac{n}{p^{\prime }}}\ .
\end{equation*}%
Thus the boundedness of these two local characteristics $A_{p,q}^{\left(
\alpha ,\beta \right) ,\left( m,n\right) }\left( v,w\right) $ in the
indicated ranges is equivalent to the four conditions:%
\begin{eqnarray*}
\alpha -m+\frac{m}{q}+\frac{m}{p^{\prime }} &\geq &0, \\
\alpha -m+\frac{m}{p^{\prime }}-\delta &\leq &0, \\
\beta -n+\frac{n}{q}+\frac{m}{q}-\gamma +\frac{n}{p^{\prime }} &\geq &0, \\
\beta -n+\frac{n}{q}-\gamma +\frac{n}{p^{\prime }}-\delta &\leq &0,
\end{eqnarray*}%
i.e.%
\begin{eqnarray*}
\Gamma &\leq &\frac{\alpha }{m}\leq \Gamma +\frac{\gamma +\delta }{m}-\frac{%
\gamma -\frac{m}{q}}{m}, \\
\Gamma +\frac{\gamma -\frac{m}{q}}{n} &\leq &\frac{\beta }{n}\leq \Gamma +%
\frac{\gamma +\delta }{n},
\end{eqnarray*}%
which are the second and third lines in (\ref{3 lines}) in this subcase.

\textbf{Subcase A4:} $\gamma =\frac{n}{q}$ and $m<n$. In this subcase $\frac{%
m}{q}-\gamma <0$ and $\frac{n}{q}-\gamma =0$, and so for $0<s\leq t<\infty $%
, we have from (\ref{understanding}) that%
\begin{equation*}
A_{p,q}^{\left( \alpha ,\beta \right) ,\left( m,n\right) }\left( v,w\right) 
\left[ I,J\right] =s^{\alpha -m+\frac{m}{q}+\frac{m}{p^{\prime }}}\ t^{\beta
-n+\frac{n}{p^{\prime }}-\delta }\ln \frac{t}{s}\ ,
\end{equation*}%
and for $0<t\leq s<\infty $, we have%
\begin{equation*}
A_{p,q}^{\left( \alpha ,\beta \right) ,\left( m,n\right) }\left( v,w\right) 
\left[ I,J\right] =s^{\alpha -m+\frac{m}{p^{\prime }}-\delta }\ t^{\beta -n+%
\frac{n}{q}+\frac{m}{q}-\gamma +\frac{n}{p^{\prime }}}\ .
\end{equation*}%
Thus the boundedness of these two local characteristics $A_{p,q}^{\left(
\alpha ,\beta \right) ,\left( m,n\right) }\left( v,w\right) $ in the
indicated ranges is equivalent to the four conditions:%
\begin{eqnarray*}
\alpha -m+\frac{m}{q}+\frac{m}{p^{\prime }} &>&0, \\
\alpha -m+\frac{m}{p^{\prime }}-\delta &\leq &0, \\
\beta -n+\frac{n}{q}+\frac{m}{q}-\gamma +\frac{n}{p^{\prime }} &\geq &0, \\
\beta -n+\frac{n}{p^{\prime }}-\delta &<&0,
\end{eqnarray*}%
i.e.%
\begin{eqnarray*}
\Gamma &<&\frac{\alpha }{m}\leq \Gamma +\frac{\frac{m}{q}+\delta }{m}, \\
\Gamma +\frac{\gamma -\frac{m}{q}}{n} &\leq &\frac{\beta }{n}<\Gamma +\frac{%
\gamma +\delta }{n},
\end{eqnarray*}%
which are the second and third lines in (\ref{3 lines}) in this subcase.

\textbf{Subcase A5:} $\gamma \in \left( \frac{n}{q},\infty \right) $. In
this subcase $\frac{m}{q}-\gamma <0$ and $\frac{n}{q}-\gamma <0$, and so for 
$0<s\leq t<\infty $, we have%
\begin{equation*}
A_{p,q}^{\left( \alpha ,\beta \right) ,\left( m,n\right) }\left( v,w\right) 
\left[ I,J\right] =s^{\alpha -m+\frac{m}{q}+\frac{n}{q}-\gamma +\frac{m}{%
p^{\prime }}}\ t^{\beta -n+\frac{n}{p^{\prime }}-\delta }\ ,
\end{equation*}%
and for $0<t\leq s<\infty $, we have%
\begin{equation*}
A_{p,q}^{\left( \alpha ,\beta \right) ,\left( m,n\right) }\left( v,w\right) 
\left[ I,J\right] =s^{\alpha -m+\frac{m}{p^{\prime }}-\delta }\ t^{\beta -n+%
\frac{n}{q}+\frac{m}{q}-\gamma +\frac{n}{p^{\prime }}}\ .
\end{equation*}%
Thus the boundedness of these two local characteristics $A_{p,q}^{\left(
\alpha ,\beta \right) ,\left( m,n\right) }\left( v,w\right) $ in the
indicated ranges is equivalent to the four conditions:%
\begin{eqnarray*}
\alpha -m+\frac{m}{q}+\frac{n}{q}-\gamma +\frac{m}{p^{\prime }} &\geq &0, \\
\alpha -m+\frac{m}{p^{\prime }}-\delta &\leq &0, \\
\beta -n+\frac{n}{q}+\frac{m}{q}-\gamma +\frac{n}{p^{\prime }} &\geq &0, \\
\beta -n+\frac{n}{p^{\prime }}-\delta &\leq &0,
\end{eqnarray*}%
i.e.%
\begin{eqnarray*}
\Gamma +\frac{\gamma -\frac{n}{q}}{m} &\leq &\frac{\alpha }{m}\leq \Gamma +%
\frac{\gamma +\delta }{m}-\frac{\gamma -\frac{m}{q}}{m}, \\
\Gamma +\frac{\gamma -\frac{m}{q}}{n} &\leq &\frac{\beta }{n}\leq \Gamma +%
\frac{\gamma +\delta }{n}-\frac{\gamma -\frac{n}{q}}{n},
\end{eqnarray*}%
which are the second and third lines in (\ref{3 lines}) in this subcase.

\textbf{Subcase A6:} $\gamma =\frac{n}{q}$ and $m=n$. In this case $\frac{m}{%
q}-\gamma =0$ and $\frac{n}{q}-\gamma =0$, and so for $0<s\leq t<\infty $,
we have from (\ref{understanding}) that%
\begin{equation*}
A_{p,q}^{\left( \alpha ,\beta \right) ,\left( m,n\right) }\left( v,w\right) 
\left[ I,J\right] =s^{\alpha -m+\frac{m}{q}+\frac{m}{p^{\prime }}}\ t^{\beta
-n+\frac{n}{p^{\prime }}-\delta }\ln \frac{t}{s}\ ,
\end{equation*}%
and for $0<t\leq s<\infty $, we have%
\begin{equation*}
A_{p,q}^{\left( \alpha ,\beta \right) ,\left( m,n\right) }\left( v,w\right) 
\left[ I,J\right] =s^{\alpha -m+\frac{m}{p^{\prime }}-\delta }\ t^{\beta -n+%
\frac{n}{q}+\frac{n}{p^{\prime }}}\ln \frac{s}{t}\ .
\end{equation*}%
Thus the boundedness of these two local characteristics $A_{p,q}^{\left(
\alpha ,\beta \right) ,\left( m,n\right) }\left( v,w\right) $ in the
indicated ranges is equivalent to the four conditions:%
\begin{eqnarray*}
\alpha -m+\frac{m}{q}+\frac{m}{p^{\prime }} &>&0, \\
\alpha -m+\frac{m}{p^{\prime }}-\delta &<&0, \\
\beta -n+\frac{n}{q}+\frac{n}{p^{\prime }} &>&0, \\
\beta -n+\frac{n}{p^{\prime }}-\delta &<&0,
\end{eqnarray*}%
i.e.%
\begin{eqnarray*}
\Gamma &<&\frac{\alpha }{m}<\Gamma +\frac{\gamma +\delta }{m}, \\
\Gamma &<&\frac{\beta }{n}<\Gamma +\frac{\gamma +\delta }{n},
\end{eqnarray*}%
which are the second and third lines in (\ref{3 lines}) in this case.

Now we turn to the next major case.

\textbf{Case B}: $\delta =\frac{m}{p^{\prime }}$. Then for $0<s\leq t<\infty 
$, we have%
\begin{equation*}
A_{p,q}^{\left( \alpha ,\beta \right) ,\left( m,n\right) }\left( v,w\right) 
\left[ I,J\right] =s^{\alpha -m+\frac{m}{q}-\left( \frac{n}{q}-\gamma
\right) _{-}+\frac{m}{p^{\prime }}}\ t^{\beta -n+\left( \frac{n}{q}-\gamma
\right) _{+}+\frac{n}{p^{\prime }}-\delta }\ ,
\end{equation*}%
and for $0<t\leq s<\infty $, we have from (\ref{understanding}) that%
\begin{equation*}
A_{p,q}^{\left( \alpha ,\beta \right) ,\left( m,n\right) }\left( v,w\right) 
\left[ I,J\right] =s^{\alpha -m+\left( \frac{m}{q}-\gamma \right) _{+}+\frac{%
m}{p^{\prime }}-\delta }\ t^{\beta -n+\frac{n}{q}-\left( \frac{m}{q}-\gamma
\right) _{-}+\frac{n}{p^{\prime }}}\ln \frac{s}{t}\ .
\end{equation*}%
Now we consider separately within Case B the (at most) five subcases
determined by $\gamma $ (since the open interval $\left( \frac{m}{q},\frac{n%
}{q}\right) $ is empty if $m=n$):%
\begin{equation*}
\gamma \in \left( -\infty ,\frac{m}{q}\right) ,\ \ \ \gamma =\frac{m}{q},\ \
\ \gamma \in \left( \frac{m}{q},\frac{n}{q}\right) ,\ \ \ \gamma =\frac{n}{q}%
,\ \ \ \gamma \in \left( \frac{n}{q},\infty \right) .
\end{equation*}

\textbf{Subcase B1:} $\gamma \in \left( -\infty ,\frac{m}{q}\right) $. In
this subcase $\frac{m}{q}-\gamma >0$ and $\frac{n}{q}-\gamma >0$, and so for 
$0<s\leq t<\infty $, we have%
\begin{equation*}
A_{p,q}^{\left( \alpha ,\beta \right) ,\left( m,n\right) }\left( v,w\right) 
\left[ I,J\right] =s^{\alpha -m+\frac{m}{q}+\frac{m}{p^{\prime }}}\ t^{\beta
-n+\frac{n}{q}-\gamma +\frac{n}{p^{\prime }}-\delta }\ ,
\end{equation*}%
and for $0<t\leq s<\infty $, we have%
\begin{equation*}
A_{p,q}^{\left( \alpha ,\beta \right) ,\left( m,n\right) }\left( v,w\right) 
\left[ I,J\right] =s^{\alpha -m+\frac{m}{q}-\gamma +\frac{m}{p^{\prime }}%
-\delta }\ t^{\beta -n+\frac{n}{q}+\frac{n}{p^{\prime }}}\ln \frac{s}{t}\ .
\end{equation*}%
Thus in the presence of the power weight equality (\ref{bal mn}), the
boundedness of these two local characteristics $A_{p,q}^{\left( \alpha
,\beta \right) ,\left( m,n\right) }\left( v,w\right) $ in the indicated
ranges is equivalent to the four conditions:%
\begin{eqnarray*}
\alpha -m+\frac{m}{q}+\frac{m}{p^{\prime }} &\geq &0, \\
\alpha -m+\frac{m}{q}-\gamma +\frac{m}{p^{\prime }}-\delta &<&0, \\
\beta -n+\frac{n}{q}+\frac{n}{p^{\prime }} &>&0, \\
\beta -n+\frac{n}{q}-\gamma +\frac{n}{p^{\prime }}-\delta &\leq &0,
\end{eqnarray*}%
i.e.%
\begin{eqnarray*}
\Gamma &\leq &\frac{\alpha }{m}<\Gamma +\frac{\gamma +\delta }{m}, \\
\Gamma &<&\frac{\beta }{n}\leq \Gamma +\frac{\gamma +\delta }{n},
\end{eqnarray*}%
which are the second and third lines in (\ref{3 lines}) in this subcase.

\textbf{Case B2:} $\gamma =\frac{m}{q}$. In this case $\frac{m}{q}-\gamma =0$
and $\frac{n}{q}-\gamma >0$, and so for $0<s\leq t<\infty $, we have%
\begin{equation*}
A_{p,q}^{\left( \alpha ,\beta \right) ,\left( m,n\right) }\left( v,w\right) 
\left[ I,J\right] =s^{\alpha -m+\frac{m}{q}+\frac{m}{p^{\prime }}}\ t^{\beta
-n+\frac{n}{q}-\gamma +\frac{n}{p^{\prime }}-\delta }\ ,
\end{equation*}%
and for $0<t\leq s<\infty $, we have%
\begin{equation*}
A_{p,q}^{\left( \alpha ,\beta \right) ,\left( m,n\right) }\left( v,w\right) 
\left[ I,J\right] =s^{\alpha -m+\frac{m}{p^{\prime }}-\delta }\ t^{\beta -n+%
\frac{n}{q}+\frac{n}{p^{\prime }}}\ln \frac{s}{t}\ .
\end{equation*}%
This coincides with Subcase A2 above and so is equivalent to the four
conditions%
\begin{eqnarray*}
\Gamma &\leq &\frac{\alpha }{m}<\Gamma +\frac{\gamma +\delta }{m}, \\
\Gamma &<&\frac{\beta }{n}\leq \Gamma +\frac{\gamma +\delta }{n},
\end{eqnarray*}%
which are the second and third lines in (\ref{3 lines}) in this subcase.

The remaining subcases \textbf{B3}, \textbf{B4}, \textbf{B5} and \textbf{B6}
are handled in similar fashion.

\textbf{Case C}: $\frac{m}{p^{\prime }}<\delta <\frac{n}{p^{\prime }}$. This
is handled in similar fashion.

\textbf{Case D}: $\delta =\frac{n}{p^{\prime }}$. This is handled in similar
fashion.

\textbf{Case E}: $\frac{n}{p^{\prime }}<\delta <\infty $. The subcases 
\textbf{E1}, \textbf{E2}, \textbf{E3}, \textbf{E4} and \textbf{E6}, \ are
handled in similar fashion, and we end with the remaining and final subcase.

\textbf{Subcase E5}: $\frac{n}{q}<\gamma <\infty $. In this case we have
both $\frac{n}{p^{\prime }}-\delta <0$ and $\frac{n}{q}-\gamma <0$ and so if 
$0<s\leq t<\infty $, we have 
\begin{equation*}
A_{p,q}^{\left( \alpha ,\beta \right) ,\left( m,n\right) }\left( v,w\right) 
\left[ I,J\right] =s^{\alpha -m+\frac{m}{q}+\frac{n}{q}-\gamma +\frac{m}{%
p^{\prime }}+\frac{n}{p^{\prime }}-\delta }\ t^{\beta -n}\ ,
\end{equation*}%
and similarly if $0<t\leq s<\infty $, we have%
\begin{equation*}
A_{p,q}^{\left( \alpha ,\beta \right) ,\left( m,n\right) }\left( v,w\right) 
\left[ I,J\right] =s^{\alpha -m}\ t^{\beta -n+\frac{n}{q}+\frac{m}{q}-\gamma
+\frac{n}{p^{\prime }}+\frac{m}{p^{\prime }}-\delta }\ .
\end{equation*}%
Thus in the presence of the power weight equality (\ref{bal mn}), the
boundedness of these two local characteristics $A_{p,q}^{\left( \alpha
,\beta \right) ,\left( m,n\right) }\left( v,w\right) $ in the indicated
ranges is equivalent to the four conditions:%
\begin{eqnarray*}
\alpha -m+\frac{m}{q}+\frac{n}{q}-\gamma +\frac{m}{p^{\prime }}+\frac{n}{%
p^{\prime }}-\delta &\geq &0, \\
\alpha -m &\leq &0, \\
\beta -n+\frac{n}{q}+\frac{m}{q}-\gamma +\frac{n}{p^{\prime }}+\frac{m}{%
p^{\prime }}-\delta &\geq &0, \\
\beta -n &\leq &0,
\end{eqnarray*}%
i.e.%
\begin{eqnarray*}
\Gamma -\frac{n}{m}\left( \frac{1}{q}+\frac{1}{p^{\prime }}\right) +\frac{%
\gamma +\delta }{m} &\leq &\frac{\alpha }{m}\leq 1, \\
\Gamma -\frac{m}{n}\left( \frac{1}{q}+\frac{1}{p^{\prime }}\right) +\frac{%
\gamma +\delta }{n} &\leq &\frac{\beta }{n}\leq 1.
\end{eqnarray*}%
which are the second and third lines in (\ref{3 lines}) in this subcase.
Indeed, in this subcase, the second and third lines in (\ref{3 lines}) are 
\begin{eqnarray}
\Gamma +\frac{\bigtriangleup _{p,q}^{\gamma ,\delta }\left( n\right) }{m}
&\leq &\frac{\alpha }{m}\leq \Gamma +\frac{\gamma +\delta }{m}-\frac{%
\bigtriangleup _{p,q}^{\gamma ,\delta }\left( m\right) }{m},  \notag \\
\Gamma +\frac{\bigtriangleup _{p,q}^{\gamma ,\delta }\left( m\right) }{n}
&\leq &\frac{\beta }{n}\leq \Gamma +\frac{\gamma +\delta }{n}-\frac{%
\bigtriangleup _{p,q}^{\gamma ,\delta }\left( n\right) }{n},  \notag
\end{eqnarray}%
where%
\begin{eqnarray*}
\bigtriangleup _{p,q}^{\gamma ,\delta }\left( m\right) &\equiv &\left(
\gamma -\frac{m}{q}\right) _{+}+\left( \delta -\frac{m}{p^{\prime }}\right)
_{+}\ , \\
\bigtriangleup _{p,q}^{\gamma ,\delta }\left( n\right) &\equiv &\left(
\gamma -\frac{n}{q}\right) _{+}+\left( \delta -\frac{n}{p^{\prime }}\right)
_{+}\ .
\end{eqnarray*}%
This is equivalent to 
\begin{eqnarray}
\Gamma +\frac{\gamma -\frac{n}{q}+\delta -\frac{n}{p^{\prime }}}{m} &\leq &%
\frac{\alpha }{m}\leq \Gamma +\frac{\gamma +\delta }{m}-\frac{\gamma -\frac{m%
}{q}+\delta -\frac{m}{p^{\prime }}}{m},  \notag \\
\Gamma +\frac{\gamma -\frac{m}{q}+\delta -\frac{m}{p^{\prime }}}{n} &\leq &%
\frac{\beta }{n}\leq \Gamma +\frac{\gamma +\delta }{n}-\frac{\gamma -\frac{n%
}{q}+\delta -\frac{n}{p^{\prime }}}{n},  \notag
\end{eqnarray}%
i.e.%
\begin{eqnarray}
\Gamma -\frac{n}{m}\left( \frac{1}{q}+\frac{1}{p^{\prime }}\right) +\frac{%
\gamma +\delta }{m} &\leq &\frac{\alpha }{m}\leq \Gamma +\frac{1}{q}+\frac{1%
}{p^{\prime }}=1,  \notag \\
\Gamma -\frac{m}{n}\left( \frac{1}{q}+\frac{1}{p^{\prime }}\right) +\frac{%
\gamma +\delta }{n} &\leq &\frac{\beta }{n}\leq \Gamma +\frac{1}{q}+\frac{1}{%
p^{\prime }}=1.  \notag
\end{eqnarray}

Finally we note that these families of inequalities for $\alpha ,\beta $
remain the same when $m\geq n$. This concludes the proof of Theorem \ref{A
char mn}.

\begin{corollary}
\label{cor to power}If $A_{p,q}^{\left( \alpha ,\beta \right) ,\left(
m,n\right) }\left( v_{\delta },w_{\gamma }\right) <\infty $, then the
following inequalities hold:%
\begin{eqnarray*}
\alpha &\leq &m\text{ and }\beta \leq n, \\
\frac{\alpha }{m} &\leq &\min \left\{ \frac{\gamma }{m}+\frac{1}{q^{\prime }}%
,\ \frac{\delta }{m}+\frac{1}{p}\right\} , \\
\frac{\beta }{n} &\leq &\min \left\{ \frac{\gamma }{n}+\frac{1}{q^{\prime }}%
,\ \frac{\delta }{n}+\frac{1}{p}\right\} .
\end{eqnarray*}%
In addition, we have the corresponding \emph{strict} inequalities in the
following cases:%
\begin{eqnarray}
\frac{\alpha }{m} &<&1\text{ when either }\frac{\gamma }{m}-\frac{1}{q}\leq 0%
\text{ or }\frac{\delta }{m}-\frac{1}{p^{\prime }}\leq 0,  \label{corres} \\
\frac{\beta }{n} &<&1\text{ when either }\frac{\gamma }{n}-\frac{1}{q}\leq 0%
\text{ or }\frac{\delta }{n}-\frac{1}{p^{\prime }}\leq 0,  \notag \\
\frac{\alpha }{m} &<&\frac{\gamma }{m}+\frac{1}{q^{\prime }}\text{ when
either }\frac{\gamma }{m}-\frac{1}{q}\geq 0\text{ or }\frac{\delta }{m}-%
\frac{1}{p^{\prime }}\leq 0,  \notag \\
\frac{\alpha }{m} &<&\frac{\delta }{m}+\frac{1}{p}\text{ when either }\frac{%
\gamma }{m}-\frac{1}{q}\leq 0\text{ or }\frac{\delta }{m}-\frac{1}{p^{\prime
}}\geq 0,  \notag \\
\frac{\beta }{n} &<&\frac{\gamma }{n}+\frac{1}{q^{\prime }}\text{ when
either }\frac{\gamma }{n}-\frac{1}{q}\geq 0\text{ or }\frac{\delta }{n}-%
\frac{1}{p^{\prime }}\leq 0,  \notag \\
\frac{\beta }{n} &<&\frac{\delta }{n}+\frac{1}{p}\text{ when either }\frac{%
\gamma }{n}-\frac{1}{q}\leq 0\text{ or }\frac{\delta }{n}-\frac{1}{p^{\prime
}}\geq 0.  \notag
\end{eqnarray}%
In particular, these strict inequalities for $\frac{\alpha }{m}$ and $\frac{%
\beta }{n}$ all hold if both $\gamma \leq \frac{\min \left\{ m,n\right\} }{q}
$ and $\delta \leq \frac{\min \left\{ m,n\right\} }{p^{\prime }}$ hold (c.f.
(\ref{local integ}) which shows that $\gamma <\frac{m+n}{q}$ and $\delta <%
\frac{m+n}{p^{\prime }}$ are required by finiteness of $A_{p,q}^{\left(
\alpha ,\beta \right) ,\left( m,n\right) }\left( v_{\delta },w_{\gamma
}\right) $).
\end{corollary}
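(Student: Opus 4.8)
The plan is to deduce Corollary \ref{cor to power} directly from Theorem \ref{A char mn}. Finiteness of $A_{p,q}^{\left( \alpha ,\beta \right) ,\left( m,n\right) }\left( v_{\delta },w_{\gamma }\right) $ gives, by that theorem, the local integrability (\ref{local integ}) together with the four one-sided inequalities in (\ref{3 lines}); of these I will use only the two \emph{upper} bounds
\begin{equation*}
\frac{\alpha }{m}\leq \Gamma +\frac{\gamma +\delta }{m}-\frac{
\bigtriangleup _{p,q}^{\gamma ,\delta }(m)}{m},\qquad \frac{\beta }{n}\leq
\Gamma +\frac{\gamma +\delta }{n}-\frac{\bigtriangleup _{p,q}^{\gamma
,\delta }(n)}{n},
\end{equation*}
since the corollary only bounds $\alpha /m$ and $\beta /n$ from above. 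Everything else is elementary algebra built on the identities $\Gamma +\frac{1}{q}=\frac{1}{p}$, $\Gamma +\frac{1}{p^{\prime }}=\frac{1}{q^{\prime }}$ and $\Gamma +\frac{1}{q}+\frac{1}{p^{\prime }}=1$, together with the trivial facts $t_{+}\geq t$ and $t_{+}\geq 0$ applied to the two summands $\left( \gamma -\frac{m}{q}\right) _{+}$ and $\left( \delta -\frac{m}{p^{\prime }}\right) _{+}$ of $\bigtriangleup _{p,q}^{\gamma ,\delta }(m)$ (and to the $n$-analogues).

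First, discarding the second positive part, $\bigtriangleup _{p,q}^{\gamma ,\delta }(m)\geq \gamma -\frac{m}{q}$, and substituting into the first displayed bound collapses the right-hand side to $\Gamma +\frac{\delta }{m}+\frac{1}{q}=\frac{\delta }{m}+\frac{1}{p}$; discarding the first positive part instead gives $\Gamma +\frac{\gamma }{m}+\frac{1}{p^{\prime }}=\frac{\gamma }{m}+\frac{1}{q^{\prime }}$. Hence $\frac{\alpha }{m}\leq \min \left\{ \frac{\gamma }{m}+\frac{1}{q^{\prime }},\frac{\delta }{m}+\frac{1}{p}\right\} $, and the $n$-analogue gives the bound for $\frac{\beta }{n}$. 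Next, keeping both summands, $\bigtriangleup _{p,q}^{\gamma ,\delta }(m)\geq \left( \gamma -\frac{m}{q}\right) +\left( \delta -\frac{m}{p^{\prime }}\right) $, collapses the right-hand side to $\Gamma +\frac{1}{q}+\frac{1}{p^{\prime }}=1$, so $\alpha \leq m$, and symmetrically $\beta \leq n$.

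For the strict inequalities I would track precisely when these positive-part estimates are strict. Since $\frac{\alpha }{m}\leq \Gamma +\frac{\gamma +\delta }{m}-\frac{\bigtriangleup _{p,q}^{\gamma ,\delta }(m)}{m}$, the bound $\frac{\alpha }{m}<1$ follows as soon as $\bigtriangleup _{p,q}^{\gamma ,\delta }(m)>\left( \gamma -\frac{m}{q}\right) +\left( \delta -\frac{m}{p^{\prime }}\right) $, i.e. as soon as one of the two summand estimates $t_{+}\geq t$ is strict, which happens exactly when $\gamma <\frac{m}{q}$ or $\delta <\frac{m}{p^{\prime }}$, and also at the borderlines $\gamma =\frac{m}{q}$ or $\delta =\frac{m}{p^{\prime }}$, where the relevant positive part is the `$0_{+}$' of the convention fixed in the statement of Theorem \ref{A char mn} (equivalently, where the logarithmic factor $\ln \frac{s}{t}$ appears in the corresponding boundary subcases of that proof) and upgrades the pertinent `$\leq $' in (\ref{3 lines}) to `$<$'. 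Likewise, using $\bigtriangleup _{p,q}^{\gamma ,\delta }(m)\geq \delta -\frac{m}{p^{\prime }}$ (strict when $\gamma >\frac{m}{q}$ or $\delta <\frac{m}{p^{\prime }}$, and at the borderlines) gives $\frac{\alpha }{m}<\frac{\gamma }{m}+\frac{1}{q^{\prime }}$; using $\bigtriangleup _{p,q}^{\gamma ,\delta }(m)\geq \gamma -\frac{m}{q}$ gives $\frac{\alpha }{m}<\frac{\delta }{m}+\frac{1}{p}$; and the $n$-analogues yield the remaining three. The only delicate point is matching each of the six `either/or' side conditions to the right pairing of `strict $t_{+}\geq t$' with `$0_{+}$ convention'; all the arithmetic is one-line substitution. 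Finally, the `in particular' clause is immediate: if $\gamma \leq \frac{\min \left\{ m,n\right\} }{q}$ and $\delta \leq \frac{\min \left\{ m,n\right\} }{p^{\prime }}$, then $\gamma \leq \frac{m}{q},\frac{n}{q}$ and $\delta \leq \frac{m}{p^{\prime }},\frac{n}{p^{\prime }}$, so in every one of the six cases at least one alternative of the side condition holds.
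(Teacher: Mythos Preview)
Your proposal is correct and follows essentially the same route as the paper: both arguments start from the upper bounds in (\ref{3 lines}) of Theorem~\ref{A char mn} and collapse them by elementary manipulations of the positive parts in $\bigtriangleup _{p,q}^{\gamma ,\delta }(m)$ (the paper rewrites via $x=x_{+}-x_{-}$ and negative parts, you use $t_{+}\geq t$ and $t_{+}\geq 0$ directly, which is the same identity unpacked). For the borderline strict inequalities the paper is in fact terser than you are---it simply asserts them---so your explicit appeal to the $0_{+}$ convention and the logarithmic subcases of Theorem~\ref{A char mn} is, if anything, a slight improvement in clarity.
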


\begin{proof}
We compute%
\begin{eqnarray*}
\frac{\alpha }{m} &\leq &\Gamma +\frac{\gamma +\delta }{m}-\frac{%
\bigtriangleup _{p,q}^{\gamma ,\delta }\left( m\right) }{m} \\
&=&\frac{1}{p}-\frac{1}{q}+\frac{\gamma +\delta }{m}-\left\{ \left( \frac{%
\gamma }{m}-\frac{1}{q}\right) _{+}+\left( \frac{\delta }{m}-\frac{1}{%
p^{\prime }}\right) _{+}\right\} \\
&=&1-\left\{ \left( \frac{\gamma }{m}-\frac{1}{q}\right) _{-}+\left( \frac{%
\delta }{m}-\frac{1}{p^{\prime }}\right) _{-}\right\} ,
\end{eqnarray*}%
and similarly%
\begin{equation*}
\frac{\beta }{n}\leq 1-\left\{ \left( \frac{\gamma }{n}-\frac{1}{q}\right)
_{-}+\left( \frac{\delta }{n}-\frac{1}{p^{\prime }}\right) _{-}\right\} .
\end{equation*}%
Using $-x_{-}=\min \left\{ 0,x\right\} $ we also compute that%
\begin{eqnarray*}
\frac{\alpha }{m} &\leq &1-\left\{ \left( \frac{\gamma }{m}-\frac{1}{q}%
\right) _{-}+\left( \frac{\delta }{m}-\frac{1}{p^{\prime }}\right)
_{-}\right\} \\
&\leq &\min \left\{ 1-\left( \frac{\gamma }{m}-\frac{1}{q}\right)
_{-},1-\left( \frac{\delta }{m}-\frac{1}{p^{\prime }}\right) _{-}\right\} \\
&\leq &\min \left\{ 1+\frac{\gamma }{m}-\frac{1}{q},1+\frac{\delta }{m}-%
\frac{1}{p^{\prime }}\right\} \\
&=&\min \left\{ \frac{\gamma }{m}+\frac{1}{q^{\prime }},\frac{\delta }{m}+%
\frac{1}{p}\right\} ,
\end{eqnarray*}%
and moreover that 
\begin{eqnarray*}
\frac{\alpha }{m} &<&\frac{\gamma }{m}+\frac{1}{q^{\prime }}\text{ when
either }\frac{\delta }{m}-\frac{1}{p^{\prime }}\leq 0\text{ or }\frac{\gamma 
}{m}-\frac{1}{q}\geq 0, \\
\frac{\alpha }{m} &<&\frac{\delta }{m}+\frac{1}{p}\text{ when either }\frac{%
\gamma }{m}-\frac{1}{q}\leq 0\text{ or }\frac{\delta }{m}-\frac{1}{p^{\prime
}}\geq 0.
\end{eqnarray*}%
Similarly we have%
\begin{equation*}
\frac{\beta }{n}\leq \min \left\{ \frac{\gamma }{n}+\frac{1}{q^{\prime }},%
\frac{\delta }{n}+\frac{1}{p}\right\} ,
\end{equation*}%
and moreover that%
\begin{eqnarray*}
\frac{\beta }{n} &<&\frac{\gamma }{n}+\frac{1}{q^{\prime }}\text{ when
either }\frac{\delta }{n}-\frac{1}{p^{\prime }}\leq 0\text{ or }\frac{\gamma 
}{n}-\frac{1}{q}\geq 0, \\
\frac{\beta }{n} &<&\frac{\delta }{n}+\frac{1}{p}\text{ when either }\frac{%
\gamma }{n}-\frac{1}{q}\leq 0\text{ or }\frac{\delta }{n}-\frac{1}{p^{\prime
}}\geq 0.
\end{eqnarray*}
\end{proof}

We end the appendix with a variant of Lemma \ref{tails} in which the
restrictions $0<\alpha \leq m$ and $0<\beta \leq n$ are no longer needed if
one of the weights is a power weight.

\begin{proposition}
\label{tails'}Suppose at least one of the weights $\sigma ,\omega $ is a
power weight on $\mathbb{R}^{m+n}$. Then for $1<p,q<\infty $ and $\alpha
,\beta >0$, we have 
\begin{equation*}
\widehat{\mathbb{A}}_{p,q}^{\left( \alpha ,\beta \right) ,\left( m,n\right)
}\left( \sigma ,\omega \right) \leq C\ \mathbb{N}_{p,q}^{\left( \alpha
,\beta \right) ,\left( m,n\right) }\left( \sigma ,\omega \right) ,
\end{equation*}%
where $C$ is a positive constant depending on $m,n,\alpha ,\beta ,p,q$ and
the power weight.
\end{proposition}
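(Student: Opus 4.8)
The plan is to reduce to, and then generalise, the argument behind Lemma \ref{tails}. By the duality identities $\mathbb{N}_{p,q}^{(\alpha,\beta),(m,n)}(\sigma,\omega)=\mathbb{N}_{q^{\prime},p^{\prime}}^{(\alpha,\beta),(m,n)}(\omega,\sigma)$ and $\widehat{\mathbb{A}}_{p,q}^{(\alpha,\beta),(m,n)}(\sigma,\omega)=\widehat{\mathbb{A}}_{q^{\prime},p^{\prime}}^{(\alpha,\beta),(m,n)}(\omega,\sigma)$ we may assume that $\sigma$ is the power weight, say $d\sigma(u,t)=|(u,t)|^{\mu}\,du\,dt$. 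If $0<\alpha\le m$ and $0<\beta\le n$ the assertion is exactly Lemma \ref{tails} (with $C=1$), so the content lies in the range $\alpha>m$ or $\beta>n$, where the pointwise estimate $|x-u|^{\alpha-m}|y-t|^{\beta-n}\ge|I|^{\frac{\alpha}{m}-1}|J|^{\frac{\beta}{n}-1}\widehat{s}_{I\times J}(x,y)\widehat{s}_{I\times J}(u,t)$ used in the proof of Lemma \ref{tails} points the wrong way and fails. The first step is to recover, \emph{for all} $(x,y)$, the pointwise lower bound
\begin{equation*}
I_{\alpha,\beta}^{m,n}(\mathbf{1}_{R}\sigma)(x,y)\ \gtrsim\ |I|^{\frac{\alpha}{m}-1}|J|^{\frac{\beta}{n}-1}\,\widehat{s}_{R}(x,y)\,\sigma(R),\qquad R=I\times J,
\end{equation*}
with the triangle inequality replaced by the local regularity of the power weight.

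To prove this, write $a=|I|^{1/m}$, $b=|J|^{1/n}$ and, for a small $\varepsilon_{0}=\varepsilon_{0}(m,n,\mu)$, set $G=\{(u,t)\in R:\ |x-u|\ge\varepsilon_{0}a,\ |y-t|\ge\varepsilon_{0}b\}$. The Lebesgue measure of $R\setminus G$ is at most $C\varepsilon_{0}^{\min\{m,n\}}|R|$, while the explicit form $|(u,t)|^{\mu}$ gives $\sigma(R\setminus G)\le C(\varepsilon_{0}^{\min\{m,n\}}+\varepsilon_{0}^{m+n+\mu})\sigma(R)$ (a short case split according to whether $R$ meets a fixed dilate of a neighbourhood of the origin), so $\sigma(G)\gtrsim\sigma(R)$ once $\varepsilon_{0}$ is small. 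On $G$ one has $\varepsilon_{0}a\le|x-u|\le Ca(1+|x-c_{I}|/a)$, and since $|x-u|\ge|x-c_{I}|-\tfrac{\sqrt{m}}{2}a$ one also has $|x-u|\gtrsim a(1+|x-c_{I}|/a)$; hence $|x-u|\sim a(1+|x-c_{I}|/a)$ and likewise $|y-t|\sim b(1+|y-c_{J}|/b)$ on $G$, regardless of the signs of $\alpha-m$ and $\beta-n$. Therefore $|x-u|^{\alpha-m}|y-t|^{\beta-n}\sim|I|^{\frac{\alpha}{m}-1}|J|^{\frac{\beta}{n}-1}\widehat{s}_{R}(x,y)$ on $G$, and integrating this over $G$ against $\sigma$ yields the displayed bound.

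Inserting $f=\mathbf{1}_{R}$ into the norm inequality (\ref{2 weight arb meas}) and using $\|\mathbf{1}_{R}\|_{L^{p}(\sigma)}=\sigma(R)^{1/p}$ gives, for every $R$ with $0<\sigma(R)<\infty$,
\begin{equation*}
|I|^{\frac{\alpha}{m}-1}|J|^{\frac{\beta}{n}-1}\Big(\iint_{\mathbb{R}^{m}\times\mathbb{R}^{n}}\widehat{s}_{R}^{\,q}\,d\omega\Big)^{\frac{1}{q}}\sigma(R)^{\frac{1}{p^{\prime}}}\ \lesssim\ \mathbb{N}_{p,q}^{(\alpha,\beta),(m,n)}(\sigma,\omega).
\end{equation*}
It remains to replace $\sigma(R)^{1/p^{\prime}}$ by $\big(\iint\widehat{s}_{R}^{\,p^{\prime}}d\sigma\big)^{1/p^{\prime}}$. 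Decomposing $\mathbb{R}^{m}$ and $\mathbb{R}^{n}$ into dyadic annuli $A_{j}$, $B_{k}$ about $c_{I}$, $c_{J}$ one has $\iint\widehat{s}_{R}^{\,p^{\prime}}d\sigma\approx\sum_{j,k\ge0}2^{j(\alpha-m)p^{\prime}+k(\beta-n)p^{\prime}}\sigma(A_{j}\times B_{k})$; since $\sigma$ is a power weight, each $\sigma(A_{j}\times B_{k})$ is explicitly computable in $\mu$, and summing the resulting geometric-type series shows that this quantity is comparable to $\sigma(R)$, with a constant depending only on $m,n,\alpha,\beta,p,q$ and $\mu$, \emph{whenever it is finite} (the reverse bound $\iint\widehat{s}_{R}^{\,p^{\prime}}d\sigma\gtrsim\sigma(R)$ being trivial since $\widehat{s}_{R}\approx1$ on $R$); this is precisely the type of computation carried out in the appendix in the proof of Theorem \ref{A char mn}. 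If instead $\iint\widehat{s}_{R}^{\,p^{\prime}}d\sigma=\infty$ for some $R$, then $\widehat{\mathbb{A}}=\infty$, and testing against suitable exhausting families, e.g. $g_{\rho}=\mathbf{1}_{\{a\le|u-c_{I}|\le\rho\}\times J}\,\widehat{s}_{I}(u)^{p^{\prime}-1}$ (and $g_{r}=\mathbf{1}_{\{r<|(u,t)|<1\}}$ when $\sigma$ fails to be locally integrable at the origin), for which a lower bound like the one above again holds on a fixed neighbourhood of $c_{R}$, forces $\mathbb{N}=\infty$ as well. In either case $\widehat{\mathbb{A}}_{p,q}^{(\alpha,\beta),(m,n)}(\sigma,\omega)\lesssim\mathbb{N}_{p,q}^{(\alpha,\beta),(m,n)}(\sigma,\omega)$ upon taking the supremum over $R$; the exceptional cases $\sigma(R)\in\{0,\infty\}$ are trivial or are covered by the $g_{r}$ family.

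The main obstacle, and the only place where ``$\sigma$ is a power weight'' is used in an essential way rather than merely ``$\sigma$ is doubling'', is in two elementary but case-sensitive estimates on $|(u,t)|^{\mu}$: the uniform lower bound $\sigma(G)\gtrsim\sigma(R)$, which is what lets the Lemma \ref{tails} pointwise estimate survive without the triangle inequality and hence without the restriction $\alpha\le m$, $\beta\le n$; and the comparability $\iint\widehat{s}_{R}^{\,p^{\prime}}d\sigma\approx\sigma(R)$. In both, the case analysis is governed by the position of $R$ relative to a fixed neighbourhood of the origin, and, for the second, additionally by the signs of $\alpha-m$ and $\beta-n$ (which determine whether the annular series converges, i.e. whether the characteristic is finite at all).
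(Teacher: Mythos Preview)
Your route differs from the paper's in a real way. The paper keeps the test function $f_R=\mathbf{1}_{B(0,R)\times B(0,R)}\widehat{s}_{I\times J}^{\,p'-1}$ from Lemma~\ref{tails}, obtains the pointwise lower bound only on the far set $\mathcal{E}_{x,y}(I,J)=\{|x-u|\ge|I|^{1/m},\ |y-t|\ge|J|^{1/n}\}$, and then invokes the power-weight hypothesis once, to show that restricting the integration to $\mathcal{E}_{x,y}$ costs only a fixed factor in the \emph{tailed} integral $\iint\widehat{s}_{I\times J}^{\,p'}d\sigma$. You instead test with $\mathbf{1}_R$, try to show $\sigma(G)\gtrsim\sigma(R)$ for a set $G\subset R$, and then add a separate comparability step $\iint\widehat{s}_R^{\,p'}d\sigma\approx\sigma(R)$.

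The gap is in the claim $\sigma(R\setminus G)\le C\bigl(\varepsilon_0^{\min\{m,n\}}+\varepsilon_0^{m+n+\mu}\bigr)\sigma(R)$: it is not uniform in $R$ and $(x,y)$. The failure is for eccentric rectangles through the origin when $\mu$ is strongly negative. Take $m=1$, $R=[-a,a]\times[-b,b]^n$ with $b$ fixed and $a\to\infty$, $\mu\in(-(1+n),-1)$, and $(x,y)=(0,0)$. Since $\int_b^\infty|u|^\mu\,du$ converges, one has $\sigma(R)\approx b^{\,1+n+\mu}$ \emph{independently of $a$}; but $G\subset\{\varepsilon_0 a\le|u|\le a\}\times J$, so
\[
\sigma(G)\ \lesssim\ (2b)^n\int_{\varepsilon_0 a}^{a}|u|^{\mu}\,du\ \approx\ b^{\,n}(\varepsilon_0 a)^{1+\mu}\ \longrightarrow\ 0,
\]
and no choice of $\varepsilon_0$ saves the estimate. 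The symmetric failure (with the $t$-slab) occurs for $\mu<-n$ and $b\gg a$. This range of $\mu$ is exactly the one you are forced into: if $\alpha>m$ then finiteness of the $u$-tail in $\iint\widehat{s}_R^{\,p'}d\sigma$ already requires $(\alpha-m)p'+\mu+m<0$, hence $\mu<-m$; likewise $\beta>n$ forces $\mu<-n$. So the cases where your new argument is needed are precisely the cases where $\sigma(G)\gtrsim\sigma(R)$ breaks down.

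The paper's device---comparing $\iint_{\mathcal{E}_{x,y}}\widehat{s}^{p'}d\sigma$ with $\iint\widehat{s}^{p'}d\sigma$ rather than $\sigma(G)$ with $\sigma(R)$---is what makes the argument go through: the mass of $\widehat{s}^{p'}d\sigma$ is spread over all of $\mathbb{R}^{m+n}$, so deleting a single slab of width comparable to the side of $R$ cannot remove a fixed fraction of it, whereas (as above) it can remove essentially all of $\mathbf{1}_R\,d\sigma$. Your extra step $\iint\widehat{s}_R^{\,p'}d\sigma\approx\sigma(R)$ then becomes unnecessary, which also removes the need for the somewhat delicate case analysis you allude to there.
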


By duality we may suppose without loss of generality that $\sigma $ is a
locally integrable power weight. Indeed, modifying slightly the proof of
Lemma \ref{tails}, we have with 
\begin{equation*}
\mathcal{E}_{x,y}\left( I,J\right) \equiv \left\{ \left( u,t\right) \in 
\mathbb{R}^{m}\times \mathbb{R}^{n}:\left\vert x-u\right\vert \geq
\left\vert I\right\vert ^{\frac{1}{m}}\text{ and }\left\vert y-t\right\vert
\geq \left\vert J\right\vert ^{\frac{1}{n}}\right\} ,
\end{equation*}%
that%
\begin{equation*}
\left\vert x-u\right\vert ^{\alpha -m}\ \left\vert y-t\right\vert ^{\beta
-n}\geq \mathbf{1}_{\mathcal{E}_{x,y}\left( I,J\right) }\left( \left(
u,t\right) \right) \ \left\vert I\right\vert ^{\frac{\alpha }{m}%
-1}\left\vert J\right\vert ^{\frac{\beta }{n}-1}\widehat{s}_{I\times
J}\left( x,y\right) \widehat{s}_{I\times J}\left( u,t\right) \ ,
\end{equation*}%
and so for $R>0$ and $f_{R}\left( u,t\right) \equiv \mathbf{1}_{B\left(
0,R\right) \times B\left( 0,R\right) }\left( u,t\right) \widehat{s}%
_{Q}\left( u,t\right) ^{p^{\prime }-1}$, we have%
\begin{eqnarray*}
I_{\alpha ,\beta }^{m,n}\left( f_{R}\sigma \right) \left( x,y\right)
&=&\diint\limits_{B\left( 0,R\right) \times B\left( 0,R\right) }\left\vert
x-u\right\vert ^{\alpha -n}\left\vert y-t\right\vert ^{\beta -n}\widehat{s}%
_{Q}\left( u,t\right) ^{p^{\prime }-1}d\sigma \left( u,t\right) \\
&\geq &\diint\limits_{B\left( 0,R\right) \times B\left( 0,R\right) }\mathbf{1%
}_{\mathcal{E}_{x,y}\left( I,J\right) }\left( \left( u,t\right) \right) \
\left\vert I\right\vert ^{\frac{\alpha }{m}-1}\left\vert J\right\vert ^{%
\frac{\beta }{n}-1}\widehat{s}_{I\times J}\left( x,y\right) \widehat{s}%
_{I\times J}\left( u,t\right) \widehat{s}_{Q}\left( u,t\right) ^{p^{\prime
}-1}d\sigma \left( u,t\right) \\
&=&\left\vert I\right\vert ^{\frac{\alpha }{m}-1}\left\vert J\right\vert ^{%
\frac{\beta }{n}-1}\widehat{s}_{I\times J}\left( x,y\right)
\diint\limits_{B\left( 0,R\right) \times B\left( 0,R\right) }\mathbf{1}_{%
\mathcal{E}_{x,y}\left( I,J\right) }\left( \left( u,t\right) \right) \ 
\widehat{s}_{I\times J}\left( u,t\right) ^{p^{\prime }}d\sigma \left(
u,t\right) .
\end{eqnarray*}%
It is at this point that we use our assumption that $\sigma \left(
u,t\right) =\left\vert \left( u,t\right) \right\vert ^{\rho }$, $\rho
>-\left( m+n\right) $, is a locally integrable power weight in order to
conclude that%
\begin{equation*}
\diint\limits_{B\left( 0,R\right) \times B\left( 0,R\right) }\mathbf{1}_{%
\mathcal{E}_{x,y}\left( I,J\right) }\left( \left( u,t\right) \right) \ 
\widehat{s}_{I\times J}\left( u,t\right) ^{p^{\prime }}d\sigma \left(
u,t\right) \geq c_{m,n,\alpha ,\beta ,p,q,\rho }\diint\limits_{B\left(
0,R\right) \times B\left( 0,R\right) }\widehat{s}_{I\times J}\left(
u,t\right) ^{p^{\prime }}d\sigma \left( u,t\right)
\end{equation*}%
holds for $\left( x,y\right) \in \mathbb{R}^{m}\times \mathbb{R}^{n}$ with a
constant $c_{m,n,\alpha ,\beta ,p,q,\rho }$ independent of $\left(
x,y\right) \in \mathbb{R}^{m}\times \mathbb{R}^{n}$. Now we continue with
the proof of Lemma \ref{tails} as given earlier to conclude that%
\begin{equation*}
\widehat{\mathbb{A}}_{p,q}^{\left( \alpha ,\beta \right) ,\left( m,n\right)
}\left( \sigma ,\omega \right) \leq \left( \frac{1}{c_{m,n,\alpha ,\beta
,p,q,\rho }}\right) ^{\frac{1}{q}}\mathbb{N}_{p,q}^{\left( \alpha ,\beta
\right) ,\left( m,n\right) }\left( \sigma ,\omega \right) .
\end{equation*}

\end{document}